%% Version déposée sur arxiv le 22/10/2024

\documentclass[11pt]{article}
\usepackage{geometry}
\geometry{margin=3cm}
\usepackage{amsmath}
\usepackage{amssymb}
\usepackage{amsthm}

\usepackage[dvipsnames]{xcolor}
\usepackage[utf8]{inputenc}
\usepackage{amsfonts}
\usepackage{tikz}
\usetikzlibrary{cd}
\usepackage{graphicx}

\usepackage{mathtools}

\usepackage{enumerate} 
\newtheorem{theorem}{Theorem}[section]
\newtheorem{lemma}[theorem]{Lemma}
\newtheorem{proposition}[theorem]{Proposition}
\newtheorem{corollary}[theorem]{Corollary}
\newtheorem{claim}{Claim}
\newtheorem*{claim*}{Claim}

\newtheorem{theoremintro}{Theorem}

\newtheorem{corollaryintro}[theoremintro]{Corollary}

\theoremstyle{definition}
\newtheorem{definition}[theorem]{Definition}

\newtheorem{example}[theorem]{Example}

\theoremstyle{remark}
\newtheorem{remark}[theorem]{Remark}
\newtheorem{question}[theorem]{Question}

\usepackage[bookmarks,bookmarksnumbered,pdfstartview=FitBH,backref=page]{hyperref}
\hypersetup{
	colorlinks=true, 
	linkcolor=blue,
	citecolor= blue,
	linktoc= all, 
}

\makeatletter
\DeclareRobustCommand{\ubigcup}{\DOTSB\mathop{\,\ubigcup@\,}\slimits@}

\newcommand{\ubigcup@}{\mathpalette\ubigcup@@\relax}
\newcommand{\ubigcup@@}[2]{%
  \begingroup
  \sbox\z@{$\m@th#1\bigcup$}%
  \sbox\tw@{$\m@th#1\uparrow$}%
  \copy\z@
  \mkern-6.3mu\ifx#1\scriptscriptstyle\mkern0.3mu\fi
  \dimen@=\dimexpr\ht\z@-\ht\tw@
  \ifx#1\displaystyle\else
    \ifx#1\scriptscriptstyle\advance\dimen@ 0.5pt\else
      \advance\dimen@ 1pt
  \fi\fi
  \raisebox{\dimen@}[0pt][0pt]{\rlap{\copy\tw@}}%
  \mkern6.3mu\ifx#1\scriptscriptstyle\mkern-0.3mu\fi
  \endgroup
}

\DeclareMathOperator{\Sub}{Sub}
\DeclareMathOperator{\rk}{rk}
\DeclareMathOperator{\rkCB}{rk_{CB}}
\DeclareMathOperator{\rkCBe}{rk^{\times}_{CB}}
\DeclareMathOperator{\Stab}{Stab}
\DeclareMathOperator{\Cay}{Cay}

\DeclareMathOperator{\PK}{\mathcal{K}}
\DeclareMathOperator{\Fix}{Fix}
\DeclareMathOperator{\Aut}{Aut}

\newcommand{\Z}{\mathbb Z}
\newcommand{\N}{\mathbb N}
\newcommand\FF{\mathbf {F}}
\newcommand\la{\langle}
\newcommand\ra{\rangle}
\newcommand\id{\mathrm{id}}

\newcommand\II{\mathcal{I}}
\newcommand\OO{\mathcal{O}}

\newcommand{\acting}{\curvearrowright}
\newcommand\Conj{\mathrm{Conj}}

\newcommand{\bs}{\backslash}

\newcommand\defin[1]{\textbf{#1}}

\usepackage{authblk}

\usepackage{lipsum}

\let\OLDthebibliography\thebibliography
\renewcommand\thebibliography[1]{
  \OLDthebibliography{#1}
  \setlength{\parskip}{0pt}
  \setlength{\itemsep}{2pt plus 0.3ex}
}

\title{Perfect kernel and dynamics: from Bass-Serre theory to hyperbolic groups}
\author{P\'{e}n\'{e}lope Azuelos and Damien Gaboriau}
\date{October 22, 2024}%

\begin{document}

\maketitle

\begin{abstract}
We introduce several approaches to studying the Cantor-Bendixson decomposition of and the dynamics on the space of subgroups for various families of countable groups. 
In particular, we uncover the perfect kernel and the Cantor-Bendixson rank of the space of subgroups of many new groups, including for instance infinitely ended groups, limit groups, hyperbolic 3-manifold groups and many graphs of groups.
We also study the topological dynamics of the conjugation action on the perfect kernel, establishing the conditions for topological transitivity and higher topological transitivity. As an application, we obtain many new examples of groups in the class $\mathcal{A}$ of Glasner and Monod, i.e. admitting faithful transitive amenable actions. This includes for example right-angled Artin groups, limit groups, finitely presented C'(1/6) small cancellation groups, random groups at density $d<1/6$, and more generally all virtually compact special groups.
\end{abstract}

\noindent{\small \textbf{{Keywords:}} Space of subgroups; perfect kernel; topologically transitive actions; hyperbolic groups, subgroup separable groups, Bass-Serre theory, amenable actions.}

\smallskip

\noindent{\small \textbf{{MSC-classification:}} 37B; 20E; 43A07; 20F67; 20F65; 37B20}

\medskip

\tableofcontents

\section{Introduction}
Let $G$ be a countable group and $\Sub(G)$ be the space of all subgroups of $G$.
There has been a recent increase in studies of the action by conjugation of $G$ on this space.
It is the crucible where certain properties of the non-free actions of G boil down, whether they are of a topological or measured nature.

When endowed with the Chabauty topology, $\Sub(G)$ is a compact metrizable totally disconnected space, thus satisfying the Baire property. The Cantor–Bendixson structure theory of Polish spaces leads us to consider the unique decomposition $\Sub(G) = \PK(G) \sqcup C$ where $\PK(G)$, the \defin{perfect kernel} of $G$, is the maximal perfect subspace of $\Sub(G)$ and $C$ is a countable set. Associated to this decomposition is an ordinal, called the \defin{Cantor-Bendixson rank} of $G$ and denoted $\rkCB(G)$. It captures after how many steps the iterative process of removing isolated points  from $\Sub(G)$ results in the stable subspace $\PK(G)$.
Precise definitions are given in Section \ref{Preliminaries}.

The action of $G$ on its space of subgroups by conjugation is continuous and the perfect kernel is invariant under this action. 
The interesting uniformly recurrent subgroups (URS) and invariant random subgroups (IRS) are supported on the perfect kernel. 

It is an instructive exercise to check that $\PK(\Z^d)=\emptyset$ and that $\rkCB(\Z^d)=d+1$ for all $d \in \mathbb{N}$.
In fact, since a non-empty perfect Polish space contains a Cantor set, $\PK(G)=\emptyset$ if and only if $G$ admits only countably many subgroups. This is the case for instance for all finitely generated nilpotent groups. More generally, this happens for all groups whose subgroups are all finitely generated (the so-called Noetherian groups), for instance for the Tarski Monsters: those non-abelian groups whose proper subgroups are all cyclic.

If $G$ is the $\mathfrak{R}$-free group of countably infinite rank of a (non-trivial) variety $\mathfrak{R}$ of groups (such as the free group $\FF_\infty$, $\oplus_\N \Z$,  $\oplus_\N \Z/m\Z$, the free Burnside group $B(\N,p)=\la (s_i)_{i\in \N}\vert g^p=1, \text{ for all } g\ra$ for $p\geq 2$,...), then  
$\PK(G)=\Sub(G)$ (see Proposition~\ref{Prop: K of count. free variety}) and $\rkCB(G)=0$.

In general, if $G$ is finitely generated then its finite index subgroups are isolated, so \[\PK(G) \subseteq \Sub_{[\infty]}(G) \text{ where } \Sub_{[\infty]}(G) := \{H \leq G \text{ s.t. } [G:H] = \infty\}.  \]
In the case where $\PK(G)=\Sub_{[\infty]}(G)$, note that $\rkCB(G)=1$.
Observe that $\Sub(G)$ may contain infinite index isolated points. There are even examples of infinite groups for which the trivial subgroup $\{\id\}$ is isolated: consider for instance the Pr\"ufer $p$-group, or see the more interesting finitely generated Example~\ref{ex: ex. with isolated id}.

It is not hard to check that the equality holds for all finitely generated non-cyclic free groups $\FF_r$:
\[\PK(\FF_r)=\Sub_{[\infty]}(\FF_r), \]
see e.g. \cite[Proposition 2.1]{CGLM-1-arxiv}. Moreover, the action of $\FF_r$ on $\PK(\FF_r)$ is highly topologically transitive (see Corollary \ref{cor:infinitely many ends} or Corollary \ref{cor: locally quasiconvex}).
Recall that a continuous action $G\acting X$ on a compact metrizable space of a countable group $G$ is called \defin{topologically $r$-transitive} for a positive integer $r$ if the diagonal action $G\acting X^r$ on the $r$-fold space is \defin{topologically transitive}; i.e. if for every $2r$-tuple $(V_1,V_2,\cdots, V_{2r})$ of non-empty open sets, there is an element $g\in G$ such that $g \cdot V_{i}\cap V_{r+i}\not=\emptyset$ for all $i=1, 2, \cdots, r$ (see Section~\ref{Preliminaries}). The action is called \defin{highly topologically transitive} if it is topologically $r$-transitive for all $r \geq 1$.

One purpose of this article is to extend the validity of these statements to several new groups.
 
 In recent years, the space of subgroups of several classes of groups have been studied by various authors. Cornulier, Guyot and Pitsch \cite{Cornulier-Guyot-Pitsch-2010} classified the spaces of subgroups of all countable abelian groups up to homeomorphism and computed their Cantor-Bendixson rank. In \cite{BGK-2015-IRS-Lamp}, Bowen, Grigorchuk and Kravchenko found the perfect kernel and Cantor-Bendixson rank of the lamplighter groups 
$(\Z/k\Z)^n\wr \Z=\oplus_{\Z}(\Z/k\Z)^n\rtimes \Z$: they showed that
$\PK((\Z/k\Z)^n\wr \Z)=\Sub(\oplus_{\Z}(\Z/k\Z)^n)$ and $\rkCB((\Z/k\Z)^n\wr \Z)=\omega$, the first infinite ordinal.
Skipper and Wesolek described in \cite{Skipper-Wesolek-2020} the perfect kernels of the Grigorchuk group, the Gupta-Sidki $3$-group 
and some other families of groups of automorphisms of infinite regular rooted trees; they also established that $\rkCB(G)=\omega$ for these groups.

Recently Carderi, Le Maître, Stalder and the second author described the perfect kernel and the dynamics for the Baumslag-Solitar groups $\mathrm{BS}(m,n)=\la b,t\vert tb^mt^{-1}=b^n\ra$ depending on the parameters $m,n$ in \cite{CGLMS-1-arxiv}. If $\lvert m \lvert, \vert n\vert >1$, then $\PK(\mathrm{BS}(m,n))$ is the space of subgroups whose action on the standard Bass-Serre tree of $\mathrm{BS}(m,n)$ has infinite quotient. Furthermore, the perfect kernel admits a countably infinite partition into $\mathrm{BS}(m,n)$-invariant subspaces. One of them is closed and not open; all the others are open (and also closed exactly when $\vert m\vert=\vert n\vert$). Moreover, the $\mathrm{BS}(m,n)$-action on each piece is topologically transitive.

\medskip
Intrinsically, this theory is less interested in the algebraic structure of subgroups so much as the ``geometry of their embedding'' into $G$. A result which exemplifies this is the following:
\\
{\em If $H \leq G$ are finitely generated groups and the quotient $H \bs G$ is quasi-isometric to $\Z$ then $H$ vanishes by the third Cantor-Bendixson derivative of $\Sub(G)$} (Theorem \ref{th: H co-QI Z then CB-e-rk(H) <4}).

\paragraph{Hyperbolic groups}
An important class of finitely generated groups is that of hyperbolic groups, introduced by Gromov \cite{Gromov87}.
Describing the space of subgroups of a hyperbolic group is in general a difficult task, largely due to the fact that finitely generated subgroups of hyperbolic groups need not respect the geometry of the group. There are however a range of tools available to study those subgroups that do respect the geometry: namely the quasiconvex subgroups. 

Given a hyperbolic group $G$, we denote by $\Sub_{\mathrm{QC}[\infty]}(G)$ the set of quasiconvex subgroups of $G$ with infinite index. 

\begin{theoremintro} [Theorems \ref{thm: kernel QC hyp} and \ref{thm: QC hyp dynamique}] \label{thmintro: hyperbolic}
Let $G$ be a non-elementary hyperbolic group. Then 
\[ \overline{\Sub_{\mathrm{QC}[\infty]}(G)} \subseteq \PK(G).\]
If in addition $G$ has no non-trivial finite normal subgroups then the action of $G$ on $ \overline{\Sub_{\mathrm{QC}[\infty]}(G)}$ is highly topologically transitive. 
\end{theoremintro}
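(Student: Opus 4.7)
Both parts of the theorem rest on combination theorems for hyperbolic groups (in the spirit of Bestvina--Feighn, Gitik, and Martinez-Pedroza), which allow one to form free products of quasiconvex subgroups whose limit sets on the Gromov boundary $\partial G$ are sufficiently separated, and guarantee that the result is again quasiconvex. The key geometric input is that a quasiconvex subgroup $H \leq G$ of a non-elementary hyperbolic group has infinite index if and only if its limit set $\Lambda(H) \subsetneq \partial G$ is a proper closed subset; in that case there exist loxodromic elements $g \in G$ with both fixed points in $\partial G \smallsetminus \Lambda(H)$.

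For the first inclusion, since $\PK(G)$ is the maximal perfect closed subset of $\Sub(G)$ and $\Sub_{\mathrm{QC}[\infty]}(G)$ is dense in its closure, it suffices to prove that every $H \in \Sub_{\mathrm{QC}[\infty]}(G)$ is a Chabauty limit of pairwise distinct elements of $\Sub_{\mathrm{QC}[\infty]}(G)$. Pick a loxodromic $g \in G$ with fixed points in a small neighborhood $U \subset \partial G \smallsetminus \Lambda(H)$. For all sufficiently large $k$, ping-pong plus the combination theorem yields $H_k := \langle H, g^k \rangle = H \ast \langle g^k \rangle$, which is quasiconvex and of infinite index (by shrinking $U$ one keeps the limit set of $H_k$ properly contained in $\partial G$). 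Since $H \subseteq H_k$, any finite $F^+ \subseteq H$ lies in every $H_k$; and any element of $H_k \smallsetminus H$ has $G$-length of order $k$ (forced by the long $g^k$-syllable in its free-product normal form), so any fixed $F^- \subset G \smallsetminus H$ is disjoint from $H_k$ for $k$ large. Hence $H_k \to H$, and distinctness is immediate from $g^k \in H_k$.

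For high topological transitivity, fix nonempty open sets $V_1, \ldots, V_{2r}$ of $\overline{\Sub_{\mathrm{QC}[\infty]}(G)}$, and by density shrink each to a basic Chabauty neighborhood $V(F_i^+, F_i^-)$ of some $H_i \in \Sub_{\mathrm{QC}[\infty]}(G)$. The plan is to construct a single loxodromic $\gamma \in G$, set $g := \gamma^{-1}$, and define $L_i := \langle H_i, \gamma H_{r+i} \gamma^{-1} \rangle$ for each $i \leq r$, verifying $L_i \in V_i$ and $\gamma^{-1} L_i \gamma \in V_{r+i}$. Take $\gamma$ loxodromic, with attracting fixed point avoiding $\bigcup_{i \leq r} \Lambda(H_i)$ and repelling fixed point avoiding $\bigcup_{i \leq r} \Lambda(H_{r+i})$, and with translation length large enough so that the combination theorem realizes each $L_i = H_i \ast \gamma H_{r+i} \gamma^{-1}$ as quasiconvex of infinite index. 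The free-product word-length estimate (as in the previous paragraph) then reduces $L_i \cap F_i^- = \emptyset$ to the \emph{syllable} condition $\gamma H_{r+i} \gamma^{-1} \cap F_i^- = \emptyset$, and symmetrically for $\gamma^{-1} L_i \gamma \cap F_{r+i}^- = \emptyset$.

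The principal difficulty is producing a single $\gamma$ meeting all these simultaneous conditions: the boundary positions of $\gamma^{\pm}$, the large translation length, and the finitely many syllable-avoidance conditions. The hypothesis that $G$ has no non-trivial finite normal subgroup enters precisely here: otherwise, for $f \in E(G) \smallsetminus \{1\}$ the conjugate $\gamma^{-1} f \gamma$ always lies in the finite set $E(G)$, which can trap $f \in F_i^-$ inside $H_{r+i}$ for every $\gamma$ and obstruct the avoidance. With $E(G) = 1$, every non-trivial element of $G$ has infinite conjugacy class, so the bad sets of $\gamma$'s for each finite avoidance condition are sparse (contained in finitely many cosets of virtually cyclic centralizers); combined with the openness of the boundary conditions and the density of loxodromic fixed-point pairs in $\partial G \times \partial G$, a generic $\gamma$ satisfies all requirements.
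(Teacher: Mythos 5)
Your overall strategy is the same as the paper's: adjoin high powers of a loxodromic element whose limit points avoid $\Lambda(H)$, invoke a combination theorem to keep quasiconvexity and infinite index, and use the length of the new generator to force Chabauty convergence. But there are genuine gaps. The most important one concerns the finite radical $N$ of $G$, which the first part of the statement does \emph{not} assume trivial. If $M := H\cap N\neq\{\id\}$, the subgroup $\la H, g^k\ra$ is \emph{never} the free product $H\ast\la g^k\ra$: since $N$ is finite and normal, a sufficiently high power of $g$ commutes with $M$, so there are relations between the two factors. The combination theorems you cite (Gitik, Mart\'inez-Pedroza) produce in the torsion case an amalgam over the finite intersection, and the paper's Proposition~\ref{prop: combination result} carefully builds $Q_n=\la u^n\ra\times M$ and the splitting $\la H,Q_n\ra=H\ast_M Q_n$ for exactly this reason. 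Your normal-form length estimate survives this correction (elements of $H_n\smallsetminus H$ still carry a $u^{\pm n}$ syllable), but as written the splitting you assert is false and the normal form theorem you are implicitly using does not apply. A second, smaller gap in the same paragraph: the parenthetical ``by shrinking $U$ one keeps the limit set of $H_k$ properly contained in $\partial G$'' is not a proof that $H_k$ has infinite index (when $\partial G$ is a Cantor set the limit set of a nontrivial free product could a priori be everything); the paper handles this by passing to a further power of $u$ and observing that $\la H, Q_{2n_0}\ra$ has infinite index in $\la H, Q_{n_0}\ra$ whenever the splitting is non-trivial.

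For high transitivity your architecture (a single conjugator $\gamma$, $L_i=\la H_i,\gamma H_{r+i}\gamma^{-1}\ra$, verified on both sides) matches the paper's, which uses the one-parameter family $\gamma_n=pu^n$ and lets $n\to\infty$; since the $F_i^{\pm}$ are finite, a single sufficiently long $\gamma$ is indeed enough, so this is a legitimate reorganization. The real missing step is the hypothesis of the combination theorem: to get the genuine free product $H_i\ast\gamma H_{r+i}\gamma^{-1}$ one must arrange that the maximal elementary subgroup $P=\Stab(\Lambda(\la\gamma\ra))$ meets \emph{each} of the $2r$ subgroups trivially (more generally in $M$). This is where the paper does real work (part~2 of Proposition~\ref{prop: combination result}): it uses proper discontinuity of each $H_i$ on $\partial G\smallsetminus\Lambda(H_i)$ together with the fact that $\Fix(h)$ has empty interior for $h\notin N$, and iteratively shrinks an open set $V_l\subseteq\cdots\subseteq V_1$ until $aV_l\cap V_l=\emptyset$ for every element $a$ of the finite bad set. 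Your replacement --- ``the bad sets of $\gamma$'s are sparse, so a generic $\gamma$ works'' --- is a heuristic, not an argument: you would need to show that the set of admissible fixed-point pairs is non-empty, and the centralizer/conjugacy-class count you sketch does not obviously control the condition $P\cap H_i=\{\id\}$ (which is about the whole elementary closure of $\gamma$, not about single group elements). Your diagnosis of where the trivial-finite-radical hypothesis enters is also slightly off: the obstruction is not that some $f\in F_i^-$ gets ``trapped,'' but that the clopen partition of $\overline{\Sub_{\mathrm{QC}[\infty]}(G)}$ by the conjugacy class of $H\cap N$ is a $G$-invariant obstruction to $2$-transitivity (Remark~\ref{rem: obstruction to r-top transitive}); topological $1$-transitivity on each piece holds even when $N\neq\{\id\}$.
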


A hyperbolic group is said to be locally quasiconvex if each of its finitely generated subgroups is quasiconvex. There are many examples of hyperbolic groups which satisfy this property, including (but not limited to): hyperbolic surface groups, hyperbolic limit groups, 
certain one-relator groups with torsion, certain small cancellation groups and graphs of free groups with cyclic edge groups which do not contain any Baumslag-Solitar subgroups (see Examples~\ref{ex: hyp limit gps are loc qc}, \ref{ex: hyperbolic gps w. a small hierarchy}, 
\ref{ex: one-relator group},
and \ref{ex: Coxeter groups}, and the references therein).

\begin{corollaryintro} [Corollary \ref{cor: locally quasiconvex}] \label{corintro: locally QC}
If a group $G$ is non-elementary hyperbolic and locally quasiconvex then
\[ \PK(G) = \Sub_{[\infty]}(G) = \overline{\Sub_{QC[\infty]}(G)}.\]
If in addition $G$ has no non-trivial finite normal subgroups then the action of $G$ on $\PK(G)$ is highly topologically transitive.
\end{corollaryintro}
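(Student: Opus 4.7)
The plan is to deduce this corollary from Theorem~\ref{thmintro: hyperbolic} by a short sandwich argument. Theorem~\ref{thmintro: hyperbolic} already provides the inclusion $\overline{\Sub_{\mathrm{QC}[\infty]}(G)} \subseteq \PK(G)$, and the general observation from the introduction gives $\PK(G) \subseteq \Sub_{[\infty]}(G)$, since every finite-index subgroup of a finitely generated group is isolated in $\Sub(G)$. Hence the only missing inclusion is $\Sub_{[\infty]}(G) \subseteq \overline{\Sub_{\mathrm{QC}[\infty]}(G)}$, and this is where local quasiconvexity enters.

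To establish it, I would take $H \in \Sub_{[\infty]}(G)$, enumerate $H = \{h_1, h_2, \ldots\}$, and set $H_n := \langle h_1, \ldots, h_n\rangle$. Each $H_n$ is finitely generated, hence quasiconvex by local quasiconvexity; and since $H_n \leq H$, its index in $G$ remains infinite, so $H_n \in \Sub_{\mathrm{QC}[\infty]}(G)$. A direct check in the Chabauty topology then shows that $H_n \to H$: any basic neighbourhood of $H$ is specified by a finite set $F_1 \subseteq H$ of required elements and a finite set $F_2 \subseteq G \setminus H$ of forbidden elements; the first requirement is satisfied for $n$ large enough by construction, and the second is automatic from $H_n \subseteq H$. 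Thus $H \in \overline{\Sub_{\mathrm{QC}[\infty]}(G)}$, which closes the chain of equalities.

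For the dynamical statement, the extra hypothesis that $G$ has no non-trivial finite normal subgroup is precisely what the second half of Theorem~\ref{thmintro: hyperbolic} requires to ensure that the conjugation action of $G$ on $\overline{\Sub_{\mathrm{QC}[\infty]}(G)}$ is highly topologically transitive. Combined with the identification $\PK(G) = \overline{\Sub_{\mathrm{QC}[\infty]}(G)}$ just obtained, this immediately gives the second assertion. I do not foresee any serious obstacle: the substantive geometric work is already packaged in Theorem~\ref{thmintro: hyperbolic}, and the only delicate point in the present argument is recognizing that local quasiconvexity is exactly the hypothesis that keeps the finitely generated approximations $H_n$ inside $\Sub_{\mathrm{QC}[\infty]}(G)$, which is what allows the standard approximation-by-finitely-generated-subgroups trick to succeed.
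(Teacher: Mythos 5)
Your proposal is correct and follows essentially the same route the paper intends: the paper presents Corollary \ref{cor: locally quasiconvex} as an immediate consequence of Theorems \ref{thm: kernel QC hyp} and \ref{thm: QC hyp dynamique}, with the missing inclusion $\Sub_{[\infty]}(G) \subseteq \overline{\Sub_{QC[\infty]}(G)}$ supplied exactly as you do, by approximating any infinite index subgroup by its finitely generated (hence quasiconvex, hence infinite index) subgroups. Your sandwich of inclusions and the Chabauty convergence check are both sound.
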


Another class of hyperbolic groups of particular interest is that of hyperbolic 3-manifold groups. While these are not locally quasiconvex, their finitely generated subgroups are well understood. As a result, we are able to describe their perfect kernels in their entirety.

\begin{theoremintro} [Theorem~\ref{theorem: kernel hyp 3-manifolds}]
Let $M$ be a closed hyperbolic 3-manifold and $G = \pi_1(M)$. Then 
\[ \PK(G) = \overline{\Sub_{\mathrm{QC}[\infty]}(G)}. \]
More precisely, the perfect kernel of $G$ is the set of infinite index subgroups $H \leq G$ such that either $H$ is quasiconvex or $H$ is not finitely generated.
\\
Moreover, the action of $G$ on $\PK(G)$ is highly topologically transitive and the Cantor-Bendixson rank of $G$ is $2$ or $3$.
\end{theoremintro}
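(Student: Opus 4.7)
The plan is to combine Theorems~\ref{thm: kernel QC hyp} and~\ref{thm: QC hyp dynamique} with two structural inputs specific to closed hyperbolic $3$-manifolds: the Tameness Theorem of Agol and Calegari--Gabai, together with Canary's covering theorem, which guarantees that every finitely generated subgroup of $G$ is either quasiconvex or a \emph{virtual fiber} --- that is, contains with finite index a surface group $S$ which is normal in some finite-index subgroup $G' \leq G$ with $G'/S \cong \Z$ --- and Agol's virtual fibering theorem, which supplies at least one such virtual fiber. Since $G$ is torsion-free and non-elementary, Theorem~\ref{thm: kernel QC hyp} already gives $\overline{\Sub_{\mathrm{QC}[\infty]}(G)} \subseteq \PK(G)$ and Theorem~\ref{thm: QC hyp dynamique} already gives the high topological transitivity of the conjugation action on this closure. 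It therefore suffices to establish the reverse inclusion and to bound $\rkCB(G)$.

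First I would exclude virtual fibers from $\PK(G)$. If $H$ is a virtual fiber, then $H$ is commensurable with its fiber $S$, and since $S \bs G' \cong \Z$ with $[G:G'] < \infty$ the coset space $H \bs G$ is quasi-isometric to $\Z$; Theorem~\ref{th: H co-QI Z then CB-e-rk(H) <4} then ensures that $H$ vanishes by the third Cantor-Bendixson derivative of $\Sub(G)$. In particular $H \notin \PK(G)$, and since the remaining non-perfect subgroups are of finite index (hence isolated), this also yields $\rkCB(G) \leq 3$. Combined with the Tameness dichotomy, this shows that every finitely generated $H \in \PK(G)$ is quasiconvex of infinite index.

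The core of the argument is the algebraic observation that an infinite-index subgroup $H \leq G$ which is not finitely generated cannot contain any virtual fiber: if $K \leq H$ were a virtual fiber with fiber $S \triangleleft G'$ and $G'/S \cong \Z$, then $H \cap G' \supseteq S$ corresponds via $G'/S$ to some subgroup of $\Z$, and either $H \cap G'$ has finite index in $G'$ (forcing $[G:H] < \infty$, a contradiction) or $H \cap G' = S$, in which case $H$ is the union of $S$ with at most $[G:G']$ cosets of $G'$, hence commensurable with $S$ and thus finitely generated --- another contradiction. Consequently each $H_n := \langle H \cap B_n \rangle$ (with $B_n$ the ball of radius $n$ in $G$) is finitely generated and not a virtual fiber, so by the Tameness dichotomy $H_n$ is quasiconvex of infinite index; and $H_n \to H$ in the Chabauty topology, giving $H \in \overline{\Sub_{\mathrm{QC}[\infty]}(G)}$. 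Combining the two cases yields $\PK(G) = \overline{\Sub_{\mathrm{QC}[\infty]}(G)}$ together with the explicit description, and Theorem~\ref{thm: QC hyp dynamique} then delivers the high topological transitivity.

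Finally, for $\rkCB(G) \geq 2$, Agol's virtual fibering theorem produces a finite-index subgroup $G' \leq G$ of the form $S \rtimes \Z$ with $S$ a fiber; letting $t \in G'$ generate the $\Z$-factor, each subgroup $\langle S, t^m \rangle = S \rtimes m\Z \leq G'$ has finite index in $G$ (so is isolated in $\Sub(G)$), yet $\langle S, t^m \rangle \to S$ in the Chabauty topology as $m \to \infty$: for any $g = s t^k \in G'$ one has $g \in \langle S, t^m\rangle$ iff $m \mid k$, and no element outside $G'$ belongs to any $\langle S, t^m\rangle$. This exhibits $S$ as a non-isolated point of $\Sub(G)$ lying outside $\PK(G)$, so the first Cantor-Bendixson derivative of $\Sub(G)$ strictly contains $\PK(G)$, forcing $\rkCB(G) \geq 2$. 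The main obstacle is the non-finitely-generated case, which is overcome by the rigidity of virtual fibers inside infinite-index subgroups.
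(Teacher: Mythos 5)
Your proposal is correct and follows the same overall architecture as the paper's proof: the Tameness/Covering dichotomy splits finitely generated subgroups into quasiconvex ones and virtual fibers, Theorems~\ref{thm: kernel QC hyp} and~\ref{thm: QC hyp dynamique} handle the quasiconvex side and the dynamics, virtual fibers are expelled from $\PK(G)$, non-finitely generated infinite index subgroups are shown to contain no virtual fiber and hence to be Chabauty limits of infinite index quasiconvex subgroups, and Theorem~\ref{th: H co-QI Z then CB-e-rk(H) <4} bounds the rank. Two of your sub-arguments differ from the paper's, and both are valid. First, to show that a non-finitely generated infinite index $H$ contains no virtual fiber, the paper invokes the geometric Proposition~\ref{prop: G>K>H QI w. Z -> finite index} (any intermediate subgroup in a tower over a subgroup with Schreier graph quasi-isometric to $\Z$ is of finite index over or under), whereas you argue algebraically: $S \leq H \cap G'$ forces $(H \cap G')/S$ to be a subgroup of $\Z$, whence either $[G:H]<\infty$ or $[H:S]\leq [G:G']<\infty$; this is more elementary and self-contained. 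Second, for $\rkCB(G)\geq 2$ the paper proves that every virtual fiber is separable (closed in the profinite topology), while you exhibit the explicit non-stationary sequence $\la S, t^m\ra \to S$ of finite index subgroups; one witness suffices for the lower bound, so this is enough. Two harmless imprecisions: a virtual fiber $H$ has $H\bs G$ quasi-isometric to $\Z$ \emph{or} $\N$ (the $\N$ case occurs when the quotient is only virtually $\Z$, e.g. $D_\infty$), not always to $\Z$ as you assert --- but Theorem~\ref{th: H co-QI Z then CB-e-rk(H) <4} gives $\rkCBe(H;G)\leq 3$ in either case, so nothing breaks; and for the exclusion of virtual fibers from $\PK(G)$ the paper uses the lighter Corollary~\ref{cor: virtual fibre} (a countable neighbourhood coming from $\Sub(\Z)$ being countable), though your use of Theorem~\ref{th: H co-QI Z then CB-e-rk(H) <4} accomplishes the exclusion and the rank bound in one stroke.
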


The above theorem makes use of significant discoveries which have been made about 3-manifolds over the past few decades. In particular, we use a consequence of the Tameness Theorem \cite{Agol04}, \cite{Calegari-Gabai06} and Canary's Covering Theorem \cite{Canary96} as well as the fact that closed hyperbolic 3-manifolds are virtually fibered \cite{Agol13} (by virtue of being virtually special in the sense of \cite{Haglund-Wise-2008-SCC}). 

In general, the inclusion $\overline{\Sub_{QC[\infty]}(G)} \subseteq \PK(G)$ is not an equality. Indeed, Rips' construction \cite{Rips-1982-small-canc-} allows us to find a hyperbolic group $G$ so that we can embed a $G$-invariant open subset $U$ into the complement $\PK(G) \smallsetminus \overline{\Sub_{QC[\infty]}(G)}$. Moreover, one can ensure that the action of $G$ on $U$ is as pathological as that of any finitely presented group on its perfect kernel. We discuss this in more detail in Section \ref{Section: Facts and questions}.

\paragraph{Subgroup separability}

A group $G$ is \defin{subgroup separable} (also referred to as LERF or locally extended residually finite) if every finitely generated subgroup of $G$ can be expressed as an intersection of finite index subgroups. The class of groups with this property includes free groups \cite{Hall-1949-subgr-free}, surface groups \cite{Scott78}, limit groups \cite{Wilton}, hyperbolic 3-manifold groups \cite{Agol13}, the first Grigorchuk group \cite{Grigorchuk-Wilson} and some lamplighter groups \cite{Grigorchuk-Kravchenko}.

Subgroup separability is equivalent to the density in $\Sub(G)$ of the set of finite index subgroups, as observed in \cite{GKM} (see Section~\ref{sect: LERF and 1st L2 Betti}). 
This does not provide any further information on the perfect kernel of a group, except when we can relate the rank (minimal number of generators) and the index of finite index subgroups. For this purpose, we require that the first $\ell^2$-Betti number be positive, we will see that in many cases this  condition is in fact optimal (see Remark \ref{rem: virtual fibring vs l2 Betti number}).

\begin{theoremintro}[Theorem~\ref{theorem: lerf + rk f.i. approx} and Corollary~\ref{cor: lerf + beta1}]
Let $G$ be a countable and subgroup separable group. If $G$ is not finitely generated then $\Sub(G)$ is a perfect space. If $G$ is finitely generated with positive first $\ell^2$-Betti number then $\PK (G) = \Sub_{[\infty]} (G)$ and $\rkCB(G) = 1$.
\end{theoremintro}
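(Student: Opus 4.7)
The two statements are proved by showing in each regime that a suitable space has no isolated points.

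Suppose first $G$ is countable, subgroup separable, and not finitely generated; I would show every $H\in\Sub(G)$ is non-isolated. If $H$ is itself not finitely generated, write $H=\bigcup_n H_n$ as the strictly increasing union of finitely generated proper subgroups: any finite $A_0\subseteq H$ eventually lies in some $H_n$, while $H_n\subseteq H$ prevents meeting any finite $B_0\subseteq G\setminus H$, so $H_n\to H$ in the Chabauty topology with each $H_n\neq H$. If instead $H$ is finitely generated, the equivalent characterization of subgroup separability as density of finite index subgroups in $\Sub(G)$ (cf.~\cite{GKM} and Section~\ref{sect: LERF and 1st L2 Betti}) places some finite index subgroup $K$ in every neighborhood of $H$; since $G$ is not finitely generated, no finite index subgroup is either (else coset representatives together with generators of $K$ would generate $G$), so $K\neq H$. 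Hence $\Sub(G)$ is perfect.

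Now assume $G$ is finitely generated, subgroup separable, with $\beta_1^{(2)}(G)>0$. Finite index subgroups of a finitely generated group are isolated in $\Sub(G)$, so $\Sub_{[\infty]}(G)$ is closed. Combined with the general inclusion $\PK(G)\subseteq\Sub_{[\infty]}(G)$, it suffices to show $\Sub_{[\infty]}(G)$ has no isolated points, for then the equality $\PK(G)=\Sub_{[\infty]}(G)$ and the rank statement $\rkCB(G)=1$ both follow. Fix $H\in\Sub_{[\infty]}(G)$ and a basic neighborhood $U=\{K:A\subseteq K,\ K\cap B=\emptyset\}$, with $A\subseteq H$ and $B\subseteq G\setminus H$ finite. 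If $\langle A\rangle\subsetneq H$, then $\langle A\rangle$ itself lies in $U\cap\Sub_{[\infty]}(G)$ and differs from $H$. The remaining case $H=\langle A\rangle$ finitely generated is where positivity of $\beta_1^{(2)}$ enters.

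In that case, LERF provides a finite index $K\supseteq H$ with $K\cap B=\emptyset$, and the key input is that $\beta_1^{(2)}(K)=[G:K]\cdot\beta_1^{(2)}(G)>0$: Lück approximation applied to the residually finite group $K$ produces normal finite index subgroups of $K$ with first Betti number tending to infinity, and patching with $H$ via LERF yields a finite index $\widetilde{K}\supseteq H$, disjoint from $B$, with $\beta_1(\widetilde{K})$ arbitrarily large (the content of Theorem~\ref{theorem: lerf + rk f.i. approx}). Choosing $\beta_1(\widetilde{K})\geq d(H)+2$, the image of $H$ in $\widetilde{K}^{\mathrm{ab}}$ has corank at least two, so there are infinitely many surjections $\varphi\colon\widetilde{K}\twoheadrightarrow\Z$ with $H\leq\ker\varphi$, hence infinitely many distinct kernels $K':=\ker\varphi\in U\cap\Sub_{[\infty]}(G)$; not all of them can equal $H$, so $H$ is not isolated. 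The main obstacle is precisely this patching step: arranging simultaneously that $\widetilde{K}\supseteq H$, $\widetilde{K}\cap B=\emptyset$, and $\beta_1(\widetilde{K})$ is large, given that $H$ need not lie in any prescribed residual chain of $G$. My strategy is to decouple the constraints, using LERF to secure containment and avoidance while applying Lück's theorem inside the ambient finite index subgroup $K$ (rather than $G$) to supply the Betti number growth.
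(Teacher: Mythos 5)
Your first paragraph (the non-finitely-generated case) is correct and is essentially the argument of Theorem~\ref{theorem: lerf + rk f.i. approx}(a); the reduction in the finitely generated case to showing that the closed set $\Sub_{[\infty]}(G)$ has no isolated points, and the disposal of the case $\la A\ra\subsetneq H$, are also fine.

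The finitely generated case with $H=\la A\ra$ is where your argument breaks, and the gap is exactly the ``patching step'' you flag: you need a finite index subgroup $\widetilde K$ which simultaneously contains $H$, avoids $B$, and has \emph{ordinary} first Betti number $b_1(\widetilde K)\geq \rk(H)+2$. Nothing you have supplies the last condition. L\"uck approximation produces large $b_1$ only for the terms $N_i$ of a residual chain in $K$; the subgroup $\la H,N_i\ra$ (or any other finite index subgroup forced to contain $H$) can have much smaller, even trivial, rational abelianization, since $H^1(\la H,N_i\ra;\mathbb{Q})$ is only the $\la H,N_i\ra/N_i$-invariant part of $H^1(N_i;\mathbb{Q})$. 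Nor can you fall back on the $\ell^2$-Betti number of $\widetilde K$: the inequality $b_1(\Gamma)\geq\beta_1^{(2)}(\Gamma)$ is false in general (e.g.\ $A_5\ast A_5$ has positive first $\ell^2$-Betti number and trivial abelianization), so $\beta_1^{(2)}(\widetilde K)=[G:\widetilde K]\,\beta_1^{(2)}(G)$ being large says nothing about maps $\widetilde K\twoheadrightarrow\Z$. (There is also a secondary issue: classical L\"uck approximation is stated for finitely presented groups, whereas here $G$ is only assumed finitely generated.) So the route through surjections onto $\Z$ killing $H$ demands strictly more than the hypothesis gives.

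The paper's proof uses the positivity of $\beta_1^{(2)}(G)$ in the opposite direction, via the inequality $\beta_1^{(2)}(K)\leq\rk(K)$ rather than via abelianizations: since $\rk(K)\geq[G:K]\,\beta_1^{(2)}(G)$ and a finitely generated group has only finitely many subgroups of each index, every finite index subgroup $K$ lying in a small enough neighbourhood $\mathcal{U}_H$ of $H$ must satisfy $\rk(K)\geq\rk(H)+2$. One then writes $H=\bigcap_n K_n$ with $(K_n)$ strictly decreasing of finite index, picks $k_n\in K_n\smallsetminus H$, and sets $H_n:=\la H,k_n\ra$. These satisfy $H\lneq H_n\leq K_n$, hence $H_n\to H$ non-trivially, and $\rk(H_n)\leq\rk(H)+1$; for large $n$ the subgroup $H_n$ lies in $\mathcal{U}_H$, so it \emph{cannot} have finite index. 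This produces the required approximating sequence in $\Sub_{[\infty]}(G)$ without ever invoking ordinary Betti numbers. I recommend replacing your patching step by this rank-obstruction argument.
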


The degree of topological transitivity of the $G$-action on $\PK(G)$ still depends on $G$ in this case. For instance, as previously mentioned, the action of $\mathbf{F}_r$ on $\PK(\mathbf{F}_r)$ is highly topologically transitive. In contrast the $\mathrm{SL}(2,\mathbb{Z})$-action on $\PK(\mathrm{SL}(2,\mathbb{Z}))$ is not even 1-topologically transitive (see Example~\ref{ex: SL(2,Z)}).

As mentioned above, free groups and surface groups are subgroup separable. Moreover, hyperbolic surface groups have positive first $\ell^2$-Betti number, as do non-abelian free groups. Thus the above theorem implies that the perfect kernel of a finite rank non-abelian free group or a hyperbolic compact surface group coincides with its set of infinite index subgroups, and its Cantor-Bendixson rank is $1$. Note that the remaining (non-hyperbolic) compact surface groups are virtually abelian so their perfect kernel is empty.

More generally, we have the following corollary for Sela's limit groups. Recall that a group $G$ is a \defin{limit group}, or \defin{$\omega$-residually free}, if for any finite set $S \subseteq G \smallsetminus \{\id\}$ there exists a normal subgroup $N \unlhd G$ such that the quotient $G/N$ is free and $S \cap N = \emptyset$. Equivalently, limit groups are the limits of free groups in the space of marked groups \cite[Theorem 1.1]{Champetier-Guirardel05}.

\begin{corollaryintro}[Limit groups (Corollaries~\ref{cor: limit group} and~\ref{cor: groups in CCC, K(G) and HTT})]
If $G$ is a finitely generated non-abelian limit group, then $\PK(G)=\Sub_{[\infty]}(G)$ and the action $G\acting \PK(G)$ is highly topologically transitive.
\end{corollaryintro}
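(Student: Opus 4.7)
The plan is to combine ingredients already established earlier in the paper; the statement splits into two assertions which I would handle separately.

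For the equality $\PK(G)=\Sub_{[\infty]}(G)$, I would invoke Corollary~\ref{cor: lerf + beta1}. Its hypotheses are finite generation, subgroup separability, and positivity of the first $\ell^2$-Betti number. Finite generation is given; subgroup separability of limit groups is Wilton's theorem, quoted in the introduction. Positivity of $\beta_1^{(2)}$ for a finitely generated non-abelian limit group is a known consequence of Sela's hierarchy (due to Bridson--Kochloukova): one argues inductively along the level of the limit group, using that free products, free extensions of centralisers, and doubles along maximal abelian subgroups all preserve positivity of $\beta_1^{(2)}$ provided the base contribution is a non-abelian free group.

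For the high topological transitivity of $G\acting \PK(G)$, I would split into two cases according to Sela's dichotomy. If $G$ is hyperbolic, then $G$ is non-elementary (being non-abelian), torsion-free (a general property of limit groups, so in particular with no non-trivial finite normal subgroup), and locally quasiconvex (Example~\ref{ex: hyp limit gps are loc qc}); Corollary~\ref{corintro: locally QC} then applies directly and yields the conclusion. If $G$ is not hyperbolic, then by Sela's theorem $G$ contains a copy of $\Z^2$ and admits a non-trivial abelian JSJ decomposition, with vertex groups that are either abelian, QH surface groups, or simpler limit groups. I would feed this decomposition into the Bass-Serre theoretic machinery packaged as Corollary~\ref{cor: groups in CCC, K(G) and HTT}.

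The main obstacle is clearly the non-hyperbolic case: one must verify that the CCC-style hypotheses are preserved under the splittings arising from the limit group hierarchy. This amounts to an induction whose delicate step is controlling how conjugates of finitely generated subgroups meet the abelian edge groups of each splitting, so that topological transitivity can be transported from the vertex groups up to $G$. Once this is in place, the two cases combine to give the full statement.
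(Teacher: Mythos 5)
Your first assertion, $\PK(G)=\Sub_{[\infty]}(G)$, is proved exactly as in the paper (Corollary~\ref{cor: limit group}): subgroup separability via Wilton plus $\beta_1^{(2)}(G)>0$ fed into Corollary~\ref{cor: lerf + beta1}. The paper cites Pichot for the positivity of the first $\ell^2$-Betti number rather than running the hierarchy argument you sketch, but that is a difference of reference, not of method.

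For the high topological transitivity there is a genuine gap in your non-hyperbolic case. The paper does not split into hyperbolic/non-hyperbolic at all: it observes (Example~\ref{ex: class Q}) that \emph{every} non-abelian limit group lies in the class $\mathcal{Q}'$ of Definition~\ref{def: class Q}, citing \cite[Corollary 2.2]{BHMS-2009}, and then Corollary~\ref{cor: groups in CCC, K(G) and HTT} gives both conclusions at once. The content you are missing is precisely this membership: one needs a finite graph-of-groups hierarchy for $G$ whose edge groups are (virtually) abelian, whose Bass-Serre tree at each stage contains two edges with finite intersection of stabilisers, and which bottoms out at a vertex group with infinitely many ends. This is supplied by the Bridson--Howie--Miller--Short construction of an \emph{acylindrical} hierarchy over cyclic subgroups for limit groups (ultimately resting on the fact that a freely indecomposable non-abelian limit group admits a principal cyclic splitting over a malnormal maximal abelian subgroup); it is not an immediate consequence of the abelian JSJ decomposition you invoke, whose edge groups need not yield the two-edges-with-finite-intersection condition and whose vertex groups (abelian or closed QH surface pieces) need not land in $\mathcal{Q}_{n-1}$ without further work. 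Your stated ``delicate step'' is also aimed at the wrong target: once membership in $\mathcal{Q}'$ is established, the transport of transitivity from the tree to $\PK(G)$ is already done by Theorem~\ref{th: two edge-stab with finite intersection - top. dyn.}; there is nothing left to prove about how finitely generated subgroups meet the edge groups. Your hyperbolic case via Example~\ref{ex: hyp limit gps are loc qc} and Corollary~\ref{corintro: locally QC} is correct but, in the paper's scheme, redundant.
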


\paragraph{Groups with infinitely many ends and actions
 on trees}

A fundamental notion in geometric group theory is the notion of ends of a group.  
Recall that a finitely generated group has either zero, one, two or infinitely many ends. Stallings' Theorem \cite{Sta68,Stallings-1971-inf-ends} states that a group has infinitely many ends if and only if it acts non-trivially and irreducibly on a tree with finite edge stabilisers. This theorem is invaluable in the study of infinitely ended groups and, in the context of spaces of subgroups, it allows us to show the following.

\begin{theoremintro}[Infinitely many ends (Corollary~\ref{cor:infinitely many ends})]
If $G$ is a finitely generated group with infinitely many ends, then the perfect kernel of $G$ consists of its subgroups with infinite index:
\[\PK(G)=\Sub_{[\infty]}(G).\]
Moreover, if $G$ has no non-trivial finite normal subgroups then the action of $G$ on $\PK(G)$ is highly topologically transitive.
\end{theoremintro}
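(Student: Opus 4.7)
The plan is to reduce the statement to Bass--Serre theoretic results for graphs of groups already established in the paper. By Stallings' theorem, since $G$ is finitely generated with infinitely many ends, $G$ acts non-trivially and irreducibly on a simplicial tree $T$ with finite edge stabilizers and finitely many orbits of edges. This translates the problem into one about the action $G \acting T$.

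I would first dispatch the inclusion $\PK(G) \subseteq \Sub_{[\infty]}(G)$: as $G$ is finitely generated, each finite-index subgroup is open in $\Sub(G)$ and hence isolated, so it cannot lie in $\PK(G)$. For the reverse inclusion, let $H \in \Sub_{[\infty]}(G)$. The key observation is that because the edge stabilizers of $G \acting T$ are finite, the condition $[G : H] = \infty$ forces the quotient $H \bs T$ to be infinite. I would then apply the graph-of-groups approximation result established earlier in the paper, which constructs, from any subgroup acting with infinite quotient on a Bass--Serre tree with finite edge stabilizers, a sequence of pairwise distinct infinite-index subgroups converging to it in the Chabauty topology. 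These approximating subgroups are obtained by modifying $H$ on portions of $T$ that lie arbitrarily far from any prescribed finite region, so that they agree with $H$ on larger and larger balls. This shows that $H$ is non-isolated in $\Sub_{[\infty]}(G)$, and therefore $H \in \PK(G)$.

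For the high topological transitivity statement under the extra hypothesis that $G$ has no non-trivial finite normal subgroup, I would again appeal to the corresponding general statement for graphs of groups proven earlier. Given open sets $V_1, \ldots, V_{2r}$ in $\PK(G)$, one picks $H_i \in V_i$ and $K_i \in V_{r+i}$ and refines to basic Chabauty neighborhoods determined by the behavior of candidate subgroups on a large finite ball $B \subseteq T$. A ping-pong argument using a hyperbolic element $h \in G$ whose translation length on $T$ exceeds the diameter of $B$ then produces a single group element $g$ (typically a high power of $h$, conjugated into position) together with simultaneous perturbations of each $H_i$ and each $K_i$ such that conjugation by $g$ sends the perturbation of $H_i$ to the perturbation of $K_i$. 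The non-triviality of the Stallings splitting supplies an abundance of such hyperbolic elements with disjoint axes, and the absence of non-trivial finite normal subgroups is precisely what rules out the global obstructions to realizing arbitrary local conjugation data.

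The main obstacle lies in this last step: realizing $r$ prescribed conjugation relations simultaneously by a single element of $G$. The tree action is essential here, as it allows one to place the $r$ independent perturbations on disjoint, far-apart subtrees and to coordinate them through a single isometry coming from a hyperbolic element. The delicate part is ensuring at once that each perturbed subgroup lies in the prescribed Chabauty open set, remains of infinite index, and is genuinely moved to the correct counterpart by $g$. Once this ping-pong construction is in place, it applies uniformly for all $r \geq 1$, so transitivity upgrades automatically to high transitivity.
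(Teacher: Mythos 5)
Your proposal follows essentially the same route as the paper: invoke Stallings' theorem to obtain a minimal irreducible $G$-tree with finite edge stabilisers, observe that finite edge stabilisers force $H\bs T$ to be infinite whenever $[G:H]=\infty$ (so that $\Sub_{[\infty]}(G)=\Sub_{\vert\bullet\bs T\vert\infty}(G)$), and then apply the general dendrological theorems (Theorems~\ref{th: two edge-stab with finite intersection} and~\ref{th: two edge-stab with finite intersection - top. dyn.}), whose amalgamation-over-far-away-edges mechanism is exactly the "perturbation on distant subtrees coordinated by a single hyperbolic element" that you describe. The argument is correct and matches the paper's proof of Corollary~\ref{cor:infinitely many ends}.
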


It is also interesting to consider actions on trees where stabilisers of individual edges are not necessarily finite or trivial but rather where the stabilisers of sufficiently long paths are trivial. These actions are known as \defin{acylindrical}. Examples of groups admitting such actions include amalgamated free products $A \ast_C B$ where $C$ is malnormal in $A$ or $B$ (See Section~\ref{Sect: Applications of dendrolog. results}).

\begin{corollaryintro}[Acylindrical actions (see Corollary~\ref{cor:acylindrical})] \label{corintro: acylindrical} 
Let $G\acting T$ be a minimal irreducible and acylindrical action of a countable group on a tree.
If $H\leq G$ is a subgroup such that the quotient graph $H\bs T$ is infinite then 
$H\in \PK(G)$. 
\end{corollaryintro}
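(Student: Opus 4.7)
The plan is to show directly that $H$ lies in the perfect kernel by constructing, in every basic Chabauty-neighborhood of $H$, an uncountable (in fact Cantor) set of distinct subgroups of $G$. Fix a finite symmetric subset $F\subseteq G$ and a basepoint $v_0 \in T$, and let $D$ be larger than $\max_{f\in F}d_T(v_0,f\cdot v_0)$ plus the acylindricity constant $N$ of the action $G\acting T$. It suffices to produce uncountably many distinct subgroups $K \leq G$ with $K\cap F=H\cap F$, since such a family is then contained in the basic neighbourhood $\{K : K\cap F = H\cap F\}$ of $H$.

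Because $H\bs T$ is infinite but only finitely many $H$-orbits of edges meet the ball $B_D(v_0)$, one can find an edge $e\in T$ whose $H$-orbit is disjoint from $B_D(v_0)$. Combining this with the irreducibility of $G\acting T$, I would produce a sequence $(\gamma_n)_{n\in\N}$ of hyperbolic elements of $G$ whose axes are pairwise disjoint, all lie at distance $>D$ from $B_D(v_0)$, and sit in pairwise disjoint ``half-trees'' cut off by edges in the $H$-orbit of $e$; a ping-pong argument then shows that sufficiently high powers of these elements freely generate a free subgroup intersecting $H$ trivially.

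For each binary sequence $s\in 2^{\N}$, I would then set
\[
K_s \ :=\ \la H,\ \{\gamma_n : s_n = 1\}\ra.
\]
The two things to verify are (i) $K_s\cap F=H\cap F$ for every $s$, and (ii) the map $s\mapsto K_s$ is injective. Both follow from a normal-form analysis in the graph-of-groups decomposition that arises when adjoining the $\gamma_n$'s to $H$: any reduced word involving at least one $\gamma_n^{\pm 1}$ and non-trivial syllables from $H$ translates $v_0$ by more than $D$, so cannot coincide with any element of $F\smallsetminus H$, while the disjointness of the axes $A_{\gamma_n}$ (together with acylindricity preventing unexpected collapses) forces distinct $s$ to yield distinct $K_s$.

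The main obstacle is carrying out this normal-form analysis rigorously, particularly verifying that the axes of the $\gamma_n$ really are ``independent'' of $H$ in the ping-pong sense. Acylindricity is exactly the hypothesis that makes this work: any element whose action fixes a segment of length greater than $N$ must be trivial, which is what prevents ``far-away'' modifications from producing unwanted elements near $v_0$ and which forces the pieces $\la \gamma_n : s_n = 1\ra$ to embed independently of $H$. I expect the technical content of this argument to be packaged in a general dendrological lemma earlier in Section~\ref{Sect: Applications of dendrolog. results}, from which Corollary~\ref{cor:acylindrical} then follows by direct specialization.
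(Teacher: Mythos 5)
Your overall strategy is sound in principle: $H\in\PK(G)$ if and only if every Chabauty neighbourhood of $H$ is uncountable, so exhibiting a Cantor family $\{K_s\}_{s\in 2^{\N}}$ of subgroups agreeing with $H$ on a given finite set $F$ would suffice, and the mechanism you propose --- adjoining hyperbolic elements whose axes live in far-away half-trees and reading off a free (or amalgamated) product normal form --- is exactly the engine of the paper's main dendrological lemma (Lemma~\ref{main dendrological lemma}). The paper packages the conclusion differently (it shows that the set $\Sub_{\vert\bullet\bs T\vert\infty}(G)$ has no isolated points, so that its closure is perfect and contained in $\PK(G)$, via Theorems~\ref{th: a full subtree with stab in the kernel} and~\ref{th: two edge-stab with finite intersection}), but that difference is largely cosmetic.

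The genuine gap is that your normal-form analysis cannot work for an arbitrary $H$ with $H\bs T$ infinite: it implicitly requires $H$ to preserve a \emph{proper} subtree of $T$ separated from the half-trees carrying the $\gamma_n$, and this can fail when $H$ is not finitely generated. Concretely, take $G=\FF_2=\Z*\Z$ acting on its Bass--Serre tree $T$ (a minimal, irreducible, acylindrical action with trivial edge stabilisers) and $H=[\FF_2,\FF_2]$. Then $H\bs T$ is infinite (its edge set is $\Z^2$), but $H$ is normal and acts minimally on $T$, so no half-tree is ``independent'' of $H$: for any $\gamma_n\in\FF_2$ the commutator $[h,\gamma_n]$ lies back in $H$, the purported splitting $\la H,\gamma_n\ra=H*\la\gamma_n\ra$ is false, and every $K_s=\la H,\{\gamma_n:s_n=1\}\ra$ is one of the countably many subgroups between $H$ and $\FF_2$ (each is determined by its image in $\Z^2$). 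So the family $\{K_s\}$ is countable and $s\mapsto K_s$ is far from injective; both your steps (i) and (ii) break down. The paper avoids this by first replacing $H$ with a finitely generated subgroup $H'\le H$ satisfying $H'\cap F=H\cap F$ (still with $H'\bs T$ infinite); by Remark~\ref{rem: H fg minimal -> T/H compact} such an $H'$ admits a proper cocompact invariant subtree, which is exactly what makes the separation hypotheses of Lemma~\ref{main dendrological lemma} (an edge with small stabiliser separating the $H'$-invariant tree from the trees of the adjoined hyperbolic elements) verifiable. You would need to insert this reduction before your construction; with it, and with the dendrological lemma supplying the normal form, your Cantor-set variant does go through.
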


An acylindrical action on a tree only provides information on those subgroups whose quotient is infinite. We construct examples of finitely generated groups $G$ which act acylindrically on trees and which contain infinite index subgroups $H$ whose quotients are finite. Sometimes these subgroups $H$ are in the perfect kernel and sometimes they are not  (see Remark~\ref{Rem: acylindrical act. examples}). More information about the group $G$ is therefore required to describe the perfect kernel fully.

The conclusion of Corollary \ref{corintro: acylindrical} in fact holds under a weaker condition than acylindricity of the action. Given a group $G$ acting on a tree $T$, define the set 
\[\Sub_{\vert \bullet\bs T\vert \infty}(G):=\{H \in \Sub(G)\colon \left\vert H\bs T\right\vert=\infty\},\]
of subgroups that act on $T$ with infinite quotient graph.
\begin{theoremintro} [Theorems~\ref{th: two edge-stab with finite intersection} and \ref{th: two edge-stab with finite intersection - top. dyn.}] \label{thmintro trees}
Assume $G\acting T$ is a minimal and irreducible action on a tree. 
Assume that there are two edges $f_1,f_2$ of $T$ such that $\Stab_G(f_1)\cap \Stab_G(f_2)$ is finite.
Then 
 \[\overline{\Sub_{\vert \bullet\bs T\vert \infty}(G)}\subseteq \PK(G).\]
 If, in addition, the kernel of the action $G \acting T$ is trivial then the action of $G$ on $\overline{\Sub_{\vert \bullet\bs T\vert \infty}(G)}$ is highly topologically transitive.
\end{theoremintro}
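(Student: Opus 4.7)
The plan is to reduce the containment $\overline{\Sub_{\vert\bullet\bs T\vert\infty}(G)}\subseteq\PK(G)$ to a non-isolation statement, prove it via a tree-theoretic ping-pong construction, and finally bootstrap to high topological transitivity. Since $\PK(G)$ is closed, it suffices to show that every $H\in\Sub_{\vert\bullet\bs T\vert\infty}(G)$ is non-isolated within $\Sub_{\vert\bullet\bs T\vert\infty}(G)$. Fix such $H$ and a basic Chabauty neighborhood $\mathcal O(F_1,F_2)=\{K:F_1\subseteq K,\ F_2\cap K=\emptyset\}$ with $F_1\subseteq H$ and $F_2\cap H=\emptyset$. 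Choose a finite subtree $K_0\subseteq T$ containing $f_1,f_2$ and the $(F_1\cup F_2)$-orbit of some vertex; since $\vert H\bs T\vert=\infty$, infinitely many $H$-orbits of edges lie outside $H\cdot K_0$.

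Using minimality and irreducibility of $G\acting T$ together with the hypothesis that $\Stab_G(f_1)\cap\Stab_G(f_2)$ is finite, a ping-pong/Schottky argument on $T$ produces hyperbolic elements of $G$ with transverse axes and arbitrarily large translation lengths, whose axes can be placed on any prescribed edge (one translates the transverse configuration $(f_1,f_2)$ by the $G$-action). Pick edges $e_n$ in pairwise distinct $H$-orbits lying outside $H\cdot K_0$ together with hyperbolic elements $g_n\in G$ whose axes contain $e_n$, and set $H_n:=\langle H,g_n\rangle$. Bass--Serre theory presents $H_n$ as an HNN-extension (or small amalgam) of $H$ by $g_n$ with edge group contained in a conjugate of $\Stab_G(f_1)\cap\Stab_G(f_2)$, hence finite; Britton's lemma then shows that every element of $H_n\smallsetminus H$ moves $K_0$ off itself and so cannot lie in $F_2$, giving $H_n\in\mathcal O(F_1,F_2)$. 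The quotient $H_n\bs T$ differs from $H\bs T$ only in a bounded neighborhood of the $e_n$-orbit, so $\vert H_n\bs T\vert=\infty$. Distinctness of the $H$-orbits of the $e_n$ ensures the $H_n$ are pairwise distinct, proving non-isolation.

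For high topological transitivity under triviality of the kernel of $G\acting T$, fix nonempty basic open sets $V_i=\mathcal O(F_1^{(i)},F_2^{(i)})\subseteq\overline{\Sub_{\vert\bullet\bs T\vert\infty}(G)}$ for $i=1,\dots,2r$. Faithfulness together with the richness of hyperbolic elements allows us to choose $g\in G$ with translation length on $T$ so large that the $g^{-1}$-translates of the finite subtrees supporting $F_1^{(r+i)}\cup F_2^{(r+i)}$ lie in $r$ pairwise disjoint regions of $T$ far from the supports of $F_1^{(i)}\cup F_2^{(i)}$. Applying the perturbation construction of the previous paragraph independently to each $i$ with combined data $F_1^{(i)}\cup g^{-1}F_1^{(r+i)}g$ and $F_2^{(i)}\cup g^{-1}F_2^{(r+i)}g$ (compatible thanks to the geometric separation) produces subgroups $H_i\in\Sub_{\vert\bullet\bs T\vert\infty}(G)$ satisfying simultaneously $H_i\in V_i$ and $gH_ig^{-1}\in V_{r+i}$, which is exactly the required $r$-fold topological transitivity.

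The main obstacle is the Britton/normal-form control at the heart of the perturbation: one must verify that the HNN decomposition of $\langle H,g_n\rangle$ has finite edge group and that every non-trivial word in $H$ and $g_n$ moves $K_0$ far enough to avoid every element of $F_2$. This is precisely where the hypothesis on $\Stab_G(f_1)\cap\Stab_G(f_2)$ is used: transporting this transverse configuration along $T$ allows the axis of $g_n$ to sit in a region where the intersections of edge stabilizers with $H$ and its conjugates remain finite. The transitivity step then rests on a purely geometric decoupling argument, with triviality of the action kernel ensuring that the prescribed translation on $T$ is realized by a genuine element of $G$.
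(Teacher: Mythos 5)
Your overall architecture matches the paper's: reduce the inclusion to showing $\Sub_{\vert \bullet\bs T\vert \infty}(G)$ has no isolated points, perturb $H$ to $\la H,g_n\ra$ by a ping-pong over a finite edge stabiliser obtained by routing through a translate of the segment $[f_1,f_2]$, and obtain transitivity by conjugating by large powers of a hyperbolic element. However, two steps fail as written. First, you run the ping-pong directly on an arbitrary $H$ with $\vert H\bs T\vert=\infty$: for $\la H,g_n\ra$ to split as an amalgam or HNN extension of $H$ over a finite group, $H$ must preserve a proper subtree separated by $e_n$ from the axis of $g_n$, and choosing $e_n$ outside $H\cdot K_0$ does not provide this. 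A non-finitely-generated subgroup with infinite quotient can act minimally on all of $T$ --- take $G=\FF_2=\Z\ast\Z$ acting on its Bass--Serre tree and $H$ the kernel of $\FF_2\to\Z$: here $\vert H\bs T\vert=\infty$, yet $T_H=T$ and $\la H,g_n\ra$ is either $H$ or a finite-index subgroup of $G$ (which has \emph{finite} quotient graph), so your construction produces nothing usable. The paper avoids this by approximating non-finitely-generated subgroups by finitely generated ones of infinite quotient and running the construction only for finitely generated $H$, which then have a proper cocompact minimal subtree by Remark~\ref{rem: H fg minimal -> T/H compact}; your argument needs this reduction. Second, the claim that ``$H_n\bs T$ differs from $H\bs T$ only in a bounded neighbourhood of the $e_n$-orbit'' is unjustified and is the wrong mechanism: adjoining $g_n$ merges infinitely many $H$-orbits of edges. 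The correct reason $\vert H_n\bs T\vert=\infty$ is that $H_n$ still preserves a proper subtree and the distance to it is an orbit invariant taking unboundedly many values on the surviving far-away edges (conclusion~\eqref{it: V} of Lemma~\ref{main dendrological lemma}).

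On transitivity, your sketch omits the one genuinely delicate point: a \emph{single} element $g$ must work for all $r$ pairs simultaneously, which requires placing the minimal subtrees of all $2r$ chosen finitely generated subgroups on one side of a single edge $e$ whose stabiliser in each of them is trivial. This is exactly what Claims~\ref{Claim: f1-f2 in Z}, \ref{claim: Stab e-Lambda} and \ref{Claim: half-trees} of the paper provide (minimal subtrees of infinite-quotient finitely generated subgroups contain no half-tree, nor do their convex hulls), and it is not delivered by ``choosing $g$ with large translation length''; your phrase about $r$ pairwise disjoint regions does not make sense for one $g$. Moreover, ``combined data'' $F_1^{(i)}\cup g^{-1}F_1^{(r+i)}g$ does not by itself yield a subgroup of the space: one must actually form $\la \Lambda_i,\, g^{-N}\Lambda_{r+i}\,g^{N}\ra$, prove it is an amalgam over the trivial group, and verify convergence to $\Lambda_i$ and, after conjugating back, to $\Lambda_{r+i}$. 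Finally, triviality of the kernel is not what ``realises the prescribed translation'' (irreducibility already supplies hyperbolic elements via Lemma~\ref{lem: existence of hyperbolic in Z}); it is what forces the relevant edge stabilisers inside the chosen subgroups to be trivial and removes the clopen obstruction of Remark~\ref{rem: obstruction to r-top transitive}.
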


This result allows us to establish that $\PK(G) = \Sub_{[\infty]}(G)$ for a large family of groups (namely the class $\mathcal{Q}$, see Definition~\ref{def: class Q}), which includes graphs of free groups with cyclic edge groups such that at least one vertex group is non-cyclic.
It also allows us to deduce the topological transitivity properties of the action $G \acting \PK(G)$ (see Corollary~\ref{cor: groups in CCC, K(G) and HTT}).

\bigskip

The reader will probably have noticed that our results above concerning the dynamics of the action $G \acting \PK(G)$ 
exclude groups $G$ that contain a non-trivial finite normal subgroup $N$. 
They may have wondered what happens in this case. 
In fact, the action of $G$ on $\Sub(N)$ by conjugation induces a finite $G$-invariant clopen partition of $\Sub(G)$ and of $\PK(G)$ (see Proposition~\ref{prop:finite normal subgr clopen part}). 
In the context of Theorems \ref{thmintro: hyperbolic} and \ref{thmintro trees}, the subgroup $N_0$ generated by the union of all finite normal subgroups of $G$ remains finite (and normal): it is called the \defin{finite radical} of $G$. When we allow $N_0$ to be non-trivial, the action of $G$ on each piece of the resulting clopen partition is topologically transitive (see Theorem \ref{thm: QC hyp dynamique} and Theorem \ref{th: two edge-stab with finite intersection - top. dyn.}). 
However, in general the action on the pieces is not topologically $2$-transitive, see Remark~\ref{rem: obstruction to r-top transitive}.

\paragraph{An application to transitive amenable actions}

As a by-product of our investigations, we obtain information on transitive actions.
Following Glasner-Monod \cite{Glasner-Monod-2007}, let $\mathcal{A}$ be the class of countably infinite groups that admit a {\em faithful transitive and amenable} action.

This class contains all free groups and most free products (see \cite{Glasner-Monod-2007}), surface groups and hyperbolic $3$-manifolds groups (see \cite{Moon-2010}) and some instances of amalgamated free products and HNN-extension over finite or amenable groups (see \cite{{Moon-2011-free-prod-A},Moon-2011b,Fima-2014-A-tree}).

\def\RRR{R}

Our results allow us to greatly enlarge the collection of examples of groups in the class $\mathcal A$.
This is done by singling out two subgroups $H_1, H_2\in \Sub_{[\infty]}(G)$ such that $G\acting G/H_1$ is faithful while $G\acting G/H_2$ is amenable (thus isolating two separate conditions: faithfulness and co-amenability) and by 
finding (thanks to topological transitivity) subgroups whose orbit in $\Sub(G)$ accumulates on both $H_1$ and $H_2$ (see Proposition~\ref{prop: conditions for being in A}).

Our Corollary~\ref{cor:infinitely many ends} (on groups with infinitely many ends) admits as a corollary Theorem~3.4 of \cite{Glasner-Monod-2007}.
The following generalisation was suggested to us by Fran\c{c}ois Le Ma\^{i}tre, following the circulation of a previous version of this paper. 
\begin{corollaryintro}[of Proposition~\ref{prop: conditions for being in A}]
\label{cor: generalisation of 3.4 GM}
Assume that the action $G\acting \Sub_{[\infty]}(G)$ is topologically transitive. 
Assume that $G$ admits an amenable action with only infinite orbits.
Then $G\in \mathcal{A}$.
\end{corollaryintro}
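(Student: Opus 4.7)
The plan is to reduce the corollary to Proposition~\ref{prop: conditions for being in A} by producing two infinite-index subgroups $H_1, H_2 \in \Sub_{[\infty]}(G)$ that separately witness the two conditions required for membership in $\mathcal{A}$---faithfulness and co-amenability---and then invoking topological transitivity to combine them into a single subgroup inheriting both properties.

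For the faithfulness witness, I would take $H_1 := \{\id\}$. Since $G$ admits an action with an infinite orbit, $G$ itself is infinite, so $H_1 \in \Sub_{[\infty]}(G)$. The coset action $G \acting G/H_1 = G$ is the left regular action, which is transitive and faithful. For the co-amenability witness, let $G \acting Y$ denote the given amenable action with only infinite orbits. Fix any $y \in Y$ and set $H_2 := \Stab_G(y)$. Since the orbit $G \cdot y$ is infinite, $[G : H_2] = \infty$, so $H_2 \in \Sub_{[\infty]}(G)$. The restriction of the amenable action $G \acting Y$ to the invariant subset $G \cdot y \cong G/H_2$ remains amenable, so $G \acting G/H_2$ is amenable.

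To merge the two witnesses, I would invoke topological transitivity. The space $\Sub_{[\infty]}(G)$ is a $G_\delta$ subset of the compact metrizable Chabauty space $\Sub(G)$, hence Polish, and $G$ acts continuously on it by conjugation. A topologically transitive action of a countable group on a non-empty Polish space admits a dense $G_\delta$ of points with dense orbit. Pick any such $H \in \Sub_{[\infty]}(G)$; its conjugacy orbit then accumulates on every point of $\Sub_{[\infty]}(G)$, in particular on both $H_1$ and $H_2$. Proposition~\ref{prop: conditions for being in A} applied to the triple $(H_1, H_2, H)$ yields $G \in \mathcal{A}$.

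The substantive content lives in the cited proposition, specifically in transferring co-amenability from $H_2$ to $H$ along the Chabauty approximation $g_n H g_n^{-1} \to H_2$---this is where the main difficulty sits, and it presumably relies on a compactness argument for invariant means or Reiter-type approximants on the cosets. By contrast, the faithfulness transfer from $H_1 = \{\id\}$ to $H$ is elementary: if $g_n H g_n^{-1} \to \{\id\}$ in Chabauty, then for every $k \neq \id$ one has $k \notin g_n H g_n^{-1}$ for all large $n$, so the normal core $\bigcap_{g \in G} g H g^{-1}$ is trivial and $G \acting G/H$ is faithful.
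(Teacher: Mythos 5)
Your overall strategy is the paper's: take $H_1=\{\id\}$ for Condition~\eqref{it: faithful}, extract the co-amenability witness from the given amenable action for Condition~\eqref{it: co-amenability condition for A}, and use topological transitivity of $G\acting \Sub_{[\infty]}(G)$ to produce a subgroup with dense conjugation orbit for Condition~\eqref{it: cond on H0}. The faithfulness and transitivity parts are fine (and your observation that $\Sub_{[\infty]}(G)$ is a $G_\delta$, hence Polish, so that topological transitivity yields a dense orbit, is correct).

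The gap is in your treatment of $H_2$. You set $H_2=\Stab_G(y)$ for a single $y\in Y$ and assert that ``the restriction of the amenable action $G\acting Y$ to the invariant subset $G\cdot y$ remains amenable.'' This is false in general: amenability of $G\acting Y$ means there is a $G$-invariant mean on $\ell^\infty(Y)$ (equivalently, a F{\o}lner sequence of finite subsets of $Y$), and when $Y$ has infinitely many orbits that mean may assign measure $0$ to every individual orbit, in which case no single orbit need carry an invariant mean. For instance, one can choose infinite-index subgroups $H_n\leq \FF_2$ whose Schreier graphs contain larger and larger F{\o}lner-like pieces while each quotient $\FF_2/H_n$ remains non-amenable; the action on $\bigsqcup_n \FF_2/H_n$ is then amenable with only infinite orbits, yet no stabiliser is co-amenable. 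This is exactly why Condition~\eqref{it: co-amenability condition for A} of Proposition~\ref{prop: conditions for being in A} is stated in the more general form of a countable family $(H_2^{i})_{i}$ with $G\acting \bigsqcup_i G/H_2^{i}$ amenable. The repair is immediate: take the $H_2^{i}$ to be stabilisers of representatives of the (countably many relevant) orbits, so that $\bigsqcup_i G/H_2^{i}$ is $G$-equivariantly identified with the part of $Y$ supporting the F{\o}lner sets; each $H_2^{i}$ has infinite index because all orbits are infinite, and your dense-orbit subgroup $H_0$ accumulates on $\{\id\}$ and on every $H_2^{i}$, so the family version of the proposition applies. As written, your argument only proves the corollary under the strictly stronger hypothesis that $G$ admits an infinite-index co-amenable subgroup.
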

\begin{proof}
Observe that the group $G$ is infinite; thus $\{\id\}\in \Sub_{[\infty]}(G)$ satisfies Condition~\eqref{it: faithful}
of Proposition~\ref{prop: conditions for being in A}. The remaining Conditions~\eqref{it: co-amenability condition for A} and ~\eqref{it: cond on H0} follow respectively from the amenability and topological transitivity conditions on $G$.
\end{proof}

Here is a (non-exhaustive) list of groups which we can prove belong to the class $\mathcal{A}$.

\begin{itemize}
    \item Any infinitely ended group without a non-trivial finite normal subgroup and with some amenable action with infinite orbits
(Corollary~\ref{cor:infinitely many ends} and Proposition~\ref{prop: conditions for being in A}).
For instance, any HNN-extension $G$ of any infinite group $H$ over a pair of  finite subgroups with trivial intersection.

\item The fundamental group of any non-tree graph of groups where some edge group is malnormal in an incident vertex group
 (See Corollary~\ref{cor: non-tree of groups w. a w.w.malnormal incident edge group}).

    \item Virtually compact special groups (see Theorem~\ref{thm: virtually compact special in A}). In particular: 
    \\
    -- Right-angled Artin groups; \\
    -- finitely presented C'(1/6) small cancellation groups; \\
    -- Right-angled and hyperbolic Coxeter groups; \\
    -- One-relator groups with torsion; \\
    -- Limit groups; \\
    -- Random groups at density $d<1/6$;\\
    -- Hyperbolic-by-cyclic groups which are themselves hyperbolic.
     
    \item Groups in the class $\mathcal{Q}'$ (Definition~\ref{def: class Q'})
     that admit an infinite index co-amenable subgroup 
(for instance if the defining graph of groups is not a tree)
 (Corollary~\ref{cor: groups in CCC, K(G) and HTT} and Proposition~\ref{prop: conditions for being in A}). 
        In particular, finite graphs of free groups with cyclic edge groups such that at least one vertex group is non-cyclic (Corollary~\ref{cor: graphs of free groups in A}).
\end{itemize}

After an earlier version of our paper was circulated, we learned that some of our results 
concerning the dynamics on $\Sub(G)$ were being studied independently by Hull, Minasyan and Osin (see \cite{HMO}). They have shown for instance that, if $G$ has trivial finite radical and admits a partially WPD action on a hyperbolic space $S$, then the action of $G$ on the closure of the set of its infinite index convex cocompact subgroups (with respect to the action $G \acting S$) is \textit{topologically $\mu$-mixing} for appropriate probability measures $\mu \in \mathrm{Prob}(G)$, which implies that it is highly topologically transitive. This recovers our Theorem~\ref{thm: QC hyp dynamique} in the case where $G$ has trivial finite radical. Note that, while the actions on trees that we consider in Theorem~\ref{th: two edge-stab with finite intersection - top. dyn.} are partially WPD, the closure of the set of convex cocompact subgroups of $G$ is, in general, a strict subspace of the space $\overline{\Sub_{\vert \bullet\bs T\vert \infty}(G)}$ 
on which we obtain high topological transitivity.

\paragraph{Acknowledgments}
We are extremely grateful to Mark Hagen for his helpful comments, for many enlightening discussions on hyperbolic groups and for asking us about the space of subgroups of hyperbolic 3-manifold groups.
The first author is particularly grateful for his ongoing support.
\\
We would like to thank Ian Biringer for pointing out an example of a hyperbolic 3-manifold group with Cantor-Bendixson rank 3 and for a discussion which motivated us to include and improve upon the results of Section \ref{section: subgroups QI to Z}.
\\
We thank Alessandro Carderi, François Le Maître and Yves Stalder for many enlightening discussions and for drawing our attention to the example of the group $G$ in Example~\ref{ex: ex. with isolated id}.
\\
We also thank François Dahmani for several discussions on hyperbolic groups.
\\
We are grateful to Eli Glasner for asking us whether our previous results about topologically transitive actions could be upgraded to weakly mixing actions and for giving us a few basics on the subject. 
This led us to consider highly topologically transitive actions. 
This question was asked during the wonderful conference ``Measured Group Theory, Stochastic Processes on Groups and Borel Combinatorics'' which took place at the CIRM in May 2023.
\\
We are grateful to Matthias Uschold for his comments on a previous version of this paper.
\\
P.~A. is supported by a University of Bristol PhD scholarship. She was supported by the LABEX MILYON (ANR-10-LABX-0070) of Université de Lyon, within the program “Investissements d’Avenir” (ANR-11-IDEX-0007) operated by the French National Research Agency
(ANR) during the academic year 2021-2022, when we initiated this project.
\\
D. G. is supported by the Centre National de la Recherche Scientifique (C.N.R.S.)

\section{Notation and preliminaries} \label{Preliminaries}

Let $G$ be a countable group. The space of subgroups $\Sub(G)$ of $G$ is a subspace of the set $\{0,1\}^G$, where each subset of $G$ is identified with an element of $\{0,1\}^G$ via the characteristic map. The Chabauty topology on $\Sub(G)$ is the subspace topology induced by the product topology on $\{0,1\}^G$. It follows that $\Sub(G)$ is compact, metrisable, separable and totally disconnected. In particular, it is a Polish space, thus satisfies the Baire category theorem.

The collection of subsets \[ \mathcal{V}_G(\II,\OO) := \{H \leq G \colon \II \subseteq H, H \cap \OO = \emptyset\}\subseteq \Sub(G)\]  for all finite subsets $\II, \OO \subseteq G$ forms a basis of clopen sets for the Chabauty topology on $\Sub(G)$.
When there is no risk of confusion we will write $\mathcal{V}(\II,\OO)$ instead of $\mathcal{V}_G(\II, \OO)$. 
The action of $G$ by conjugation ($g\cdot H= gHg^{-1}$)  on $\Sub(G)$ is continuous.

\paragraph{The Cantor-Bendixson decomposition.}
Let $X$ be a Polish space. The Cantor-Bendixson theorem \cite{Cantor1884,Bendixson1900} (see \cite{Kechris95} for an accessible account) states that there is a unique decomposition $X = \PK(X) \sqcup C$ such that $\PK(X)$ is a perfect subspace of $X$ (i.e. it is closed and has no isolated points when equipped with the subspace topology) and $C$ is countable. The subspace $\PK(X)$ is called the \defin{perfect kernel} of $X$. The derived set $X'$ of $X$ is the complement in $X$ of the set of isolated points of $X$. 
 The \defin{Cantor-Bendixson derivatives} of $X$ are defined by repeatedly applying the derived set operation using transfinite induction: Let $X^{(0)} \coloneqq X$.
Given an ordinal $\alpha$ such that the Cantor-Bendixson derivatives $X^{(\beta)}$ are defined for every $\beta<\alpha$,
if $\alpha$ is a successor $\alpha=\delta+1$ then 
let $X^{(\alpha)} := \left(X^{(\delta)}\right)'$ and if $\alpha$ is a limit ordinal then let $X^{(\alpha)} := \cap_{\beta < \alpha} X^{(\beta)}$. 

The Cantor-Bendixson theorem implies that there is an at most countable ordinal $\alpha$ such that $X^{(\alpha)} = X^{(\alpha+1)}$. In this case, $X^{(\alpha)}$ is the perfect kernel $\PK(X)$. The minimal ordinal $\alpha$ such that $X^{(\alpha)}= \PK(X)$ is called the \defin{Cantor-Bendixson rank} of $X$ and is denoted by $\rkCB(X)$.
A point $x\in X$ belongs to 
$\PK(X)$ if and only if $x$ is a condensation point, i.e. every open neighbourhood of $x$ is uncountable.

When $G$ is a countable group, we define 
\begin{eqnarray*}
\PK(G) &:= &\PK(\Sub(G)) \text{ the \defin{perfect kernel of $G$}, and}\\
\rkCB(G) &:=& \rkCB(\Sub(G)) \text{ the  \defin{Cantor-Bendixson rank of $G$}.}
\end{eqnarray*}

Let $H\leq G$ be a subgroup.
 The \defin{Cantor-Bendixson erasing rank} $\rkCBe(H;G)$ of $H$ in $G$ 
 is, when $H\notin\PK(G)$, the infimum of the ordinals $\alpha$ such that the derivative $\Sub(G)^{(\alpha)}$ does not contain $H$; and we write $\rkCBe(H;G)=\infty$
when  $H\in\PK(G)$. We make the convention that $\infty$ is larger than all the ordinals. Observe that $\rkCBe(H;G)$ is not a limit ordinal:
\[\rkCBe(H;G)=\begin{cases}
 \infty &\text{ if } H\in \PK(G)\\
 \alpha+1 &\text{ if } H\in\Sub(G)^{(\alpha)} \text{ but } 
 H\not\in\Sub(G)^{(\alpha+1)}.
 \end{cases}\]
Moreover, $\rkCB(G) = 0$ if and only if $\Sub(G)$ is perfect and $\rkCB(G) = \sup\{\rkCBe(H;G) \colon H \in \Sub(G) \smallsetminus \PK(G)\}$ otherwise.

\begin{example}\label{example rkCBe H in Z^d}
For instance, when $G=\Z^{d}$, then $\rkCBe(H;\Z^d)=d-\rk(H)+1$ where $\rk(H)$ is the \defin{minimal number of generators} of the group $H\leq \Z^d$.
In particular, $\rkCBe(\{\id\};\Z^d)=d+1$, $\rkCB(\Z^d)=d+1$, and $\PK(\Z^d) = \emptyset$.
\end{example}

Note that there is another notion of {\em (intrinsic) Cantor-Bendixson rank} $\mathrm{cb}(G)$ 
of a countable group $G$, defined in \cite{Cornulier-2011-CB-rk-metab} as follows. Let $\mathcal{N}(G)$ denote the space of normal subgroups of $G$. Then $\mathrm{cb}(G)$ is the supremum of the ordinals $\alpha$ such that $\{\id\}\in \mathcal{N}(G)^{(\alpha)}$ when such an $\alpha$ exists and $\mathfrak{C}$ otherwise, i.e. when $\{\id\}\in \PK(\mathcal{N}(G))$.
This last notion coincides with the CB-erasing rank of $G$ in the space of marked groups when $G$ is finitely presented.

\paragraph{Topological transitivity}
A continuous action $G\acting X$ of a countable discrete group $G$ on a compact metrizable space is called \defin{topologically transitive} if it satisfies one of the following equivalent conditions:
\begin{enumerate}
\item For every pair $(V_1,V_2)$ of non-empty open subsets of $X$, there is an element $g\in G$ such that $g\cdot V_1\cap V_2\not=\emptyset$;
\item There is a dense orbit, i.e. a point $x\in X$ whose orbit $G\cdot x$ is dense in $X$;
\item There is a G$_\delta$ subset of $X$ consisting of dense orbits.
\end{enumerate}

A continuous action $G\acting X$ on a compact metrizable space of a countable group $G$ is called \defin{topologically $r$-transitive} for a positive integer $r$ if the diagonal action $G\acting X^r$ on the $r$-fold space is topologically transitive; i.e. if for every $2r$-tuple $(V_1,V_2,\cdots, V_{2r})$ of non-empty open sets, there is an element $g\in G$ such that $g.V_{i}\cap V_{r+i}\not=\emptyset$ for all $i=1, 2, \cdots, r$.
A topologically $2$-transitive action is also called \defin{topologically weakly mixing}. If an action is topologically $r$-transitive for all $r \geq 1$ we say that it is \defin{highly topologically transitive}.

Contrarily to the analogous concept in ergodic theory, topological $2$-transitivity does not imply high topological transitivity when the group $G$ is not abelian.

\section{\texorpdfstring{Generalities on the topological structure of $\Sub(G)$}{Generalities on the topological structure of Sub G}}
\label{sect:Generalities on perfect kernels of groups}

In this section we highlight the effects of various properties of individual subgroups, and of their quotients, on the Cantor-Bendixson decomposition of the space of subgroups and the dynamics of the action by conjugation.

\begin{remark}\label{rem:K(subGr) in K(Gr)}
If $H\leq G$ is a subgroup of the countable group $G$, then the natural continuous embedding $\Sub(H)\subseteq \Sub(G)$ induces a continuous embedding
$\PK(H)\subseteq \PK(G)$.
In fact, the topology on $\Sub(H)$ is induced from that on $\Sub(G)$.
Condensation points of $\Sub(H)$ are thus condensation points of $\Sub(G)$.
\end{remark}

\subsection{Normal subgroups and invariant subsets}

A normal subgroup $N\trianglelefteq G$ is clearly a fixed point of the $G$-action on $\Sub(G)$ by conjugation. In the case where $N$ is finite, this property induces an invariant partition of the entire space.

\begin{proposition}[Finite normal subgroup and its clopen partition]
\label{prop:finite normal subgr clopen part}
If $N\trianglelefteq G$ is a finite normal subgroup $N$.
The set $\Conj(N)^{G}$ of $G$-conjugacy classes of subgroups of $N$ induces a finite $G$-invariant clopen partition: 
\[\Sub(G)=\bigsqcup_{C\in \Conj(N)^G} \{H\in \Sub(G)\colon H\cap N\in C\}.\]
\end{proposition}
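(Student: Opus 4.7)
The plan is to verify directly the four properties implicit in the statement: that the collection is a partition, that it is finite, that each piece is $G$-invariant, and that each piece is clopen. Each follows from a short computation using only the finiteness and normality of $N$, so I do not expect any serious obstacle.

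First I would observe that the collection is a partition of $\Sub(G)$. For any $H \leq G$, the intersection $H \cap N$ is a subgroup of $N$ and therefore lies in exactly one $G$-conjugacy class $C \in \Conj(N)^G$. Hence $H$ belongs to exactly one of the pieces $\{H \in \Sub(G) \colon H \cap N \in C\}$. Finiteness is immediate: since $N$ is finite, it has only finitely many subgroups, hence only finitely many $G$-conjugacy classes of subgroups.

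Next I would check $G$-invariance. For any $g \in G$, normality of $N$ gives $gNg^{-1}=N$, and therefore
\[ (gHg^{-1}) \cap N \;=\; (gHg^{-1}) \cap (gNg^{-1}) \;=\; g(H \cap N)g^{-1}. \]
Thus conjugation by $g$ preserves the $G$-conjugacy class of $H \cap N$, so each piece of the partition is stable under the $G$-action by conjugation.

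Finally, for clopenness, I would note that
\[ \{H \in \Sub(G) \colon H \cap N \in C\} \;=\; \bigsqcup_{K \in C} \{H \in \Sub(G) \colon H \cap N = K\}, \]
a \emph{finite} union, as $C$ is a finite conjugacy class inside the finite set $\Sub(N)$. Each piece in this union can be written as the basic Chabauty open set
\[ \{H \in \Sub(G) \colon H \cap N = K\} \;=\; \mathcal{V}_G\!\bigl(K,\, N \smallsetminus K\bigr), \]
which is clopen because both $K$ and $N \smallsetminus K$ are finite subsets of $G$. A finite union of clopen sets being clopen, the proof is complete.
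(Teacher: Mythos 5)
Your proof is correct and follows essentially the same route as the paper: the same identity $(gHg^{-1})\cap N = g(H\cap N)g^{-1}$ for $G$-invariance, and the same decomposition of each piece as a finite union of basic clopen sets $\mathcal{V}_G(K, N\smallsetminus K)$ for $K\in C$. No gaps.
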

\begin{proof}
By normality and finiteness, $G$ acts by conjugation on $\Sub(N)$ with finite orbits. These orbits form the set $\Conj(N)^{G}=\{\{\gamma M \gamma^{-1}\colon \gamma\in G\}\colon M\in \Sub(N)\}$.
Since $\gamma H \gamma^{-1}\cap N=\gamma (H\cap N) \gamma^{-1}$, the $G$-orbit of $H\cap N\in \Sub(N)$ is an invariant of the $G$-orbit of $H\in \Sub(G)$.
In addition, for $C\in \Conj(N)^{G}$, the condition $H\cap N\in C$ is a clopen condition in $\Sub(G)$: 
it defines a union of basic clopen sets: 
$\{H\in \Sub(G)\colon H\cap N\in C\}=\cup_{M\in C}\mathcal{V}(M,N\smallsetminus M)$.
\end{proof}

If $G'$ is a subgroup of the countable group $G$, then the map 
\begin{equation}\label{eq:def of intersect map P}
P:  \left(\begin{array}{ccc}\Sub(G) & \longrightarrow &\Sub(G')\\H&\longmapsto &H \cap G'\end{array}\right)
\end{equation}
 is continuous: if $\mathcal{V}_{G'}(\II',\OO')$ is a basic open set in $\Sub(G')$ (with $\II',\OO'$ finite subsets of $G'$), then $P^{-1}(\mathcal{V}_{G'}(\II',\OO'))=\mathcal{V}_G(\II',\OO')$.

\begin{proposition}\label{prop:finite index subgr}
Let $G' \leq G$ be a finite index subgroup and $P: \Sub(G) \rightarrow \Sub(G')$ be the continuous map from~\eqref{eq:def of intersect map P}. Then each fiber of $P$ is countable.
It follows that the natural embedding $\PK(G')\subseteq \PK(G)\cap \Sub(G')$ is an equality: $\PK(G')=\PK(G)\cap \Sub(G')$.
\end{proposition}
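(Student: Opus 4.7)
The plan is to establish the two assertions in sequence: first prove that the fibers of $P$ are countable, then combine this with the continuity of $P$ and the clopen basis description to conclude $\PK(G') = \PK(G) \cap \Sub(G')$.

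For the fiber claim, fix $K \in \Sub(G')$ and consider $H \in P^{-1}(K)$, i.e. $H \cap G' = K$. By the second isomorphism theorem, $[H:K] = [H : H \cap G'] = [HG' : G'] \leq [G:G'] < \infty$. Hence any such $H$ is obtained from $K$ by adjoining at most $[G:G']$ elements of the countable group $G$. There are only countably many finite tuples of elements of $G$, so only countably many candidate subgroups $H$, proving that $P^{-1}(K)$ is countable.

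For the equality of perfect kernels, the inclusion $\PK(G') \subseteq \PK(G) \cap \Sub(G')$ is Remark~\ref{rem:K(subGr) in K(Gr)}. For the reverse inclusion, let $H \in \PK(G) \cap \Sub(G')$; in particular $H \leq G'$, so $P(H) = H$. To show $H \in \PK(G')$ it suffices to show $H$ is a condensation point of $\Sub(G')$. Let $\VV_{G'}(\II', \OO') \ni H$ be a basic clopen neighbourhood, where $\II', \OO' \subseteq G'$ are finite. The key observation is that, because both $\II'$ and $\OO'$ lie in $G'$, one has the identity
\[ P^{-1}\bigl(\VV_{G'}(\II',\OO')\bigr) \;=\; \VV_G(\II',\OO'). \]
Indeed, if $H' \in \VV_G(\II', \OO')$ then $\II' \subseteq H' \cap G'$ and $(H' \cap G') \cap \OO' = \emptyset$, so $P(H') \in \VV_{G'}(\II', \OO')$; conversely, if $P(H') \in \VV_{G'}(\II', \OO')$ then $\II' \subseteq H' \cap G' \subseteq H'$, and $\OO' \cap H' = \OO' \cap (H' \cap G') = \emptyset$, so $H' \in \VV_G(\II', \OO')$.

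Because $H \in \PK(G)$, the neighbourhood $\VV_G(\II', \OO')$ is uncountable. Since it equals $P^{-1}(\VV_{G'}(\II', \OO'))$ and each fiber of $P$ is countable, the target $\VV_{G'}(\II', \OO')$ must itself be uncountable. As this holds for every basic neighbourhood of $H$ in $\Sub(G')$, $H$ is a condensation point of $\Sub(G')$ and hence lies in $\PK(G')$. I do not anticipate a serious obstacle: the only step requiring care is verifying the fiber identity $P^{-1}(\VV_{G'}(\II', \OO')) = \VV_G(\II', \OO')$, which works precisely because $\II', \OO' \subseteq G'$ (the natural basis for $\Sub(G')$ uses data already living in $G'$).
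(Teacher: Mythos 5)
Your proof is correct and follows essentially the same route as the paper: the fiber-countability argument is the same coset-counting (the paper writes $H$ explicitly as a finite union of cosets $h_iG_0'$, you bound $[H:H\cap G']\leq[G:G']$ instead), and your direct condensation-point argument for $\PK(G)\cap\Sub(G')\subseteq\PK(G')$ is just the contrapositive of the paper's (which pulls back a countable neighbourhood), both resting on the identity $P^{-1}(\mathcal{V}_{G'}(\II',\OO'))=\mathcal{V}_G(\II',\OO')$ that the paper records when proving continuity of $P$. The only cosmetic slip is attributing $[H:H\cap G']\leq[G:G']$ to the second isomorphism theorem ($HG'$ need not be a subgroup when $G'$ is not normal), but the inequality is a standard coset-counting fact and the argument stands.
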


\begin{proof}[Proof of Proposition~\ref{prop:finite index subgr}]
Fix a subgroup $G_0' \leq G'$. Let $k := [G:G']$ and let $g_1, \dots, g_k \in G$ be such that $G = \sqcup_{i=1}^k g_i G'$. If $H \in P^{-1}(G_0')$ let $I \subseteq \{1, \dots k\}$ be such that $H \cap g_i G' \neq \emptyset$ if and only if $i \in I$. Then, for each $i \in I$, there is $h_i \in g_i G'$ such that $H \cap g_i G' = h_i G_0'$. Thus $H = \sqcup_{i \in I} h_i G_0'$. It follows that
\[ P^{-1}(G_0') \subseteq \bigcup_{I \subseteq \{1, \dots, k\}} \Big\{ \bigsqcup_{i \in I} h_i G_0' : h_i \in g_i G'\Big\}.\]
Thus $P^{-1}(G_0')$ is contained in a finite union of countable sets and is therefore countable.

To see that $\PK(G) \cap \Sub({G'})\subseteq \PK({G'})$, 
let $H' \in \Sub(G') \smallsetminus \PK(G')$.
Pick a countable neighbourhood $\mathcal{V}'$ of $H'$ in $\Sub(G')$. Its pre-image $P^{-1}(\mathcal{V}')$ is a countable neighbourhood of $H'$ in $\Sub(G)$ so $H' \notin \PK(G)$.
\end{proof}

\begin{proposition} \label{prop: finite normal sub}
Let $N\trianglelefteq G$ be a finite normal subgroup of the countable group $G$. Assume $G$ contains a torsion-free subgroup $H$ such that $\{\id\}\in \PK(H)$. Then $\Sub(N)\subseteq \PK(G)$.
\end{proposition}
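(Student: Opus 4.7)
The plan is, for each $M \le N$, to transfer the condensation-point property of $\{\id\}\in \Sub(H)$ to $M\in \Sub(G)$ via an explicit continuous injection $\phi_M$ sending $\{\id\}\mapsto M$. The main obstacle I anticipate is algebraic rather than topological: for a generic subgroup $K\le H$ the product $KM$ need not be a subgroup, since $H$ may fail to normalise $M$. This is remedied by passing to the centraliser of $N$.

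Since $N$ is finite, the conjugation homomorphism $G\to \Aut(N)$ has image in a finite group, so $C_G(N)$ has finite index in $G$; hence $H_0:=H\cap C_G(N)=C_H(N)$ has finite index in $H$. By Proposition~\ref{prop:finite index subgr}, $\PK(H_0)=\PK(H)\cap \Sub(H_0)$, and in particular $\{\id\}\in \PK(H_0)$. Moreover, since $H$ is torsion-free and $N$ is finite, $H\cap N=\{\id\}$.

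Now fix $M\le N$. Because $H_0$ centralises $N$, for every $K\in \Sub(H_0)$ the product $KM$ is a subgroup of $G$ (in fact isomorphic to $K\times M$, as $K\cap M\subseteq H_0\cap N=\{\id\}$). Define $\phi_M\colon \Sub(H_0)\to \Sub(G)$ by $\phi_M(K):=KM$. The identity $\phi_M(K)\cap H_0=K$, which follows from $H\cap N=\{\id\}$ applied to any $g=km\in H_0$ with $k\in K$ and $m\in M$, yields injectivity. Continuity is a routine check on basic clopens: for each fixed $f\in G$, the condition ``$f\in KM$'' is the finite disjunction over $m\in M$ of ``$fm^{-1}\in K$'', each of which cuts out a clopen subset of $\Sub(H_0)$ (vacuous when $fm^{-1}\notin H_0$).

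Since $\phi_M(\{\id\})=M$, any open neighbourhood $V$ of $M$ in $\Sub(G)$ pulls back to an open neighbourhood of $\{\id\}$ in $\Sub(H_0)$; this preimage is uncountable by the condensation property, and its image inside $V$ is uncountable by the injectivity of $\phi_M$. Hence $M$ is a condensation point of $\Sub(G)$, i.e.\ $M\in \PK(G)$. As $M\le N$ was arbitrary, $\Sub(N)\subseteq \PK(G)$.
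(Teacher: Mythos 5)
Your proof is correct and follows essentially the same route as the paper: both pass to the finite-index subgroup of $H$ centralising $N$, use torsion-freeness to get trivial intersection with $N$, and exhibit uncountably many subgroups of the form $L\times M$ accumulating at $M$. The only difference is presentational — you package the transfer of the condensation property as an explicit continuous injection $K\mapsto KM$, whereas the paper manipulates basic neighbourhoods directly inside $\Sub(H'\times N)$ and then invokes the embedding $\Sub(K)\subseteq\Sub(G)$.
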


\begin{proof}
The subgroup $H$ acting by conjugation on the finite subgroup $N$
admits a finite index subgroup $H'$ that acts trivially; i.e. $H'$ commutes with $N$, and since it is torsion-free, it intersects $N$ trivially, thus $K=H'\times N$ injects naturally as a subgroup of $G$.
If $M=\{\id\}\times M$ is a subgroup of $N=\{\id\}\times N$ and $\mathcal{V}=\mathcal{V}_{K}(M,\OO)$ is any basic neighbourhood of $M$ in $\Sub(K)$, with $\OO=\{(x_1,n_1), (x_2,n_2),\cdots, (x_j,n_j)\}$ where $x_i\in H'$ and $n_i\in N$, then $\mathcal{V}':=\mathcal{V}_{H'}(\emptyset,\{x_1, x_2, \cdots, x_j\})$ is a basic neighbourhood of $\{\id\}$ in $\Sub(H')$. 
By Proposition~\ref{prop:finite index subgr}, $\{\id\}\in \PK(H')$.
Thus $\mathcal{V}'$ is uncountable and the uncountable set $\{H'\times M\colon H'\in \mathcal{V}'\}$ is contained in $\mathcal{V}$. It follows that $M\in \PK(K)$, and so $M\in \PK(G)$ by Remark~\ref{rem:K(subGr) in K(Gr)}.
\end{proof}

Let $H\leq G$ be a subgroup.
We denote by \[\Sub_{H\leq}(G)\!:=\{L\leq G\colon H\leq L\}\] the closed subspace 
 of $\Sub(G)$ consisting of subgroups that contain $H$.
 In the case where $H$ is generated by a finite set $S$, then $\Sub_{H\leq}(G)=\mathcal{V}(S,\emptyset)$ is a clopen neighbourhood of $H$ in $\Sub(G)$. If $H$ is normal, then the action of $G$ on $\Sub(G)$ by conjugation induces natural actions on the spaces of subgroups $G \acting \Sub_{H \leq}(G)$ and $G \acting \Sub(G / H)$.
\begin{proposition}
\label{prop: f.g. normal subgr and Sub(Q)}
Let $1\to N\to G\overset{\phi}{\to} Q\to 1$ be a short exact sequence. 
 \begin{enumerate}
     \item 
 Then the bijection \[q:\left(\begin{array}{ccc}\Sub_{N\leq}(G)&\to &\Sub(Q) \\H\ \ \ &\mapsto& H/N\end{array}\right)\]  is a $G$-equivariant homeomorphism with respect to the subspace topology on $\Sub_{N\leq}(G)$.
 In particular, 
 \begin{equation} \label{eq: lifting the perfect kernel}
 q^{-1}(\PK(Q))\subseteq \PK(G).
 \end{equation}
\item Assume that $N$ is finitely generated. Then $\Sub_{N\leq}(G)$ is a clopen neighbourhood of $N$ in $\Sub(G)$.
	It follows that the restriction
\begin{equation} \label{eq: perfect kernel homeo}
q_{\restriction}: \PK(G)\cap \Sub_{N\leq}(G) \longrightarrow \PK(Q) 
\end{equation}
is a $G$-equivariant homeomorphism. 

If $H\in \Sub_{N\leq}(G)$, then the Cantor-Bendixson erasing ranks of $H$ in $G$ and of $q(H)=H/N$ in $Q$ are the same:

	\begin{equation}\label{eq: equality of rk CB X}
	\rkCBe(H;G)=\rkCBe(H/N;Q).
	\end{equation}

 In particular, if $\Sub(Q)$ is countable, and if  $H\in \Sub_{N\leq}(G)$, then
\begin{equation}\label{eq: rk CB X vs rk CB}
	\rkCBe(H;G)\leq \rkCB(Q).
	\end{equation}

 \end{enumerate}
 
\end{proposition}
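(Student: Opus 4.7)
The two parts separate cleanly. For part (1), the bijection $q$ is the correspondence theorem for subgroups containing $N$, and $G$-equivariance is immediate from normality: for $g\in G$ and $H\in\Sub_{N\leq}(G)$, one has $gHg^{-1}\supseteq gNg^{-1}=N$ and $q(gHg^{-1})=\phi(g)\cdot(H/N)\cdot\phi(g)^{-1}$. The key topological observation, which drives both directions of continuity, is that when $H\supseteq N$ a coset $gN$ is either entirely contained in $H$ or entirely disjoint from it; thus for any $g\in G$, the condition $g\in H$ is equivalent to $\phi(g)\in H/N$. Using this, I would check that $q^{-1}(\mathcal{V}_Q(\II',\OO'))=\mathcal{V}_G(\tilde\II,\tilde\OO)\cap\Sub_{N\leq}(G)$ for any lifts $\tilde\II,\tilde\OO\subseteq G$ of the finite sets $\II',\OO'\subseteq Q$, and symmetrically that $q(\mathcal{V}_G(\II,\OO)\cap\Sub_{N\leq}(G))=\mathcal{V}_Q(\phi(\II),\phi(\OO))$. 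Since $\Sub_{N\leq}(G)$ is closed (hence compact) in $\Sub(G)$ and $\Sub(Q)$ is Hausdorff, the continuous bijection $q$ is automatically a homeomorphism. The inclusion \eqref{eq: lifting the perfect kernel} then follows from the condensation-point characterisation of $\PK$: if $H/N\in\PK(Q)$, every neighbourhood of $H$ in $\Sub_{N\leq}(G)$ is uncountable via $q$, and every neighbourhood of $H$ in $\Sub(G)$ contains such a relative neighbourhood, hence is itself uncountable.

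For part (2), if $S$ is a finite generating set of $N$ then $\Sub_{N\leq}(G)=\mathcal{V}_G(S,\emptyset)$ is a basic clopen set. The homeomorphism \eqref{eq: perfect kernel homeo} and the equality of erasing ranks \eqref{eq: equality of rk CB X} then reduce to a general topological fact, which I would prove by transfinite induction: if $C$ is a clopen subspace of a Polish space $X$, then $X^{(\alpha)}\cap C=C^{(\alpha)}$ for every ordinal $\alpha$. The base and limit cases are immediate; at a successor step, openness of $C$ ensures that a point of $C$ is a limit point of $X^{(\alpha)}$ in $X$ if and only if it is a limit point of $X^{(\alpha)}\cap C$ in $C$. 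Applying this to $C=\Sub_{N\leq}(G)$ inside $\Sub(G)$ and transporting via $q$, one concludes that $H\in\Sub(G)^{(\alpha)}$ if and only if $H/N\in\Sub(Q)^{(\alpha)}$ whenever $H\supseteq N$, which yields \eqref{eq: equality of rk CB X}; restricting to the perfect kernel gives \eqref{eq: perfect kernel homeo}. The inequality \eqref{eq: rk CB X vs rk CB} is then formal: if $\Sub(Q)$ is countable then $\PK(Q)=\emptyset$, so every $H/N$ lies outside $\PK(Q)$ and $\rkCBe(H/N;Q)\leq\rkCB(Q)$ by the definition of the Cantor-Bendixson rank as a supremum of erasing ranks.

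The only substantive obstacle is the continuity check in part (1), which hinges entirely on the observation that membership in a subgroup $H\supseteq N$ depends only on the $N$-coset of the element. Once this is in hand, the homeomorphism is furnished by the compact-Hausdorff formalism, and everything in part (2) is a formal consequence of the clopen behaviour of Cantor-Bendixson derivatives together with the transport via $q$.
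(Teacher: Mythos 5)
Your proposal is correct and follows essentially the same route as the paper: the same computation of $q^{-1}$ on basic clopen sets (using that membership in a subgroup containing $N$ depends only on the $N$-coset), the same condensation-point argument for \eqref{eq: lifting the perfect kernel}, and the same transfinite induction showing that Cantor--Bendixson derivatives restrict to open subspaces and are preserved by homeomorphisms. The only cosmetic difference is that you invoke the compact-to-Hausdorff criterion to upgrade the continuous bijection to a homeomorphism, where the paper implicitly relies on the explicit description of $q^{-1}$ on a basis; both are fine.
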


\begin{proof}
Let us show that the bijection $q:\Sub_{N\leq}(G)\to \Sub(Q)$ is continuous.
If $\mathcal{V}_{Q}(\II,\OO)$ is a basic clopen set in $\Sub(Q)$, let  $\II'$ and $\OO'$ be any (bijective!) $\phi$-sections in $G$ of the finite sets $\II$ and $\OO$.
Then $q^{-1}\left(\mathcal{V}_{Q}(\II,\OO)\right)=\mathcal{V}_{G}(\II',\OO')\cap \Sub_{N\leq}(G)$. To see that \eqref{eq: lifting the perfect kernel} holds, recall that if $H \in q^{-1}(\PK(Q))$ then every open neighbourhood of $q(H)$ in $\Sub(Q)$ is uncountable. Since $q$ is a homeomorphism, it follows that every open neighbourhood of $H$ in $\Sub_{N \leq}(G)$ is uncountable and thus so is every open neighbourhood of $H$ in $\Sub(G)$. Hence $H \in \PK(G)$.

If $N$ is generated by the finite set $S$, then $\Sub_{N\leq}(G)$ coincides with the basic clopen set $\mathcal{V}_{G}(S,\emptyset)$.

If $Y$ is an open subset of a Polish space $X$ then a point $y \in Y$ is isolated with respect to the subspace topology on $Y$ if and only if $y$ is isolated in $X$. It follows by transfinite induction that, for every $y\in Y$ and every ordinal $\alpha$, we have $y\in Y^{(\alpha)}$ if and only if $y\in X^{(\alpha)}$. Thus $Y^{(\alpha)} = Y \cap X^{(\alpha)}$. Again by transfinite induction, one can show that, if $f: X \rightarrow Y$ is a homeomorphism between Polish spaces $X,Y$, then $f(X^{(\alpha)}) = Y^{(\alpha)}$ for all $\alpha$. In particular, $f(\PK(X)) = \PK(Y)$ and the Cantor-Bendixson erasing rank is invariant under homeomorphism.

Thus $q(\PK(G) \cap \Sub_{N \leq}(G)) = q(\PK(\Sub_{N \leq}(G))) = \PK(Q)$ and the restricted homeomorphism~\eqref{eq: perfect kernel homeo} is well-defined. The equality  \eqref{eq: equality of rk CB X} also follows since, for $H \in \Sub_{N \leq G}(G)$, 
\[ \rkCBe(H;G) = \infty \Leftrightarrow H \in \PK(G) \cap \Sub_{N \leq G}(G) \Leftrightarrow q(H) \in \PK(Q) \Leftrightarrow \rkCBe(H/N;Q) = \infty \]
and, if $H \notin \PK(G)$ then $H/N \notin \PK(Q)$ so

\begin{align*}
\rkCBe(H;G) &= \inf\left\{\alpha \geq 1 : H \notin \Sub(G)^{(\alpha)}\right\}\\
&= \inf\left\{ \alpha \geq 1: H \notin \Sub_{N \leq}(G)^{(\alpha)} \right\}\\
&= \inf\left\{ \alpha \geq 1 : q(H) \notin \Sub(Q)^{(\alpha)} \right\} \\
&= \rkCBe(H/N; Q).
\end{align*}
If $\Sub(Q)$ is countable, then $\PK(Q)=\emptyset$ and then $\rkCB(Q)=\sup\left\{\rkCBe(L;Q)\colon L\in \Sub(Q)\right\}$.
This proves the inequality~\eqref{eq: rk CB X vs rk CB}.
\end{proof}

Propositions \ref{prop:finite index subgr} and \ref{prop: f.g. normal subgr and Sub(Q)} imply that virtual algebraic fibering allows one to find finitely generated, infinite index subgroups which do not belong to the perfect kernel. More generally:

\begin{corollary} \label{cor: virtual fibre}
Let $G$ be a countable group and $H \leq G$ be a finitely generated subgroup. Suppose that there exists a finite index subgroup $G' \leq G$ and a finitely generated subgroup $H' \leq H \cap G'$ such that there is a short exact sequence 
\[ \begin{tikzcd}
1 \arrow[r] &H' \arrow[r] &G' \arrow{r} &Q \arrow[r] & 1
\end{tikzcd} \] 
where $\Sub(Q)$ is countable. Then $H \notin \PK(G)$.
\end{corollary}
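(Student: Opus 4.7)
The plan is to reduce the statement to showing that $H \cap G'$ fails to be a condensation point in $\Sub(G')$, and then exhibit an explicit countable clopen neighborhood of $H \cap G'$ coming from the fibering hypothesis.

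First I would handle the passage from $G$ to the finite-index subgroup $G'$. Note that $H \cap G' \leq G'$ and consider the continuous restriction map $P: \Sub(G) \to \Sub(G')$, $K \mapsto K \cap G'$, from \eqref{eq:def of intersect map P}. Proposition~\ref{prop:finite index subgr} tells us that every fiber of $P$ is countable. Thus, if I can produce a countable open neighborhood $\mathcal{V}'$ of $H \cap G'$ in $\Sub(G')$, then $P^{-1}(\mathcal{V}')$ is an open neighborhood of $H$ in $\Sub(G)$ which is a countable union of countable fibers, hence countable. This forces $H \notin \PK(G)$. So it suffices to show that $H \cap G' \notin \PK(G')$.

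Next I would invoke the short exact sequence $1 \to H' \to G' \to Q \to 1$. Since $H'$ is finitely generated, part~(2) of Proposition~\ref{prop: f.g. normal subgr and Sub(Q)} ensures that $\Sub_{H' \leq}(G')$ is a clopen neighborhood of every one of its points in $\Sub(G')$, and that the map $q: \Sub_{H' \leq}(G') \to \Sub(Q)$, $K \mapsto K/H'$, is a $G'$-equivariant homeomorphism. Because $\Sub(Q)$ is countable by hypothesis, so is $\Sub_{H' \leq}(G')$. Now the containment $H' \leq H \cap G'$ says precisely that $H \cap G' \in \Sub_{H' \leq}(G')$, so $H \cap G'$ admits the countable clopen neighborhood $\Sub_{H' \leq}(G')$ in $\Sub(G')$. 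Therefore $H \cap G' \notin \PK(G')$, which by the previous paragraph completes the proof.

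There is no substantial obstacle here: the result is essentially the concatenation of the two preceding propositions. The only mild subtlety is that $H$ need not be contained in $G'$, so one cannot directly quote the equality $\PK(G') = \PK(G) \cap \Sub(G')$; this is exactly what the countability-of-fibers statement in Proposition~\ref{prop:finite index subgr} is designed to bridge.
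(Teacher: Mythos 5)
Your proof is correct and follows essentially the same route as the paper: apply Proposition~\ref{prop: f.g. normal subgr and Sub(Q)} to get the countable clopen neighbourhood $\Sub_{H' \leq}(G')$ of $H \cap G'$, then pull it back under the countable-to-one map $P$ of Proposition~\ref{prop:finite index subgr} to obtain a countable neighbourhood of $H$. The paper's proof is just a terser version of exactly this argument.
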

\begin{proof}
By Proposition \ref{prop: f.g. normal subgr and Sub(Q)}, $\Sub_{H' \leq}(G')$ is a countable neighbourhood of $H'$. Let $P: G \rightarrow G'$ be defined by $P(L) = L \cap G'$. Then, by Proposition \ref{prop:finite index subgr}, $P^{-1}(\Sub_{H' \leq}(G'))$ is a countable neighbourhood of $H$.
\end{proof}

\subsection{Examples}
Note that the existence of a short exact sequence with finitely generated kernel does not in general provide any information about subgroups which do not contain the kernel. In fact, there exist pairs $N\trianglelefteq G$ with $N$ finite and normal in the countable group $G$ such that $\{\id\}\in \PK(G/N)$,
yet the subgroup $\{\id\}$ is isolated in $\Sub(G)$.

\begin{example}[Example with $\{\id\}$ isolated]\label{ex: ex. with isolated id}
In order to allow for the possibility that $\{\id\}$ be isolated, it is necessary that $G$ contains no infinite order element. 
A very interesting example is a central extension by $\Z/p\Z$ of the Burnside group $B(m,p):=\langle h_1, h_2, \cdots, h_m\vert g^p=1, \forall g\rangle$.
Let $m,p\in \N$, $m\geq 2$ and $p$ a prime number $\geq 665$.
There is a central extension 
\[
\begin{tikzcd}
1 \arrow[r] &\Z/p\Z \arrow[r] & G \arrow[r] & B(m,p) \arrow[r] & 1
\end{tikzcd}
\]
in which all the elements have order $p^2$ except the elements of the center $\Z/p\Z$.
Indeed, first take the Adian group $A(m,p)$ -- this is a torsion free $m$-generated group whose center is an infinite cyclic group $\la c\ra$ and with quotient $A(m,p)/\la c\ra\simeq B(m,p)$ -- and set $G=A(m,p)/\la c^p\ra$, see \cite[Example 1.25]{Zelenyuk-Ultraf-top-gp-2011}.
The $p$-th power of every element belongs to the center.
It follows that every non-trivial subgroup contains the center. The trivial subgroup is thus isolated in $\Sub(G)$.

On the other hand,  by a theorem of \v{S}irvanjan \cite{Sirvanjan--Burnside-gps-1976}, when $m\geq 2$  and $p$ is larger than $665$ and odd, $B(m,p)$ contains a copy of the countably infinite rank free Burnside group $B(\N,p)=\la s_i, i\in \N\vert g^p=1 \text{ for all } g\ra$.
Since $\{\id\}\in \PK(B(\N,p))=\Sub(B(\N,p))$, as shown in Proposition~\ref{Prop: K of count. free variety} below, then $\{\id\}\in \PK(B(m,p))$.
By Proposition~\ref{prop: f.g. normal subgr and Sub(Q)}, $ \Z/p\Z\in \PK(G)$.
\end{example}

Recall that the $\mathfrak{R}$-free group with generators indexed by $I$ of a variety $\mathfrak{R}$ of groups is the group $\mathfrak{F}^{\mathfrak{R}}_I=\la s_i, i\in I\ \vert\  \mathfrak R\ra$ where $\mathfrak R$ represents a collection of relators of the form $R(g_1,g_2, \cdots, g_p)=1$ for all $p$-tuples $(g_1,g_2, \cdots, g_p)$ of elements of the (usual) free group on $(s_i)_{i\in I}$.
Examples of $\mathfrak{R}$-free group $\mathfrak{F}^{\mathfrak{R}}_{\N}$ of countably infinite rank include the free group $\FF_\infty$, the free abelian group $\oplus_\N \Z$, the free abelian group $\oplus_\N \Z/m\Z$ of exponent $m$, the free nilpotent group of class $c$, the free Burnside group $B(\N,p)=\la s_i, i\in \N\vert g^p=1, \text{ for all } g\ra$,...
By {\em non-trivial} variety, we simply mean that $\mathfrak{F}^{\mathfrak{R}}_{\N}$ is not finitely generated.

\begin{proposition}[Free group of a variety]
\label{Prop: K of count. free variety}
Let $\mathfrak{F}^{\mathfrak{R}}_{\N}=\la s_1, s_2, \cdots \vert \mathfrak{R} \ra$ be the $\mathfrak{R}$-free group of countably infinite rank of a non-trivial variety $\mathfrak{R}$ of groups. 
Then 
$\PK(\mathfrak{F}^{\mathfrak{R}}_{\N})=\Sub(\mathfrak{F}^{\mathfrak{R}}_{\N})$.
\end{proposition}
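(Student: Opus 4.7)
The strategy is to show that $\Sub(G)$ has no isolated points, since a Polish space without isolated points coincides with its own perfect kernel. Write $G = \mathfrak{F}^{\mathfrak{R}}_\N$ and, for $I \subseteq \N$, put $G_I := \la s_i : i \in I\ra$. The crucial feature of $\mathfrak{R}$-freeness that I will exploit is that each $G_I$, being a subgroup of $G$, lies in $\mathfrak{R}$, so the assignment $s_i \mapsto s_i$ (for $i \in I$), $s_j \mapsto \id$ (for $j \notin I$) extends to a retraction $\pi_I : G \to G_I$. Two consequences will be used repeatedly: $G_I \cap G_J = G_{I \cap J}$ (apply $\pi_J$ to an element of $G_I$), and each generator $s_k$ is non-trivial (otherwise the permutation automorphisms of $G$ would force every $s_k$ to be trivial, making $G$ finitely generated and contradicting the non-triviality of the variety).

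Fix $H \in \Sub(G)$ and a basic neighbourhood $\mathcal{V}(\II, \OO)$ of $H$, and pick $n$ with $\II \cup \OO \subseteq G_{\{1,\dots,n\}}$. For each $k > n$ I plan to consider the subgroup
\[ H_k := \la H \cap G_{\{1,\dots,n\}}, s_k \ra \leq G_{\{1,\dots,n,k\}}. \]
The retraction of $G_{\{1,\dots,n,k\}}$ onto $G_{\{1,\dots,n\}}$ sending $s_k \mapsto \id$ fixes $G_{\{1,\dots,n\}}$ pointwise and maps $H_k$ onto $H \cap G_{\{1,\dots,n\}}$, which yields $H_k \cap G_{\{1,\dots,n\}} = H \cap G_{\{1,\dots,n\}}$. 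Hence $\II \subseteq H_k$ and $\OO \cap H_k = \OO \cap H = \emptyset$, so $H_k \in \mathcal{V}(\II,\OO)$. Moreover, for $j \neq k$ (both larger than $n$) the retraction $\pi_{\N \setminus \{k\}}$ places $H_j$ inside $G_{\N \setminus \{k\}}$, while $s_k \in H_k$ does not belong to $G_{\N \setminus \{k\}}$ by the non-triviality of $s_k$; hence the $H_k$ are pairwise distinct.

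The last point, which I expect to be the only one requiring any care, is to verify that infinitely many $H_k$ differ from $H$ itself. If $H \leq G_{\{1,\dots,n\}}$ then every $H_k$ strictly contains $H$, because $H_k \ni s_k \notin G_{\{1,\dots,n\}}$. Otherwise, pick $h \in H \setminus G_{\{1,\dots,n\}}$ and $M \geq n$ with $h \in G_{\{1,\dots,M\}}$; then for every $k > M$ the intersection identity $G_{\{1,\dots,M\}} \cap G_{\{1,\dots,n,k\}} = G_{\{1,\dots,n\}}$ forces $h \notin H_k$, so $H_k \neq H$. In either case $\mathcal{V}(\II,\OO)$ contains infinitely many subgroups distinct from $H$, proving that $H$ is not isolated in $\Sub(G)$ and therefore $\PK(G) = \Sub(G)$.
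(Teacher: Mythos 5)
Your proof is correct and rests on exactly the same mechanism as the paper's: the retractions $\pi_I$ of the relatively free group onto its standard subgroups, used to adjoin a fresh generator $s_k$ and to verify via ``erasing letters'' that nothing unwanted enters the new subgroup. The only (cosmetic) difference is that by truncating $H$ to $H\cap G_{\{1,\dots,n\}}$ you treat all subgroups uniformly, whereas the paper first shows each finitely generated $L$ is a non-trivial limit of the groups $\la L, s_i\ra$ and then handles non-finitely generated subgroups as limits of finitely generated ones.
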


\begin{proof}
For $I\subseteq \N$, consider the subgroup $H_I:=\la s_i\colon i\in I\ra\leq \mathfrak{F}^{\mathfrak{R}}_{\N}$, and let $\mathfrak{F}^{\mathfrak{R}}_I:=\la t_i \colon i\in I\vert  \mathfrak{R}\ra $ be the $\mathfrak R$-free group with generators indexed by $I$.

Consider the natural morphisms, well-defined by the universal property of the $\mathfrak R$-free groups, $\mathfrak{F}^{\mathfrak{R}}_{\N} \overset{u}{\to} \mathfrak{F}^{\mathfrak{R}}_I \overset{v}{\to}  \mathfrak{F}^{\mathfrak{R}}_{\N}$ defined by $u(s_i)=t_i$ if $i\in I$ and $u(s_i)=\id$ otherwise, and by $v(t_i)=s_i$. 

Since $v$ is the inverse of $u$  when restricted to $H_I$,
this ensures that $H_I\simeq \mathfrak{F}^{\mathfrak{R}}_I$ and $w_I:=v\circ u$ is a {\em retraction} $\mathfrak{F}^{\mathfrak{R}}_{\N}\to H_I$ (i.e. it is the identity on $H_{I}$).
Similarly, when $I\subseteq J$, the obvious morphism
$w_{I,J}: H_{J} \overset{}{\to}  H_{I}$ is a retraction.

Let $L$ be a finitely generated subgroup of $\mathfrak{F}^{\mathfrak{R}}_{\N}$. 
Then, there is a finite subset $I\subset \N$ such that $L\leq H_{I}$.
For every $i\in \N\smallsetminus I$, let $L_i=\la L, s_i\ra\leq H_{I\sqcup \{i\}}$. 
Observe, by non-triviality of the variety, that $s_i\not\in H_{I}$ and thus $L_i\not=L$.
We claim that the sequence $(L_i)_i$ tends to $L$ when $i$ tends to $\infty$:
If $g\in L_i$, then write it as a word $m$ using elements of $L$ and $s_i$.
When $j>i$, if $g$ continues to belong to $L_j$ then under the retraction $H_{I\sqcup \{i,j\}}\to H_{I\sqcup \{j\}}$, we see that the word $m$ continues to represent $g$ even after the erasing of the letters $s_i$. Thus $g$ can be written as a word using only elements of $L$, i.e. $g\in L$.
It follows that $L$ is a non-trivial limit of finitely generated subgroups of $\mathfrak{F}^{\mathfrak{R}}_{\N}$ and thus all finitely generated subgroups belong to $\PK(\mathfrak{F}^{\mathfrak{R}}_{\N})$. 
\\
Now, if $L$ is a non finitely generated subgroup of $\mathfrak{F}^{\mathfrak{R}}_{\N}$, then it is a limit of finitely generated groups. 
We have proved that $\PK(\mathfrak{F}^{\mathfrak{R}}_{\N})=\Sub(\mathfrak{F}^{\mathfrak{R}}_{\N})$.
\end{proof}

When $\mathfrak{F}^{\mathfrak{R}}_{\N}$ is abelian, its action on $\Sub(\mathfrak{F}^{\mathfrak{R}}_{\N})$ is trivial, thus not topologically transitive, 
while $\FF_\infty\acting \PK(\FF_\infty)$ is highly topologically transitive (see Corollary~\ref{cor: F infty act K(G) is top. r-transit}).

\begin{question}[Free Burnside groups and dynamics]
For the free Burnside groups, $B(\N,p)$ of countably infinite rank and $B(m,p)$ of rank $m$:
\begin{enumerate}
\item 
Is the action $B(\N,p)\acting \Sub(B(\N,p))$ topologically $r$-transitive for some $r$?
\item
    For $m\geq 2$ and $p$ such that $B(m,p)$ is infinite, what is perfect kernel $\PK(B(m,p))$? How transitive is the corresponding action of $B(m,p)$?
\end{enumerate}
\end{question}

\subsection{\texorpdfstring{Subgroups with Schreier graph quasi-isometric to $\N$ or $\Z$}{Subgroups with Schreier graph quasi-isometric to Z}} \label{section: subgroups QI to Z}

We now consider subgroups which are not necessarily normal but whose ``quotients'' are nonetheless equipped with a strong geometric structure: namely those with Schreier graph quasi-isometric to $\N$ or $\Z$. 

Recall that, given two metric spaces $(\mathcal{X},d_\mathcal{X})$, $(\mathcal{Y}, d_{\mathcal{Y}})$ and a constant $C \geq 1$, a map $f:\mathcal{X} \rightarrow \mathcal{Y}$ is a $C$-quasi-isometry if $\frac{1}{C} d_\mathcal{X}(x,y) -C\leq d(f(x), f(y))\leq C d_\mathcal{X}(x,y) +C$ for all $x,y \in \mathcal{X}$, and $d(z,f(\mathcal{X}))\leq C$ for all $z \in \mathcal{Y}$. The metric spaces $\mathcal{X}$ and $\mathcal{Y}$ are said to be quasi-isometric if there exists a $C$-quasi-isometry for some $C \geq 1$. This relation is symmetric (allowing a different constant $C$).

If $G$ is a finitely generated group, $H$ is a subgroup and $\mathcal{Y}$ is a metric space, we say that $H\bs G$ is quasi-isometric to $\mathcal{Y}$ if for a (equivalently any) Cayley graph $X$ of $G$ associated with a finite generating set, the \defin{Schreier graph}
$H\bs X$ is quasi-isometric to $\mathcal{Y}$.

\begin{theorem}
\label{th: H co-QI Z then CB-e-rk(H) <4} 
Let $H\leq G$ be a pair of finitely generated groups. If $H \bs G$ is quasi-isometric to $\mathbb{N}$, then $\rkCBe(H;G) \leq 2$, and if $H\bs G$ is quasi-isometric to $\Z$, then $\rkCBe(H;G) \leq 3$, where $\rkCBe(H;G)$ is the Cantor-Bendixson erasing rank of $H$ in $G$.
\end{theorem}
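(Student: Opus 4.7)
My plan for this proof:

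Since $H$ is finitely generated with a finite generating set $S_H$, the basic clopen set $\Sub_{H\leq}(G)=\mathcal{V}_G(S_H,\emptyset)$ is a neighbourhood of $H$ in $\Sub(G)$, so any subgroup sufficiently Chabauty-close to $H$ automatically contains $H$. The strategy is to exhibit, for a suitable basic neighbourhood $\mathcal{V}$ of $H$, a bound on $\rkCBe(L;G)$ for every $L\in\mathcal{V}\smallsetminus\{H\}$, and then conclude by noting that $H$ is isolated in a Cantor-Bendixson derivative of appropriate order.

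I would first handle the $\mathbb{N}$ case and prove the following structural claim: there is a finite set $\mathcal{O}\subseteq G\smallsetminus H$ such that every $L\in\mathcal{V}_G(S_H,\mathcal{O})$ with $L\neq H$ is of finite index in $G$. Because a finite-index subgroup of a finitely generated group is isolated in $\Sub(G)$, every such $L$ would then be isolated, giving that $H$ is isolated in $\Sub(G)^{(1)}$ and hence $\rkCBe(H;G)\leq 2$. The geometric content of the claim is as follows: letting $\bar X=H\bs X$ denote the Schreier graph (with $X$ a fixed Cayley graph of $G$), the complement $\bar X\smallsetminus \bar B_R(H)$ has a single unbounded connected component for $R\gg 1$, because $\bar X$ is QI to $\N$, hence one-ended and of linear growth. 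Taking $\mathcal{O}$ to be the finite set of elements of $G\smallsetminus H$ whose $H$-coset lies in $\bar B_R(H)$, we see that $L\in\mathcal{V}_G(S_H,\mathcal{O})$ with $L\neq H$ must contain $H$ together with some $g$ whose coset $Hg$ is deep in the unique end. A coarse-geometric argument (iterating $g$ and combining with elements of $H$) should then force $\langle H,g\rangle$ to visit a coarsely cofinite subset of $\bar X$, and the finite generation of $G$ bounds $|L\bs G|$.

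For the $\mathbb{Z}$ case I would use the same set-up but with $\bar X$ now having two unbounded components $E_+,E_-$ after removing a large ball. Any $L\in\mathcal{V}_G(S_H,\mathcal{O})\smallsetminus\{H\}$ meets at least one end. If it meets both, the previous argument adapted to two ends gives $[G:L]<\infty$, so $L$ is isolated. If it meets only one end, say $E_+$, then $E_-$ survives in the quotient Schreier graph $L\bs X$, which is therefore one-ended with linear growth, i.e. QI to $\N$. Moreover $L$ is finitely generated (generated by $S_H$ together with the finite data coming from $L\cap B_R$), so the $\N$-case applied to the pair $(L,G)$ gives $\rkCBe(L;G)\leq 2$. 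In either subcase $\rkCBe(L;G)\leq 2$, so $H$ is isolated in $\Sub(G)^{(2)}$ and $\rkCBe(H;G)\leq 3$.

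The main obstacle is the coarse-geometric heart of the $\mathbb{N}$-case claim, namely that a single element $g$ deep in the unique end of $\bar X$ already suffices to make $\langle H,g\rangle$ finite-index in $G$. I would expect this to be handled either by a direct pigeonhole argument on finite-diameter balls of $\bar X$, or by invoking a Scott--Wall / Kropholler--Roller type structural result for finitely generated pairs $(G,H)$ with few relative ends, which yields a finite-index $G'\leq G$ containing a finite-index subgroup $H'$ of $H$ as a normal subgroup with quotient trivial (in the $\N$ case) or virtually cyclic (in the $\Z$ case); combining this with Propositions~\ref{prop:finite index subgr} and~\ref{prop: f.g. normal subgr and Sub(Q)} then translates the structure theorem into the claimed bounds on the Cantor-Bendixson erasing rank.
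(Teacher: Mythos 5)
Your high-level architecture matches the paper's: since $H$ is finitely generated, subgroups Chabauty-close to $H$ contain it, and everything reduces to showing that a proper overgroup $K\gneq H$ near $H$ is either of finite index in $G$ (hence isolated) or has Schreier graph quasi-isometric to $\N$ (hence of erasing rank $\leq 2$ by the first case). The problem is that the step you defer as ``the main obstacle'' \emph{is} the theorem. The paper's actual content is the dichotomy of Proposition~\ref{prop: G>K>H QI w. Z -> finite index} (for every intermediate $H\leq K\leq G$, either $[K:H]<\infty$ or $[G:K]<\infty$) together with Propositions~\ref{prop: fibres w. bounded diameter} and~\ref{prop: finitely many subgroups for H or for K} (there are only finitely many intermediate $K$ with $[K:H]<\infty$ in the $\N$ case, and in the $\Z$ case a single fiber of $p_{K,H}$ of diameter $>C_2$ forces $K\bs G$ to be quasi-isometric to $\N$). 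These are proved by pulling back an infinite geodesic of $K\bs X$ to $H\bs X$ at each point of a fiber of $p_{K,H}$ and using Lemmas~\ref{lem: 2 if delta1 meets the ball} and~\ref{lem: geod. towards xi remain at bounded distance}: two points of a fiber whose pull-back rays point to the same end of $H\bs X$ are within a uniform constant $C_2$ of each other, and since there are at most two ends every fiber is finite, with a $2^{\vert B_X(\id,C_2)\vert}$ pigeonhole bound on the number of small-fiber overgroups. None of this is carried out in your proposal, and your ``iterate $g$ and combine with elements of $H$'' sketch does not indicate how you would obtain it.

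Of the two routes you suggest for filling the gap, the second is a dead end: any Scott--Wall or Kropholler--Roller type statement keyed only to the number of ends of $H\bs G$ cannot suffice, because being quasi-isometric to $\Z$ is strictly stronger than being $2$-ended --- the paper points out that a cyclic subgroup $H$ of a hyperbolic surface group $G$ has $2$-ended Schreier graph yet lies in $\PK(G)$, so no bound on $\rkCBe(H;G)$ can follow from the end count alone. (Also, the structural conclusion you state for the $\N$ case, a finite-index $G'$ with $H'\trianglelefteq G'$ and trivial quotient, would make $H$ of finite index in $G$, contradicting the hypothesis that $H\bs G$ is infinite.) Finally, in your $\Z$ case the assertion that an overgroup $L$ meeting both ends has finite index is not justified and is not what the geometry gives: if $[L:H]<\infty$ and the basepoint fiber of $p_{L,H}$ contains cosets deep in both ends, the correct conclusion (Proposition~\ref{prop: finitely many subgroups for H or for K}(2)) is that $L\bs G$ is quasi-isometric to $\N$, so $L$ still has \emph{infinite} index; your bound $\rkCBe(L;G)\leq 2$ survives, but not for the reason you give.
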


Note that the above theorem does not hold if one assumes only that $H \bs G$ is 2-ended. Indeed, if $G$ is a hyperbolic surface group and $H \leq G$ is a cyclic subgroup then $H \bs G$ is 2-ended but $H \in \PK(G)$. 
It also does not hold if one removes the assumption that $H$ is finitely generated: if $G$ is a non-abelian free group and $H$ is the kernel of a morphism onto $\Z$, then  $H \in \PK(G)$.  We are however working on extending this result to finitely generated subgroups $H$ such that $H \bs G$ is quasi-isometric to $\Z^d$ for $d \geq 2$.

\begin{example}
The trivial group $\{\id\}$ in $\Z$ and in the infinite dihedral group $D_\infty$ furnishes examples of subgroup with Schreier graph quasi-isometric with $\Z$ and Cantor-Bendixson erasing rank $2$ and $3$, respectively. 
Moreover any copy of $\Z / 2\Z$ in $D_\infty$ has Schreier graph quasi-isometric to $\N$. Examples of subgroups with Cantor-Bendixson erasing rank 1 and Schreier graphs quasi-isometric to $\N$ and $\Z$ can be constructing using Houghton's groups, see \cite[Section~6, Example~6.2]{Azuelos}.
\end{example}

Let $G$ be a group with finite generating set $S$.
Let $X:=\Cay(G,S)$ be the associated Cayley graph. 
The ball of radius $R$ centered at $g$ in $\Cay(G,S)$ is denoted by  $B_X(g,R)$.

For every tower of groups $H\leq K \leq G$, we shall consider the corresponding covering maps 
\[
\begin{tikzcd}
 X \arrow[r, "p_H"]\ & H\bs  X \arrow[r, "p_{K,H}"] & K \bs X  \arrow[r, "p_{G,K}"] & G\bs  X.
\end{tikzcd} \]
The subgroups $H$ and $K$ will not be assumed to be finitely generated in the following until the proof of Theorem~\ref{th: H co-QI Z then CB-e-rk(H) <4}.

\begin{proposition}
\label{prop: fibres w. bounded diameter}
Let $H\leq G=\la S\ra$ be any subgroup. 
Let $x_0\in H\bs  X$ and let $D>0$ be a positive integer. There are at most $2^{\vert B_X(\id ,D) \vert}$ intermediate subgroups
$H\leq K\leq G$ such that the fiber $p_{K,H}^{-1}(p_{K,H}(x_0))$ of $p_{K,H}:H\bs  X\to K\bs  X$ has diameter $\leq D$.
\end{proposition}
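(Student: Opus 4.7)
The plan is to show that each intermediate subgroup $K$ satisfying the diameter hypothesis is completely determined by a subset of the ball $B_X(\mathrm{id},D)$, and then count the possible subsets.

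Write $x_0=Hg_0$ and $B:=B_X(\mathrm{id},D)$. The first step is to translate the metric hypothesis into an algebraic containment. Using that distances in $H\backslash X$ are infima of distances between representatives in $X$, and using the right translation invariance $B_X(g,D)=gB$, I would show that, if the fiber $p_{K,H}^{-1}(Kg_0)=\{Hkg_0:k\in K\}$ has diameter at most $D$, then for every $k\in K$ there exist $h_1,h_2\in H$ and $b\in B$ with $h_2kg_0=h_1g_0b$. This gives $k=h_2^{-1}h_1\,g_0bg_0^{-1}$, so
\[ K\;\subseteq\; H\cdot g_0Bg_0^{-1}. \]

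The second step is to set up the injection. For each intermediate subgroup $K$ satisfying the hypothesis, define
\[ T_K\;:=\;\{\,b\in B\;:\;g_0bg_0^{-1}\in K\,\}\;\subseteq\;B. \]
I claim $K=H\cdot g_0 T_K g_0^{-1}$. The inclusion $\supseteq$ is immediate from $H\subseteq K$ and the definition of $T_K$. For $\subseteq$, take $k\in K$ and write it as $k=h\cdot g_0bg_0^{-1}$ with $h\in H$, $b\in B$ from the first step; since $H\subseteq K$, we get $g_0bg_0^{-1}=h^{-1}k\in K$, hence $b\in T_K$ and $k\in H\cdot g_0T_Kg_0^{-1}$. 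Thus $K$ is completely determined by $T_K$.

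The final step is to count: the map $K\mapsto T_K$ is an injection from the set of intermediate subgroups satisfying the diameter condition into $2^B$, the power set of $B=B_X(\mathrm{id},D)$. Hence there are at most $2^{|B_X(\mathrm{id},D)|}$ such subgroups, which is the desired bound. There is no real obstacle here: the only care needed is in the first step, to correctly unpack the definition of distance in the quotient graph $H\backslash X$ and to use right-translation invariance of the Cayley graph to land inside $g_0 B g_0^{-1}$ modulo $H$.
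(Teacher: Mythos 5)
Your proof is correct and follows essentially the same route as the paper: both arguments show that the diameter hypothesis forces $K$ to lie in $H\cdot g_0 B_X(\id,D)g_0^{-1}$ and that $K$ is then determined by the subset of the (conjugated) ball it contains, yielding the injection into the power set of $B_X(\id,D)$. Your coset formulation $K=H\cdot g_0T_Kg_0^{-1}$ is just an algebraic rephrasing of the paper's geometric lifting of the fiber to the Cayley graph.
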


\begin{proof}
Consider any such $K$. The finite set $p_{K,H}^{-1}(p_{K,H}(x_0))=\{x_0, x_1, x_2, \cdots, x_n\}$ is contained in the ball $B(x_0,D)$ of $H\bs  X$.
Let $\tilde x_0\in  X$ be any pre-image of $x_0\in H\bs  X$ under the covering map $p_H:  X\to H\bs  X$. 
By lifting under $p_H$ some geodesic paths from $x_0$  to $x_i$ to geodesic paths from $\tilde x_0$ to $\tilde x_i$, one obtains a family of points $\{\tilde{x}_0 , \tilde{x} _1, \tilde{x}_2, \cdots, \tilde{x}_n\}$ such that 
$p_H(\tilde x_i)=x_i$. Each $\tilde x_i$ belongs to the ball $B_X(\tilde {x}_0,D)$ of $ X$.

Let $g_0=\id, g_1, g_2, \cdots, g_n\in G$ such that $g_i \tilde x_0=\tilde x_i$.
We claim that $K=\la H, g_1, g_2, \cdots, g_n\ra$.
\\
In fact,  $p_K(g_i\tilde x_0)=p_K(\tilde {x}_0)$ implies that $g_i\in K$. Thus $K\geq  \la H, g_1, g_2, \cdots, g_n\ra$.
\\
Since $p_K=p_{K,H}\circ p_H$, it follows that
\[K\cdot \tilde{x}_0=H \cdot \{\tilde{x}_0, \tilde{x} _1, \tilde{x}_2, \cdots, \tilde{x}_n\}=
\bigcup_{i=0}^n H \cdot g_i ( \tilde {x}_0)=(\bigcup_{i=0}^n H g_i) \cdot \tilde {x}_0.\]
Since the action $G\acting  X$ is free, it follows that $K\leq  \la H, g_1, g_2, \cdots, g_n\ra$.

Now let $A_D:=\{ \gamma_1, \gamma_2, \cdots, \gamma_m\}$ be the set of elements of $G$ such that $B_X(\tilde{x}_0,D)=\{\tilde {x}_0\}\cup \bigcup_{i=1}^m \{\gamma_i (\tilde{x}_0)\} \subseteq X$.
If $K \leq G$ is a subgroup containing $H$ such that the $p_{K,H}$-fiber of $p_{K,H}(x_0)$ has diameter $\leq D$, then 
$K=\la H, A_K\ra$ for some subset $A_K\subseteq A_D$. 
The number of such subgroups $K$ is thus bounded by the cardinality $2^{\vert B_X(\id, D)\vert}$ of the  power set of $B_X(\id, D)$.
\end{proof}

\begin{lemma}
\label{lem: 2 if delta1 meets the ball}
Consider subgroups $H\leq K\leq G$.
 Let $\bar x\in  K \bs X$ and $x_1,x_2\in  H\bs X$ such that $p_{K,H}(x_1)=p_{K,H}(x_2)=\bar x$.
Let $\bar \delta$ be a geodesic path in $ K  \bs X$ starting at $\bar x$. Its pull-back $\delta_1$ in $H\bs X$ starting at $x_1$ is again a geodesic.
If $\delta_1$ meets the ball $B_{H\bs X}(x_2,r)$ then $d(x_1,x_2)\leq 2r$.
\end{lemma}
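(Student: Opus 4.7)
The plan is to exploit the fact that the covering map $p_{K,H}: H\backslash X \to K\backslash X$ is $1$-Lipschitz (it is a graph morphism sending edges to edges), so that it cannot increase distances, while lifts of paths preserve length.

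First I would prove that $\delta_1$ is a geodesic. If $\bar\delta$ has length $\ell$, then $\delta_1$ also has length $\ell$ (lifts of combinatorial paths have the same length). Since $p_{K,H}$ is $1$-Lipschitz, for the endpoints $y_0:=\delta_1(0)=x_1$ and $y_\ell:=\delta_1(\ell)$ we have
\[
d_{H\backslash X}(y_0, y_\ell) \;\geq\; d_{K\backslash X}(p_{K,H}(y_0), p_{K,H}(y_\ell)) \;=\; d_{K\backslash X}(\bar\delta(0), \bar\delta(\ell)) \;=\; \ell,
\]
the last equality using that $\bar\delta$ is a geodesic. Combined with the obvious $d_{H\backslash X}(y_0, y_\ell)\leq \ell$, we conclude that $\delta_1$ realises the distance between its endpoints, and the same argument applied to subpaths shows it is a geodesic.

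For the second part, suppose $\delta_1$ meets $B_{H\backslash X}(x_2, r)$, say at the point $y=\delta_1(t)$ for some $t\geq 0$, with $d_{H\backslash X}(y, x_2)\leq r$. Since $\delta_1$ is a geodesic we have $d_{H\backslash X}(x_1, y)=t$. The key step is to bound $t$: projecting a geodesic from $y$ to $x_2$ via $p_{K,H}$ produces a path in $K\backslash X$ of length $\leq r$ from $p_{K,H}(y)=\bar\delta(t)$ to $p_{K,H}(x_2)=\bar x=\bar\delta(0)$. Hence
\[
t \;=\; d_{K\backslash X}(\bar\delta(0), \bar\delta(t)) \;\leq\; r,
\]
again using that $\bar\delta$ is a geodesic. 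Finally the triangle inequality in $H\backslash X$ yields
\[
d_{H\backslash X}(x_1, x_2) \;\leq\; d_{H\backslash X}(x_1, y) + d_{H\backslash X}(y, x_2) \;\leq\; t + r \;\leq\; 2r.
\]

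I do not expect any serious obstacle: the whole argument is a two-line triangle inequality once one observes that $p_{K,H}$ is $1$-Lipschitz and that lifting along a covering preserves the length of combinatorial paths. The only thing to be careful about is to use the geodesic property of $\bar\delta$ in the downstairs quotient $K\backslash X$, rather than attempting to pull back the ball $B_{H\backslash X}(x_2,r)$ itself.
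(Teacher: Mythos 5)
Your proof is correct and follows essentially the same route as the paper's: use that $p_{K,H}$ is $1$-Lipschitz together with the geodesity of $\delta_1$ and $\bar\delta$ to show the meeting point lies within distance $r$ of $x_1$, then conclude by the triangle inequality. The only difference is that you also verify the (standard) claim that the lift $\delta_1$ is a geodesic, which the paper asserts without proof.
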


\begin{proof} Let $z_2\in \delta_1\cap B_{H\bs X}(x_2,r)$. Then, since (successively) $\delta_1, \bar\delta$ are geodesics, $p_{K,H}(x_1)=p_{K,H}(x_2)$ and $p_{K,H}$ is 1-Lipschitz, we have
\[ d(x_1, z_2)=d(p_{K,H}(x_1),p_{K,H}(z_2))=d(p_{K,H}(x_2),p_{K,H}(z_2))\leq d(x_2,z_2)\leq r.\]
Thus, $d(x_1,x_2)\leq d(x_1, z_2)+d(z_2, x_2)\leq 2r$.
\end{proof}

Given $C \geq 1$, let $C_1 \coloneqq 3C^3 + C^2 + 3C$ and $C_2 \coloneqq 2C_1$.

\begin{lemma}
\label{lem: geod. towards xi remain at bounded distance} 
Consider subgroups $H\leq K\leq G$ and a constant $C \geq 1$.
Suppose that there exists a $C$-quasi-isometry $f:\mathcal{Y} \rightarrow H\bs  X$, where $\mathcal{Y} = \N$ or $\Z$. Assume that $K \bs X$ is infinite.
Let $\bar x\in  K \bs X$ and $x_1,x_2\in  H\bs X$ such that $p_{K,H}(x_1)=p_{K,H}(x_2)=\bar x$ and let $\bar \delta$ be an infinite geodesic path in $ K  \bs X$ starting at $\bar x$.
If the pull-backs $\delta_1, \delta_2$ starting at $x_1, x_2$ (respectively) point towards the same end $\xi$ of $H\bs  X$, then 
 \[d(\delta_1(t), \delta_2(t))\leq C_2\] for every $t\in [0,\infty)$ where $\delta_1, \delta_2:[0, \infty) \to H\bs  X$ are parametrised by arc length.
In particular, $d(x_1,x_2)\leq C_2$.
\end{lemma}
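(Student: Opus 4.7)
The plan is to reduce, for each $t\geq 0$, to Lemma~\ref{lem: 2 if delta1 meets the ball} applied not at the original pair $(x_1,x_2)$ but at the shifted pair $(\delta_1(t),\delta_2(t))$, which still have common $p_{K,H}$-image $\bar\delta(t)$ and above which the ray $\bar\delta|_{[t,\infty)}$ lifts to $\delta_1|_{[t,\infty)}$ and $\delta_2|_{[t,\infty)}$. It suffices to exhibit some $s\geq 0$ with $\delta_1(t+s)\in B_{H\bs X}(\delta_2(t),C_1)$ or, by symmetry, some $s\geq 0$ with $\delta_2(t+s)\in B_{H\bs X}(\delta_1(t),C_1)$: Lemma~\ref{lem: 2 if delta1 meets the ball} then yields $d(\delta_1(t),\delta_2(t))\leq 2C_1=C_2$.

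To produce such a point I would use the $C$-quasi-isometry $f\colon\mathcal{Y}\to H\bs X$ to define a coarse projection $\pi\colon H\bs X\to\mathcal{Y}$ by picking, for each $y$, some $\pi(y)\in\mathcal{Y}$ with $d(y,f(\pi(y)))\leq C$ (possible by coarse density). A triangle-plus-QI computation yields the coarse Lipschitz bound
\[ |\pi(y)-\pi(y')|\leq C\cdot d(y,y')+3C^2\quad \text{for all } y,y'\in H\bs X. \]
Since each $\delta_i$ converges to the end $\xi$ and $\mathcal{Y}\in\{\N,\Z\}$ has at most two ends, we may orient $\mathcal{Y}$ so that $\pi\circ\delta_i(t)\to+\infty$ as $t\to\infty$; this is the only place where the hypothesis that \emph{both} rays converge to the \emph{same} end is used.

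Assume $\pi(\delta_1(t))\leq \pi(\delta_2(t))$ (otherwise swap the roles of $\delta_1,\delta_2$ in what follows). Since consecutive values of $\pi\circ\delta_1$ along the geodesic differ by at most $C+3C^2$ and $\pi\circ\delta_1(t+s)\to+\infty$, a coarse intermediate value argument produces $s\geq 0$ such that
\[ 0\leq \pi(\delta_1(t+s))-\pi(\delta_2(t))\leq C+3C^2. \]
The triangle inequality combined with the upper QI bound $d(f(n),f(n'))\leq C|n-n'|+C$ then gives
\[ d(\delta_1(t+s),\delta_2(t))\leq 2C+C(C+3C^2)+C=3C^3+C^2+3C=C_1, \]
which is exactly the hypothesis of Lemma~\ref{lem: 2 if delta1 meets the ball} applied to $(\delta_1(t),\delta_2(t),\bar\delta|_{[t,\infty)})$ with $r=C_1$. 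The final assertion $d(x_1,x_2)\leq C_2$ is the special case $t=0$.

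The main obstacle is the coarse intermediate value step, which is where the hypothesis on ends is consumed; the remainder is mechanical bookkeeping with quasi-isometry constants.
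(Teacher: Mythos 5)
Your proof is correct. It shares with the paper's proof the overall skeleton --- reduce everything to Lemma~\ref{lem: 2 if delta1 meets the ball} by showing that one of the two lifted rays enters the ball of radius $C_1$ around the other ray's basepoint, and obtain the bound for general $t$ by applying the basepoint case to the shifted configuration $(\delta_1(t),\delta_2(t),\bar\delta|_{[t,\infty)})$ --- but the mechanism for producing a point of $\delta_1$ that is $C_1$-close to $\delta_2(t)$ is genuinely different. The paper argues topologically: it first records that balls of radius $C_1$ separate $H\bs X$ (with exactly one infinite complementary component when $\mathcal{Y}=\N$ and the point is far from $f(1)$, and exactly two when $\mathcal{Y}=\Z$), and deduces that if $\delta_1$ avoids $B(x_2,C_1)$ then $B(x_1,C_1)$ must separate $x_2$ from the common end $\xi$, forcing $\delta_2$ to cross it; a separate case handles points within $C_1$ of $f(1)$. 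You instead argue metrically, via a coarse retraction $\pi$ onto $\mathcal{Y}$ and a coarse intermediate value argument along $\pi\circ\delta_1$. Your route treats $\N$ and $\Z$ uniformly with no case split, avoids having to justify the separation properties of $C_1$-balls (which the paper asserts without proof), and makes the provenance of the constant $C_1=3C^3+C^2+3C$ completely transparent: the estimates $|\pi(y)-\pi(y')|\le C\,d(y,y')+3C^2$ and $d(\delta_1(t+s),\delta_2(t))\le 3C+C(C+3C^2)=C_1$ check out exactly against the paper's definition of $C_1$. What the paper's version buys is a more visibly geometric account of where the ``same end'' hypothesis enters (the end $\xi$ being trapped on one side of a separating ball); in your version that hypothesis is consumed, as you correctly note, in orienting $\mathcal{Y}$ so that both projected rays tend to $+\infty$, which is the standard end-correspondence under quasi-isometry.
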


\begin{proof}
It follows from the definition of a $C$-quasi-isometry that, if $\mathcal{Y} = \N$ and $y \in H \bs G$ and $d(f(1),y) > C_1$ then $B(y,C_1)$ separates $H \bs X$ into at least two connected components; exactly one of these is infinite,
while the component containing $f(1)$ is finite.
Meanwhile if $\mathcal{Y} = \Z$ then, for all $y \in H \bs X$, $B(y,C_1)$ separates $H \bs X$ into at least two connected components, exactly two of which are infinite.

Relabel $x_1$ and $x_2$ if necessary so that $d(f(1),x_1) \leq d(f(1),x_2)$.
Suppose that $d(f(1),x_2) > C_1$ and $\delta_1\cap B(x_2,C_1)=\emptyset$. Then the ball $B(x_2,C_1)$ separates $H \bs X$ into at least two connected components and the end $\xi$ is contained in the connected component of $(H\bs  X)\smallsetminus B(x_2,C_1)$ that contains $\delta_1$, and thus $x_1$, $B(x_1,C_1)$ and $f(1)$. This implies in particular that $\mathcal{Y} = \Z$.
Therefore $B(x_1,C_1)$ separates $x_2$ from $\xi$. Thus $\delta_2\cap B(x_1,C_1)\not=\emptyset$.
In conclusion, if $d(f(1), x_2) > C_1$ then we have either $\delta_2\cap B(x_1,C_1)\not=\emptyset$ or $\delta_1\cap B(x_2,C_1)\not=\emptyset$.
By Lemma~\ref{lem: 2 if delta1 meets the ball}, we conclude that $d(x_1,x_2)\leq 2C_1 = C_2$. On the other hand, if $d(f(1),x_2) \leq C_1$ then $d(f(1),x_1) \leq C_1$ so $d(x_1,x_2) \leq C_2$.

More generally, since $p_{K,H}(\delta_1(t))=p_{K,H}(\delta_2(t))=\bar \delta(t)$ for all $t \in [0,\infty)$, the points $\delta_1(t), \delta_2(t)$ are the initial points of infinite geodesic paths to which the above argument applies.
\end{proof}

\begin{proposition}
\label{prop: G>K>H QI w. Z -> finite index}
Consider subgroups $H\leq K\leq G$.
Assume $H\bs  X$ is quasi-isometric to $\N$ or $\Z$.
Then either $[K:H]$ is finite or $[G:K]$ is finite.
\end{proposition}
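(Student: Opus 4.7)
I would argue the contrapositive: assume $[G:K]=\infty$ and aim to deduce $[K:H]<\infty$. The starting observation is that, since $G$ is generated by the finite set $S$, the Cayley graph $X$ and all its quotients are locally finite; moreover the assumption $[G:K]=\infty$ forces the vertex set $K\bs G$ of $K\bs X$ to be infinite. By K\"onig's lemma there must then exist an infinite geodesic ray $\bar\delta:[0,\infty)\to K\bs X$ issued from some vertex $\bar x$.

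My next step would be to understand the fiber $F:=p_{K,H}^{-1}(\bar x)\subseteq H\bs X$. Using that $G$ acts freely on the Cayley graph $X$, one checks that $F$ is in natural bijection with the coset space $H\bs K$; in particular $|F|=[K:H]$, so proving $[K:H]<\infty$ reduces to showing that $F$ is finite. For each $y\in F$ I would lift $\bar\delta$ along $p_{K,H}$ to a geodesic ray $\delta_y:[0,\infty)\to H\bs X$ starting at $y$ (lifts of geodesics under the covering maps involved are again geodesics, exactly as exploited in Lemma \ref{lem: 2 if delta1 meets the ball}). Since $H\bs X$ is quasi-isometric to $\N$ or $\Z$ it has at most two ends, so each $\delta_y$ accumulates on one of at most two ends of $H\bs X$.

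The conclusion would then follow from a pigeonhole argument combined with Lemma \ref{lem: geod. towards xi remain at bounded distance}. If $F$ were infinite, one could extract an infinite subset $F'\subseteq F$ all of whose rays $\delta_y$ accumulate on a single end $\xi$. The hypotheses of Lemma \ref{lem: geod. towards xi remain at bounded distance} are satisfied ($K\bs X$ is infinite, so $\bar\delta$ is genuinely infinite), and it would yield $d(y_1,y_2)\leq C_2$ for every $y_1,y_2\in F'$. Thus $F'$ would be contained in a single ball of radius $C_2$ in $H\bs X$, contradicting its local finiteness. Hence $F$ is finite and $[K:H]<\infty$.

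I do not expect any serious obstacle: the proof is essentially pigeonhole layered on top of the bounded-distance output of Lemma \ref{lem: geod. towards xi remain at bounded distance}. The only point worth a brief verification is the description of the fiber $F$ (which comes from the freeness of the $G$-action on $X$) and the fact that $\bar\delta$ can genuinely be chosen to be an infinite geodesic ray in $K\bs X$, which is exactly what the working hypothesis $[G:K]=\infty$ gives us.
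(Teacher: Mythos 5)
Your proposal is correct and takes essentially the same route as the paper: both reduce the statement to showing that a fiber $p_{K,H}^{-1}(\bar x)$ is finite by lifting an infinite geodesic ray of $K\bs X$ and invoking Lemma~\ref{lem: geod. towards xi remain at bounded distance} to bound the distance between lifts pointing to the same end. The only difference is cosmetic: the paper argues directly by a two-case analysis on the ends (all lifts agree, or two disagree and every other lift is within $C_2$ of one of them), whereas you phrase the same idea as a pigeonhole contradiction; your explicit identification of the fiber with $H\bs K$ is the step the paper leaves implicit.
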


\begin{proof}
Assume  $[G:K]$ is infinite (i.e. $K\bs  X$ is infinite) and let $C \geq 1$ be such that there exists a $C$-quasi-isometry from $\N$ or $\Z$ to $H \bs X$. 
Let $\bar x\in  K \bs X$ and let $\bar \delta$ be an infinite geodesic path in $ K  \bs X$ starting at $\bar x$.
If all the $p_{K,H}$-pull-backs of $\bar \delta$ point towards the same end of $H\bs  X$, then $p_{K,H}^{-1}(\bar x)$ has diameter bounded by $C_2$ (by Lemma~\ref{lem: geod. towards xi remain at bounded distance}).
Assume now that there are two points $u_1,u_2\in H \bs X$ such that  $p_{K,H}(u_i)=\bar x$ and such that the pull-backs of $\bar \delta$ starting at $u_1,u_2$ point towards different ends of $H\bs  X$. Then for all other points $u\in p_{K,H}^{-1}(\bar x)$, the pull-back of $\bar \delta$ starting at $u$ point towards the same end as either that starting at $u_1$ or that starting at $u_2$.
It follows that $d(u, \{u_1,u_2\})\leq  C_2$ (by Lemma~\ref{lem: geod. towards xi remain at bounded distance}).
In any case, $p_{K,H}^{-1}(\bar x)$ is finite. It follows that $[K:H]$ is finite.
\end{proof}

\begin{proposition}
\label{prop: finitely many subgroups for H or for K}
Consider a subgroup $H \leq G$ such that there exists a $C$-quasi-isometry $f: \mathcal{Y} \rightarrow H \bs X$ for some $C \geq 1$.
\begin{enumerate}
    \item If $\mathcal{Y} = \N$ then, for every intermediate subgroup $H\leq K\leq G$ such that $[K:H] < \infty$, 
    the fibers of $p_{K,H}$ have diameter $\leq C_2$, and there are only finitely many such subgroups $K$.
    \item Suppose that $\mathcal{Y} = \Z$ and consider an intermediate subgroup $H\leq K\leq G$ such that $[K:H] < \infty$. If there is a fiber of $p_{K,H}$ with diameter $> C_2$, then there are fibers of $p_{K,H}$ of arbitrarily large diameter and $K \bs G$ is quasi-isometric to $\N$.
\end{enumerate}
\end{proposition}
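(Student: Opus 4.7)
The plan is to combine Lemma~\ref{lem: geod. towards xi remain at bounded distance} and Proposition~\ref{prop: fibres w. bounded diameter} with a careful analysis of how $p_{K,H}$ interacts with the ends of $H\bs X$, which has one end if $\mathcal{Y}=\N$ and two ends if $\mathcal{Y}=\Z$.

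For part~(1), when $\mathcal{Y}=\N$ the graph $H\bs X$ is one-ended. Since $[G:H]=\infty$ and $[K:H]<\infty$ we have $[G:K]=\infty$, so $K\bs X$ is infinite, and hence from any $\bar x\in K\bs X$ there exists an infinite geodesic $\bar\delta$. Every $p_{K,H}$-pull-back of $\bar\delta$ is an infinite geodesic in $H\bs X$ and therefore converges to the unique end of $H\bs X$. Lemma~\ref{lem: geod. towards xi remain at bounded distance} then bounds the distance between any two points of $p_{K,H}^{-1}(\bar x)$ by $C_2$, giving the fiber-diameter claim. Applying Proposition~\ref{prop: fibres w. bounded diameter} at a fixed base point $x_0\in H\bs X$ with $D=C_2$ bounds the number of such $K$ by $2^{\vert B_X(\id,C_2)\vert}$.

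For part~(2), pick $u_1,u_2\in p_{K,H}^{-1}(\bar x)$ with $d(u_1,u_2)>C_2$, and let $\bar\delta$ be any infinite geodesic from $\bar x$ in $K\bs X$ (again $K\bs X$ is infinite since $[G:K]=\infty$). By the contrapositive of Lemma~\ref{lem: geod. towards xi remain at bounded distance}, the pull-backs $\delta_1,\delta_2$ point to distinct ends $\xi_1,\xi_2$ of $H\bs X$, which are the only two available. For every $t$, $\delta_1(t)$ and $\delta_2(t)$ lie in $p_{K,H}^{-1}(\bar\delta(t))$, yet they escape into opposite ends of $H\bs X$ as $t\to\infty$, so their pairwise distance diverges. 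This yields fibers of arbitrarily large diameter.

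For the quasi-isometry conclusion, the strategy is to show first that $K\bs X$ is one-ended. Given a finite $\bar F\subset K\bs X$, set $F:=p_{K,H}^{-1}(\bar F)$; since $H\bs X$ is QI to $\Z$, the complement $H\bs X\smallsetminus F$ has exactly two unbounded components $U_1\ni\xi_1$ and $U_2\ni\xi_2$. Choosing $t$ large enough so that $\bar\delta(t)\notin\bar F$, $\delta_1(t)\in U_1$ and $\delta_2(t)\in U_2$, the equalities $p_{K,H}(\delta_1(t))=p_{K,H}(\delta_2(t))=\bar\delta(t)$ force $p_{K,H}(U_1)$ and $p_{K,H}(U_2)$ to meet, hence to lie in a common unbounded component of $K\bs X\smallsetminus\bar F$. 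Thus $K\bs X$ is one-ended. Since $p_{K,H}$ is $[K:H]$-to-one, surjective and $1$-Lipschitz, $K\bs X$ inherits bounded degree and linear growth from $H\bs X$. The main obstacle will be the last step, namely deducing from one-endedness, bounded degree and linear growth that $K\bs X$ is QI to $\N$; rather than appealing to a generic statement, the cleanest route is probably to build such a QI explicitly by composing $f:\Z\to H\bs X$ with $p_{K,H}$ and exploiting the fact that both ends of $\Z$ are sent arbitrarily close to one another in $K\bs X$, effectively folding $\Z$ onto $\N$.
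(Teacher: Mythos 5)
Your treatment of part (1) and of the unbounded-fibre claim in part (2) is correct and essentially coincides with the paper's argument: a fibre of diameter $>C_2$ forces, via Lemma~\ref{lem: geod. towards xi remain at bounded distance}, two pull-backs pointing to distinct ends, which is impossible when $\mathcal{Y}=\N$, and Proposition~\ref{prop: fibres w. bounded diameter} with $D=C_2$ then gives the finiteness. (You correctly note the hypotheses needed to invoke the lemma: $[G:K]=\infty$ so that $K\bs X$ carries an infinite geodesic ray.)

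The genuine gap is the final step of part (2), which you yourself flag. Your primary route --- deducing that $K\bs X$ is quasi-isometric to $\N$ from one-endedness, bounded degree and linear growth --- rests on a false general statement. For a counterexample, take a ray and attach at the vertex $4^k$ a cycle of length $2^{k+1}$, for every $k$: this graph is one-ended, has bounded degree and linear growth, yet is not quasi-isometric to $\N$, since a coarsely dense quasi-geodesic ray would have to reach the antipode of each attaching vertex and hence enter and leave each large cycle near its attaching vertex at parameters $j<j'$ with $d(g(j),g(j'))$ bounded (so $|j-j'|$ bounded), while some intermediate parameter is sent at distance about $2^k$ from $g(j)$, contradicting the upper quasi-isometry bound. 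Your fallback --- folding $f:\Z\to H\bs X$ through $p_{K,H}$ --- is the right idea but is not carried out, and the missing point is not routine: $p_{K,H}\circ f|_{\N}$ is coarsely Lipschitz and (granting your observation that the two ends are identified in $K\bs X$) coarsely surjective, but the \emph{lower} quasi-isometry bound requires excluding that two points $f(m),f(n)$ with $m,n\gg 0$ have images at bounded distance because some \emph{other} point of the fibre of $p_{K,H}(f(n))$ --- the cluster of that fibre associated with the opposite end --- happens to lie near $f(m)$; nothing you have established rules this out. The paper avoids the issue entirely: since $\delta_1\cup\delta_2$ is coarsely dense in $H\bs X$ (the two lifts go to the two ends of a space quasi-isometric to $\Z$) and $p_{K,H}$ is a $1$-Lipschitz surjection, the image $\bar\delta=p_{K,H}(\delta_1)=p_{K,H}(\delta_2)$ is coarsely dense in $K\bs X$; and $\bar\delta$ is an honest geodesic ray of $K\bs X$, hence isometrically a copy of $\N$, so no lower bound needs to be verified. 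I recommend replacing your last step by this observation.
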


\begin{proof}
Let $K$ an intermediate subgroup such that $[K:H]<\infty$.
Suppose that there exists $\bar x\in K \bs  X$ and $x_1,x_2\in p_{K,H}^{-1}(\bar x)$ such that $d(x_1,x_2)>C_2$.
Let $\bar{\delta}$ be a geodesic path in $K \bs  X$ starting at $\bar x$ and let $\delta_1, \delta_2$ be its lifts in $H\bs  X$ starting at $x_1$ and $x_2$ respectively.
By Lemma~\ref{lem: geod. towards xi remain at bounded distance}, $\delta_1, \delta_2$ point towards different ends $\xi_1,\xi_2$, respectively, of $H\bs  X$. Therefore, if $p_{K,H}$ has a fiber with diameter $> C_2$ then $\mathcal{Y} \neq \N$. By Proposition~\ref{prop: fibres w. bounded diameter}, this implies that, if $\mathcal{Y} = \N$ then there are at most $2^{\vert B_X(\id ,C_2) \vert}$ such intermediate subgroups.

Assume from now on that $\mathcal{Y} = \mathbb{Z}$ and that there exists $\bar x\in K \bs  X$ and $x_1,x_2\in p_{K,H}^{-1}(\bar x)$ such that $d(x_1,x_2)>C_2$. Let $\bar \delta, \delta_1, \delta_2, \xi_1$ and $\xi_2$ be as above.
Then $p_{K,H}(\delta_1(t))=p_{K,H}(\delta_2(t))$ for every $t\in [0,\infty)$ while the distance 
$d(\delta_1(t),\delta_2(t))$ tends to $\infty$ as $t$ tends to $\infty$ : there is no bound on the diameter of the $p_{K,H}$-fibers. Moreover, $H \bs X$ is $(C_1 + d(x_1,x_2))$-quasi-isometric to $\delta_1 \cup \delta_2$ equipped with the metric induced from $H \bs X$, and it follows that $K \bs X$ is $(C_1+d(x_1,x_2))$-quasi-isometric to $\bar \delta$.
\end{proof}

Let us now give the proof of Theorem~\ref{th: H co-QI Z then CB-e-rk(H) <4}.

\begin{proof}[Proof of Theorem~\ref{th: H co-QI Z then CB-e-rk(H) <4}]
Let $H \leq G$ be a finitely generated pair of groups.
First suppose that $H \bs G$ is quasi-isometric to $\mathbb{N}$. If $\rkCBe(H;G) > 1$ then let $(H_n)_{n \in \mathbb{N}} \subseteq \Sub(G)$ be a sequence of pairwise distinct subgroups converging towards $H$. Since $H$ is finitely generated, we can assume, up to removing finitely many elements, that $H \leq H_n$ for all $n$. By Proposition~\ref{prop: finitely many subgroups for H or for K}, for all but finitely many $n \in \mathbb{N}$, we have $[H_n:H] = \infty$ which, by Proposition~\ref{prop: G>K>H QI w. Z -> finite index}, implies that $[G:H_n] < \infty$. Since $G$ is finitely generated, each such $H_n$ is isolated in $\Sub(G)$ so $H$ is isolated in $\Sub(G)'$ and $\rkCBe(H;G) = 2$.

Assume that $H \bs G$ is quasi-isometric to $\Z$ and $\rkCBe(H;G)>2$. Let $C \geq 1$ be such that there exists a $C$-quasi-isometry $\Z \rightarrow H \bs X$.
Let $(H_n)_{n \in \N}$ be a non-stationary sequence of subgroups of $G$ that converges to $H$ and such that $\rkCBe(H_n;G)\geq 2$ for all $n$.
Finite index subgroups of $G$ have Cantor-Bendixson erasing rank 1 so $[G:H_n]=\infty$ for all $n$.
Since $H$ is finitely generated, up to extraction one can assume that each $H_n$ contains $H$, and so by Proposition~\ref{prop: G>K>H QI w. Z -> finite index}, $[H_n:H]<\infty$.
By Proposition~\ref{prop: fibres w. bounded diameter}, there are only finitely many subgroups $H\leq K\leq G$ for which the fibers of $p_{K,H}$ have diameter $\leq C_2=2C_1$.
Thus, up to extraction one can assume that each $H_n$ gives rise to a $p_{H_n,H}$-fiber of diameter $>C_2$. By Proposition~\ref{prop: finitely many subgroups for H or for K} (2) this implies that $H_n \bs G$ is quasi-isometric to $\N$ and therefore $\rkCBe(H_n:G) \leq 2$. 
Hence every non-stationary approximation of $H$ must contain infinitely many subgroups with $\rkCBe(H_n:G) \leq 2$, so $H$ is isolated in $\Sub(G)''$ and $\rkCBe(H;G) \leq 3$.
\end{proof}

\section{\texorpdfstring{Subgroup separable groups and the first $\ell^2$-Betti number}{Subgroup separable groups and the first L2 Betti number}}
\label{sect: LERF and 1st L2 Betti}

Let $G$ be a countable group. A subgroup $H \leq G$ is \defin{separable} if it is equal to the intersection of a sequence of finite index subgroups of $G$, or equivalently if $H$ is a closed subset of $G$ with respect to the profinite topology. The group $G$ is said to be \defin{subgroup separable} if every finitely generated subgroup is separable.
Equivalently, $G$ is subgroup separable if and only if the subspace $\Sub_{[<\infty]}(G)$ of finite index subgroups of $G$ is dense in $\Sub(G)$, as observed in \cite{GKM}:
the closure of the finite index subgroups contains all the finitely generated subgroups and these are dense in $\Sub(G)$.
Applied to the subgroup $\{\id\}$ this property implies that $G$ is residually finite.
It is clear from the first definition that  {\em subgroup separability passes to subgroups}.
In addition it easily follows, for example via the profinite characterisation, that {\em subgroup separability is an invariant of commensurability} (alternatively, see \cite[Lemma 1.1]{Scott78}).

Assuming $G$ is a finitely generated subgroup separable group, the isolated points of $\Sub(G)$ are exactly the finite index subgroups; i.e. 
$H$ has finite index in $G$ if and only if $\rkCBe(H;G)=1$.
We present a condition under which $\Sub(G)'$ has no isolated points.
Here, $\rk(H)$ denotes the minimal number of generators of the group $H$.

\begin{theorem} \label{theorem: lerf + rk f.i. approx}
Let $G$ be a subgroup separable group. 
\begin{enumerate}[(a)]
\item  If $G$ is not finitely generated then $\Sub(G)$ is a perfect space.

\item \label{item: neighb w. greater rank}
If $G$ is finitely generated and every finitely generated $H\in  \Sub_{[\infty]}(G)$ admits a neighbourhood $\mathcal{U}_H$ where every subgroup $K\in \mathcal{U}_H$ of finite index in $G$ satisfies $\rk(K)\geq \rk(H)+2$. Then $\Sub_{[\infty]}(G)=\PK(G)$.

\end{enumerate}
 
 \end{theorem}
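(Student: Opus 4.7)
My plan rests on the observation that, for a subgroup separable group, the isolated points of $\Sub(G)$ are exactly the finite index subgroups: density of $\Sub_{[<\infty]}(G)$ in $\Sub(G)$ makes every infinite index subgroup a limit of distinct finite index ones (so non-isolated), whereas each finite index $K$ in a finitely generated $G$ is clopen. Under the hypotheses of (b) this identifies the first derived set $\Sub(G)'$ with $\Sub_{[\infty]}(G)$, so proving $\Sub_{[\infty]}(G)=\PK(G)$ reduces to showing that no $H\in \Sub_{[\infty]}(G)$ is isolated in this closed subspace.

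For (a), since $G$ is not finitely generated, every finitely generated subgroup of $G$ has infinite index (otherwise, combined with a finite transversal it would finitely generate $G$). Given $H\leq G$ and a basic neighbourhood $\mathcal{V}(\II,\OO)$, I would argue in two cases. If $H$ is not finitely generated, then $\langle \II\rangle\subsetneq H$ is itself finitely generated and lies in $\mathcal{V}(\II,\OO)$. If $H$ is finitely generated, subgroup separability produces finite index subgroups $K\supseteq H$ with $K\cap\OO=\emptyset$; any such $K$ lies in $\mathcal{V}(\II,\OO)$ and differs from $H$ (one has finite, the other infinite, index). So $H$ is non-isolated in either case and $\Sub(G)$ is perfect.

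For (b), the case where $H\in \Sub_{[\infty]}(G)$ is not finitely generated is handled as in (a): writing $H=\bigcup_n \langle S_n\rangle$ with $S_n\subseteq H$ finite and increasing to $H$, each $\langle S_n\rangle$ converges to $H$ in Chabauty and lies in $\Sub_{[\infty]}(G)$, since $[G:\langle S_n\rangle]\geq [G:H]=\infty$.

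The heart of the argument, and the only place where the rank hypothesis enters, is the finitely generated case. I would fix $H\in \Sub_{[\infty]}(G)$ finitely generated and a basic neighbourhood $\mathcal{V}(\II,\OO)$ of $H$, and (up to enlarging $\II$ and shrinking the neighbourhood) arrange that $\mathcal{V}(\II,\OO)\subseteq \mathcal{U}_H$ and $\langle \II\rangle=H$. By subgroup separability, I choose a finite index subgroup $K\in \mathcal{V}(\II,\OO)$ with $H\leq K$; since $[K:H]=\infty$, there is $g\in K\smallsetminus H$, and I set $L=\langle H,g\rangle$. Then $L\in \mathcal{V}(\II,\OO)$ (it contains $\II$ and $L\cap\OO\subseteq K\cap\OO=\emptyset$) and $L\supsetneq H$. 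If $L$ has infinite index in $G$, it is the desired approximation of $H$ within $\Sub_{[\infty]}(G)$. Otherwise $L$ has finite index in $G$ and lies in $\mathcal{U}_H$, so the hypothesis forces $\rk(L)\geq \rk(H)+2$, contradicting the obvious bound $\rk(L)\leq \rk(H)+1$ coming from $L=\langle H,g\rangle$. The single subtle feature is the ``$+2$'' in the rank hypothesis: adding one generator raises the rank by at most one, so nothing weaker suffices, as already visible in the example $\Z\leq\Z^2$, where $\rk(\Z^2)=\rk(\Z)+1$ and $\PK(\Z^d)=\emptyset$.
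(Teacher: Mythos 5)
Your proof is correct and follows essentially the same strategy as the paper: identify the isolated points of $\Sub(G)$ with the finite index subgroups, handle non-finitely generated $H$ by approximating with finitely generated subgroups, and in the key finitely generated case adjoin a single element $g\in K\smallsetminus H$ of a finite index subgroup $K\supseteq H$ lying in the given neighbourhood, so that $\rk(\langle H,g\rangle)\leq\rk(H)+1$ forces $\langle H,g\rangle$ to have infinite index by the hypothesis. The only cosmetic difference is that the paper phrases this via a strictly decreasing sequence $(K_n)$ of finite index subgroups converging to $H$ rather than working inside one arbitrary basic neighbourhood; the two formulations are equivalent.
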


\begin{proof}[Proof of Theorem \ref{theorem: lerf + rk f.i. approx}]
First suppose that $G$ is not finitely generated and let $H \leq G$. If $H$ is finitely generated then $H$ has infinite index in $G$. Moreover, there is a sequence of finite index subgroups $(K_n)_n$ of $G$ such that $H=\cap_n K_n$. In other words, the sequence of finite index subgroups $H_i=\cap_{n\leq i} K_n$ converges toward $H$ and is non-stationary.
Therefore $H$ is not isolated in $\Sub(G)$. If $H$ is not finitely generated then it is a non-stationary limit of a sequence of finitely generated subgroups,
so $H$ is again not isolated. Thus $\Sub(G)$ is perfect.

Now suppose that $G$ is finitely generated.
Every finite index subgroup of $G$ is an isolated point of $\Sub(G)$ so $\PK (G) \subseteq \Sub_{[\infty]}(G)$. 
 Let $H \leq G$ be a subgroup with infinite index. If $H$ is not finitely generated and $H = \langle s_n : n \in \mathbb{N} \rangle$ then $H = \lim_{n \rightarrow \infty} \langle s_1, \dots, s_n \rangle$ and, for each $n$, $\langle s_1, \dots, s_n \rangle \leq H$ has infinite index so $H$ is not isolated in $\Sub_{[\infty]}(G)$. Next, suppose that $H$ is finitely generated, i.e. $\rk(H) \in \mathbb{N}$. 
By subgroup separability of $G$, the subgroup $H$ is the intersection of countably many finite index subgroups of $G$ and thus
$H$ is the limit of a strictly decreasing sequence $(K_n)_{n \in \mathbb{N}}$ of finite index subgroups of $G$.
For each $n$, let $H_n := \la H, k_n\ra$ for some $k_n \in K_n \smallsetminus H$. Since $H \lneq H_n \leq K_n$ for all $n$ it follows that $H_n \rightarrow H$ non trivially.
For large enough $n$, the subgroup $H_n$ belongs to $\mathcal{U}_H$ and is generated by $\rk(H)+1<\rk(H)+2$ elements;  it thus has infinite index in $G$ by Condition~\eqref{item: neighb w. greater rank}. Thus $H$ is not isolated in  $\Sub_{[\infty]}(G)$.
\end{proof}

Assume $G$ is finitely generated, subgroup separable, and admits a map $a:\Sub(G)\to \mathbb{R}$ such that $a(K)\leq \rk(K)$ and such that 
if $[G:K]\rightarrow \infty$ then $a(K) \rightarrow \infty$. Then Condition~\eqref{item: neighb w. greater rank} of Theorem~\ref{theorem: lerf + rk f.i. approx} is satisfied:
Since a finitely generated group $G$ has finitely many subgroups of any given index, any infinite index subgroup $H\leq G$ admits a sequence of neighbourhoods $(\mathcal{U}_j)_j$ such that the index in $G$ of all subgroups $K\in \mathcal{U}_j$ is greater than $j$. In particular, Condition~\eqref{item: neighb w. greater rank} of Theorem~\ref{theorem: lerf + rk f.i. approx} is satisfied when 
$G$ is finitely generated with positive first $\ell^2$-Betti number $\beta_1^{(2)}(G)$.
We refer the reader to \cite{CG-86} for the precise definition of this invariant, but note that it verifies the following properties:
(i) $\beta_1^{(2)}(H)\leq \rk(H)$ and (ii) if $H \leq G$ is a finite index subgroup, then $\beta_1^{(2)}(H) = [G: H] \ \beta_1^{(2)}(G)$ (cf. \cite[Proposition~2.6]{CG-86}).

\begin{corollary} \label{cor: lerf + beta1}
If $G$ is a finitely generated subgroup separable group
with positive first $\ell^2$-Betti number, then $\PK (G) = \Sub_{[\infty]} (G)$ and thus $\rkCB(G) = 1$.
\end{corollary}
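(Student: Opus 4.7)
The plan is to apply Theorem~\ref{theorem: lerf + rk f.i. approx}\eqref{item: neighb w. greater rank}; the whole task reduces to verifying its rank hypothesis for $G$. The $\ell^2$-Betti number enters through the two standard properties recalled in the paragraph preceding the corollary: $\beta_1^{(2)}(K)\leq \rk(K)$ for every finitely generated $K\leq G$, and $\beta_1^{(2)}(K)=[G:K]\,\beta_1^{(2)}(G)$ whenever $[G:K]<\infty$. Setting $\beta:=\beta_1^{(2)}(G)>0$, these combine to give
\[ \rk(K)\ \geq\ [G:K]\cdot \beta \qquad \text{for every finite index } K\leq G. \]

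Fix a finitely generated $H\in\Sub_{[\infty]}(G)$ and put $r:=\rk(H)$. Choose any integer $N\geq (r+2)/\beta$. Since $G$ is finitely generated, it has only finitely many subgroups of index at most $N$, and none of them can equal $H$, which has infinite index. Picking a basic clopen neighbourhood $\mathcal{U}_H$ of $H$ that avoids this finite list, every finite index $K\in\mathcal{U}_H$ satisfies $[G:K]>N$, and therefore
\[ \rk(K)\ \geq\ [G:K]\cdot \beta\ >\ N\beta\ \geq\ r+2, \]
which is exactly hypothesis~\eqref{item: neighb w. greater rank}. Theorem~\ref{theorem: lerf + rk f.i. approx} then yields $\PK(G)=\Sub_{[\infty]}(G)$.

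It remains to read off the Cantor-Bendixson rank. Since $G$ is finitely generated and subgroup separable, the paper already records that the isolated points of $\Sub(G)$ are exactly the finite index subgroups; hence the first derivative $\Sub(G)'$ equals $\Sub_{[\infty]}(G)$, which by the above coincides with $\PK(G)$. So $\rkCB(G)=1$. There is no real obstacle here: all of the geometric content sits in the two properties of $\beta_1^{(2)}$ invoked above, and the only bookkeeping is the choice of $N$ so that the rank gap $+2$ is realised on a small enough clopen neighbourhood, which is automatic from the finiteness of subgroups of bounded index in a finitely generated group.
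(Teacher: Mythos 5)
Your proof is correct and follows essentially the same route as the paper: the paper verifies hypothesis~\eqref{item: neighb w. greater rank} of Theorem~\ref{theorem: lerf + rk f.i. approx} via exactly the two properties $\beta_1^{(2)}(K)\leq \rk(K)$ and $\beta_1^{(2)}(K)=[G:K]\,\beta_1^{(2)}(G)$, together with the observation that a finitely generated group has only finitely many subgroups of each index, which is precisely your choice of $N$ and of the clopen neighbourhood $\mathcal{U}_H$. The only cosmetic difference is that the paper phrases the index-to-rank growth through an auxiliary map $a:\Sub(G)\to\mathbb{R}$ before specialising to $a=\beta_1^{(2)}$.
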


\begin{remark} \label{rem: virtual fibring vs l2 Betti number}
Kielak showed in \cite{kielak} that if $G$ is a finitely generated group which is virtually residually finite rationally solvable (or RFRS) in the sense of \cite{Agol08} then $\beta_1^{(2)}(G) = 0$ if and only if $G$ virtually fibers (i.e. $G$ has a finite index subgroup which surjects onto $\mathbb{Z}$ with finitely generated kernel). This allows us to sharpen our result in the following sense: 

{\em If $G$ is finitely generated, subgroup separable and virtually RFRS then $\PK(G) = \Sub_{[\infty]}(G)$ if and only if $\beta_1^{(2)}(G) > 0$.}\\
One direction is given by Corollary~\ref{cor: lerf + beta1} while the other follows from Corollary \ref{cor: virtual fibre} since it implies that the kernel of a virtual fibering is an infinite index subgroup which is not contained in the perfect kernel of $G$.
In particular, we characterise the property of having maximal perfect kernel in terms of the first $\ell^2$-Betti number for a large class of subgroup separable groups, including all those which virtually embed into right-angled Artin groups.
\end{remark}

 The following corollary is a generalisation of a well-known fact about free groups (see e.g. \cite[Proposition 2.1]{CGLM-1-arxiv}).

\begin{corollary} \label{cor: virtually free}
If $G$ is a virtually free group of rank $\geq 2$ then 
\[ \PK (G) = 
\begin{cases}
\Sub_{[\infty]}(G) &\text{if } G \text{ is finitely generated} \\
\Sub(G) &\text{otherwise}.
\end{cases} \]
\end{corollary}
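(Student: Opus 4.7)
My plan is to reduce the statement directly to Theorem~\ref{theorem: lerf + rk f.i. approx} and Corollary~\ref{cor: lerf + beta1} from the present section. The common first step is to observe that $G$ is subgroup separable: free groups have this property by M.~Hall's theorem, subgroup separability is a commensurability invariant (as recalled at the start of this section), and $G$ contains a finite-index free subgroup by hypothesis. So $G$ fits into the hypotheses of both results.

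If $G$ is not finitely generated, then any finite-index free subgroup is itself not finitely generated, hence of infinite rank, so the ``rank $\geq 2$'' assumption is automatic. A direct application of Theorem~\ref{theorem: lerf + rk f.i. approx}(a) then gives that $\Sub(G)$ is perfect, i.e.\ $\PK(G)=\Sub(G)$.

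If $G$ is finitely generated, let $F\leq G$ be a finite-index free subgroup of rank $r\geq 2$. The first $\ell^{2}$-Betti number of a free group satisfies $\beta_{1}^{(2)}(\FF_{r})=r-1\geq 1$, and the index formula $\beta_{1}^{(2)}(F)=[G:F]\,\beta_{1}^{(2)}(G)$ recalled just above then forces $\beta_{1}^{(2)}(G)>0$. Combining subgroup separability with this positivity, Corollary~\ref{cor: lerf + beta1} yields $\PK(G)=\Sub_{[\infty]}(G)$, and incidentally also $\rkCB(G)=1$.

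I do not anticipate any real obstacle: each half is essentially a one-line application of the machinery just built. The only subtle point is to interpret the rank $\geq 2$ hypothesis consistently in the two cases so that it excludes precisely the virtually cyclic examples (where $\beta_{1}^{(2)}(G)=0$ and the argument would break) while covering everything else; once this is done the verification is routine.
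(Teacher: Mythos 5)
Your proof is correct and follows essentially the same route as the paper: subgroup separability via Hall's theorem and commensurability invariance, then Theorem~\ref{theorem: lerf + rk f.i. approx}(a) for the non-finitely generated case and Corollary~\ref{cor: lerf + beta1} (via $\beta_1^{(2)}(\FF_r)=r-1>0$ and the index formula) for the finitely generated case. You are in fact slightly more explicit than the paper's own proof, which invokes only Corollary~\ref{cor: lerf + beta1} and leaves the non-finitely generated case implicit.
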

\begin{proof}
If $\FF_r$ is a free group of rank $r \geq 2$ then $\beta_1^{(2)}(\FF_r) = r -1 > 0$. Moreover it was shown in \cite{Hall-1949-subgr-free} that $\FF_r$ is subgroup separable. This implies that any group $G$ containing $\FF_r$ as a finite index subgroup is subgroup separable and has positive first $\ell^2$-Betti number so the result follows by Corollary~\ref{cor: lerf + beta1}.
\end{proof}

\begin{corollary}\label{cor: limit group}
If $G$ is virtually a finitely generated non-abelian limit group, then $\PK(G)=\Sub_{[\infty]}(G)$.
\end{corollary}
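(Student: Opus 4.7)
The plan is to reduce the virtual case to Corollary~\ref{cor: lerf + beta1} by passing each of its three hypotheses from a suitable finite index subgroup $G' \leq G$ to $G$. Fix such a $G'$ which is a finitely generated non-abelian limit group; the preceding Corollary~\ref{cor: limit group} already establishes $\PK(G') = \Sub_{[\infty]}(G')$ in this setting, so it is tempting (but not quite enough) simply to quote it. The homeomorphism $\PK(G') = \PK(G) \cap \Sub(G')$ from Proposition~\ref{prop:finite index subgr} only reaches those $H \in \Sub_{[\infty]}(G)$ that happen to be contained in $G'$, so the cleaner route is to verify the hypotheses of Corollary~\ref{cor: lerf + beta1} directly for $G$ itself.

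Finite generation of $G$ is immediate from finite generation of $G'$ together with $[G:G'] < \infty$. For subgroup separability I would invoke Wilton's theorem \cite{Wilton}, giving subgroup separability of $G'$, combined with the commensurability invariance of this property (noted at the start of Section~\ref{sect: LERF and 1st L2 Betti}, cf.~\cite[Lemma 1.1]{Scott78}), which transfers it to $G$. For positivity of the first $\ell^2$-Betti number, the scaling relation $\beta_1^{(2)}(G') = [G:G'] \, \beta_1^{(2)}(G)$ recalled in Section~\ref{sect: LERF and 1st L2 Betti} yields $\beta_1^{(2)}(G) > 0$ as soon as $\beta_1^{(2)}(G') > 0$.

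With the three hypotheses of Corollary~\ref{cor: lerf + beta1} in hand for $G$, the conclusion $\PK(G) = \Sub_{[\infty]}(G)$ follows directly.

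The only non-trivial input — already packaged into Corollary~\ref{cor: limit group} — is the positivity of $\beta_1^{(2)}$ for finitely generated non-abelian limit groups, and this is the main obstacle. I would import it as an external result, provable for instance via the hierarchical construction of limit groups from free, surface and free abelian groups by iterated amalgamations and HNN extensions over cyclic subgroups, together with the standard behaviour of $\ell^2$-Betti numbers under such operations. Once this is granted, the passage from the non-virtual Corollary~\ref{cor: limit group} to the virtual case requires nothing beyond the commensurability-stability of the two properties used.
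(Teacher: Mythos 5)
Your proposal is correct and takes essentially the same route as the paper: one checks that $G$ is finitely generated, subgroup separable (Wilton's theorem for the limit subgroup plus commensurability invariance) and has $\beta_1^{(2)}(G)>0$ (the paper simply cites Pichot for the positivity on the limit group, with multiplicativity under finite index handling the virtual case), and then applies Corollary~\ref{cor: lerf + beta1}. You are in fact more explicit than the paper's two-line proof about transferring the hypotheses from the finite index limit subgroup to $G$ itself, which is a welcome clarification rather than a deviation.
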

\begin{proof}
Limit groups are subgroup separable by \cite{Wilton} and a finitely generated, non-abelian limit group has positive first $\ell^2$-Betti number by \cite[p. 645, l. 2-3]{pichot-2006-semi-contin}. The result therefore follows from Corollary~\ref{cor: lerf + beta1}.
\end{proof}

 In the special case of surface groups, the perfect kernel was also described using a different proof in unpublished work by A. Carderi, the second author and F. Le Ma\^{i}tre.

 \begin{example}[Product of free groups over a cyclic subgroup] \label{ex: free product of free groups}
 Let $G= \FF_p \ast_\Z \FF_q$ for some finite $p,q \geq 2$. By \cite{Brunner-Burns-Solitar-1984}, free products of subgroup separable groups over cyclic subgroups are subgroup separable. Moreover, using the Mayer-Vietoris formula from \cite[p. 204, §4]{CG-86}, the first $\ell^2$-Betti number of $G$ satisfies $\beta^{(2)}_1(G) = p+q-2 > 0$ 
 so, by Corollary~\ref{cor: lerf + beta1}, $\PK(G) = \Sub_{[\infty]}(G)$.
 For information about the dynamics, see Corollary~\ref{cor: groups in CCC, K(G) and HTT}.
 \end{example}

\section{Actions on trees} \label{Section: actions on trees}

We will refer throughout to the geometric realisation of an abstract graph as a graph. More precisely, a \defin{graph} is a one-dimensional cell complex $X$. We denote by $V(X)$ its set of vertices (or 0-cells) and by $E(X)$ its set of edges (or 1-cells). For each $e \in E(X)$ let $\varphi_e: \partial e \rightarrow V(X)$ be the corresponding attaching map and define maps:
\[ o,t : E(X) \rightarrow V(X)\] 
where for each $e = [0,1] \in E(X)$ its \defin{endpoints} are the vertices $o(e) = \varphi_e(0)$ and $t(e) = \varphi_e(1)$. Note that $X$ is endowed with a natural orientation where each edge $e$ is directed from $o(e)$ to $t(e)$. A \defin{path} $P$ in $X$ is a sequence of edges $(e_n)_{n \in N}$, where $N = \mathbb{N}=\mathbb{N}_{>0}$ or $N = \{1,\dots,k\}$ for some $k \in \mathbb{N}$, such that $e_n = \gamma([n-1,n])$ for each $n \in N$ and for some continuous map $\gamma : \cup_{n \in N} [n-1, n] \rightarrow X$.
If $N$ has cardinality $k \in \mathbb{N}$ then $k$ is the \defin{length} of $P$ and if $N = \mathbb{N}$ we say that $P$ is \defin{infinite}. If $\gamma$ is injective then we say that $P$ is \defin{simple}.

If $X,Y$ are graphs and $\varphi: X \rightarrow Y$ is a bijection then $\varphi$ is an isometry if it preserves the cellular structure of $X$. It follows that $\varphi$ preserves the length metrics $d_X$ and $d_Y$ on $X$ and $Y$ respectively. All actions on graphs are assumed to be by orientation preserving isometries.  A \defin{tree} is a simply connected graph.

Recall that the action of a group $G$ on a tree $Y$ is \defin{minimal} if $Y$ does not contain any proper $G$-invariant subtree.
The action is \defin{reducible} if one of the following holds:
(1) there is a global fixed point; (2) $G$ fixes an end of $Y$; or
    (3) there is a $G$-invariant line in $Y$.
Otherwise, the action is \defin{irreducible} (aka of general type).
If $x$ is a vertex or an edge of $Y$, we denote its stabiliser under the action of a subgroup $H\leq G$ by $\Stab_H(x)$. 
The \defin{kernel} of an action $G\acting T$ is the normal subgroup consisting of the elements which fix every vertex of $T$.

The structure theorem of Bass-Serre Theory \cite[\S 5.4, Theorem 13]{Serre-Trees-1980} implies that a group $G$ admits a minimal action on a tree with a single orbit of edges if and only if it splits non-trivially as an amalgamated free product $G = A \ast_C B$ or an HNN extension $G = A \ast_C$. 
The $G$-tree $Y$ associated with such a splitting is called its Bass-Serre tree. 
In the former case $G$ acts on $Y$ with two orbits of vertices, $C$ is the stabiliser of an edge and $A,B$ are the stabilisers of its endpoints and in the latter $G$ acts with a single orbit of vertices, $C$ is the stabiliser of an edge and $A$ is the stabiliser of one of its endpoints.

We will make use of the following standard characterisation of irreducibility.

\begin{proposition}\label{prop: irreducibility vs indices}
Consider a minimal action $G\acting T$ on a tree that is transitive on the set of edges.
The action is irreducible if and only if, for some (equivalently every) edge $e$ of $T$ with endpoints $v$ and $w$, the indices of the stabilisers satisfy (up to relabelling $v$ and $w$):
	\begin{enumerate}
		\item \label{it: HNN} $[\Stab_{G}(v):\Stab_G(e)]\geq 2$ and $[\Stab_{G}(w):\Stab_G(e)]\geq 2$, when $G\bs T$ has one vertex.
		\item \label{it: amalgam} $[\Stab_{G}(v):\Stab_G(e)]\geq 3$ and $[\Stab_{G}(w):\Stab_G(e)]\geq 2$, when $G\bs T$ has two vertices.
	\end{enumerate}
\end{proposition}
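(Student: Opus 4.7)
The plan is to invoke the standard trichotomy for reducible actions on trees: a minimal action $G\acting T$ is reducible if and only if $G$ fixes a vertex, fixes an end of $T$, or preserves a (setwise) bi-infinite line. Since $T$ has at least one edge in our setting, minimality immediately rules out a global fixed vertex in either case.

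Before splitting cases, I would record the local $\Stab_G(u)$-orbit structure on edges at a vertex $u$. In the amalgam case (Case~\ref{it: amalgam}), $v$ and $w$ lie in distinct $G$-orbits, so every edge incident to $v$ is an $\Stab_G(v)$-translate of $e$ and the edges at $v$ form a single $\Stab_G(v)$-orbit of size $[\Stab_G(v):\Stab_G(e)]$, with the analogous statement at $w$. In the HNN case (Case~\ref{it: HNN}), picking $t\in G$ with $tw=v$, the edges at $v$ split into two $\Stab_G(v)$-orbits: the outgoing edges (the $\Stab_G(v)$-translates of $e$, of size $[\Stab_G(v):\Stab_G(e)]$) and the incoming edges, realised as $\Stab_G(v)$-translates of $te$ with stabiliser $t\Stab_G(e)t^{-1}$, giving orbit size $[\Stab_G(v):t\Stab_G(e)t^{-1}]=[\Stab_G(w):\Stab_G(e)]$.

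The sufficiency direction is then short. Assume the index conditions hold. A fixed end $\xi$ would force, at each vertex $u$, the unique edge at $u$ pointing towards $\xi$ to be $\Stab_G(u)$-invariant; but by the orbit computation every $\Stab_G(u)$-orbit of edges at $u$ has size at least $2$ under our hypothesis, a contradiction. A $G$-invariant line $L$ would, by minimality, force $T=L$ and hence every vertex to have valence~$2$; this contradicts $[\Stab_G(v):\Stab_G(e)]\geq 3$ in the amalgam case, and the total valence $[\Stab_G(v):\Stab_G(e)]+[\Stab_G(w):\Stab_G(e)]\geq 4$ at each vertex in the HNN case.

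For necessity I would argue the contrapositive. In the amalgam case, $[\Stab_G(v):\Stab_G(e)]=1$ forces $\Stab_G(v)=\Stab_G(e)\leq \Stab_G(w)$, so $G=\Stab_G(w)$ fixes $w$; the symmetric statement handles $[\Stab_G(w):\Stab_G(e)]=1$; the remaining failure $[\Stab_G(v):\Stab_G(e)]=[\Stab_G(w):\Stab_G(e)]=2$ gives valence~$2$ at every vertex, so $T$ is a $G$-invariant line. In the HNN case, both indices equal to $1$ likewise yields a line; the genuinely delicate situation is when exactly one of the two indices equals $1$, because then the tree still has valence $\geq 3$ and the reducibility cannot be read off from valence alone. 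There the singleton $\Stab_G(u)$-orbit at each vertex $u$ supplies a canonical $\Stab_G(u)$-fixed edge at $u$, and because $G$ preserves edge orientations and acts transitively on vertices, this canonical edge is of the same type (outgoing, say) at every vertex. Iterating it from any vertex then produces a $G$-equivariant ray whose end is necessarily $G$-fixed. The step I expect to require the most care is precisely this: verifying that the iteration yields an honest ray rather than doubling back, which reduces to the observation that a canonical edge is outgoing at its origin and incoming at its terminus, so the canonical (outgoing) edge at the terminus must be distinct and the process truly advances by one edge at each step.
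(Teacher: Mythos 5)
Your proof is correct and follows essentially the same route as the paper's: valence counts dispose of the invariant-line and degree-one cases, and in the HNN case with one trivial index the unique outgoing edge at each vertex is iterated to produce a $G$-fixed end. Two small remarks. First, to rule out a fixed end under the index hypotheses the paper exhibits two distinct ends (the rays $P$ and $gP$ for $g\in\Stab_G(v_0)\smallsetminus\Stab_G(e_1)$ meet only at $v_0$), whereas you observe that a fixed end would force a singleton $\Stab_G(u)$-orbit of edges at each $u$; these are dual formulations of the same local computation and both work. Second, in the delicate HNN sub-case you correctly flag that the canonical path must not backtrack, but you should also justify that the canonical rays issued from \emph{different} vertices define the \emph{same} end --- equivariance of the family of rays alone does not yet give a $G$-fixed end. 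This is immediate once noted: if $u,u'$ are joined by an edge $f$ with $o(f)=u$, then $f$ is the unique outgoing edge at $u$, so the canonical ray from $u$ is $u$ followed by the canonical ray from $u'$; hence any two canonical rays are cofinal by induction along the geodesic between their origins, exactly as the paper asserts when it says the positive paths from any two vertices must eventually agree.
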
  
\begin{proof}
Recall that for an edge $e$ of $T$ and an endpoint $v$ of $e$, the set of edges in the $G$-orbit of $e$ and adjacent to $v$ is indexed by the cosets $\Stab_G(v)/\Stab_G(e)$. 

Assume $G\bs T$ has one edge and two vertices. If $[\Stab_{G}(v):\Stab_G(e)]= [\Stab_{G}(w):\Stab_G(e)]= 2$, then $T$ is a line and the action is reducible.
If $[\Stab_{G}(v):\Stab_G(e)]=1$, then $v$ has degree $1$ in $T$: this contradicts the minimality of the action.

Assume $G\bs T$ has one edge and one vertex. If $[\Stab_{G}(v):\Stab_G(e)]= 1$, then up to reversing the orientation of all the edges, every vertex of $T$ is the startpoint of a single edge: for every $x\in V(T)$, there is a single edge $f\in E(T)$ such that $o(f)=x$. There is a unique infinite simple path $P = (e_n)_{n \in \mathbb{N}}$ with startpoint $x$ that follows the orientations of the edges. For any two vertices, the corresponding infinite simple positive path must eventually agree. They thus define the same end of $T$. This uniquely defined end is fixed by $G$ and the action on $T$ is reducible.

We have proved that irreducibility of the action implies the desired conditions on the stabilisers.

Conversely,  if $T$ admits an edge $e$ with an endpoint $v$ such that the index of the stabiliser satisfies
$[\Stab_{G}(v):\Stab_G(e)]\geq 3$, it follows that $T$ is not a line: $v$ has degree $\geq 3$.
If $G\bs T$ has only one vertex and $[\Stab_{G}(v):\Stab_G(e)]= [\Stab_{G}(w):\Stab_G(e)]= 2$, then every vertex has degree $\geq 4$, thus $T$ is not a line.
There is no global fixed point or $G$-invariant line in $T$ by minimality of the action. 
Moreover, if $P = (e_n)_{n \in \mathbb{N}}$ is an infinite simple path starting at $v_0$ and $g \in \Stab_G(v_0) \smallsetminus \Stab_G(e_1)$ then $gP$ is an infinite path in $T$ whose intersection with $P$ is $\{v_0\}$. Therefore $P$ and $gP$ define distinct ends of $T$ so no end of $T$ is fixed by $G$.
\end{proof}

Let $T$ be a tree and $\varphi: T \rightarrow T$ be a graph automorphism. Recall that $\varphi$ is either \defin{elliptic} (i.e. $\varphi$ fixes a point in $T$) or \defin{hyperbolic} (i.e. the translation length of $\varphi$ is realised in $T$ and is strictly positive) \cite[Proposition~2.4]{Serre-Trees-1980}. In the later case, there is a unique bi-infinite line $\mathrm{Axis}(\varphi)$ in $T$ on which $\varphi$ acts by translation.

\begin{lemma}
\label{lem: existence of hyperbolic in Z}
Let $G\acting T$ be a minimal irreducible action of a countable group. Let $e$ be an edge of $T$ and $Z$ be one of the two connected components of $T\smallsetminus \{e\}$.  Let $L$ be a finite segment of edges $[f_1, f_2]$.
Then there are three hyperbolic elements ${\gamma}_0,{\gamma}_1, \gamma_2 \in G$ such that
\\-- $\mathrm{Axis} ({\gamma}_0)$ contains $e$;
\\-- $\mathrm{Axis}({\gamma}_1)$ is contained in $Z$;
\\--  $\mathrm{Axis}({\gamma}_0)\cap \mathrm{Axis}({\gamma}_1)$ is non-empty and bounded (the hyperbolic elements are ``transverse'');
\\-- the segment $\gamma_2(L)$ is contained in $Z$ and $\gamma_2\cdot f_1$ separates $\gamma_2\cdot f_2$ from $e$.
\end{lemma}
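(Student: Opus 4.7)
The plan is to build $\gamma_0, \gamma_1, \gamma_2$ in order, using two standard features of minimal irreducible tree actions: (a) the union of translation axes of hyperbolic elements of $G$ is a non-empty $G$-invariant subtree, hence equals $T$ by minimality, so every edge lies on some axis; and (b) irreducibility provides pairs of hyperbolic elements with transverse axes.

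Apply (a) to $e$, yielding a hyperbolic $\gamma_0$ with $e \in A_0 := \mathrm{Axis}(\gamma_0)$, oriented so that $\gamma_0$ translates $A_0$ toward $Z$. For $\gamma_1$, pick via (b) a hyperbolic $h$ with $A_h := \mathrm{Axis}(h)$ transverse to $A_0$ and set $\gamma_1 := \gamma_0^N h \gamma_0^{-N}$ for $N$ large. Its axis $\gamma_0^N A_h$ meets $A_0$ in the bounded segment $\gamma_0^N(A_h \cap A_0)$, translated deep into $A_0 \cap Z$. The inclusion $\gamma_0^N A_h \subseteq Z$ follows from a projection argument: if a point of $\gamma_0^N A_h$ lay outside $Z$, the geodesic along $\gamma_0^N A_h$ joining it to $\gamma_0^N(A_h \cap A_0)$ would cross $e$; this would force $e \in \gamma_0^N A_h \cap A_0 = \gamma_0^N(A_h \cap A_0)$, contradicting that the latter lies deep in $Z$ for $N$ large.

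For $\gamma_2$, let $x_1, x_2$ denote the outer endpoints of $f_1, f_2$ in $L$; the required condition reduces to $\gamma_2 L \subseteq Z$ together with $\gamma_2 x_1$ being the vertex of $\gamma_2 L$ closest to $e$. The strategy is to produce a hyperbolic $\tilde \gamma$ with $L \subseteq \tilde A := \mathrm{Axis}(\tilde \gamma)$ (a strengthening of (a) obtained via ping-pong between $\gamma_0$ and $\gamma_1$, yielding axes through any prescribed finite geodesic segment), and then take $\gamma_2 := \tilde \gamma^n$ for $|n|$ large, with the sign of $n$ chosen so that $\tilde \gamma^n$ translates $L$ along $\tilde A$ in the direction placing $\tilde \gamma^n x_1$ on the end of $\tilde \gamma^n L$ closer to $e$. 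If a ray of $\tilde A$ eventually lies in $Z$ this directly achieves $\gamma_2 L \subseteq Z$; otherwise one precomposes with a large power of $\gamma_0$, setting $\gamma_2 := \gamma_0^M \tilde \gamma^n$ for $M$ large, which pushes the already-oriented configuration into $Z$ along $A_0$ while the projection argument onto $A_0$ preserves the ordering of the endpoints.

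The main obstacle is the existence of $\tilde \gamma$ with $L \subseteq \mathrm{Axis}(\tilde \gamma)$, and, more subtly, the orientation bookkeeping for $\gamma_2$: ensuring $\gamma_2 x_1$ always lands closer to $e$ than $\gamma_2 x_2$ requires combining the sign of $n$, the orientation of $\tilde \gamma$, and the position of $L$ relative to $e$ and $A_0$.
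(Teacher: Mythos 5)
Your constructions of $\gamma_0$ and $\gamma_1$ are sound and close in spirit to the paper's, though you reach $\gamma_1$ by conjugating a transverse hyperbolic element by a large power of $\gamma_0$, whereas the paper produces it directly from a pair of coherent edges of $Z$ adjacent to $\mathrm{Axis}(\gamma_0)$, using Serre's criterion that an element sending an edge to a coherent edge is hyperbolic with the connecting segment on its axis. Both routes work; note only that the existence of a hyperbolic element transverse to the \emph{given} $\gamma_0$ (rather than to some hyperbolic element) deserves a sentence, since the two independent hyperbolic elements supplied by irreducibility could a priori each share an end with $\mathrm{Axis}(\gamma_0)$.

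The genuine gap is in the construction of $\gamma_2$, and you flag it yourself. Two things are missing. First, the existence of a hyperbolic $\tilde\gamma$ with $L\subseteq\mathrm{Axis}(\tilde\gamma)$ is asserted via ``ping-pong'' but never proved; it is true (take hyperbolic elements whose axes lie in the two half-trees cut off by $f_1$ and by $f_2$ on the sides away from $L$ --- these exist by your $\gamma_1$-construction --- and use that a product of two hyperbolic elements with disjoint axes is hyperbolic with axis containing the bridge), but it is a lemma in its own right. Second, and more seriously, the orientation argument does not go through as described: if $e\in\mathrm{Axis}(\tilde\gamma)$, the requirement $\gamma_2 L\subseteq Z$ forces the direction of translation along $\tilde A$, so you cannot also ``choose the sign of $n$'' to decide which endpoint of $\tilde\gamma^nL$ lands nearer $e$ --- that is determined by the orientation of $L$ on $\tilde A$, which you do not control. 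Your fallback $\gamma_2=\gamma_0^M\tilde\gamma^n$ can be made to work when $\tilde\gamma^nL$ is disjoint from $\mathrm{Axis}(\gamma_0)$ and projects to a single point of it, but this needs a case analysis (e.g.\ $\tilde A$ sharing an end with $\mathrm{Axis}(\gamma_0)$, or the projection of $e$ landing in the interior of $L$) that is absent. The paper avoids all of this: it sends $L$ into $Z$ by $\gamma_0^r$, and if $\gamma_0^r f_1$ fails to separate $\gamma_0^r f_2$ from $e$, it post-composes with a hyperbolic element whose axis lies in the half-tree beyond $\gamma_0^r f_1$ on the side away from $\gamma_0^r f_2$; any path from $e$ to the resulting image of $f_2$ must then cross the image of $f_1$. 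No axis through $L$ and no orientation bookkeeping are needed. As it stands, your $\gamma_2$ step is a plausible plan rather than a proof.
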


\begin{proof}
By shrinking to a point each connected component of $T\smallsetminus G\cdot e$, the complement in $T$ of the $G$-orbit of $e$, 
we obtain a tree $T'$ equipped with a $G$-action that is transitive on the set of edges and for which the map $T\to T'$ is $G$-equivariant. 
By minimality of the action $G\acting T$, the convex hull of $G\cdot e$ is the whole of $T$; thus  $G\acting T'$ is minimal and irreducible and both actions have the same kernel.
If ${\gamma}_0,{\gamma}_1$ satisfy the conditions of the lemma for $T'$, then their action on $T$ also satisfies these conditions. 

Thus, one can start by assuming that $T$ has a single orbit of edges. 
There is a hyperbolic element ${\gamma}_0$ whose axis contains $e$: 
by irreducibility $G\acting T$ admits some hyperbolic element $h_0$, and for any edge $f\in \mathrm{Axis}(h_0)$, there is $h_1\in G$ such that  $h_1\cdot f = e$; then $h_1(\mathrm{Axis}(h_0))$ is  the axis of ${\gamma}_0:=h_1 h_0 h_1^{-1}$ and it contains $e$.

Recall from \cite[I.6.4, p. 62]{Serre-Trees-1980} that two edges $f$ and $f'$ are \defin{coherent} if they are oriented in the same way in the unique geodesic segment which connects them.
Let $a,b$ be edges of $Z\smallsetminus \mathrm{Axis}({\gamma}_0)$ that are adjacent to $\mathrm{Axis}({\gamma}_0)$ but at different vertices of $\mathrm{Axis}({\gamma}_0)$.
If $a$ and $b$ are not coherent, one replaces $b$ by an edge $b'$ that is adjacent to $b$, that is separated from $\mathrm{Axis}({\gamma}_0)$ by $b$ and that is coherent with $a$ (every vertex is adjacent to at least two incoherent edges -- see Proposition~\ref{prop: irreducibility vs indices}).
Thus, one can take $a$ and $b$ to be coherent. Choose an element ${\gamma}_1\in G$ sending $a$ to $b$: then by \cite[Corollary, p. 63]{Serre-Trees-1980} ${\gamma}_1$ is hyperbolic and $[a,b]$ belongs to its axis. In particular $\mathrm{Axis}({\gamma}_1)$ has a non-trivial and bounded intersection with $\mathrm{Axis}({\gamma}_0)$. Moreover $[a,b]$ separates $\mathrm{Axis}({\gamma_1})\smallsetminus [a,b]$ from $e$, in particular, $\mathrm{Axis}({\gamma_1})\subset Z$.

There is an integer $r$ such that $\gamma_0^r(L)\subset Z$.
If $ \gamma_0^r\cdot f_1$ separates $\gamma_0^r\cdot f_2$ from $e$, set $\gamma_2\coloneqq \gamma_0^r$. 
Otherwise, the connected component $Z_2$ of $T\smallsetminus \{\gamma_0^r\cdot f_1\}$ that does not contain $\gamma_0^r\cdot f_2$ is entirely contained in $Z$.
Choose $\gamma'_1\in G$ whose axis is entirely contained in $Z_2$. 
Then the path from $\gamma'_1\gamma_0^r\cdot f_2$ to $e$ traverses $\gamma'_1\gamma_0^r\cdot f_1$ and $\gamma_2:=\gamma'_1\gamma_0^r$ satisfies the condition. It is easy to check that $\gamma_2$ is hyperbolic (the projection of $L$ onto $\mathrm{Axis}({\gamma}_0)$ belongs to the axis of $\gamma_2$). 
\end{proof}

\begin{proposition}\label{prop: finite kernel in Sub(Gamma)}
If $G\acting T$ is a minimal irreducible action on a tree with finite kernel $N$, then $\Sub(N)\subseteq\PK(G)$.
\end{proposition}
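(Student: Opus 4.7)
\medskip

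\noindent\textbf{Proof plan.} The key observation is that Proposition~\ref{prop: finite normal sub} already provides the result as soon as $G$ contains a torsion-free subgroup $H$ with $\{\id\}\in\PK(H)$. So the entire task reduces to exhibiting such an $H$ inside $G$, and the natural candidate is a non-abelian free subgroup $F_2\leq G$, since $F_2$ is torsion-free and, by the paper's discussion of free groups, $\PK(F_2)=\Sub_{[\infty]}(F_2)$ contains $\{\id\}$.

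\medskip

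\noindent\textbf{Producing $F_2$.} To manufacture $F_2$ inside $G$, I would apply Lemma~\ref{lem: existence of hyperbolic in Z} to any edge $e$ of $T$ and one of its complementary half-trees $Z$: this gives two hyperbolic elements $\gamma_0,\gamma_1\in G$ whose axes meet in a bounded, non-empty segment (hence the axes are ``transverse'' and the elements do not share fixed ends on $\partial T$). A standard ping-pong argument then shows that for sufficiently large integers $m,n$, the pair $(\gamma_0^m,\gamma_1^n)$ generates a non-abelian free subgroup $F_2\leq G$. Every non-trivial element of $F_2$ obtained in this way is hyperbolic on $T$, so in particular acts non-trivially on $T$, which gives $F_2\cap N=\{\id\}$ (although this is not strictly needed).

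\medskip

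\noindent\textbf{Concluding.} With $F_2\leq G$ in hand: it is torsion-free, and $\{\id\}\in\PK(F_2)$, as recalled above. Proposition~\ref{prop: finite normal sub} (applied with $H=F_2$ and the finite normal subgroup $N$) then immediately yields $\Sub(N)\subseteq \PK(G)$, which is the desired conclusion.

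\medskip

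\noindent\textbf{Expected obstacle.} There is no real obstacle: all the ingredients are already proved in the paper. The only point requiring minimal care is verifying the ping-pong hypotheses for $\gamma_0^m,\gamma_1^n$ (transversality of the axes ensures that sufficiently high powers translate by more than the diameter of $\mathrm{Axis}(\gamma_0)\cap\mathrm{Axis}(\gamma_1)$, so the standard axis ping-pong on $T$ applies). The argument is essentially a one-line reduction to Proposition~\ref{prop: finite normal sub} once a free subgroup of rank $\geq 2$ is exhibited.
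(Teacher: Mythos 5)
Your proof is correct and follows essentially the same route as the paper: the paper's proof is exactly the two-step reduction you describe, obtaining a rank-$2$ free subgroup from irreducibility and then invoking Proposition~\ref{prop: finite normal sub}. The only difference is that the paper cites \cite[Theorem 2.7]{Culler-Morgan-87} for the existence of $\FF_2$, whereas you construct it by ping-pong from the transverse hyperbolic elements of Lemma~\ref{lem: existence of hyperbolic in Z}; both are valid.
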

\begin{proof}
By \cite[Theorem 2.7]{Culler-Morgan-87} the irreducibility of the action implies that $G$ has a free subgroup $\FF_2$ of rank $2$. Therefore $\Sub(N) \subseteq \PK(G)$ by Proposition \ref{prop: finite normal sub}.
\end{proof}

\subsection{Main technical dendrological lemma}

\begin{lemma}[Main technical dendrological lemma]\label{main dendrological lemma}
Let $G\acting T$ be a minimal action on a tree $T$.
Let $\Lambda\leq G$ be a subgroup. Assume that $\Lambda$ admits a proper invariant subtree $T_\Lambda\subsetneq T$
for which there  are sequences $(H_n)_n$, $({S_{n}})_n$ and $(e_n)_n$ such that for every $n$
\begin{enumerate}
\item 
\label{it: Hn is a subgroup}
$H_n$ is a subgroup of $G$;

\item
\label{it: Sn is Hn-inv}
 ${S_{n}}$ is an $H_n$-invariant subtree of $T$; 
\item    
$e_n\in E(T)$ is an edge such that:
\begin{enumerate}
	\item 
	\label{it assumpt dist >n}
	$e_n$ is  at distance greater than $n+1$ from ${T_{\Lambda}}$ 
	
	\item 
	\label{it en separates}
	$e_n$ separates ${T_{\Lambda}}$ from ${S_{n}}$;
	
	\item 
	\label{it: assumpt stab e_n are equal} 
	$\Stab_{\Lambda}(e_n)=\Stab_{H_n}(e_n)$, thus $\Lambda\cap H_n=\Stab_{\Lambda}(e_n)$;
	
	\item 
	\label{it: assumpt stab e-n H-n not = H-n}
	 $\Stab_{H_n}(e_n)\not= H_n$;
	
	\item 
	\label{it en does not separates T-Lambda from the ek}
	The $\Lambda$-orbit of $e_n$ does not separate ${T_{\Lambda}}$ from the other $e_k$, $k\in \mathbb{N}\smallsetminus \{n\}$.
\end{enumerate}
\end{enumerate}
Let $\Lambda_n\coloneqq \langle \Lambda, H_n\rangle$. 
Let ${\mathcal L}_{T}$ be the set of subgroups of $G$ that satisfy assumptions 1-3.
Then: 
\begin{enumerate}[(I)]
\item \label{it: I}
Each $\Lambda_n$
splits as  $\Lambda_n= \Lambda*_{\Stab_{\Lambda}(e_n)} H_n$ and $\Stab_{\Lambda_n}(e_n)=\Stab_{\Lambda}(e_n)$. 
\\
More generally, $\Stab_{\Lambda_n}(e_k)=\Stab_{\Lambda}(e_k)$ for all $k\in \mathbb{N}$ and $n\in \mathbb{N}$.

\item \label{it: II}
The sequence $(\Lambda_n)_n$ tends non trivially to $\Lambda$.
\item \label{it: III}
Each $\Lambda_n$ belongs to ${\mathcal L}_{T}$, witnessed by a subsequence of  $(H_n)_n$, $({S_{n}})_n$ and $(e_n)_n$.

\item \label{it: IV} The set ${\mathcal L}_{T}$ has no isolated points for its induced topology. In particular ${\mathcal L}_{T}\subseteq \PK(G)$.

\item \label{it: V} For every $\Delta\in {\mathcal L}_{T}$, the quotient $\Delta\bs T$ is infinite.
\end{enumerate}
\end{lemma}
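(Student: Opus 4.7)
The five assertions are tightly interlocked: (I) realises $\Lambda_n$ as an amalgamated free product $\Lambda *_C H_n$, from which (II) extracts Chabauty convergence $\Lambda_n \to \Lambda$; (III) propagates the hypothesis of the lemma to $\Lambda_n$, enabling iteration; and (IV)--(V) are topological and quotient consequences. The plan is to prove (I) by applying the fundamental theorem of Bass--Serre theory to a carefully chosen $\Lambda_n$-invariant subtree of $T$, then (II) via a distance estimate on $T$, then (III) by choosing a sufficiently fast subsequence of $(H_k, S_k, e_k)$, and finally (IV)--(V) essentially formally.

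\textbf{Parts (I) and (II).} For (I), I consider the $\Lambda_n$-invariant subtree $Y_n \subseteq T$ obtained as the convex hull of $\Lambda_n \cdot T_\Lambda \cup \Lambda_n \cdot S_n$; by (b) every geodesic between a translate of $T_\Lambda$ and one of $S_n$ passes through a $\Lambda_n$-translate of $e_n$. Collapsing each $\Lambda_n$-translate of $T_\Lambda$ and of $S_n$ to a single point yields a $\Lambda_n$-tree with two vertex orbits and one edge orbit; the vertex stabilisers are $\Lambda$ and $H_n$, and the edge stabiliser is $C := \Stab_\Lambda(e_n) = \Lambda \cap H_n$ (the last equality by a geodesic argument: any element preserving both $T_\Lambda$ and $S_n$ fixes the unique bridge between them, hence $e_n$). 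The fundamental theorem of Bass--Serre theory then gives $\Lambda_n = \Lambda *_C H_n$. The equality $\Stab_{\Lambda_n}(e_k) = \Stab_\Lambda(e_k)$ follows because, by (e), $e_k$ lies on the $T_\Lambda$-side of every $\Lambda$-translate of $e_n$, hence of every $\Lambda_n$-translate (since $H_n$-moves only reach the $S_n$-side), so any $g \in \Lambda_n \setminus \Lambda$ acts non-trivially on the Bass--Serre tree and moves $e_k$ across at least one translate of $e_n$. For (II), non-triviality $\Lambda_n \ne \Lambda$ follows from (d). To prove Chabauty convergence, I fix $g \in G \setminus \Lambda$ and $p \in T_\Lambda$ and set $D := d_T(p, g \cdot p)$. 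If $g \in \Lambda_n \setminus \Lambda$, then by (I) we have $g \cdot T_\Lambda \ne T_\Lambda$, so the geodesic from $T_\Lambda$ to $g \cdot T_\Lambda$ must cross at least two $\Lambda_n$-translates of $e_n$ (entering and leaving the intervening translate of $S_n$), each at distance $> n+1$ from its adjacent translate of $T_\Lambda$ by (a). Hence $D \geq d_T(T_\Lambda, g \cdot T_\Lambda) > 2(n+1)$, which contradicts the constancy of $D$ for $n$ large. Thus $g \notin \Lambda_n$ eventually.

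\textbf{Parts (III), (IV), (V).} For (III), set $T_{\Lambda_n}$ equal to the convex hull of $\Lambda_n \cdot T_\Lambda \cup \Lambda_n \cdot S_n$ in $T$; it is $\Lambda_n$-invariant and proper because, by (e), no $e_k$ with $k \ne n$ lies in this hull. Using the tree-geometric identity $d_T(e_k, e_n) \geq d_T(T_\Lambda, e_k) - d_T(T_\Lambda, e_n)$ (via the centroid of $\{T_\Lambda, e_n, e_k\}$) one verifies that $d_T(e_k, T_{\Lambda_n}) \to \infty$ as $k \to \infty$, so after extracting a subsequence $(H_{k_j}, S_{k_j}, e_{k_j})$ condition (a) is satisfied. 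Conditions (b), (c), (d) transfer from the $\Lambda$-case using the stabiliser equality from (I) together with the geodesic argument $\Lambda_n \cap H_{k_j} \subseteq \Stab(e_{k_j})$. Condition (e) is the delicate step: it requires showing that $\Lambda_n$-translates of $e_{k_j}$ beyond $\Lambda$-translates lie deep on the $S_n$-side of $e_n$, hence cannot separate $T_{\Lambda_n}$ from another $e_{k_{j'}}$ (which sits on the $T_\Lambda$-side by (e) for $\Lambda$). For (IV), combining (II) and (III) shows every element of $\mathcal{L}_T$ is a non-trivial Chabauty-limit of elements of $\mathcal{L}_T$; hence the closure $\overline{\mathcal{L}_T}$ is a non-empty closed subset of $\Sub(G)$ without isolated points (a point of $\overline{\mathcal{L}_T} \setminus \mathcal{L}_T$ is already a limit of pairwise distinct elements of $\mathcal{L}_T$), hence perfect, hence $\mathcal{L}_T \subseteq \overline{\mathcal{L}_T} \subseteq \PK(G)$ by maximality of the perfect kernel. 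For (V), if $\Delta \bs T$ were finite then some $\Delta$-orbit of edges would contain infinitely many $e_n$'s; but $\Delta$ preserves $T_\Delta$, so $d_T(\delta \cdot e_n, T_\Delta) = d_T(e_n, T_\Delta)$ is constant along each $\Delta$-orbit, contradicting the unbounded growth $d_T(e_n, T_\Delta) > n+1$.

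\textbf{Main obstacle.} The most delicate step is the propagation of condition (e) to $\Lambda_n$ in part (III), which demands a precise control of how $\Lambda_n$-translates of the $e_{k_j}$ interact with $T_{\Lambda_n}$ and with the other $e_{k_{j'}}$; the interplay between the $\Lambda$-orbit of the separating edges and the extra orbits introduced by $H_n$ via the amalgam structure must be tracked carefully. The Bass--Serre collapse in (I) and the distance estimate in (II), while conceptually clean, also need care to make fully rigorous; the remaining parts follow formally once the amalgam structure and convergence are in hand.
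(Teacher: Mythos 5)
Your overall architecture coincides with the paper's: collapse $T$ along the $\Lambda_n$-orbit of $e_n$ to obtain a Bass--Serre tree with two vertex orbits and one edge orbit, read off the amalgam, deduce Chabauty convergence from hypothesis (3a), propagate the hypotheses to $\Lambda_n$ along a shifted subsequence, and conclude (IV)--(V) formally. Parts (II), (IV) and (V) are essentially correct as written (the paper's (II) is even slightly more direct: if $g\in\Lambda_n$ and $d(v,g\cdot v)\leq n$ for some vertex $v\in T_\Lambda$, then $v$ and $g\cdot v$ both lie in the component $T^{*}_{n,\Lambda}$ of $T\smallsetminus \Lambda_n\cdot e_n$ containing $T_\Lambda$, so $g$ fixes the collapsed vertex $\bar v_n$ and hence lies in $\Lambda$).

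The genuine gap is in Part (I), at the sentence ``the vertex stabilisers are $\Lambda$ and $H_n$''. This is precisely the content of the splitting $\Lambda_n=\Lambda\ast_C H_n$, not a by-product of the collapse: Bass--Serre theory only yields $\Lambda_n=\Stab_{\Lambda_n}(\bar v_n)\ast_{\Stab_{\Lambda_n}(\bar e_n)}\Stab_{\Lambda_n}(\bar w_n)$, and a priori $\Stab_{\Lambda_n}(\bar v_n)$ could be strictly larger than $\Lambda$ --- some alternating word in $\Lambda$ and $H_n$ might return the component $T^{*}_{n,\Lambda}$ to itself. The paper closes this by a normal-form argument: one first checks that $\Stab_{\Lambda_n}(\bar e_n)\cap\Lambda=\Stab_{\Lambda_n}(\bar e_n)\cap H_n=\Lambda\cap H_n$, then writes any $g\in\Stab_{\Lambda_n}(\bar v_n)$ as an alternating product $c_1\cdots c_j$ of elements of $\Lambda$ and $H_n$ with no letter in $\Lambda\cap H_n$ when $j>1$, observes that this is a reduced word for the splitting of the collapsed tree while $g^{-1}c_1\cdots c_j=1$ is not, and concludes $j=1$, i.e.\ $g\in\Lambda$. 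Since everything downstream rests on this identification --- your proof of (II) invokes ``by (I), $g\cdot T_\Lambda\neq T_\Lambda$'', the equalities $\Stab_{\Lambda_n}(e_k)=\Stab_{\Lambda}(e_k)$ use it, and the propagation of hypothesis (3e) in (III) needs exactly the implication that an element of $\Lambda_n$ carrying some $e_i$ onto the geodesic from $e_j$ to $T_{\Lambda_n}$ fixes $\bar v_n$ and hence lies in $\Lambda$, contradicting (3e) for $\Lambda$ --- the assertion cannot be left unproved. Your ``main obstacle'' paragraph correctly singles out (3e) as the delicate point of (III), but its resolution is this same missing ingredient; once the normal-form step is supplied, the rest of your outline goes through.
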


\begin{proof}

\textbf{Step \eqref{it: I}.} 
We fixed an integer $n$. We will first construct a tree $A_n$ on which the subgroup $\Lambda_n\coloneqq \langle \Lambda, H_n\rangle$  of $G$ acts. 
Let $v_n$ be the endpoint of $e_n$ nearest to ${T_{\Lambda}}$ and let $w_n$ be the other endpoint of $e_n$.

Denote by ${T^{*}_{n,\Lambda}}$ the connected component of $T\smallsetminus \{\Lambda\cdot e_n\}$ that contains ${T_{\Lambda}}$ and by ${T^{*}_{H_n}}$  the connected component of $T\smallsetminus \{H_n\cdot e_n\}$ that contains ${S_{n}}$.

Notice (by Assumption~\eqref{it en separates})  that ${T^{*}_{n,\Lambda}}$ and ${T^{*}_{H_n}}$ are disjoint subtrees of $T$  that are $\Lambda$-invariant and $H_n$-invariant respectively. Moreover, $e_n$ connects them.
Also observe for later use that by Assumption~\eqref{it en does not separates T-Lambda from the ek}, all the $e_k$, $k\in \mathbb{N}\smallsetminus \{n\}$ are contained in ${T^{*}_{n,\Lambda}}$.

By an immediate induction based on the length of an element of $\Lambda_n= \langle \Lambda, H_n\rangle$ as a product of elements from $\Lambda$ and $H_n$, we obtain that the $\Lambda_n$-orbit of $e_n$ splits $T$ into connected components that are either a $\Lambda_n$-image of ${T^{*}_{n,\Lambda}}$ or a $\Lambda_n$-image of ${T^{*}_{H_n}}$.
Contracting each of these connected components to a point, we obtain a $\Lambda_n$-equivariant map 
$$P_n:T\to A_n$$
from $T$ to a tree $A_n$ 
equipped with the induced $\Lambda_n$-action, with a single $\Lambda_n$-orbit of edges and two $\Lambda_n$-orbits of vertices. 

Let $\bar v_n \coloneqq P_n({T^{*}_{n,\Lambda}})$ and $\bar w_n \coloneqq P_n({T^{*}_{H_n}})$. Let $\bar e_n$ be the image of $e_n$; its endpoints are $\bar v_n$ and $\bar w_n$.
It follows from general Bass-Serre theory that $\Lambda_n$ splits as 
\begin{equation}\label{eq: amalgam decomp of Lambda n}
\Lambda_n=\Stab_{\Lambda_n}(\bar v_n)*_{\Stab_{\Lambda_n}(\bar e_n)}\Stab_{\Lambda_n}(\bar w_n).
\end{equation}
Our goal is now to identify this splitting with the splitting $\Lambda_n=\Lambda *_{\Stab_{\Lambda}(e_n)} H_n$ of \eqref{it: I}.

The projection map $P_n:T\to A_n$ is injective on the $\Lambda_n$-orbit of $e_n$, thus the stabilisers of $e_n$ and $\bar e_n$ are the same for $\Lambda$ and $H_n$, respectively:
\begin{equation*}
\Stab_{\Lambda}(\bar e_n)=\Stab_{\Lambda}(e_n) \ \ \ \text{ and } \ \ \ \Stab_{H_n}(\bar e_n)=\Stab_{H_n}(e_n).
\end{equation*}
Thus, by Assumption \eqref{it: assumpt stab e_n are equal}, $\Stab_{\Lambda}(\bar e_n)=\Stab_{H_n}(\bar e_n)$.
By definition $\Stab_{\Lambda_n}(\bar e_n)\cap \Lambda=\Stab_{\Lambda}(\bar e_n)$ and 
$\Stab_{\Lambda_n}(\bar e_n)\cap H_n=\Stab_{H_n}(\bar e_n)$.
It follows that:
\begin{equation}\label{eq: Lambda-n Stab intersct Lambda and H-n}
\Stab_{\Lambda_n}(\bar e_n)\cap \Lambda
=
\Stab_{\Lambda_n}(\bar e_n)\cap H_n=\Lambda\cap H_n.
\end{equation}

\medskip

Since ${T^{*}_{n,\Lambda}}$ is $\Lambda$-invariant and ${T^{*}_{H_n}}$ is $H_n$-invariant in $T$,  
the stabilisers of their images $P({T^{*}_{n,\Lambda}})=\bar v_n$ and $P({T^{*}_{H_n}})=\bar w_n$ 
for the $\Lambda_n$-action on $A_n$ satisfy 
\begin{equation}\label{eq: stab of vertices in An contain...}
\Lambda\leq \Stab_{\Lambda_n}(\bar v_n) \ \ \ \text{ and } \ \ \ H_n\leq \Stab_{\Lambda_n}(\bar w_n).
\end{equation}
We shall make use of the normal form theorem \cite[Theorem 2.6, Section IV.2, p. 187]{LS77} for the amalgamated free product splitting \eqref{eq: amalgam decomp of Lambda n}.
 Since $\la \Lambda, H_n\ra=\Lambda_n$, every non-trivial element $g\in \Stab_{\Lambda_n}(\bar v_n)$ can be written in the form $g=c_1c_2\cdots c_j$ where
 \begin{itemize}
 \item each $c_i$ is in one of the ``factor'' subgroups  $\Lambda$ or $H_n$,
 \item successive $c_i,c_{i+1}$ come from different factors, and
 \item if $j>1$ then no $c_i$ is in the intersection $ \Lambda\cap H_n$. 
 \end{itemize}
 Then (by \eqref{eq: stab of vertices in An contain...} and \eqref{eq: Lambda-n Stab intersct Lambda and H-n}) $c_1c_2\cdots c_j$ is a reduced sequence for the splitting \eqref{eq: amalgam decomp of Lambda n}.
On the other hand,  $g^{-1}c_1c_2\cdots c_j=1$ is not reduced for  the splitting \eqref{eq: amalgam decomp of Lambda n}. 
If $j>1$, then $g^{-1}c_1\in \Stab_{\Lambda_n}(\bar e_n)$, and successively we get $g^{-1}c_1c_2\cdots c_{j-1}\in \Stab_{\Lambda_n}(\bar e_n)$.
In the end, $c_j$ also belongs to $\Stab_{\Lambda_n}(\bar e_n)$, and by \eqref{eq: Lambda-n Stab intersct Lambda and H-n}, $c_j\in \Lambda\cap H_n$, a contradiction. Thus $j=1$ and $g=c_1\in \Lambda$ and $\Lambda= \Stab_{\Lambda_n}(\bar v_n)$.
 Similarly, the symmetric argument with the other factor shows that  $H_n=\Stab_{\Lambda_n}(\bar w_n)$.
We have proved that
\begin{equation}\label{eq: stabilisers in A-n}
\Stab_{\Lambda_n}(\bar v_n)=\Lambda, \ \ \ \ \Stab_{\Lambda_n}(\bar w_n)=H_n \ \ \ \ \text{ and } \ \ \ \ \Stab_{\Lambda_n}(e_n)=\Stab_{\Lambda}(e_n).
\end{equation}
and the splitting \eqref{eq: amalgam decomp of Lambda n} becomes
\begin{equation}
\Lambda_n=\Lambda *_{\Stab_{\Lambda}(e_n)} H_n.
\end{equation}

The ``More generally" part of \eqref{it: I}, i.e. $\Stab_{\Lambda_n}(e_k)=\Stab_{\Lambda}(e_k)$ for all  the other $k\in \mathbb{N}\smallsetminus\{n\}$, is proved later as Equation~\eqref{eq: stab e k in Lambda n}. 

\medskip
\noindent
\textbf{Step \eqref{it: II}.}
All the $\Lambda_n$ contain $\Lambda$, thus in order to show that $(\Lambda_n)_n$ tends to  $\Lambda$, it is enough to prove that for every $g\in G$, there is $n_0$ such that if $g\in \Lambda_n$ for some $n\geq n_0$, then $g\in\Lambda$.

Let $g\in G$, let $v$ be a vertex $v$ in ${T_{\Lambda}}$ and let $n_0$ be the distance in $T$ between $v$ and $g\cdot v$. 
Assume that $g\in \Lambda_n$ for some $n\geq n_0$.
By Assumption \eqref{it assumpt dist >n}
the $n$-neighbourhood of ${T_{\Lambda}}$ is contained in ${T^{*}_{n,\Lambda}}$, thus both $v$ and $g\cdot v$ belong  to ${T^{*}_{n,\Lambda}}$. It follows that their images in $A_n$ both equal $\bar v_n$. By equivariance of the quotient map $T\to A_n$, the element $g$ fixes the vertex $\bar v_n$. Since $\Stab_{\Lambda_n}(\bar v_n)=\Lambda$ (by Equation \eqref{eq: stabilisers in A-n}), it follows that $g \in \Lambda$. We have proved that $\Lambda_n \rightarrow \Lambda$ as $n \rightarrow \infty$.

By Assumption \eqref{it: assumpt stab e-n H-n not = H-n}, the injection $\Lambda\leq \Lambda_n=\Lambda*_{\Stab_{\Lambda}(e_n)} H_n$ is strict, so the convergence of the sequence $(\Lambda_n)_n$ is non-trivial.

\medskip
\noindent
\textbf{Step \eqref{it: III}.}
Let us show that $\Lambda_n\in {\mathcal L}_{T}$.

The invariant tree we propose for $T_{\Lambda_n}$ is the $\Lambda_n$-saturation of the convex hull of ${T_{\Lambda}}\cup {S_{n}}$:
$$T_{\Lambda_n}\coloneqq \Lambda_n\cdot \mathrm{Hull}({T_{\Lambda}}\cup {S_{n}}).$$
 
As for the sequences of subgroups $(H'_k)_k$, subtrees $(S'_k)_k$, and edges $(e'_k)_k$ associated to $\Lambda_n$, we will take subsequences of those associated with $\Lambda$, obtained by shifting the indices by $r_n$, the distance between ${T_{\Lambda}}$ and $v_n$,
i.e.  $r_n := d(T_\Lambda, v_n)$, $H'_k:=H_{k+r_n}$, $S'_k:=S_{k+r_n}$ and $e'_k:=e_{k+r_n}$.

As observed earlier, all the $e_k$, for $k\in \mathbb{N}\smallsetminus \{n\}$ are contained in ${T^{*}_{n,\Lambda}}$ by Assumption~\eqref{it en does not separates T-Lambda from the ek}.
If an element $g$ of $\Lambda_n$ were to send some $e_i$ to $g\cdot e_i$ so that $g\cdot e_i$ separated some $e_j$ from $T_{\Lambda_n}$ (for $i,j\not= n$), then $g\cdot e_i$ would also belong to ${T^{*}_{n,\Lambda}}$ and $g$ would fix the vertex $\bar v_n$ of $A_n$. Thus $g$ would belong to $\Lambda$  (by Equation \eqref{eq: stabilisers in A-n}). But Assumption~\eqref{it en does not separates T-Lambda from the ek} for $\Lambda$ and all $n$ prevents the $\Lambda$-orbit of $e_i$ from separating ${T_{\Lambda}}$ and $e_j$.
Thus the sequence $(e'_k)_k$ satisfies Assumption~\eqref{it en does not separates T-Lambda from the ek} for $\Lambda_n$, and in particular the $\Lambda_n$-invariant tree $T_{\Lambda_n}$ is strictly contained in $T$.
It also follows that $e'_k$ separates $T_{\Lambda_n}$ from $S'_k$, i.e. Assumption~\eqref{it en separates} for $\Lambda_n$.
Since $T_{\Lambda_n}\cap {T^{*}_{n,\Lambda}}$ is contained in the $r_n$-neighbourhood of ${T_{\Lambda}}$, this shift in the indices permits to check Assumption~\eqref{it assumpt dist >n} for $\Lambda_n$.

An element $g$ of $\Lambda_n$ that fixes some $e_k$, for $k\in \mathbb{N}\smallsetminus \{n\}$ (which again is an edge of ${T^{*}_{n,\Lambda}}$) belongs to the stabiliser of $\bar v_n$, thus to $\Lambda$ by Equation \eqref{eq: stabilisers in A-n}. Thus for all $k\in \mathbb{N}$:
\begin{equation}\label{eq: stab e k in Lambda n}
\Stab_{\Lambda_n}(e_k)=\Stab_{\Lambda}(e_k).
\end{equation}
Assumption~\eqref{it: assumpt stab e_n are equal} for $\Lambda$ ensures that $\Stab_{\Lambda}(e_k)=\Stab_{H_k}(e_k)$, so $\Stab_{\Lambda_n}(e_k)=\Stab_{H_k}(e_k)$.
Thus the sequence $(e'_k)_k$ satisfies Assumption~\eqref{it: assumpt stab e_n are equal} for $\Lambda_n$.
 Assumption~\eqref{it: assumpt stab e-n H-n not = H-n} for $\Lambda_n$, i.e.  $\Stab_{H_k'}(e'_k)\not= H_k'$ is immediate from the analogous condition  $\Stab_{H_{k+r_n}}(e_{k+r_n})\not= H_{k+r_n}$ for $\Lambda$. We have proved that $\Lambda_n\in {\mathcal L}_{T}$. 

Therefore, every point of ${\mathcal L}_{T}$ is a non-trivial limit of points of ${\mathcal L}_{T}$.  The proof of Item~\eqref{it: IV} follows immediately.

For $\Delta\in {\mathcal L}_{T}$ and any edge $e\in E(T)$, the distance from $e$ to $T_\Delta$ is an invariant of its $\Delta$-orbit $\Delta\cdot e$.
This is precisely the distance in $\Delta\bs T$ between $\Delta\bs T_\Delta$ and the image of $e$ in $\Delta\bs T$.
By Assumption\eqref{it assumpt dist >n}, the sequence of edges $(e_n)$ thus delivers infinitely many edges in $\Delta\bs T$.
This proves Item~\eqref{it: V}, and concludes the proof of the main technical dendrological Lemma~\ref{main dendrological lemma}.
\end{proof}

\subsection{Main dendrological theorems}

\begin{theorem} \label{th: a full subtree with stab in the kernel}
Let $G\acting T$ be a minimal and irreducible action with finite kernel $N$ on a tree $T$. 
Let $\Lambda\leq G$ be a subgroup and suppose that there exists an edge $e_\Lambda\in E(T)$ such that 
$\Stab_\Lambda(e_\Lambda)=\Lambda\cap N$ and 
 one of the connected components of  $T\smallsetminus \{e_\Lambda\}$ contains a $\Lambda$-invariant subtree $T_\Lambda \subseteq T$.
Then $\Lambda$ belongs to $\PK(G)$.
\\
More precisely, the set ${\mathcal N}_T$ of subgroups that satisfy the assumptions of this theorem for the action $G\acting T$ has no isolated points with respect to its induced topology.
\end{theorem}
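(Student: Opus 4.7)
The plan is to apply the main technical dendrological Lemma~\ref{main dendrological lemma} to $\Lambda$, which requires constructing sequences $(H_n)_n$, $(S_n)_n$ and $(e_n)_n$ satisfying its hypotheses. Write $Z$ for the component of $T \smallsetminus \{e_\Lambda\}$ that does not contain $T_\Lambda$. A useful preliminary is that $\Stab_\Lambda(e) = \Lambda \cap N$ for every edge $e\subset Z$: the inclusion $\supseteq$ is immediate since $N$ is the kernel of the action, and conversely if $\lambda \in \Lambda$ fixes such an $e$ then it preserves $T_\Lambda$ and fixes both endpoints of $e$ (as the action is orientation-preserving), so by uniqueness of projections in a tree $\lambda$ fixes pointwise the unique geodesic from $T_\Lambda$ to $e$, which traverses $e_\Lambda$; hence $\lambda \in \Stab_\Lambda(e_\Lambda) = \Lambda \cap N$.

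For the construction, observe that since $N$ is finite, $\Aut(N)$ is finite and every element of $G$ admits a power centralising $N$. Applying Lemma~\ref{lem: existence of hyperbolic in Z} iteratively inside $Z$, I would produce hyperbolic elements $\gamma_n \in G$, each centralising $N$, whose axes $S_n := \mathrm{Axis}(\gamma_n)$ lie in $Z$ at distance strictly greater than $n+2$ from $T_\Lambda$. Let $e_n$ be the edge on the geodesic from $T_\Lambda$ to $S_n$ adjacent to $S_n$ but not contained in it, and set $H_n := (\Lambda \cap N)\cdot\langle \gamma_n\rangle$ (a subgroup since $\gamma_n$ commutes with $\Lambda\cap N$). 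Then $S_n$ is $H_n$-invariant, $e_n$ has distance $>n+1$ from $T_\Lambda$ and separates $T_\Lambda$ from $S_n$; the preliminary together with the fact that $\gamma_n^k e_n \ne e_n$ for $k\neq 0$ (because $e_n\notin S_n$) gives $\Stab_\Lambda(e_n) = \Stab_{H_n}(e_n) = \Lambda\cap N$; and $\Stab_{H_n}(e_n) \neq H_n$ because the hyperbolic $\gamma_n$ is not in $N$.

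The main obstacle is ensuring condition~(iii)(e) of the lemma: the $\Lambda$-orbit of $e_n$ should not separate $T_\Lambda$ from $e_k$ for $k\ne n$. The flexibility of Lemma~\ref{lem: existence of hyperbolic in Z}, which allows one to push any finite segment arbitrarily deep into $Z$, permits an inductive choice of axes $S_n$ in distinct ``directions'' from $T_\Lambda$, so that each new $e_n$ avoids the $\Lambda$-orbits of all previously chosen $e_k$. Once this is arranged, Lemma~\ref{main dendrological lemma} yields $\Lambda \in \PK(G)$ together with a non-trivial approximating sequence $\Lambda_n := \Lambda *_{\Lambda\cap N} H_n \to \Lambda$.

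Finally, to prove $\mathcal{N}_T$ has no isolated points, I would check that each $\Lambda_n$ lies in $\mathcal{N}_T$. The Bass-Serre action of $\Lambda_n$ on the tree $A_n$ produced by the lemma forces $\Lambda_n\cap N$ into the intersection of every vertex and every edge stabiliser, which is exactly $\Lambda\cap N$; hence $\Lambda_n\cap N = \Lambda\cap N$. Choosing $e_{\Lambda_n} := e_k$ for any $k\neq n$, item~\ref{it: I} of the lemma gives $\Stab_{\Lambda_n}(e_k) = \Stab_\Lambda(e_k) = \Lambda\cap N = \Lambda_n\cap N$, and the $\Lambda_n$-invariant subtree $T_{\Lambda_n} := \Lambda_n\cdot\mathrm{Hull}(T_\Lambda\cup S_n)$ lies in one component of $T\smallsetminus\{e_k\}$ by the separation property already established for $\Lambda_n$ in the lemma. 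Hence $\Lambda_n\in\mathcal{N}_T$, as required.
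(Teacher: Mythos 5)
Your overall architecture is the same as the paper's: establish that $\Stab_\Lambda(e)=\Lambda\cap N$ for every edge $e$ of the component $Z$ not containing $T_\Lambda$, feed $\Lambda$ into Lemma~\ref{main dendrological lemma} using hyperbolic elements supplied by Lemma~\ref{lem: existence of hyperbolic in Z} (raised to powers centralising $N$), and then show that each $\Lambda_n=\Lambda*_{\Lambda\cap N}H_n$ again satisfies the hypotheses of the theorem. Your verification of Hypotheses \eqref{it: Hn is a subgroup}, \eqref{it: Sn is Hn-inv}, \eqref{it assumpt dist >n}--\eqref{it: assumpt stab e-n H-n not = H-n} is correct, and your treatment of the final paragraph (identifying $\Lambda_n\cap N$ with $\Lambda\cap N$ via the edge stabiliser in $A_n$, then re-using the shifted edges and the saturated hull as $T_{\Lambda_n}$) matches the paper's.

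There is, however, a genuine gap at Hypothesis~\eqref{it en does not separates T-Lambda from the ek}, precisely the point you single out as the main obstacle. What you propose to arrange is that each new $e_n$ \emph{avoids} the $\Lambda$-orbits of the previously chosen $e_k$; what the hypothesis requires is that the $\Lambda$-orbit of $e_n$ does not \emph{separate} $T_\Lambda$ from $e_k$, in both directions for every pair $n\neq k$. These are different conditions: $e_n$ could lie behind a translate $\lambda\cdot e_k$ without meeting the orbit of $e_k$, and a priori $\Lambda\cdot e_k$ could re-enter $Z$ in infinitely many places, so that pushing the new edge arbitrarily deep "in a distinct direction" does not obviously escape all of the shadows of these orbits. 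The missing ingredient is an upgrade of your own preliminary: the same projection argument shows not merely that $\Stab_\Lambda(e)=\Lambda\cap N$ for $e\subset Z$, but that any $\lambda\in\Lambda$ carrying a vertex of $Z$ to a vertex of $Z$ must fix $e_\Lambda$ and hence lie in $N$; combined with the $\Lambda$-invariance of $d(T_\Lambda,\cdot)$ (which forbids a translate $\lambda\cdot e_n$ from landing strictly between $T_\Lambda$ and $e_\Lambda$, since that would force $d(T_\Lambda,\lambda\cdot e_n)<d(T_\Lambda,e_n)$), this shows that the only edge of $\Lambda\cdot e_n$ that can separate $T_\Lambda$ from a point of $Z$ is $e_n$ itself. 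Hypothesis~\eqref{it en does not separates T-Lambda from the ek} then reduces to the purely combinatorial requirement that no $e_n$ lies on the geodesic from $T_\Lambda$ to $e_k$, which your inductive "distinct directions" choice can secure; the paper gets it for free by taking $e_n:=g^n\cdot e_\Lambda$ for a single hyperbolic $g$ with axis in $Z$, since these edges hang off $\mathrm{Axis}(g)$ at distinct projection points. With this repair your argument goes through.
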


\begin{proof}
 We will show that $\Lambda$ satisfies the conditions of Lemma~\ref{main dendrological lemma}.

Let $w_0$ be the endpoint of $e_\Lambda$ that is contained in the component $Z$ of $T \smallsetminus e_\Lambda$ which does not contain $T_\Lambda$, and let $w_1$ be the other endpoint of $e_\Lambda$.
Since $e_\Lambda$ separates $Z$ from a $\Lambda$-invariant subtree, it follows that if $u_1,u_2$ are vertices of $Z$ and $\lambda\in \Lambda$ such that $\lambda \cdot u_1=u_2$, 
then $\lambda$ fixes $w_0$ and $e_\Lambda$, so $\lambda\in \Lambda\cap N$. In particular $\lambda$ belongs to the kernel of the action, thus the quotient map $T\to \Lambda\bs T$ is injective on $Z$. Moreover, (taking $u_1=u_2$) for every vertex $v\in V(Z)$ and every edge $e\in E(Z)$ the stabilisers satisfy:
\begin{equation}\label{eq: stab L (v)= L cap N}
\Stab_\Lambda(v)=\Lambda\cap N=\Stab_\Lambda(e).
\end{equation}

 By Lemma~\ref{lem: existence of hyperbolic in Z}, there is a hyperbolic element $g$ whose axis is contained in $Z$. 
 Thus the sequence of edges $(e_n)_{n \in \N} \coloneqq (g^n\cdot e_\Lambda)_{n \in \N}$, satisfies two conditions of Lemma~\ref{main dendrological lemma}: \eqref{it assumpt dist >n} (distance to $T_{\Lambda}$) and \eqref{it en does not separates T-Lambda from the ek} (the $\Lambda$-orbit of the edge $e_n$ intersects $Z$ only in $e_n$, thus does not separate any other $e_k$ from $T_{\Lambda}$).

 By Lemma~\ref{lem: existence of hyperbolic in Z}, there is a hyperbolic element $h_n$ for each $n \in \mathbb{N}$ whose axis is contained in the connected component of $T\smallsetminus \{e_n\}$ that does not contain $T_\Lambda$.
We define the subtree $S_n$ as the axis of $h_n$.
The sequence $(e_n)_n$ satisfies \eqref{it en separates} of Lemma~\ref{main dendrological lemma}.

Since $N$ is finite and normal in $G$, there is a power $r_n$ of $h_n$ which commutes with $N$. 
Let $H_n$ be the subgroup generated by $h_n^{r_n}$ and $\Lambda\cap N$:
$$H_n\coloneqq \la h_n^{r_n}, \Lambda\cap N \ra.$$
Since $h_n^{r_n}$ is hyperbolic and commutes with $\Lambda \cap N$ and since $\Lambda \cap N$ is in the kernel of the action, $g \in H_n$ fixes $e_n$ (or any other edge) if and only if $g \in \Lambda \cap N$.
Thus $\Stab_{H_n} (e_n)=\Lambda\cap N=\Stab_\Lambda(e_n)$.
This shows condition~\eqref{it: assumpt stab e_n are equal} of Lemma~\ref{main dendrological lemma}.

As for condition~\eqref{it: assumpt stab e-n H-n not = H-n}, it is clear since $\Stab_{H_n} (e_n)$ is contained in the kernel of the action $G\acting T$ while $H_n$ contains the hyperbolic element $h_n^{r_n}$.
Thus $\Lambda$ satisfies the conditions of Lemma~\ref{main dendrological lemma}.

By Conclusion~\eqref{it: I} of Lemma~\ref{main dendrological lemma}, the group $\Lambda$ is the non-trivial limit of $$\Lambda_n\coloneqq \la \Lambda, H_n\ra=\Lambda*_{(\Lambda\cap N)} H_n.$$ 
Since $\Lambda_n\cap N$ is contained in the edge stabiliser $\Lambda\cap N$ which is in turn contained in $N$, we have $\Lambda\cap N=\Lambda_n\cap N$.
By Conclusion~\eqref{it: III} of Lemma~\ref{main dendrological lemma}, $\Lambda_n$ belongs to ${\mathcal L}_{T}$ for a subsequence of the $(e_n)_n$ and a proper subtree $T_{\Lambda_n}\subsetneq T$ that does not contain these edges. For any $k \in \mathbb{N}$, we have $\Stab_{\Lambda_n}(e_k)=\Stab_{\Lambda}(e_k)$ (by the ``More generally" part of Conclusion~\eqref{it: I} of Lemma~\ref{main dendrological lemma}) and, since all the $e_k$ belong to $E(Z)$, Equation~\eqref{eq: stab L (v)= L cap N} gives $\Stab_{\Lambda_n}(e_k)=\Lambda\cap N=\Lambda_n\cap N$.
Thus each $\Lambda_n$ satisfies the assumptions of Theorem~\ref{th: a full subtree with stab in the kernel}, thereby completing the proof that ${\mathcal N}_T$ has no isolated points. This concludes the proof of Theorem~\ref{th: a full subtree with stab in the kernel}.
\end{proof}

If $G\acting T$ is a minimal and irreducible action on a tree, let us denote by
\[\Sub_{\vert \bullet\bs T\vert \infty}(G):=\{\Lambda \in \Sub(G)\colon \left\vert \Lambda\bs T\right\vert=\infty\}\]
the set of subgroups whose action on $T$ has infinitely many orbits of edges. Observe that $\Sub_{\vert \bullet\bs T\vert \infty}(G)$ as well as its closure $\overline{\Sub_{\vert \bullet\bs T\vert \infty}(G)}$ are invariant under the action $G\acting \Sub(G)$.

\begin{remark}\label{rem: H fg minimal -> T/H compact}
Bass-Serre Theory asserts that if $\Lambda\acting T $ is a \emph{minimal} action and if $\Lambda$ is finitely generated, then the quotient graph $\Lambda\bs T$ is compact.
Indeed, by writing $\Lambda\bs T$ as an increasing union of compact connected subgraphs $\kappa_n$, we can write $G$ as the increasing union of the subgroups $G_n$ corresponding to the graphs of groups of $\kappa_n$. Since $G$ is finitely generated, this sequence of subgroups is stationary: $G_n=G_{n_0}$ for $n$ greater than some $n_0$. The pull-back of $\kappa_{n_0}$ is a $G$-invariant cocompact subtree which coincides with $T$ by minimality.
\end{remark}

\begin{question}
What is the class of groups and minimal irreducible actions $G\acting T$ such that $\Lambda\leq G$ has infinite index if and only if $\Lambda\bs T$ is infinite, i.e. for which \[\Sub_{\vert \bullet\bs T\vert \infty}(G)=\Sub_{[\infty]}(G) ?\]
Compare with Remarks~\ref{rem: when G bs T is infinite} and~\ref{Rem: acylindrical act. examples}, and with Proposition~\ref{claim: L/T finite -> L/G finite}.
\end{question}
Thinking about this problem led us to the following question.
\begin{question}
Let $H,A$ be infinite subgroups of the countable group $G$. Assume that $A$ is amenable and that $\beta_1^{(2)}(G)>0$ and 
$\beta_1^{(2)}(H)<\infty$. If $A\bs G /H$ is finite, does this imply that $H$ has finite index in $G$?
\end{question}

\begin{theorem}[A segment with finite stabiliser] 
\label{th: two edge-stab with finite intersection}
Assume $G\acting T$ is a minimal and irreducible action on a tree. 
Assume that there are two edges $f_1,f_2$ of $T$ such that $\Stab_G(f_1)\cap \Stab_G(f_2)$ is finite.
Then the set $\Sub_{\vert \bullet\bs T\vert \infty}(G)$ has no isolated point, thus its closure $\overline{\Sub_{\vert \bullet\bs T\vert \infty}(G)}$ is a perfect set. In particular:
 \[\overline{\Sub_{\vert \bullet\bs T\vert \infty}(G)}\subseteq \PK(G).\]
\end{theorem}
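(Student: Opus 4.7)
The plan is to establish that $\Sub_{\vert \bullet\bs T\vert \infty}(G)$ has no isolated point; its closure is then automatically a perfect subset of $\Sub(G)$ (if a point of the closure were isolated, it would have to lie in the set and be isolated there, a contradiction), and the inclusion in $\PK(G)$ is then immediate. Note that the hypothesis forces the kernel $N$ of $G\acting T$ to be finite, since $N\leq \Stab_G(f_1)\cap \Stab_G(f_2)$.

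I split the proof of ``no isolated point'' into two cases depending on whether $\Lambda\in \Sub_{\vert \bullet\bs T\vert \infty}(G)$ is finitely generated. If $\Lambda$ is \emph{not} finitely generated, write $\Lambda=\bigcup_k \Lambda_k$ as an increasing union of finitely generated subgroups. The inclusion $\Lambda_k\leq \Lambda$ induces a surjection $\Lambda_k\bs T\twoheadrightarrow \Lambda\bs T$; since the target is infinite, $|\Lambda_k\bs T|=\infty$, so $\Lambda_k\in \Sub_{\vert \bullet\bs T\vert \infty}(G)$ and $\Lambda_k\to \Lambda$ non-trivially. If instead $\Lambda$ is finitely generated, then Remark~\ref{rem: H fg minimal -> T/H compact} combined with $|\Lambda\bs T|=\infty$ rules out a minimal $\Lambda$-action: there exists a proper $\Lambda$-invariant subtree $T_\Lambda\subsetneq T$.

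For the finitely generated case I would apply Lemma~\ref{main dendrological lemma}, following closely the template of Theorem~\ref{th: a full subtree with stab in the kernel}. By Lemma~\ref{lem: existence of hyperbolic in Z} applied with the finite segment $L=[f_1,f_2]$, I can translate $[f_1,f_2]$ arbitrarily deep into a fixed half-space of $T$ avoiding $T_\Lambda$: for each $n$, choose $g_n\in G$ so that the edge $e_n$ of $g_n[f_1,f_2]$ closest to $T_\Lambda$ lies at distance $>n+1$ from $T_\Lambda$, with the rest of the translated segment in the component $Z_n$ of $T\smallsetminus \{e_n\}$ disjoint from $T_\Lambda$, and so that the $\Lambda$-orbits of the $e_n$'s are independent (Condition~(e)); the segments can be spread out using further hyperbolic elements provided by Lemma~\ref{lem: existence of hyperbolic in Z}. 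Take $h_n\in G$ hyperbolic with axis $S_n$ in $Z_n$ (Lemma~\ref{lem: existence of hyperbolic in Z} once more), arranged so that $\langle h_n\rangle\cap \Lambda=\{\id\}$, and replace $h_n$ by a suitable power $h_n^{r_n}$ so that it centralises the finite group $\Stab_G(g_n[f_1,f_2])=g_nFg_n^{-1}$. Set $H_n:=\la \Stab_\Lambda(e_n), h_n^{r_n}\ra$.

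The main obstacle is verifying Condition~(3c) of Lemma~\ref{main dendrological lemma}, that $\Stab_\Lambda(e_n)=\Stab_{H_n}(e_n)=\Lambda\cap H_n$. The key input is the finite intersection hypothesis, used as follows: since $g_n[f_1,f_2]$ is a segment with \emph{finite} $G$-stabiliser $g_nFg_n^{-1}$, it separates the far endpoint from $e_n$ via a path of finite-stabiliser edges, which forces the natural map $\Stab_\Lambda(e_n)*\langle h_n^{r_n}\rangle\twoheadrightarrow H_n$ to be an isomorphism by a Bass-Serre/ping-pong argument on the action of $H_n$ on $T$, with $e_n$ playing the role of the ``base edge''. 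The normal form theorem then gives the stabiliser and intersection equalities (any element of $H_n$ of positive length in $h_n^{r_n}$ translates $e_n$ into $Z_n$ and does not lie in $\Lambda$, whereas words of length zero in $h_n^{r_n}$ are exactly $\Stab_\Lambda(e_n)$). Conditions~(3a), (3b), (3d) are by construction, and~(3e) comes from the spread-out choice of the $g_n$'s. Lemma~\ref{main dendrological lemma}~(II) then furnishes $\Lambda_n:=\la\Lambda,H_n\ra\to \Lambda$ non-trivially, and~(V) gives $|\Lambda_n\bs T|=\infty$, so $\Lambda_n\in \Sub_{\vert \bullet\bs T\vert \infty}(G)$, completing the proof.
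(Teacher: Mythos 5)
Your overall strategy is the correct one and matches the paper's: dispose of the non-finitely generated case by approximating from inside, and for finitely generated $\Lambda$ translate the segment $[f_1,f_2]$ deep into a half-tree avoiding a proper invariant subtree $T_\Lambda$ and feed the resulting data into Lemma~\ref{main dendrological lemma}. The execution of the finitely generated case, however, has two genuine gaps.

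First, you take $e_n$ to be the edge of $g_n[f_1,f_2]$ \emph{closest} to $T_\Lambda$. An element $\lambda\in\Stab_\Lambda(e_n)$ then fixes only the geodesic from $e_n$ back to $T_\Lambda$ (it fixes $e_n$ and preserves $T_\Lambda$, hence fixes the bridge between them); this geodesic does \emph{not} contain the rest of the translated segment, so the finite-intersection hypothesis gives no control on $\Stab_\Lambda(e_n)$: it is only contained in the stabiliser of a single edge, which may be infinite (think of $A\ast_CB$ with $C$ infinite but weakly malnormal). With $\Stab_\Lambda(e_n)$ possibly infinite and acting non-trivially, $S_n=\mathrm{Axis}(h_n)$ need not be $H_n$-invariant (hypothesis~(2) of the lemma fails), no power of $h_n$ need centralise $\Stab_\Lambda(e_n)$, and the equality $\Stab_\Lambda(e_n)=\Stab_{H_n}(e_n)$ of~(3c) simply fails, since the right-hand side is meant to be finite. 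The fix --- and what the paper does --- is to take $e_n$ to be the \emph{far} edge $g_nf_2$, with $g_nf_1$ separating it from $T_\Lambda$: then $\Stab_\Lambda(e_n)$ fixes the whole segment $g_n[f_1,f_2]$ and hence lies in the finite group $g_n\left(\Stab_G(f_1)\cap\Stab_G(f_2)\right)g_n^{-1}$. Relatedly, your phrase ``a path of finite-stabiliser edges'' is incorrect: individual edges of $[f_1,f_2]$ may well have infinite stabilisers; only the pointwise stabiliser of the whole segment is finite.

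Second, you ``replace $h_n$ by a power centralising the finite group $g_nFg_n^{-1}$''. A power of an element centralises a finite subgroup only if some power normalises it; this is automatic for a finite \emph{normal} subgroup (conjugation gives a homomorphism $\Z\to\Aut(N_0)$ with finite image) but false for an arbitrary finite subgroup such as $g_nFg_n^{-1}$. This is exactly why the paper's proof opens with a non-trivial preliminary reduction that your proposal omits: replacing $f_2$ by another edge $f_3$, chosen by minimising the cardinality of the intersection and using a hyperbolic element through $f_3$, so that $\Stab_G(f_1)\cap\Stab_G(f_3)$ equals the kernel $N_0$ of $G\acting T$. After this reduction one gets $\Stab_\Lambda(e_\Lambda)=\Lambda\cap N_0$, a subgroup of a finite normal subgroup acting trivially on $T$; a power of $h_n$ commuting with $N_0$ then exists, $H_n=\la h_n^{r_n}\ra\times(\Lambda\cap N_0)$ preserves $\mathrm{Axis}(h_n)$, and all the hypotheses of Lemma~\ref{main dendrological lemma} (packaged in the paper as Theorem~\ref{th: a full subtree with stab in the kernel}) can be checked cleanly. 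Without this reduction your construction of $H_n$ does not go through.
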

\begin{remark}\label{rem: when G bs T is infinite}
Observe that $G\bs T$ may be infinite in the above theorem and in this case \[{\Sub_{\vert \bullet\bs T\vert \infty}(G)} = \Sub(G)=\PK(G).\]
On the contrary, if $G\bs T$ is finite, then ${\Sub_{\vert \bullet\bs T\vert \infty}(G)}\subseteq \Sub_{[\infty]}(G)$.
\end{remark}

The subset $\overline{\Sub_{\vert \bullet\bs T\vert \infty}(G)}$ is invariant under the conjugacy $G$-action. 
We will see that it moreover admits a natural finite partition into perfect $G$-invariant topologically transitive pieces.
Indeed, recall from Proposition~\ref{prop:finite normal subgr clopen part} that for any
finite normal subgroup $N$ there is
a finite clopen partition of $\Sub(G)$ indexed by the set $\Conj(N)^{G}$ of $G$-conjugacy classes of subgroups of $N$.
This partition induces a $G$-invariant partition of $\overline{\Sub_{\vert \bullet\bs T\vert \infty}(G)}$:
\[\overline{\Sub_{\vert \bullet\bs T\vert \infty}(G)} = \bigsqcup_{C \in \Conj(N)^{G}}{\mathcal F}_C^{T}
 \]
 where, for each 
 $C \in \Conj(N)^G$, the subset  
 \begin{equation}\label{eq: def F-C}
{\mathcal F}^{T}_C:=\overline{\Sub_{\vert \bullet\bs T\vert \infty}(G)}\cap \{\Lambda\in \Sub(G)\colon \Lambda\cap N\in C\}
\end{equation}
 is a perfect set by Theorem~\ref{th: two edge-stab with finite intersection}.
 \begin{theorem}[Topological transitivity]
 \label{th: two edge-stab with finite intersection - top. dyn.}
Suppose that $G\acting T$ is a minimal and irreducible action on a tree. 
Assume that there are two edges $f_1,f_2$ of $T$ such that $\Stab_G(f_1)\cap \Stab_G(f_2)$ is finite, thus the kernel $N_0$ of the action $G\acting T$ is finite.
Then 
\begin{enumerate}
\item[(i)] For each $C\in \Conj(N_0)^{G}$ the action $G\acting {\mathcal F}^{T}_C$ is topologically transitive, and
\item[(ii)]  If $C \in \Conj(N_0)^G$ has cardinality 1 then the action $G \acting {\mathcal F}_C^{T}$ is highly topologically transitive.
\end{enumerate}
 In particular, if $N_0$ is trivial, then the action $G\acting \overline{\Sub_{\vert \bullet\bs T\vert \infty}(G)}$ is highly topologically transitive.
\end{theorem}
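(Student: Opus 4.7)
The plan is to use the amalgamated free product construction from Lemma~\ref{main dendrological lemma} as a mechanism to realize the transitivity condition on each piece $\mathcal{F}_C^T$.

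After shrinking, I would assume each $V_i = \mathcal{V}(\II_i, \OO_i)\cap \mathcal{F}_C^T$ is a basic clopen neighborhood intersected with the piece, and (by Proposition~\ref{prop:finite normal subgr clopen part}) that all $\Lambda \in V_i$ share the same intersection with $N_0$, say $\Lambda\cap N_0 = M_i\in C$. By density of $\Sub_{\vert \bullet\bs T\vert \infty}(G)$ in its closure, pick a reference subgroup $\Lambda_i \in V_i \cap \Sub_{\vert \bullet\bs T\vert \infty}(G)$ for each $i=1,\dots,2r$, and, using the construction behind Theorem~\ref{th: two edge-stab with finite intersection}, arrange each $\Lambda_i$ to satisfy the hypotheses of Lemma~\ref{main dendrological lemma}, with associated proper invariant subtree $T_{\Lambda_i}$, separating edges, subtrees and subgroups. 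Since $M_i, M_{r+i}\in C$, there exists $g_0^{(i)}\in G$ conjugating $M_i$ to $M_{r+i}$; when $|C|=1$ this constraint is vacuous and we take $g_0^{(i)}=\id$, while when $|C|>1$ only the case $r=1$ is at stake, so a single coset of $N_G(M_1)$ suffices.

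Fix the conjugating element $g \coloneqq g_0\cdot\gamma^{N}$ where $\gamma\in G$ is a hyperbolic element (existing by irreducibility, chosen inside $N_G(M_i)$ if $|C|>1$) and $N$ is a large integer to be tuned. For each $i=1,\dots,r$, I propose the candidate
\[ H_i \coloneqq \bigl\langle \Lambda_i,\; g^{-1}\Lambda_{r+i}g \bigr\rangle. \]
The goal is to recognize $H_i$ as an amalgam $\Lambda_i *_K g^{-1}\Lambda_{r+i}g$ over $K = M_i$ via Lemma~\ref{main dendrological lemma}, using $\Lambda = \Lambda_i$, $H_n = g^{-1}\Lambda_{r+i}g$, $S_n = g^{-1}T_{\Lambda_{r+i}}$, and $e_n = e$ a suitable edge along an axis of $\gamma$ separating $T_{\Lambda_i}$ from $g^{-1}T_{\Lambda_{r+i}}$. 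For $N$ sufficiently large, the translation length of $g$ places $g^{-1}T_{\Lambda_{r+i}}$ arbitrarily far from $T_{\Lambda_i}$, and conclusion (II) of Lemma~\ref{main dendrological lemma} forces $H_i \to \Lambda_i$ in the Chabauty topology; thus for $N$ exceeding a threshold determined by the finite sets $\II_i \cup \OO_i$, we get $H_i \in V_i$. By symmetry the conjugate
\[ g H_i g^{-1} = \langle g\Lambda_i g^{-1},\; \Lambda_{r+i} \rangle \]
lies in $V_{r+i}$ for $N$ large. A single $N$ large enough works for all $i=1,\dots,r$ simultaneously since there are only finitely many thresholds. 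Finally, by the amalgam structure with $K = M_i$, one checks $H_i \cap N_0 = M_i$ and $gH_ig^{-1}\cap N_0 = M_{r+i}$, so all membership conditions in $\mathcal{F}_C^T$ are met.

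The main technical obstacle is justifying the hypothesis $\Stab_{\Lambda_i}(e) = \Stab_{g^{-1}\Lambda_{r+i}g}(e) = M_i$ of Lemma~\ref{main dendrological lemma}, because a priori each side contains $M_i$ (resp.\ $g^{-1}M_{r+i}g = M_i$) but may be strictly larger. This is where the hypothesis $|\Stab_G(f_1)\cap\Stab_G(f_2)|<\infty$ is crucial: by translating $f_1, f_2$ via the $G$-action one produces arbitrarily distant pairs of edges whose common $G$-stabilizer is finite, hence contained in $N_0$; picking the edge $e$ along the $\gamma$-axis at a distance large enough that its stabilizer must intersect another nearby edge-stabilizer trivially modulo $N_0$ forces the edge-stabilizers in $\Lambda_i$ and $g^{-1}\Lambda_{r+i}g$ to collapse onto $M_i$. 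This is straightforward when $|C|=1$ (the single $M$ is $G$-invariant so the same $g$ works for all $r$ pairs), yielding high topological transitivity. When $|C|>1$, taking $r\geq 2$ would require a single $g$ to simultaneously realize multiple conjugations on $N_0$, and the diagonal $G$-orbits on $C\times C$ are generally nontrivial, which is the precise obstruction pointed out in Remark~\ref{rem: obstruction to r-top transitive}; consequently only $r=1$ survives in part (i).
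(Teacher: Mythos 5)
Your proposal is correct and follows essentially the same route as the paper: reduce to finitely generated representatives $\Lambda_i$ with $\Lambda_i\cap N_0=M$, use the $(f_1,f_2)$ hypothesis to find a separating edge whose $\Lambda_i$-stabilisers all collapse to $M$, push $\Lambda_{r+i}$ far away by a power of a hyperbolic element normalising $M$, and apply Lemma~\ref{main dendrological lemma} to get $\Lambda_i *_M g^{-n}\Lambda_{r+i}g^{n}\to\Lambda_i$ together with the symmetric convergence of its conjugate to $\Lambda_{r+i}$. The paper's proof is exactly this, organised via Claims~\ref{Claim: half-tree in T-min}--\ref{Claim: half-trees} and the sequence $e_n=g^{kn}\cdot e$, $H_n=g^{kn}\Lambda_{r+i}g^{-kn}$.
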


\begin{remark}
The condition ``{\em $C \in \Conj(N_0)^G$ has cardinality $1$}'' means that $M\in C$ is normalised by $G$; it  
is satisfied for instance when $N_0$ is cyclic: a subgroup $M\leq N_0$ is uniquely defined by its cardinality. 
It is also satisfied for some finite index subgroup $G'\leq G$ that acts trivially on $\Sub(N_0)$ by conjugation. Then $G'$ satisfies Theorem~\ref{th: two edge-stab with finite intersection - top. dyn.}(ii), but for a finer partition.
\end{remark}

\begin{remark}[Obstruction to topological $2$-transitivity]\label{rem: obstruction to r-top transitive}
Note that the condition is necessary in part (ii) of Theorem~\ref{th: two edge-stab with finite intersection - top. dyn.}. Indeed, if $C$ contains at least two distinct elements $M,M'$, then the action $G\acting {\mathcal F}^{T}_C$ is not topologically $2$-transitive:
Consider the four non-empty open sets $V_1=V_2=V_3={\mathcal F}^{T}_C\cap \{\Lambda\in \Sub(G)\colon \Lambda\cap N_0=M\}$ and $V_4={\mathcal F}^{T}_C\cap \{\Lambda\in \Sub(G)\colon \Lambda\cap N_0=M'\}$. For every element $\gamma\in G$ such that $\gamma V_1\cap V_3\not=\emptyset$, then $\gamma M\gamma^{-1}=M$ thus $\gamma V_2\cap V_4=\emptyset$.
\end{remark}

\begin{question}
Does the conclusion of the above  Theorem~\ref{th: two edge-stab with finite intersection} still hold if we replace the assumption about the edge stabilisers with the assumption that the kernel $N_0$ of $G\acting T$ is finite and the action of $G/N_0$ on the boundary of $T$ is topologically free? François Le Maître pointed out that Baumslag-Solitar groups $\mathrm{BS}(m,n)$, 
$2\leq \vert m\vert <n$ give counter-examples to Theorem~\ref{th: two edge-stab with finite intersection - top. dyn.} under this weaker assumption, see  \cite[Theorem B]{CGLMS-1-arxiv}. 
\end{question}

\begin{proof}[Proof of Theorem~\ref{th: two edge-stab with finite intersection}]
Let us start by showing that, up to replacing $f_2$ with another edge, we can assume that $\Stab_G(f_1)\cap \Stab_G(f_2)=N_0$, the  kernel of $G\acting T$.
In the connected component $Z_2$ of $T\smallsetminus \{f_2\}$ that does not contain $f_1$, let us choose an edge $f_3$ such that the intersection $\Stab_G(f_1)\cap \Stab_G(f_3)$ has minimal cardinality; this is possible by the finiteness of $\Stab_G(f_1)\cap \Stab_G(f_2)$ and because for every edge $f\in Z_2$, the edge $f_2$ belongs to the segment $[f_1,f]$. Let us call $P:=\Stab_G(f_1)\cap \Stab_G(f_3)$.
Let $Z_3$ be the connected component of $T\smallsetminus \{f_3\}$ that does not contain $f_1$.
Every edge $f$ in $Z_3$ satisfies 
\begin{equation}\label{eq: stability of Stab}
\Stab_G(f_1)\cap \Stab_G(f)=P.
\end{equation}
By Lemma~\ref{lem: existence of hyperbolic in Z}, there is an element $\gamma_0\in G$ whose axis contains $f_3$:  a ``half of the axis'' of $\gamma_0$ is in $Z_3$. 
Up to replacing $\gamma_0$ by its inverse, one can assume that $\gamma_0^{-1}\cdot f_3$ belongs to $Z_3$.

For every edge $e$ of $T$, there is a positive integer $s$ such that $e\subseteq \gamma_0^r\cdot Z_3$ for every $r\geq s$. Indeed, first note that $Z_3 \subseteq \gamma_0^r Z_3$ for all $r \geq 1$. Now if $e \notin Z_3$, let $y$ be the projection of $e$ on the axis of $\gamma_0$, i.e. the vertex of $\mathrm{Axis}(\gamma_0)$ that is the closest to $o(e)$. For every large enough positive power $r$ of $\gamma_0$, the vertex $y$ lies between $f_3$ and $\gamma_0^r\cdot f_3$; and then $y$ and $e$ belong to $\gamma_0^{r+1}\cdot Z_3$. 
In particular, for every $r$ larger than some $s_0$, the segment $[f_1,f_3]$ is contained in $\gamma_0^r\cdot Z_3$.
Every edge $e$ in $\gamma_0^r\cdot Z_3$ satisfies $\Stab_G(\gamma_0^r \cdot f_1)\cap \Stab_G(e)=\gamma_0^r P \gamma_0^{-r}$, the $\gamma_0^r$-translated version of \eqref{eq: stability of Stab}. In particular, for the edges $f_1,f_3$ and $r\geq s_0$:
$\Stab_G(\gamma_0^r \cdot f_1)\cap \Stab_G(f_1)=\gamma_0^r P \gamma_0^{-r}=\Stab_G(\gamma_0^r \cdot f_1)\cap \Stab_G(f_3)$. It follows that 
$\gamma_0^r P \gamma_0^{-r}\leq \Stab_G(f_1)\cap \Stab_G(f_3)=P$.
But $P$ and $\gamma_0^r P \gamma_0^{-r}$ have the same finite cardinal, thus $P=\gamma_0^r P \gamma_0^{-r}$. It follows that $P$ fixes every edge of $T$ so that $P\leq N_0$.
Since the kernel $N_0$ in contained in both $\Stab_G(f_1)$ and $ \Stab_G(f_3)$, we also have $P\geq N_0$.
We have proved that
\begin{equation}
\Stab_G(f_1)\cap \Stab_G(f_3)=N_0.
\end{equation}
So, replace $f_2$ with $f_3$.

\medskip
If $\Lambda\in \Sub_{\vert \bullet\bs T\vert \infty}(G)$ is not finitely generated, then $\Lambda$ is a limit of finitely generated subgroups $\Lambda_n\leq \Lambda$ which satisfy a fortiori that $\Lambda_n\bs T$ is infinite.
This is the case, in particular, if $G\bs T$ is not finite: by minimality, $G$ is not finitely generated.
 
It is thus enough to prove that $\Lambda\in \PK(G)$ when $\Lambda\in \Sub_{\vert \bullet\bs T\vert \infty}(G)$ is finitely generated, by showing that $\Lambda$ satisfies the conditions of Theorem~\ref{th: a full subtree with stab in the kernel}.
Since $\Lambda$ is finitely generated, it admits a cocompact (thus proper) invariant subtree $T_\Lambda\subsetneq T$ (see Remark~\ref{rem: H fg minimal -> T/H compact}).

\begin{claim}\label{Claim: f1-f2 in Z}
 Let $f$ be any edge of $T\smallsetminus T_\Lambda$ and let $Z_f$ be the connected component of $T\smallsetminus \{f\}$ that does not contain $T_\Lambda$.
 Let $L$ be the segment $[f_1, f_2]$. One can assume, up to replacing $L$ with some $G$-translate, that $L\subseteq Z_f$ and that $f_1$ separates $f_2$ from $f$ and $T_\Lambda$.
\end{claim}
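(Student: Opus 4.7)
The plan is to deduce this claim almost directly from Lemma~\ref{lem: existence of hyperbolic in Z}. That lemma is stated precisely for the situation we have here: given an edge of $T$, a prescribed connected component of its complement, and a finite segment $L$, it provides a hyperbolic element that translates $L$ into that component in a controlled way. We only need to verify that the extra requirement involving $T_\Lambda$ comes for free from the geometry.

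Concretely, I would apply Lemma~\ref{lem: existence of hyperbolic in Z} with the edge taken to be $f$, the connected component taken to be $Z_f$ (the component of $T\smallsetminus\{f\}$ not meeting $T_\Lambda$), and the finite segment $L=[f_1,f_2]$. The lemma yields a hyperbolic $\gamma_2\in G$ such that $\gamma_2\cdot L\subseteq Z_f$ and such that $\gamma_2\cdot f_1$ separates $\gamma_2\cdot f_2$ from $f$. Replacing $L$ by the translate $\gamma_2\cdot L$ (and relabelling $f_1,f_2$ as $\gamma_2\cdot f_1,\gamma_2\cdot f_2$), the first two assertions of the claim -- namely $L\subseteq Z_f$ and that $f_1$ separates $f_2$ from $f$ -- hold by construction.

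It then remains only to observe that, in a tree, separation from $f$ already forces separation from $T_\Lambda$. Indeed, $Z_f$ is by definition the connected component of $T\smallsetminus\{f\}$ disjoint from $T_\Lambda$, so the edge $f$ lies on every path from any vertex of $Z_f$ to $T_\Lambda$. The unique arc in $T$ from $f_2$ to $T_\Lambda$ therefore decomposes as the arc from $f_2$ to $f$, which (by the conclusion of Lemma~\ref{lem: existence of hyperbolic in Z} after our translation) passes through $f_1$, followed by the arc from $f$ to $T_\Lambda$. Concatenation shows that $f_1$ lies on the geodesic from $f_2$ to $T_\Lambda$, i.e.\ $f_1$ separates $f_2$ from $T_\Lambda$, as required.

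There is no real obstacle: the claim is a direct unpacking of Lemma~\ref{lem: existence of hyperbolic in Z} together with the elementary tree-separation observation above.
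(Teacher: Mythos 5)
Your proof is correct and follows essentially the same route as the paper: the authors also obtain the claim as a direct application of the fourth item of Lemma~\ref{lem: existence of hyperbolic in Z} (the existence of $\gamma_2$), with the separation from $T_\Lambda$ being the same elementary tree observation you spell out. The only point the paper records that you omit is why the replacement is harmless for the rest of the argument: since $N_0$ is normal, the translated pair $(\gamma_2\cdot f_1,\gamma_2\cdot f_2)$ still satisfies the standing hypothesis $\Stab_G(\gamma_2\cdot f_1)\cap\Stab_G(\gamma_2\cdot f_2)=N_0$, which is what justifies the phrase ``one can assume''.
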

\begin{proof}[Proof of Claim~\ref{Claim: f1-f2 in Z}]
\renewcommand{\qedsymbol}{$\diamond$}
Since $N_0$ is normal, for any $\gamma\in G$ the pair $(\gamma\cdot f_1, \gamma\cdot f_2)$ satisfies the same assumption as $(f_1,f_2)$. 
Then Claim~\ref{Claim: f1-f2 in Z} is a direct application of the fourth item of Lemma~\ref{lem: existence of hyperbolic in Z} (existence of $\gamma_2$).
\end{proof}

\begin{claim}\label{claim: Stab e-Lambda}
Suppose that $f, Z_f$ and $L$ are as in the Claim~\ref{Claim: f1-f2 in Z} and define the edge $e_\Lambda:=f_2$. Then
 \begin{equation}
\Stab_\Lambda(e_\Lambda)= \Lambda\cap N_0.
\end{equation}
\end{claim}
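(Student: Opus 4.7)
The plan is to prove both inclusions of $\Stab_\Lambda(e_\Lambda)=\Lambda\cap N_0$ directly. The inclusion $\Lambda \cap N_0 \subseteq \Stab_\Lambda(e_\Lambda)$ is immediate: by definition the kernel $N_0$ of the action $G \acting T$ fixes every edge of $T$, in particular $e_\Lambda = f_2$. So the content of the claim lies in the reverse inclusion, and I would use the key arrangement provided by Claim~\ref{Claim: f1-f2 in Z}: $L = [f_1,f_2]$ lies in $Z_f$ and $f_1$ separates $f_2$ from $T_\Lambda$. Combined with the preliminary reduction made at the start of the proof of Theorem~\ref{th: two edge-stab with finite intersection}, we have $\Stab_G(f_1) \cap \Stab_G(f_2) = N_0$.

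Now take $\lambda \in \Stab_\Lambda(f_2)$. Since the action is by orientation-preserving isometries, $\lambda$ fixes both endpoints of $f_2$; let $v$ be the endpoint of $f_2$ that lies on the geodesic segment from $f_2$ to $T_\Lambda$. Since $\lambda \in \Lambda$, the subtree $T_\Lambda$ is $\lambda$-invariant. The geodesic $P$ in $T$ from $v$ to $T_\Lambda$ is unique, so $\lambda(P)$ is again a geodesic from $\lambda \cdot v = v$ to the $\lambda$-invariant set $T_\Lambda$, and hence $\lambda(P) = P$ setwise. Being an isometry fixing the endpoint $v$ of this finite segment, $\lambda$ fixes $P$ pointwise (edge by edge, proceeding from $v$). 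By the separation property of $f_1$ provided by Claim~\ref{Claim: f1-f2 in Z}, the edge $f_1$ lies on $P$, so $\lambda$ fixes $f_1$ as well.

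Therefore $\lambda \in \Stab_G(f_1) \cap \Stab_G(f_2) = N_0$, so $\lambda \in \Lambda \cap N_0$, which gives the reverse inclusion and completes the proof.

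I do not anticipate any real obstacle here: the whole content is packaged into the preliminary reduction (which ensures that $\Stab_G(f_1)\cap\Stab_G(f_2)$ coincides with $N_0$, not just is finite) together with the geometric placement $f_1$--separates--$f_2$--from--$T_\Lambda$ of Claim~\ref{Claim: f1-f2 in Z}; given these, the uniqueness of geodesics in a tree does all the work. The only subtlety to keep in mind is the convention that isometries of $T$ are orientation preserving, which guarantees that fixing the edge $f_2$ implies fixing both of its endpoints, so that we may start the pointwise-fixing argument along $P$.
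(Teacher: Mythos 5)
Your proof is correct and follows essentially the same route as the paper's: the $\Lambda$-invariance of $T_\Lambda$ forces $\lambda$ to fix pointwise the geodesic from $e_\Lambda$ to $T_\Lambda$, which contains $f_1$ by Claim~\ref{Claim: f1-f2 in Z}, so $\lambda\in\Stab_G(f_1)\cap\Stab_G(f_2)=N_0$ thanks to the preliminary reduction, while the reverse inclusion is immediate since $N_0$ is the kernel. No issues.
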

\begin{proof}[Proof of Claim 2]
\renewcommand{\qedsymbol}{$\diamond$}
If $\lambda$ belongs to $\Stab_\Lambda(e_\Lambda)$, then (since $T_\Lambda$ is $\Lambda$-invariant) the whole path from $e_\Lambda$ to $T_\Lambda$ must be point-wise fixed by $\lambda$. 
This path uses the subpath $L$ whose edges $f_1$ and $f_2$ satisfy $\Stab_\Lambda(f_1)\cap \Stab_\Lambda(f_2)\leq  \Stab_G(f_1)\cap \Stab_G(f_2)=N_0$.
Thus $\lambda\in \Lambda\cap N_0$, i.e. $\Stab_\Lambda(e_\Lambda)\leq \Lambda\cap N_0\leq  \Stab_\Lambda(e_\Lambda)$ since $N_0$ is the kernel of the action. Claim~\ref{claim: Stab e-Lambda} is proved.
\end{proof}
\renewcommand{\qedsymbol}{$\Box$}

We have proved that $\Lambda$ (when finitely generated) satisfies the conditions of Theorem~\ref{th: a full subtree with stab in the kernel}, i.e. $\Lambda\in {\mathcal N}_T$, in the terminology of that theorem. It follows that $\overline{\Sub_{\vert \bullet\bs T\vert \infty}(G)}\subseteq \overline{{\mathcal N}_T}$.
Observe that for every $\Lambda'\in {\mathcal N}_T$, the quotient $\Lambda'\bs T$ is infinite (one of the components of $T \smallsetminus \{e_{\Lambda'}\}$ is sent injectively in $\Lambda'\bs T$), thus ${\mathcal N}_T\subseteq \Sub_{\vert \bullet\bs T\vert \infty}(G)$.
Hence 
\begin{equation}
\overline{\Sub_{\vert \bullet\bs T\vert \infty}(G)}= \overline{{\mathcal N}_T}.
\end{equation}
Recall from Theorem~\ref{th: a full subtree with stab in the kernel} that ${\mathcal N}_T$ has no isolated points with respect to its induced topology.
This concludes the proof of Theorem~\ref{th: two edge-stab with finite intersection}.
\end{proof}

We now prove Theorem~\ref{th: two edge-stab with finite intersection - top. dyn.}, keeping the same notation and numbering of claims.

\begin{proof}[Proof of Theorem~\ref{th: two edge-stab with finite intersection - top. dyn.}]
Fix $C \in \Conj(N_0)^G$. 

In order to prove (i), we will have to show that if  $V_1, V_2$ are two arbitrary non-empty open sets of ${\mathcal F}^{T}_C$, then there is $\gamma\in G$ such that $\gamma V_1\gamma^{-1}\cap V_2\not=\emptyset$.

Finitely generated subgroups are dense in $\Sub_{\vert \bullet\bs T\vert \infty}(G)$ and $\{\Lambda\in \Sub(G)\colon \Lambda\cap N_0\in C\}$ is open, so there exist finitely generated $\Lambda_i \in V_i$ for $i = 1,2$. 
By Theorem~\ref{th: two edge-stab with finite intersection}, $\Sub_{\vert \bullet\bs T\vert \infty}(G)$ has no isolated points, so we can assume that $\Lambda_1, \Lambda_2 \notin \Sub(N_0)$. 
Moreover, we can assume that $ \Lambda_1 \cap N_0 = \Lambda_2 \cap N_0=:M$. Indeed, if $M_i := \Lambda_i \cap N_0$ for $i = 1,2$ then there exists $\gamma_2\in G$ such that $\gamma_2  M_2 \gamma_2^{-1}=M_1$, and we can replace $V_2$ by $V'_2:=\gamma_2 V_2\gamma_2^{-1}$: if there exists $\gamma$ such that $\gamma V_1\gamma^{-1}\cap V'_2 \not=\emptyset$ then $\gamma_2^{-1} \gamma V_1 \gamma^{-1} \gamma_2 \cap V_2 \neq \emptyset$.

To prove part (ii), we need to show that, if $C = \{M\}$ (i.e. $M$ is normal in $G$) then, for all positive integers $r$ and for all non-empty open sets $V_1, \dots, V_{2r} \subseteq \mathcal{F}_C$, there exists $\gamma \in G$ such that $\gamma V_i \cap V_{r+i} \neq \emptyset$ for each $i \in \{1, \dots, r\}$. As above, we can pick $\Lambda_i\in V_i$ that is finitely generated and not contained in $\Sub(N_0)$. Moreover $\Lambda_i \cap N_0 = M$ for each $i$ by definition. Thus both part (i) and part (ii) follow from the following statement: 

\medskip
{\em Fix $C \in \Conj(N_0)^G$ and $M \in C$ (we don't assume that $|C|=1$). Let $r \geq 1$ and $V_1, \dots V_{2r} \subseteq \mathcal{F}_C$ be open subsets such that, for each $i \in \{1, \dots, 2r\}$, there exists a finitely generated subgroup $\Lambda_i \in V_i$ such that $\Lambda_i \cap N_0 = M$ and $\Lambda_i \neq M$. Then there is an element $\gamma \in G$ such that $\gamma V_i \cap V_{r+i} \neq \emptyset$ for each $i \in \{1, \dots, r\}$.} 

\medskip

Let us prove that this statement indeed holds. For any subgroup $H\leq G$ let $T_H$ denote a 
minimal $H$-invariant subtree of $T$.
Recall that either $T_H$ is a single vertex or $T_H$ is the union of the axes of the hyperbolic elements of $H$.
\begin{claim} \label{Claim: half-tree in T-min}
If $T_H$ contains a {\em half-tree}, i.e. a connected component $Z$ of $T\smallsetminus \{e\}$ for some edge $e\in E(T)$, then $T_H=T$.
If $H$ is moreover finitely generated, then $H\bs T$ is finite.
\end{claim}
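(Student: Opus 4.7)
The plan is to prove the first statement $T_H = T$ by contradiction; the second then follows directly since, once $T_H = T$, Remark~\ref{rem: H fg minimal -> T/H compact} applied to the minimal action of the finitely generated group $H$ on $T_H = T$ yields that $H\bs T$ is finite.

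The key structural input is that a minimal $H$-invariant subtree with more than one vertex has no terminal vertex: otherwise the $H$-orbit of a terminal vertex, together with its unique incident edge, would form a strictly smaller $H$-invariant subtree. Since $T_H$ contains the infinite half-tree $Z$, it has more than one vertex, so this no-leaf property applies, and in particular $H$ contains hyperbolic elements and $T_H$ equals the union of their axes. The base case of the argument handles $T_H = Z$: then the $H$-invariance of $T_H$ forces $H$ to preserve the set $Z$, and since the endpoint $v$ of $e$ in $Z$ is characterized uniquely as the only vertex of $Z$ with a neighbor in $T\smallsetminus Z$, every element of $H$ must fix $v$. Hence $\{v\}$ is a strictly smaller $H$-invariant subtree, contradicting the minimality of $T_H = Z \neq \{v\}$.

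For the general case, I would suppose $Z \subsetneq T_H \subsetneq T$ and derive a contradiction. Pick a boundary edge $e'$ of $T_H$ (an edge of $T$ not in $T_H$ with exactly one endpoint $u' \in T_H$) and let $Y$ be the half-tree of $T$ on the other side of $e'$; then $Y \cap T_H = \emptyset$. Since $T_H$ coincides with the convex hull of any orbit $H\cdot v$ for $v \in Z \subseteq T_H$ (by minimality), and since $H \cdot v \subseteq T_H$ is disjoint from $Y$, the convex hull $T_H$ is contained in the complementary half-tree $X := T \smallsetminus Y$ of $T$, whose unique boundary vertex is $u'$. The main obstacle is then to parlay the sandwich $Z \subsetneq T_H \subseteq X$ into a contradiction; I plan to do so by using the $H$-action on the family of boundary edges of $T_H$, intersecting the resulting family of half-tree constraints to produce an $H$-invariant subtree between $T_H$ and $T$. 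Combined with the base-case preservation principle (that $H$-invariance of a half-tree forces $H$ to fix its unique boundary vertex), this should force $H$ to fix some vertex, contradicting the minimality of $T_H \supseteq Z \neq \{v\}$ and thereby completing the proof.
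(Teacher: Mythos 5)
Your reduction of the second statement to the first via Remark~\ref{rem: H fg minimal -> T/H compact} is correct and matches the paper, and your base case ($T_H=Z$ forces $H$ to fix the unique boundary vertex of $Z$, contradicting minimality) is sound. But the general case, which is the actual content of the claim, is not proved: you explicitly defer ``the main obstacle'' to a plan that does not work as described. The outer half-trees $Y_{e'}$ attached to distinct boundary edges $e'$ of $T_H$ are pairwise disjoint, so the intersection of the complementary constraints $\bigcap_{e'}(T\smallsetminus Y_{e'})$ is exactly $T_H$ itself (any point outside $T_H$ lies beyond the boundary edge through which its geodesic enters $T_H$), while their union is all of $T$ as soon as there are two boundary edges. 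Neither operation produces a new $H$-invariant subtree, and no individual $Y_{e'}$ or $T\smallsetminus Y_{e'}$ is $H$-invariant in general, so the ``base-case preservation principle'' has nothing to apply to: there is no mechanism here forcing $H$ to fix a vertex. (Note also that an $H$-invariant subtree strictly \emph{between} $T_H$ and $T$ would not by itself contradict minimality, which only forbids strictly smaller ones.)

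The missing idea, which is how the paper argues, is to exploit the fact that $T_H$ (having more than one vertex) is the union of the axes of the hyperbolic elements of $H$ in order to produce a single hyperbolic $g\in H$ whose axis contains the edge $e$ bounding $Z$. Concretely: pick a hyperbolic $h\in H$ whose axis contains an edge of $Z$; a suitable power of $h$ sends $e$ into $Z\subseteq T_H$, so $h^n\cdot e$ lies on the axis of some hyperbolic $g'\in H$, and the conjugate $g=h^{-n}g'h^{n}\in H$ is hyperbolic with $e\in\mathrm{Axis}(g)$. Since $\mathrm{Axis}(g)$ crosses $e$, the half-trees $g^k(Z)$, $k\in\Z$, are nested and exhaust $T$, and each lies in $T_H$ by invariance, giving $T_H=T$ directly, with no contradiction argument needed. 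I would encourage you to rewrite the proof along these lines rather than trying to repair the half-tree-intersection scheme.
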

 \begin{proof}[Proof of Claim~\ref{Claim: half-tree in T-min}]
 \renewcommand{\qedsymbol}{$\diamond$}
Let $f$ be an edge in $Z$ and $h\in H$ a hyperbolic element whose axis $\mathrm{Axis}(h)$ contains $f$.
Then, up to replacing $h$ by $h^{-1}$, the image $h \cdot e$ belongs to $Z$. 
Let $g'\in H$ be a hyperbolic element whose axis contains $h \cdot e$.
Then the axis of the conjugate $g=h^{-1} g' h \in H$ contains $e$. Thus $e\in T_H$ and $T_H$ contains all the half-trees $g^k(Z)$ for $k\in \Z$. 
It follows that $T=\cup_{k} \: g^k(Z)=T_H$.
When $H$ is finitely generated, then (Remark~\ref{rem: H fg minimal -> T/H compact}) there is a finite connected subgraph $W\subseteq H\bs T$ whose associated graph of groups has fundamental group equal to $H$. If $H\bs T$ is infinite, then the pull-back of $W$ in $T$ is a proper $H$-invariant subtree of $T$.
This proves Claim~\ref{Claim: half-tree in T-min}.
\end{proof}

\begin{claim} \label{Claim: half-trees}
Assume that $G\acting T$ is a minimal and irreducible action on a tree and
 that $T_i := T_{\Lambda_i}\subsetneq T$ are proper $\Lambda_i$-invariant subtrees for finitely generated subgroups $\Lambda_i\leq G$ for $i \in \{1, \dots, 2r\}$. 
Then, there is an edge $e\in E(T)$ such that 
$\cup_{i=1}^{2r} T_i$ is contained in one connected component of $T\smallsetminus\{e\}$ and such that 
 $\Stab_{\Lambda_i}(e)=M$ for all $i$.
\end{claim}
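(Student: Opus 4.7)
My strategy is to locate an end $\xi \in \partial T$ lying outside the limit set of every $T_i$, pick an edge $e(t)$ far along the geodesic ray to $\xi$ so that all $T_i$ lie on one side, and then push $e(t)$ through a $G$-translate of the segment $[f_1,f_2]$ inside the free half-tree using Lemma~\ref{lem: existence of hyperbolic in Z}, so that the stabilisers are controlled through $\Stab_G(f_1)\cap\Stab_G(f_2) = N_0$ together with the normality of $N_0$.

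\textbf{Key step (the main obstacle): each limit set has empty interior.} For each $i$, let $L_i := \partial T_i \subseteq \partial T$ denote the (closed) end-set of the minimal $\Lambda_i$-invariant subtree $T_i = T_{\Lambda_i}$. Since $\Lambda_i\bs T$ is infinite, Claim~\ref{Claim: half-tree in T-min} ensures $T_i$ contains no half-tree of $T$. I claim $L_i$ has empty interior in $\partial T$. Indeed, suppose that a basic clopen cylinder $\partial Z\subseteq L_i$ for some half-tree $Z$ with boundary vertex $w$. Using that minimality of $G\acting T$ forces every vertex of $T$ to have degree $\geq 2$, every $v\in Z\smallsetminus\{w\}$ has degree $\geq 2$ in $Z$ and therefore lies on the $T$-geodesic between two ends of $\partial Z$ reached from $v$ in distinct directions; these ends belong to $L_i$ and $T_i = \mathrm{Hull}(L_i)$ is convex, so $v\in T_i$. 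Consequently every vertex of $Z$ except possibly $w$ lies in $T_i$, and hence $T_i$ contains a half-tree of $T$ (either $Z$ itself when $w$ has degree $\geq 3$ in $T$, or else the half-tree on the opposite side of the unique edge of $Z$ at $w$), contradicting Claim~\ref{Claim: half-tree in T-min}. Thus each $L_i$ is closed and nowhere dense, and by Baire category applied to the Polish space $\partial T$ the complement $\partial T\smallsetminus\bigcup_{i=1}^{2r} L_i$ is non-empty.

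\textbf{Construction of $e$ and verification.} Pick $\xi$ in this complement, fix a basepoint $v_0\in T$, and let $R=(v_0,v_1,\ldots)$ be the geodesic ray to $\xi$. Since $\xi\notin L_i$ and a geodesic ray in a tree cannot re-enter a subtree once it exits (by convexity), $R$ is eventually disjoint from every $T_i$; choose $t$ large enough so that the edge $e(t) := [v_t,v_{t+1}]$ lies outside $\bigcup_i T_i$, which forces each connected $T_i$ to lie in the component of $T\smallsetminus\{e(t)\}$ containing $v_0$. Apply Lemma~\ref{lem: existence of hyperbolic in Z} to this edge $e(t)$, the half-tree $Z(t)$ of $e(t)$ not containing $v_0$, and the segment $L=[f_1,f_2]$, obtaining $\gamma_2\in G$ with $\gamma_2\cdot[f_1,f_2]\subseteq Z(t)$ and $\gamma_2\cdot f_1$ separating $\gamma_2\cdot f_2$ from $e(t)$. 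Set $e := \gamma_2\cdot f_2\in Z(t)$; then $\bigcup_i T_i$ still lies in the component of $T\smallsetminus\{e\}$ containing $v_0$ (and $e(t)$). The inclusion $M\leq\Stab_{\Lambda_i}(e)$ is automatic since $M\leq N_0$ is the kernel of the action. Conversely, any $\lambda\in\Stab_{\Lambda_i}(e)$ fixes both endpoints of $e$, preserves the convex $T_i$ (hence its unique nearest vertex $w^*\in T_i$ to $e$), and therefore fixes pointwise the $T$-geodesic from the near endpoint of $e$ to $w^*$; this geodesic contains $\gamma_2\cdot f_1$, so $\lambda\in\Stab_G(\gamma_2\cdot f_1)\cap\Stab_G(\gamma_2\cdot f_2) = \gamma_2 N_0\gamma_2^{-1} = N_0$ by normality, and thus $\lambda\in\Lambda_i\cap N_0 = M$.
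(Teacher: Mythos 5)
Your proof is correct in overall structure and takes a genuinely different route from the paper. The paper stays entirely inside $T$: it shows combinatorially that the convex hull of two subtrees containing no half-tree again contains no half-tree, iterates over the $T_i$ (none of which contains a half-tree, by Claim~\ref{Claim: half-tree in T-min}) to separate $\cup_i T_i$ from a half-tree by a single edge, and then invokes Claims~\ref{Claim: f1-f2 in Z} and~\ref{claim: Stab e-Lambda}. You instead pass to $\partial T$, show each $\partial T_i$ is closed and nowhere dense, pick an end avoiding all of them, and locate the edge far along the ray to that end. Both arguments rest on Claim~\ref{Claim: half-tree in T-min} and finish identically (push a translate of $[f_1,f_2]$ into the free half-tree via Lemma~\ref{lem: existence of hyperbolic in Z} and read off the stabiliser from $\Stab_G(f_1)\cap\Stab_G(f_2)=N_0$). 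Your version is arguably more conceptual and avoids the induction on convex hulls; note that with only finitely many $T_i$ a finite union of closed nowhere dense sets already has non-empty complement, so Baire category is not actually needed, and you only ever use the inclusion $\mathrm{Hull}(\partial T_i)\subseteq T_i$, not the equality (which fails when $T_i$ is a vertex).

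There is one step whose justification is wrong as written, in the empty-interior argument. From $\deg_Z(v)\ge 2$ you conclude that $v$ lies on a geodesic between two ends of $Z$ reached in distinct directions; but a direction from $v$ can dead-end, namely the direction pointing toward $w$ when the component of $Z\smallsetminus\{v\}$ containing $w$ is a finite path (which happens exactly when every vertex of $(v,w]$ has degree $2$ in $T$). Such a $v$ has only one direction leading to an end of $Z$ and need not lie in $\mathrm{Hull}(\partial Z)$, so ``every vertex of $Z$ except possibly $w$ lies in $T_i$'' can fail along the whole degree-two stem emanating from $w$ — and your fallback half-tree ``on the opposite side of the unique edge of $Z$ at $w$'' still contains that stem. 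The conclusion survives with a small repair: choose a vertex $u_0\in Z$ at which at least two distinct directions not pointing toward $w$ lead to ends of $Z$ (such $u_0$ exists, e.g.\ a branch vertex of the convex hull of two axes contained in $Z$, which Lemma~\ref{lem: existence of hyperbolic in Z} provides after one further application inside $Z$), and let $Z'$ be the half-tree beyond one of these directions. Every vertex $x$ of $Z'$ then has at least two directions leading to ends of $Z$ (each component of $T\smallsetminus\{x\}$ not containing $u_0$ is a half-tree of $T$ contained in $Z$, hence contains an end, and the component containing $u_0$ contains the other infinite branch at $u_0$), so $Z'\subseteq\mathrm{Hull}(\partial Z)\subseteq T_i$, contradicting Claim~\ref{Claim: half-tree in T-min}. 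A last cosmetic point: ``$e(t)$ lies outside $\bigcup_i T_i$'' does not by itself force all $T_i$ onto the $v_0$-side of $e(t)$; you also need $t$ larger than each $d(v_0,T_i)$, which your ``$t$ large enough'' of course accommodates.
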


\begin{proof}[Proof of Claim \ref{Claim: half-trees}]
\renewcommand{\qedsymbol}{$\diamond$}
If $U,V$ are subtrees of $T$ that do not contain any half-tree, then the convex hull of $U\cup V$ does not contain any half-tree. To see this,
pick an edge $e_0\in E(T)$ and let $K$ be one of its half-trees. Since $U$ does not contain $K$ nor any half-tree therein, there is an edge $e_1\in E(K)$ such that $U$ is in the same half-tree of $T\smallsetminus \{e_1\}$ as $e_0$. Let $K_2$ be the other half-tree of $T\smallsetminus \{e_1\}$ and note that $K_2 \subseteq K$. Since no half-tree is contained in $V$, there exists an edge $e_2\in E(K_2)$ such that $V$ is in the same half-tree of $T\smallsetminus \{e_2\}$ as $e_1$ (and $e_0$). If $K_3 \subseteq T \smallsetminus \{e_2\}$ is the half-tree which does not contain $e_1$ then $K_3 \subseteq K$ and $K_3$ has empty intersection with the convex hull of $U \cup V$. Thus the half-tree $K$ is not entirely contained in the convex hull of $U\cup V$.

Since minimal subtrees $T_i$ of $\Lambda_i$ do not contain any half-tree by Claim~\ref{Claim: half-tree in T-min}, the same holds for their convex hull. It follows by induction that $\cup_{i=1}^{2r} T_i$ is contained in a half-tree of $T\smallsetminus\{f\}$ for some $f\in E(T)$. This remains true if we replace $f$ by any edge in the connected component of $T \smallsetminus \{f\}$ which does not contain $\cup_{i=1}^{2r} T_i$. 

By Claim~\ref{Claim: f1-f2 in Z}, one can assume that the segment $L=[f_1,f_2]$ is contained in the connected component of $T\smallsetminus \{f\}$ that does not contain $\cup_{i=1}^{2r} T_i$ and that $f_1$ separates $f_2$ from $f$, thus from each $T_i$.
By Claim~\ref{claim: Stab e-Lambda}, the edge $e=f_2$ satisfies $\Stab_{\Lambda_i}(e)= \Lambda_i\cap N_0 = M$ for each $i$ and this concludes the proof of Claim~\ref{Claim: half-trees}.
\end{proof}

We take an edge $e\in E(T)$ as in the Claim~\ref{Claim: half-trees} and let $Z$ be the connected component of $T\smallsetminus \{e\}$ that does not contain $\cup_{i=1}^{2r} T_i$. 
By Lemma~\ref{lem: existence of hyperbolic in Z}, there is a hyperbolic element  $g\in G$ whose axis is contained in $Z$. 

We now fix $i \in \{1, \dots, r\}$ and show that $\Lambda_i$ satisfies the assumptions of Lemma~\ref{main dendrological lemma} where the $\Lambda_i$-invariant proper subtree is $T_{\Lambda_i}= T_i$ and the subgroups $H_n$ are of the form $g^{q} \Lambda_{r+i}g^{-q}$. The sequences of subtrees $(S_n)_n$ and of edges $(e_n)_n$ will be defined shortly.

Since the action by conjugation of $G$ on $\Sub(N_0)$ has finite orbits, there is a
non-zero integer $k$ such that $g^k M g^{-k}=M$.
It follows that $g^{kn} \Lambda_{r+i} g^{-kn}\cap N_0=g^{kn} (\Lambda_{r+i} \cap N_0) g^{-kn}=g^{kn} M g^{-kn}=M$ 
for every integer $n$.

Let $e_n\coloneqq g^{kn}\cdot e$. Then the distance of $e_n$ from $T_{\Lambda_i}$ is greater than $n +1$ times the translation length of $g$.
The subtree $S_n\coloneqq g^{kn}\cdot T_{r+i}$ is contained in $Z$  and $e_n$ separates $S_n$ from $T_i$.
The subtree $g^{kn}\cdot T_{r+i}$ is an invariant subtree for the subgroup $H_n\coloneqq g^{kn} \Lambda_{r+i} g^{-kn}$.
We have defined $(H_n)$, $(S_n)$ and $(e_n)$ and checked Assumptions~\ref{it: Hn is a subgroup},  \ref{it: Sn is Hn-inv}, \ref{it assumpt dist >n} and \ref{it en separates} of Lemma~\ref{main dendrological lemma}.
The assumption we made that $\Lambda_{r+i}$ is not a subgroup of $N_0$ implies that $\Stab_{H_n}(e_n)\not= H_n$, thus checking Assumption~\ref{it: assumpt stab e-n H-n not = H-n}. 
If an element $\lambda\in \Lambda_i$ sends some $e_n$ to an edge $\lambda\cdot e_n$ belonging to the path from some other $e_k$ to $T_i$ then $\lambda$ must fix the projection $p$ of $e_k$ in $T_i$ and $\lambda\cdot e_n$ belongs to $Z$. It follows that $\lambda$ must fix the edge $e$ and thus $\lambda\in \Stab_{\Lambda_i}(e)=M\leq  N_0$. So $\Lambda_i\cdot e_n$ does not separate $e_k$ from the tree $T_i$.
Therefore, $(e_n)$ verifies condition \ref{it en does not separates T-Lambda from the ek} of Lemma~\ref{main dendrological lemma}.
Concerning the stabilisers:
$\Stab_{\Lambda_i}(e_n)=g^{kn} \Stab_{\Lambda_i}(e) g^{-kn}=g^{kn} M g^{-kn}=M$ while $\Stab_{H_n}(e_n)=\Stab_{g^{kn} \Lambda_{r+i} g^{-kn}}(g^{kn} \cdot e)=\Stab_{\Lambda_{r+i}}(e)=M$.
The Assumption~\ref{it: assumpt stab e_n are equal} is satisfied.

\medskip
Thus, by Lemma~\ref{main dendrological lemma}~\eqref{it: I} and \eqref{it: II}, $\Delta^{i}_n\coloneqq \langle \Lambda_i, g^{kn} \Lambda_{r+i} \, g^{-kn}\rangle= \Lambda_i*_M g^{kn} \Lambda_{r+i}\,  g^{-kn}$ and we have the non-trivial convergence:
\[\Delta^{i}_n= \Lambda_i*_{M} g^{kn} \Lambda_{r+i} \, g^{-kn}\underset{n\to \infty}{\longrightarrow} \Lambda_i.\]
Since $\{\Lambda\leq G\colon \Lambda\cap N_0=M\}$ is open, we have $\Delta^{i}_n\cap N_0=\Lambda_i\cap N=M$ for all large enough $n$.
By \eqref{it: V} of Lemma~\ref{main dendrological lemma}, the quotient $\Delta^{i}_n\bs T$ is infinite, i.e. $\Delta^{i}_n\in \Sub_{\vert \bullet\bs T\vert \infty}(G)$.
Thus $\Delta^{i}_n\in {\mathcal F}^{T}_C$.
It follows that for $n$ larger than some $n_i$, the subgroup $\Delta^{i}_n\in V_i$.

It remains to show that we have the non-trivial convergence:
\[g^{-kn}\Delta^{i}_n\, g^{kn}= g^{-kn}\Lambda_i \, g^{kn}*_{M}  \Lambda_{r+i} \underset{n\to \infty}{\longrightarrow} 
\Lambda_{r+i}.\]
It is enough to observe that $g^{-kn}\Delta^{i}_n\, g^{kn}= g^{-kn}\Lambda_i \, g^{kn}*_{M}  \Lambda_{r+i}$ is obtained by the exact symmetric construction by exchanging the roles of $\Lambda_i, \Lambda_{r+i}$ and replacing $g$ by $g^{-1}$. So the symmetric application of Lemma~\ref{main dendrological lemma} \eqref{it: II} shows that 
$\lim_{n\to \infty}g^{-kn}\Delta^{i}_n\, g^{kn} = \Lambda_{r+i}$.
It follows as above that for all $n$ larger than some $n_{r+i}$, the subgroup $g^{-kn}\Delta^{i}_n\, g^{kn}\in V_{r+i}$.

Hence, for any $n\geq \max\{n_j : j \in \{1, \dots, 2r\}\}$ and for each $i \in \{1, \dots, r\}$, we have:
\begin{center}$\Delta^{i}_n\in V_i$ and $g^{-kn}\Delta^{i}_n\, g^{kn}\in V_{r+i}$, \ \ 
and so \ \ 
$g^{-kn}V_i\, g^{kn}\cap V_{r+i}\neq \emptyset$.
\end{center}
This completes the proof of Theorem~\ref{th: two edge-stab with finite intersection - top. dyn.}.
\end{proof}

\subsection{Applications}
\label{Sect: Applications of dendrolog. results}

Recall that the $G$-action $G\acting T$ on a tree is \defin{acylindrical} if there is $R>0$ such that the stabilisers of any path of length at least $R$ is trivial. 

 Examples of $3$-acylindrical actions include the actions on the Bass-Serre tree of non-trivial amalgamated free products $A \ast_C B$ where $C$ is \defin{malnormal} in $A$ (i.e. for all $g \in A \smallsetminus C$, $C \cap g C g^{-1}$ is trivial).

\begin{corollary}[Acylindrical actions]
\label{cor:acylindrical}
Let $G\acting T$ be a minimal irreducible and acylindrical action of a countable group on a tree.
If $\Lambda\leq G$ is a subgroup such that the quotient graph $\Lambda\bs T$ is infinite, then 
$\Lambda\in \PK(G)$.
Moreover, $G\acting \overline{\Sub_{\vert \bullet\bs T\vert \infty}(G)}$ is highly topologically transitive.
\end{corollary}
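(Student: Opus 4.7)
The plan is to deduce this corollary directly from Theorems \ref{th: two edge-stab with finite intersection} and \ref{th: two edge-stab with finite intersection - top. dyn.}. The whole task reduces to verifying their two hypotheses for an acylindrical action: existence of two edges whose stabilisers intersect in a finite group, and triviality of the kernel of $G\acting T$.

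First I would use acylindricity to produce the required pair of edges. Let $R>0$ be the acylindricity constant, so that every segment of length at least $R$ has trivial pointwise stabiliser. Since $G\acting T$ is minimal and irreducible, the tree $T$ is infinite: by Lemma \ref{lem: existence of hyperbolic in Z} it admits a hyperbolic element, hence a bi-infinite axis, and so contains paths of arbitrary length. Pick any two edges $f_1,f_2$ whose distance in $T$ exceeds $R$. Then $\Stab_G(f_1)\cap\Stab_G(f_2)$ is contained in the pointwise stabiliser of the segment $[f_1,f_2]$, which is trivial; in particular it is finite. Hence Theorem \ref{th: two edge-stab with finite intersection} applies and yields
\[
\overline{\Sub_{\vert \bullet\bs T\vert \infty}(G)}\subseteq \PK(G).
\]
For a subgroup $\Lambda\leq G$ with $\Lambda\bs T$ infinite we have by definition $\Lambda\in \Sub_{\vert\bullet\bs T\vert\infty}(G)$, which gives the first assertion $\Lambda\in\PK(G)$.

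For the dynamical statement, I would observe that the kernel $N_0$ of $G\acting T$ fixes \emph{every} edge and therefore every segment, regardless of length. By acylindricity $N_0$ must be trivial. Consequently the conjugacy class set $\Conj(N_0)^G$ has a single element $C=\{\{\id\}\}$, of cardinality $1$, so the finite clopen partition of $\overline{\Sub_{\vert\bullet\bs T\vert\infty}(G)}$ from \eqref{eq: def F-C} is trivial and $\mathcal{F}^T_C=\overline{\Sub_{\vert\bullet\bs T\vert\infty}(G)}$. Theorem \ref{th: two edge-stab with finite intersection - top. dyn.}(ii) therefore applies to give that $G\acting\overline{\Sub_{\vert\bullet\bs T\vert\infty}(G)}$ is highly topologically transitive, finishing the proof.

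There is essentially no obstacle here: all the difficulty has already been absorbed into the main dendrological theorems of the previous subsection. The only point requiring a moment's care is the observation that a minimal irreducible action on a tree automatically produces arbitrarily long segments, so that acylindricity can be triggered.
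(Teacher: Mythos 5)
Your proposal is correct and follows essentially the same route as the paper: the paper's proof likewise consists of observing that any two edges at distance greater than the acylindricity constant $R$ have trivially intersecting stabilisers, so that Theorems \ref{th: two edge-stab with finite intersection} and \ref{th: two edge-stab with finite intersection - top. dyn.} apply directly (with the kernel $N_0$ trivial by acylindricity). Your write-up merely spells out the details that the paper leaves implicit.
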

\begin{proof} The $G$-action on $T$ satisfies the assumptions of Theorem~\ref{th: two edge-stab with finite intersection} and Theorem~\ref{th: two edge-stab with finite intersection - top. dyn.}: the intersection of the stabilisers of any pair of edges at distance greater than the acylindricity constant $R$ is trivial.
\end{proof}
\begin{remark}\label{Rem: acylindrical act. examples}
There exist acylindrical actions $G\acting T$ as in Corollary~\ref{cor:acylindrical} where $G$ admits a finitely generated subgroup $\Lambda \leq G$ with infinite index but where the quotient $\Lambda \bs T$ is finite.
Moreover, there are examples where $\Lambda$ belongs to $\PK(G)$ and others where it does not.

 For example, let $C$ be any finitely generated infinite torsion-free group and let $A, B$ be finitely generated non-trivial groups such that $C$ is malnormal in $G_1 = A \rtimes C$ and $G_2 = B \rtimes C$ respectively. Such groups exist by D. Osin's Appendix in \cite[Theorem 15]{HW14}. 
 
 Then let $G = G_1 *_C G_2$ where the injections $C \hookrightarrow G_i$ are given by the natural inclusion maps and consider the action of $G$ on the Bass-Serre tree $T$ associated to this decomposition. It follows from the malnormality of $C$ in $G_1$ and $G_2$ that the action $G \acting T$ is $2$-acylindrical. Moreover, the normal subgroup $\Lambda = \la A, B \ra \trianglelefteq G$ is finitely generated, has infinite index in $G$ since $G/\la A, B \ra\simeq C$, and acts transitively on the set of edges of $T$, so $\Lambda \bs T$ consists of a single edge.
 
 Moreover, by Proposition~\ref{prop: f.g. normal subgr and Sub(Q)}, the finitely generated normal subgroup $\Lambda=\la A,B\ra=A*B$ belongs to the perfect kernel $\PK(G)$ if and only if the trivial subgroup $\{\id\}$ belongs to the perfect kernel $\PK(C)$.
If we choose for instance $C = \Z$ then $\Lambda\not\in\PK(G)$ while if $C = \FF_2$, then  $\Lambda\in \PK(G)$.

\end{remark}

The results of the previous section imply that:
\\
{\em If $G$ is a finitely generated non-elementary free product\footnote{A free product  $G = A \ast B$ is \defin{non-elementary} if  $\vert A\vert, \vert B\vert\geq 2$ and at least one of $A$ and $B$ has cardinality at least $3$.} then 
$\PK(G)=\Sub_{[\infty]}(G)$ and the action of $G$ on its perfect kernel is highly topologically transitive.}
More generally,
\begin{corollary}[Infinitely many ends]
\label{cor:infinitely many ends}
If $G$ is a finitely generated group with infinitely many ends, then the perfect kernel of $G$ consists of its subgroups of infinite index:
\[\PK(G)=\Sub_{[\infty]}(G).\]
Moreover, $G$ has a greatest finite normal subgroup $N_0$
 and the set $\Conj(N_0)^G$ defines a finite clopen partition of $\PK(G)$ into $G$-invariant and topologically transitive perfect subsets:
\[\PK(G)=\bigsqcup_{C\in \Conj(N_0)^G} \mathcal{S}_C \text{, \ \ where \ } 
\mathcal{S}_C:=\Sub_{[\infty]}(G)\cap \left\{\Lambda\colon \Lambda\cap N_0\in C\right\}.\]
If $N_0=\{\id\}$ (resp. $\vert C\vert=1$)  then $G\acting \PK(G)$ (resp. $G\acting \mathcal{S}_C$) is highly topologically transitive.
\end{corollary}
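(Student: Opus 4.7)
The plan is to apply the dendrological results of Section~\ref{Section: actions on trees} to the Bass-Serre tree of a Stallings splitting. By Stallings' theorem, $G$ splits non-trivially over a finite subgroup; letting $T$ be the associated Bass-Serre tree, the action $G\acting T$ is minimal and irreducible (otherwise $G$ would have at most two ends), with finite edge stabilisers, and $G\bs T$ has a single edge. In particular, $\Stab_G(f_1)\cap\Stab_G(f_2)$ is finite for any pair of edges $f_1,f_2\in E(T)$, and the kernel $N_0$ of the action, being contained in any edge stabiliser, is finite. To see that $N_0$ is the greatest finite normal subgroup of $G$: by Serre's fixed point theorem every finite subgroup $F\leq G$ fixes a vertex of $T$, and if $F$ is moreover normal, then its fixed point set is a non-empty $G$-invariant subtree, hence equals $T$ by minimality, so $F\subseteq N_0$.

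Theorem~\ref{th: two edge-stab with finite intersection} then gives $\overline{\Sub_{\vert\bullet\bs T\vert\infty}(G)}\subseteq\PK(G)$, and since $G$ is finitely generated we also have $\PK(G)\subseteq\Sub_{[\infty]}(G)$. To obtain $\PK(G)=\Sub_{[\infty]}(G)$ I would prove the inclusion $\Sub_{[\infty]}(G)\subseteq\overline{\Sub_{\vert\bullet\bs T\vert\infty}(G)}$. A subgroup which is not finitely generated is a limit of its finitely generated proper subgroups, which share its infinite index, so it suffices to handle finitely generated $\Lambda\in\Sub_{[\infty]}(G)$. For such $\Lambda$ I would show that $\vert\Lambda\bs T\vert=\infty$: otherwise $\Lambda$ would act cocompactly and minimally on $T$, so by Bass-Serre theory $\Lambda$ would be the fundamental group of a finite graph of groups with finite edge groups and vertex groups $\Lambda\cap g_jG_vg_j^{-1}$ indexed by the (finitely many) $g_j\in\Lambda\bs G/G_v$; a double-coset argument using the identity $[G:\Lambda]=\sum_j[G_v:G_v\cap g_j\Lambda g_j^{-1}]$, together with the one-edge structure of the Stallings splitting, forces $[G:\Lambda]<\infty$, contradicting the hypothesis.

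For the partition and the dynamics, apply Proposition~\ref{prop:finite normal subgr clopen part} with $N=N_0$ to obtain the $G$-invariant clopen partition $\Sub(G)=\bigsqcup_{C\in\Conj(N_0)^G}\{\Lambda\in\Sub(G):\Lambda\cap N_0\in C\}$. Intersecting with $\PK(G)=\Sub_{[\infty]}(G)$ yields $\PK(G)=\bigsqcup_C\mathcal{S}_C$. Under the identification $\overline{\Sub_{\vert\bullet\bs T\vert\infty}(G)}=\PK(G)$ from the previous step, each $\mathcal{S}_C$ coincides with $\mathcal{F}_C^T$ of~\eqref{eq: def F-C}. Theorem~\ref{th: two edge-stab with finite intersection - top. dyn.}(i) then gives topological transitivity of $G\acting\mathcal{S}_C$, and part (ii) promotes this to high topological transitivity when $\vert C\vert=1$. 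In the special case $N_0=\{\id\}$, every class $C$ is a singleton, so $G\acting\PK(G)$ is highly topologically transitive.

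The main obstacle is the Bass-Serre step in the second paragraph: since the vertex stabilisers of $T$ may be infinite, one must carefully exploit the one-edge structure of the Stallings splitting to rule out a finitely generated infinite-index subgroup acting cocompactly on $T$.
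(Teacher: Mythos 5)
Your overall architecture matches the paper's: Stallings tree with finite edge stabilisers, identification of $N_0$ with the kernel of the action, the equality $\Sub_{[\infty]}(G)=\Sub_{\vert \bullet\bs T\vert \infty}(G)$, and then Theorems~\ref{th: two edge-stab with finite intersection} and~\ref{th: two edge-stab with finite intersection - top. dyn.} together with Proposition~\ref{prop:finite normal subgr clopen part}. The first, third and fourth paragraphs are fine (your fixed-point argument for maximality of $N_0$ is a slightly expanded version of the paper's remark).

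The gap is in your second paragraph, at the step ruling out a finitely generated infinite-index $\Lambda$ with $\Lambda\bs T$ finite. The identity $[G:\Lambda]=\sum_j [G_v:G_v\cap g_j\Lambda g_j^{-1}]$, summed over representatives of $\Lambda\bs G/G_v$, is correct, but having finitely many summands does not force $[G:\Lambda]<\infty$: the vertex groups of a Stallings splitting are in general infinite, and $\Lambda\cap g_j G_v g_j^{-1}$ may have infinite index in $g_j G_v g_j^{-1}$, in which case a single summand is already infinite and no contradiction arises. Nothing in the one-edge structure repairs this; you would need to show separately that each vertex-group intersection has finite index, which is essentially what you are trying to prove. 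The correct count --- and the one the paper uses --- is over the \emph{edge} group: the $\Lambda$-orbits of edges in $G\cdot e$ are in bijection with the double cosets $\Lambda\bs G/\Stab_G(e)$, and since $\Stab_G(e)$ is finite, each double coset $\Lambda g \Stab_G(e)$ meets at most $\vert \Stab_G(e)\vert$ left $\Lambda$-cosets, so $[G:\Lambda]=\infty$ forces $\Lambda\bs G/\Stab_G(e)$, hence $\Lambda\bs T$, to be infinite. Note that this argument applies directly to \emph{every} infinite-index subgroup, so your reduction to finitely generated $\Lambda$ (and the appeal to minimality of $\Lambda\acting T$, which is not automatic anyway) is unnecessary: one gets $\Sub_{[\infty]}(G)=\Sub_{\vert \bullet\bs T\vert \infty}(G)$ on the nose, and the identification $\mathcal{S}_C=\mathcal{F}^{T}_C$ follows at once.
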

\begin{proof}
By Stallings' theorem \cite{Sta68,Stallings-1971-inf-ends}, $G$ admits a minimal and irreducible action $G\acting T$ on a tree $T$ with finite edge stabilisers.
If $M$ is a finite normal subgroup of $G$, then it is contained in the kernel of the action $G\acting T$.
Conversely, this kernel is finite and normal: this is $N_0$.
If $\Lambda \in \Sub(G)$ has infinite index, then the image in $\Lambda\bs T$ of the $G$-orbit any edge $e$ of $T$ is in bijection with the double classes $\Lambda\bs G/\Stab_{G}(e)$, which is infinite since $\Stab_{G}(e)$ is finite. Thus $\Lambda\bs T$ is infinite and $\mathcal{S}_C$ coincides with ${\mathcal F}^{T}_C$ (defined in Equation~\ref{eq: def F-C}).
Since the edge stabilisers are finite, any two distinct edges $f_1,f_2$ of $T$ satisfy a fortiori that $\Stab_G(f_1)\cap \Stab_G(f_2)$ is finite.
We conclude by applying Theorem~\ref{th: two edge-stab with finite intersection} and Theorem~\ref{th: two edge-stab with finite intersection - top. dyn.}.
 \end{proof}
\begin{example}[$\mathrm{SL}(2,\Z)$]
\label{ex: SL(2,Z)}
This corollary applies for instance to $G=\mathrm{SL}(2,\Z)$ for which $N_0=\{\pm \id\}\simeq \Z/2\Z$ is the centre. The perfect kernel decomposes into two invariant pieces 
$$\PK(G)=\Sub_{[\infty]}(\mathrm{SL}(2,\Z))=
 \left\{\Lambda \colon  N_0\leq\Lambda \right\}
\sqcup \left\{\Lambda \colon  N_0\not\leq\Lambda\right\}$$
on which the action is  highly topologically transitive.
\end{example}

\begin{corollary}\label{cor: F infty act K(G) is top. r-transit} 
The action of the free group $\FF_\N$ on countably many generators on its perfect kernel $\PK(\FF_\N)=\Sub(\FF_\N)$ is highly topologically transitive. 
\end{corollary}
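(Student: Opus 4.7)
The plan is to reduce the statement to Theorem~\ref{th: two edge-stab with finite intersection - top. dyn.} applied to a suitable Bass-Serre tree. Write $\FF_\N = \la s_1\ra \ast \la s_2, s_3, \ldots\ra$ as the free product of $\Z$ and $\FF_{\N\smallsetminus\{1\}}\simeq \FF_\N$, and let $T$ be the associated Bass-Serre tree. The action $\FF_\N \acting T$ is minimal, and irreducible by Proposition~\ref{prop: irreducibility vs indices} since the two vertex groups have orders $\geq 3$ while the edge stabiliser is trivial. Because edge stabilisers are trivial, the condition $\Stab_{\FF_\N}(f_1)\cap \Stab_{\FF_\N}(f_2)$ finite is immediate for any pair of edges, and the kernel $N_0$ of the action is trivial.

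Next, I would identify $\Sub_{\vert \bullet\bs T\vert \infty}(\FF_\N)$. Since edge stabilisers are trivial, the set of $\Lambda$-orbits of edges in $T$ is in bijection with $\Lambda\bs \FF_\N$ (for any given $G$-orbit of edges), so $\vert \Lambda\bs T\vert = \infty$ if and only if $[\FF_\N:\Lambda] = \infty$. Hence
\[\Sub_{\vert \bullet\bs T\vert \infty}(\FF_\N) = \Sub_{[\infty]}(\FF_\N).\]

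I would then verify that $\overline{\Sub_{[\infty]}(\FF_\N)} = \Sub(\FF_\N)$. Every finitely generated subgroup $L\leq \FF_\N$ has infinite index in $\FF_\N$ (since $\FF_\N$ itself is not finitely generated), and the argument in the proof of Proposition~\ref{Prop: K of count. free variety} shows that finitely generated subgroups are dense in $\Sub(\FF_\N)$ (using the retractions $w_I$ and the approximations $L_i = \la L, s_i\ra \to L$). Thus $\overline{\Sub_{[\infty]}(\FF_\N)} = \Sub(\FF_\N) = \PK(\FF_\N)$ (the last equality being Proposition~\ref{Prop: K of count. free variety}).

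Finally, applying Theorem~\ref{th: two edge-stab with finite intersection - top. dyn.} to the action $\FF_\N \acting T$, whose kernel is trivial, yields high topological transitivity of $\FF_\N \acting \overline{\Sub_{\vert \bullet\bs T\vert \infty}(\FF_\N)} = \PK(\FF_\N)$. There is no real obstacle here; the only mild verification is that the chosen free-product decomposition satisfies the irreducibility criterion and that finitely generated subgroups are simultaneously dense and of infinite index, both of which are straightforward from the structure of $\FF_\N$.
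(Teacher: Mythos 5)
Your proof is correct, but it takes a genuinely different route from the paper. The paper's own argument is a reduction to the finite-rank case: given open sets $V_1,\dots,V_{2r}$, it picks finitely generated subgroups inside them, finds a finite $J\Subset\N$ with $|J|\geq 2$ such that each $V_i$ meets $\Sub_{[\infty]}(\FF_J)$, and then invokes Corollary~\ref{cor:infinitely many ends} for the finitely generated free group $\FF_J$ to produce the required element $g\in\FF_J\leq\FF_\N$. You instead apply Theorem~\ref{th: two edge-stab with finite intersection - top. dyn.} directly to the Bass--Serre tree of the splitting $\FF_\N=\la s_1\ra\ast\la s_2,s_3,\dots\ra$; since that theorem is stated for arbitrary countable groups (no finite generation hypothesis on $G$), and its hypotheses (minimality, irreducibility via Proposition~\ref{prop: irreducibility vs indices}, trivial edge stabilisers, trivial kernel) are all readily verified, this is legitimate. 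Your identifications $\Sub_{\vert\bullet\bs T\vert\infty}(\FF_\N)=\Sub_{[\infty]}(\FF_\N)$ (free edge orbits) and $\overline{\Sub_{[\infty]}(\FF_\N)}=\Sub(\FF_\N)$ (finitely generated subgroups are dense and all have infinite index, since a finite index subgroup of a non-finitely-generated group cannot be finitely generated) are both sound. The paper's route buys economy by reusing the already-established statement for groups with infinitely many ends and avoids having to discuss the tree for a non-finitely-generated group; your route is more self-contained once Theorem~\ref{th: two edge-stab with finite intersection - top. dyn.} is in hand, and makes explicit that that theorem needs no finite generation of $G$. One small simplification: density of finitely generated subgroups in $\Sub(\FF_\N)$ is immediate for any countable group (every subgroup is an increasing limit of its finitely generated subgroups); the retraction argument of Proposition~\ref{Prop: K of count. free variety} is only needed to make such approximations non-stationary, which is not required at that point of your argument.
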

\begin{proof}
By Proposition~\ref{Prop: K of count. free variety}, $\PK(\FF_\N)=\Sub(\FF_\N)$.
Every non-empty open subset $V$ of $\Sub(\FF_\N)$ contains a finitely generated subgroup, thus for every $2r$-tuple $(V_1, V_2, \cdots, V_{2r})$ of non-empty open sets, there is a finite subset $J\Subset \N$ with $\vert J\vert \geq 2$ such that $V_i\cap \Sub_{[\infty]}(\FF_J)\not=\emptyset$ for all $i$.
By Corollary~\ref{cor:infinitely many ends}, there is $g\in \FF_J$ such that $g\cdot (V_i\cap \Sub_{[\infty]}(\FF_J))\cap (V_{r+i}\cap \Sub_{[\infty]}(\FF_J))\not=\emptyset$ for all $i$; in particular, $g\cdot V_i\cap V_{r+i}\not=\emptyset$.
\end{proof}

\medskip

\begin{example}[Weakly malnormal edge group]
\label{ex: amalg or HNN with non s-normal edge gp}
A subgroup $C \leq G$ is said to be \defin{weakly malnormal} if there exists $g\in G$ such that $g C g^{-1}\cap C$ is finite.
Let $G$ be a free product with amalgamation $G=A*_CB$  or a HNN-extension $G=A*_C$ with irreducible associated Bass-Serre tree $T$. Assume that $C$ is weakly malnormal in $G$. It follows that the action of $G$ on $T$ verifies the conditions of Theorem~\ref{th: two edge-stab with finite intersection} (and of Theorem~\ref{th: two edge-stab with finite intersection - top. dyn.}). Thus every subgroup $\Lambda$ such that $\Lambda\bs T$ is infinite belongs to $\PK(G)$.

For instance, if $A=\FF_p$ is a non-abelian free group, then any finitely generated subgroup $C$ with infinite-index is weakly malnormal  in $A$ and thus in $G=A*_C B$ for any $B$.
More generally, by \cite[Theorem 5.12]{2011-Peterson-Thom} (with $\beta_1^{(2)}(.)$ the first $\ell^2$-Betti numbers): {\em If $C\leq G$ is an infinite index subgroup with $\beta_1^{(2)}(C)<\infty$ and if $\beta_1^{(2)}(G)>0$, then $C$ is weakly malnormal .}

This leads to plenty of examples. Take $A\geq C$ and $B\geq C$ such that $[A:C]\geq 2$ and $[B:C]\geq 3$: this will ensure the irreducibility of $G\acting T$ by Proposition~\ref{prop: irreducibility vs indices}. Assume that $\beta_1^{(2)}(C)<\infty$ and $\beta_1^{(2)}(A)+\beta_1^{(2)}(B)-\beta_1^{(2)}(C)>0$: an application of the Mayer-Vietoris formula from Cheeger-Gromov \cite[p. 204, §4]{CG-86} gives (when $C$ is infinite) $\beta_1^{(2)}(A*_CB)\geq \beta_1^{(2)}(A)+\beta_1^{(2)}(B)-\beta_1^{(2)}(C)$. Thus $C$ is weakly malnormal  in $G=A*_CB$ and Theorems~\ref{th: two edge-stab with finite intersection} and \ref{th: two edge-stab with finite intersection - top. dyn.} apply.
\end{example}

\subsection{A rich class of groups with maximal perfect kernel}

\newcommand\CCC{\mathcal{C}}

In their study of the subgroups of direct products of limit groups, Bridson, Howie, Miller, and Short  \cite{BHMS-2009} introduced a class $\mathcal{C}$ of finitely presented groups defined in a hierarchical manner as the union of classes $\mathcal{C}_n$ and proved a series of properties of this class, in particular their Proposition 5.2.
A close look at their proof reveals that various properties that are of interest to us are verified by the larger class $\mathcal{Q}$ defined below, with essentially the same arguments.
Compare with \cite[Section 2.2]{BHMS-2009}: $\CCC_0$ is the class of non-elementary free products of finitely presented groups, and a group lies in $\CCC_n$ if and only if it is the fundamental group of a finite, acylindrical graph of finitely presented groups, where all of the edge groups are cyclic, and at least one of the vertex groups lies in $\CCC_{n-1}$.

\begin{definition}[The classes  $\mathcal{Q}$ and $\mathcal{Q}'$]
\label{def: class Q}
The class $\mathcal{Q}_0$ consists of finitely generated groups with infinitely many ends. 
A group lies in $\mathcal{Q}_n$ if it is the fundamental group of a finite graph of groups such that
\begin{itemize} 
\item every vertex group is finitely generated,
\item every edge group is finitely generated virtually abelian, \item at least one vertex group lies in $\mathcal{Q}_{n-1}$ and 
\item the associated Bass-Serre tree $T$ admits a pair $(f_1,f_2)$ of edges whose stabilisers have finite intersection.
\end{itemize}
The class $\mathcal{Q}$ is the union of the classes $\mathcal{Q}_n$.
The class $\mathcal{Q}'$ is the subclass of groups in $\mathcal{Q}$ without any non-trivial finite normal subgroup.
\label{def: class Q'}
\end{definition}

\medskip

By Stallings' Theorem a group in $\mathcal{Q}_0$ admits a minimal irreducible action on a tree with a single orbit of edges and with finite edge stabilisers.
We call any such $G$-tree a ``Stallings tree'' of $G$. 
A $G$-tree $T$ witnessing that $G\in \mathcal{Q}$ will be called ``the'' Bass-Serre tree of $G$, although it is not uniquely defined.

Observe that the class $\mathcal{Q}$ does not contain any infinite virtually abelian group as all its elements contain a non-abelian free subgroup.

\begin{example}
\label{ex: class Q}
The class $\mathcal{Q}'$ contains all non-abelian limit groups (\cite[Corollary 2.2]{BHMS-2009}). 
Another important class of groups contained in $\mathcal{Q}'$ is that of graphs of free groups with cyclic edges such that at least one vertex group is non-cyclic. 
\end{example}

The following is a corollary of Theorems~\ref{th: two edge-stab with finite intersection}
 and~\ref{th: two edge-stab with finite intersection - top. dyn.}.
\begin{corollary}\label{cor: groups in CCC, K(G) and HTT}
Let $G$ be any group in the class $\mathcal{Q}$.
Then $\PK(G)=\Sub_{[\infty]}(G)$.
Moreover, the conclusions of Theorem~\ref{th: two edge-stab with finite intersection - top. dyn.} apply. 
In particular, if $G\in \mathcal{Q}'$, then the action $G\acting \PK(G)$ highly topologically transitive.
\end{corollary}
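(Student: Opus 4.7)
My plan is to prove both conclusions by induction on the minimal $n$ such that $G\in \mathcal{Q}_n$. The base case $n=0$ is precisely Corollary~\ref{cor:infinitely many ends}: a finitely generated infinitely-ended group satisfies $\PK(G)=\Sub_{[\infty]}(G)$, and the conclusions of Theorem~\ref{th: two edge-stab with finite intersection - top. dyn.} apply, with high topological transitivity on $\PK(G)$ when $G\in\mathcal{Q}'$ (equivalently when the finite radical $N_0$ is trivial). For the inductive step, I consider $G\in \mathcal{Q}_n$ with $n\geq 1$ acting on its Bass-Serre tree $T$ with a distinguished vertex $v_0$ such that $G_{v_0}\in \mathcal{Q}_{n-1}$. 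By definition of $\mathcal{Q}_n$ together with Proposition~\ref{prop: irreducibility vs indices}, the action $G\acting T$ is minimal and irreducible, and the hypothesis on two edges with finite intersection of stabilisers is exactly what Theorems~\ref{th: two edge-stab with finite intersection} and~\ref{th: two edge-stab with finite intersection - top. dyn.} require. These deliver $\overline{\Sub_{\vert \bullet\bs T\vert \infty}(G)}\subseteq \PK(G)$ together with the dynamical conclusions on this closure.

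Since $G$ is finitely generated, the inclusion $\PK(G)\subseteq \Sub_{[\infty]}(G)$ is automatic. The heart of the argument is the reverse inclusion, which I will strengthen to $\Sub_{[\infty]}(G)\subseteq \overline{\Sub_{\vert \bullet\bs T\vert \infty}(G)}$. Given $H\in \Sub_{[\infty]}(G)$, if $H\bs T$ is infinite we are done. Otherwise $|H\bs G/G_{v_0}|$ is finite, and the standard double-coset index formula
\[
[G:H]=\sum_{j}\,[G_{v_0}:g_j^{-1}Hg_j\cap G_{v_0}]
\]
(summed over a finite set of representatives $g_j$ of $H\bs G/G_{v_0}$) combined with $[G:H]=\infty$ forces some summand to be infinite. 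Replacing $H$ by the conjugate $g_j^{-1}Hg_j$ (which preserves both $\PK(G)$ and $\overline{\Sub_{\vert \bullet\bs T\vert \infty}(G)}$, these being $G$-invariant), I may assume $[G_{v_0}:H\cap G_{v_0}]=\infty$.

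By the inductive hypothesis applied to $G_{v_0}\in \mathcal{Q}_{n-1}$, the subgroup $H\cap G_{v_0}$ lies in $\PK(G_{v_0})=\Sub_{[\infty]}(G_{v_0})$ and is thus a condensation point of $\Sub(G_{v_0})$. I choose a sequence $K_m\to H\cap G_{v_0}$ in $\Sub(G_{v_0})$ with $K_m\neq H\cap G_{v_0}$ and (automatically) each $K_m$ of infinite index in $G_{v_0}$. The Bass-Serre decomposition of $H\acting T$ has finite underlying graph $Y=H\bs T$ with vertex group $H\cap G_{v_0}$ at a lift $\tilde v_0$ of $v_0$; the finitely many adjacent edge groups are contained in $H\cap G_{v_0}$, hence by Chabauty convergence are eventually contained in $K_m$. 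Substituting $K_m$ for the vertex group at $\tilde v_0$ yields a modified graph-of-groups $\mathcal{G}_{K_m}$ whose fundamental group, via the natural homomorphism, identifies with the subgroup $H_m:=\langle K_m, (H\cap G_v)_{v\neq v_0}, \text{stable letters of }H\rangle\leq G$. Standard Bass-Serre theory will then give $H_m\to H$ in $\Sub(G)$, and that $H_m\bs T$ is infinite: indeed $H_m\cap G_{v_0}=K_m$ has infinite index in $G_{v_0}$, and a double-coset analysis $[G_{v_0}:K_m]=\sum_{[g]\in K_m\bs G_{v_0}/G_e}[G_e:G_e\cap g^{-1}K_m g]$ at an edge $e$ adjacent to $v_0$ forces $|K_m\bs G_{v_0}/G_e|=\infty$ (using that $G_e$ is finitely generated virtually abelian and proper in $G_{v_0}$, so that the contribution of each double coset is sufficiently controlled). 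Consequently $H_m\in \Sub_{\vert \bullet\bs T\vert \infty}(G)$ and $H\in \overline{\Sub_{\vert \bullet\bs T\vert \infty}(G)}$.

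The main technical obstacle lies in the graph-of-groups lifting step: verifying the injectivity of $\pi_1(\mathcal{G}_{K_m})\to G$ (so that $H_m$ is a genuine subgroup with the expected graph-of-groups description of its action on $T$), checking that $K_m\to H\cap G_{v_0}$ in $\Sub(G_{v_0})$ translates cleanly to $H_m\to H$ in $\Sub(G)$, and in particular ruling out the degenerate possibility that $|K_m\bs G_{v_0}/G_e|$ is finite (which, as indicated above, fails by a careful analysis of the double-coset formula and the virtually abelian structure of $G_e$). Once this is in place, the equality $\PK(G)=\Sub_{[\infty]}(G)=\overline{\Sub_{\vert \bullet\bs T\vert \infty}(G)}$ together with Theorem~\ref{th: two edge-stab with finite intersection - top. dyn.} yields all the dynamical conclusions; when $G\in \mathcal{Q}'$, the kernel $N_0$ of $G\acting T$ is a finite normal subgroup of $G$ and so must be trivial, producing high topological transitivity of $G\acting \PK(G)$.
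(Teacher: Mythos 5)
Your overall skeleton is the same as the paper's: reduce everything to the inclusion $\Sub_{[\infty]}(G)\subseteq\overline{\Sub_{\vert \bullet\bs T\vert \infty}(G)}$, get the reverse inclusion and the dynamics from Theorems~\ref{th: two edge-stab with finite intersection} and~\ref{th: two edge-stab with finite intersection - top. dyn.}, and run an induction along the hierarchy defining $\mathcal{Q}$. The gap is in the branch where $H$ is finitely generated, $[G:H]=\infty$ and $H\bs T$ is finite. The pivotal claim you lean on --- that $[G_{v_0}:K]=\infty$ forces $\vert K\bs G_{v_0}/G_e\vert=\infty$ because $G_e$ is finitely generated virtually abelian --- is false as stated: in the double-coset formula $[G_{v_0}:K]=\sum [G_e:G_e\cap g^{-1}Kg]$ a single summand can already be infinite when $G_e$ is infinite. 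Concretely, for $G_{v_0}=\FF_2=\la a,b\ra$, $G_e=\la a\ra$ and $K=\ker(\FF_2\to\Z)$ (with $a\mapsto 1$, $b\mapsto 0$) one has $[G_{v_0}:K]=\infty$ but $\vert K\bs G_{v_0}/G_e\vert=1$. Even after restricting to finitely generated $K$ (which your $K_m$ need not be), the implication is precisely the content of Propositions~\ref{claim: C, Lamba double classes finite -> Lambda / T finite} and~\ref{claim: L/T finite -> L/G finite}, whose proofs are genuinely geometric (minimal subtree of the virtually abelian group, the collapsed tree $S$, half-trees cut off by a translate of the segment $[f_1,f_2]$, an ends count) and cannot be replaced by bookkeeping with the index formula. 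In fact the paper's route is to prove, via these two propositions, that the case you are trying to perturb away from never occurs: a finitely generated infinite-index subgroup of $G\in\mathcal{Q}$ automatically has infinite quotient graph. That statement, in some form, is unavoidable for your argument too, since you still need $H_m\bs T$ to be infinite.

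The graph-of-groups substitution also has unresolved problems beyond being ``technical''. Replacing the vertex group $H\cap G_{v_0}$ by $K_m$ gives a homomorphism $\pi_1(\mathcal{G}_{K_m})\to G$ that has no reason to be injective, and even granting injectivity the Chabauty convergence $H_m\to H$ requires a normal-form control of how elements of $K_m\smallsetminus (H\cap G_{v_0})$ combine with the stable letters and the other vertex groups --- this is exactly the kind of control that Lemma~\ref{main dendrological lemma} provides in the very different situation where the perturbing subgroup is attached far away from $T_\Lambda$ across an edge with controlled stabiliser, not at a vertex of $H$'s own minimal tree. Finally, two smaller points: your $K_m$ need not contain $H\cap G_{v_0}$ (so $H\leq H_m$ can fail), and they are not ``automatically'' of infinite index --- if $G_{v_0}$ is subgroup separable (e.g.\ a free vertex group), $H\cap G_{v_0}$ is a limit of finite-index subgroups; this is repairable because $H\cap G_{v_0}$ is a condensation point and $G_{v_0}$ has only countably many finite-index subgroups, but it must be said.
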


We postpone the proof of this corollary until after the following two necessary propositions and 
after their proofs (which mimic that of \cite[Proposition 5.2]{BHMS-2009}).
Observe that the assumptions of the propositions both repeat those defining the class $\mathcal{Q}$ each with one condition missing (the third and the fourth respectively).

\begin{proposition}
\label{claim: C, Lamba double classes finite -> Lambda / T finite}
Let $G$ be the fundamental group of a finite graph of finitely generated groups,   
where all of the edge groups are finitely generated virtually abelian and the associated Bass-Serre tree $T$ admits a pair $(f_1,f_2)$ of edges whose stabilisers have finite intersection. Let $\Lambda, C\leq G$ be finitely generated subgroups where $C$ is virtually abelian.
If the set of double classes $\Lambda\bs G/C$ is finite, then the quotient graph $\Lambda\bs T$ is finite.
\end{proposition}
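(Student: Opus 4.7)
The plan is to prove the contrapositive: assuming $\Lambda\bs T$ is infinite, I will show $|\Lambda\bs G/C|=\infty$. The strategy is modelled on Bridson--Howie--Miller--Short~\cite{BHMS-2009}, to which the authors have already pointed.

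First I reduce to edges. Since every vertex of a tree is an endpoint of some edge, $|\Lambda\bs V(T)|\leq 2|\Lambda\bs E(T)|$ whenever the latter is finite; so $\Lambda\bs T$ infinite forces infinitely many $\Lambda$-orbits of edges, and since $G\bs T$ is finite, there is an edge $e\in E(T)$ with $|\Lambda\bs G/G_e|=\infty$. By hypothesis $G_e$ is finitely generated virtually abelian. The task therefore reduces to showing: for two f.g.\ virtually abelian subgroups $C,G_e\leq G$, finiteness of $|\Lambda\bs G/C|$ implies finiteness of $|\Lambda\bs G/G_e|$.

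Writing $G=\bigsqcup_{i=1}^{n}\Lambda g_iC$, I would exploit the action of $C$ on $T$. Being f.g.\ virtually abelian, $C$ either (i) fixes a vertex $v_C$ of $T$, so that $C\leq G_{v_C}$, or (ii) preserves a canonical bi-infinite axis $\ell_C$ on which a finite-index subgroup acts by translations. In both cases $C$ leaves invariant a canonical subtree $T_C\subseteq T$ (a vertex or a line) on which it acts cocompactly. Since $cT_C=T_C$ for every $c\in C$, we have $gT_C=\lambda g_iT_C$ whenever $g\in \Lambda g_iC$, so the $G$-translates $\{gT_C:g\in G\}$ split into at most $n$ $\Lambda$-orbits of subtrees $\Lambda g_iT_C$. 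In case (i), the obvious surjection $\Lambda\bs G/C\twoheadrightarrow \Lambda\bs G/G_{v_C}$ immediately yields $|\Lambda\bs G/G_{v_C}|<\infty$, so $\Lambda$ acts with finitely many orbits on the $G$-orbit of $v_C$.

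The main obstacle is propagating this finiteness to every edge orbit of $T$. Here the structural hypothesis on $(f_1,f_2)$ enters essentially: finiteness of $\Stab_G(f_1)\cap\Stab_G(f_2)$ forces the action $G\acting T$ to be irreducible, providing an abundance of hyperbolic elements with transverse axes (as exploited in Lemma~\ref{lem: existence of hyperbolic in Z}). Combined with the ``thinness'' of $T_C$ (a vertex or a line) and the fact that each $G_e$ is itself finitely generated virtually abelian with its own thin canonical invariant subtree, this geometric richness should allow one to displace arbitrary edges of $T$ into bounded neighbourhoods of the finitely many $\Lambda$-orbits $\Lambda g_iT_C$, thereby forcing $|\Lambda\bs G/G_e|<\infty$ and completing the contradiction. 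Executing this last geometric step rigorously---mirroring the inductive propagation of BHMS but adapted to a single-level Bass--Serre decomposition with virtually abelian edge groups---is what I expect to be the main technical challenge, likely involving iteration of the double-coset comparison with $C$ replaced successively by conjugates of edge groups.
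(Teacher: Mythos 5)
You correctly identify the key dichotomy (the virtually abelian subgroup $C$ either fixes a vertex or preserves an axis, in both cases acting cocompactly on a thin invariant subtree), and your reduction to comparing double cosets is in the spirit of the paper's argument, which is itself modelled on BHMS. However, the proposal has two genuine gaps, one of which you acknowledge yourself.

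First, even in the elliptic case your argument stops too early. Knowing that $\Lambda$ acts with finitely many orbits on $G\cdot v_C$ gives that $\Lambda\bs T$ has finite diameter (by cocompactness of $G\acting T$), but a graph of finite diameter with finitely generated fundamental group can still be infinite (an infinite star has diameter $2$ and trivial $\pi_1$). The paper closes this by a valence analysis: since $G\acting T$ is minimal there are no valence-one vertices in $T$, so a vertex of $\Lambda\bs T$ has valence one only if its $\Lambda$-stabiliser strictly contains the $\Lambda$-stabiliser of its unique incident edge, and finite generation of $\Lambda$ permits only finitely many such vertices; combined with finite diameter and finitely generated $\pi_1$ this forces finiteness. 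Some such argument is needed and is absent from your write-up.

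Second, the hyperbolic case — which you explicitly leave as ``the main technical challenge'' — is the heart of the proposition, and your proposed route (displacing arbitrary edges into bounded neighbourhoods of the translates $\Lambda g_i T_C$) is not how the difficulty is resolved, nor is it clear it can be. The paper's argument runs differently: after collapsing to an edge-transitive tree $S$ containing an edge of $\mathrm{Axis}(c)$, the finiteness of $\Lambda\bs G/C$ translates into the statement that $\Lambda\bs S$ is covered by finitely many images of lines $Q_\Lambda(g_i\bar A)$. One then assumes the minimal $\Lambda$-invariant subtree $S_\Lambda$ is proper, places a translate of the segment $[f_1,f_2]$ (via Lemma~\ref{lem: existence of hyperbolic in Z}) in a half-tree disjoint from $S_\Lambda$, and uses the finiteness of $\Stab_G(f_1)\cap\Stab_G(f_2)$ to show that $Q_\Lambda$ is boundedly finite-to-one on a deeper half-tree; hence $\Lambda\bs S$ contains the image of a half-tree with infinitely many ends, contradicting the covering by finitely many lines. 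This ends-versus-lines counting is the essential idea your proposal is missing, and without it the proof is incomplete.
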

This proposition applies in particular to any $G\in \mathcal{Q}$ and its Bass-Serre tree.

\begin{proposition}
\label{claim: L/T finite -> L/G finite}
Let $G$ be the fundamental group of a finite graph of finitely generated groups,   
where all of the edge groups are virtually abelian.
Assume that the associated Bass-Serre tree $T$ admits a vertex $w$ satisfying $\Stab_G(w)\in \mathcal{Q}_{n}$, for $n\geq 0$.
Then the quotient graph $\Lambda\bs T$ is finite if and only if the index $[G:\Lambda]$ is finite.
\end{proposition}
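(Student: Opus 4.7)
The plan is to prove the proposition by induction on $n$, using Proposition~\ref{claim: C, Lamba double classes finite -> Lambda / T finite} as the bridge between finiteness of double coset spaces in $G$ and finiteness of the quotient of the Bass-Serre tree. The direction $[G:\Lambda]<\infty \Rightarrow \Lambda\bs T$ finite is routine: the edges of $\Lambda\bs T$ are indexed by $\bigsqcup_{\bar e \in G\bs T} \Lambda\bs G / \Stab_G(\tilde e)$, with $G\bs T$ finite and each double coset space of size at most $[G:\Lambda]$.

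For the converse, set $H:=\Stab_G(w)$ and $\Lambda'':=\Lambda\cap H$, and assume $\Lambda\bs T$ is finite. The $\Lambda$-orbits on $G\cdot w$ form a subset of $\Lambda\bs V(T)$, so $\Lambda\bs G/H$ is finite; with representatives $g_1,\dots,g_k$, the orbit-counting formula
\[[G:\Lambda]=\sum_{i=1}^{k}[H:g_i^{-1}\Lambda g_i\cap H]\]
reduces the problem, after replacing $\Lambda$ by a conjugate (which preserves $\Lambda\bs T$ finite), to showing $[H:\Lambda'']<\infty$. I would then pick an edge $e$ of $T$ incident to $w$ and set $C:=\Stab_G(e)\leq H$, which is (finitely generated) virtually abelian. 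The inclusion $H\hookrightarrow G$ induces an injection $\Lambda''\bs H/C\hookrightarrow \Lambda\bs G/C$ (if $h'=\lambda h c$ with $h,h',c\in H$ and $\lambda\in\Lambda$, then $\lambda=h'c^{-1}h^{-1}\in H$, hence $\lambda\in\Lambda''$), and the target is finite since it parametrises the $\Lambda$-orbits on $G\cdot e\subseteq E(T)$. So $\Lambda''\bs H/C$ is finite. Its $r$ double cosets furnish a homomorphism $\Lambda''\to S_r$ whose kernel lies inside a conjugate of $C$ and is therefore finitely generated virtually abelian; so $\Lambda''$ is itself finitely generated, as an extension of a finitely generated group by a finite group. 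Proposition~\ref{claim: C, Lamba double classes finite -> Lambda / T finite} applied to $H\in\mathcal{Q}$ with its Bass-Serre tree $T_H$ and the subgroups $\Lambda''$, $C$ then yields that $\Lambda''\bs T_H$ is finite.

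Finally, I would deduce $[H:\Lambda'']<\infty$ from $\Lambda''\bs T_H$ finite. In the base case $n=0$, $H$ has infinitely many ends and one can take $T_H$ to be a Stallings tree, whose edge stabilisers are finite; the standard covering-degree identity
\[[H:\Lambda'']=\sum_{e'\text{ in the fibre over an edge of }H\bs T_H}[\Stab_H(e'):\Stab_{\Lambda''}(e')]\]
is then a finite sum (the fibre is finite) of terms bounded by $|\Stab_H(e')|$, hence finite. In the inductive step $n\geq 1$, the class $\mathcal{Q}_n$ supplies a vertex $w_H$ of $T_H$ with $\Stab_H(w_H)\in\mathcal{Q}_{n-1}$, and the induction hypothesis applied to $(H,T_H,\Lambda'',w_H)$ concludes. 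The main obstacle I anticipate is the finite generation of $\Lambda''$: it is precisely where the virtual abelianness of the edge groups is used essentially, ensuring that the kernel of the $S_r$-action on $H/C$ is finitely generated and thus bringing $\Lambda''$ within the scope of Proposition~\ref{claim: C, Lamba double classes finite -> Lambda / T finite}.
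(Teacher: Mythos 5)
Your proof follows essentially the same route as the paper's: induction on $n$, reduction to the vertex stabiliser $H=\Stab_G(w)$ via the double coset count on $\Lambda\bs G/H$, finiteness of $\Lambda''\bs H/C$ deduced from finiteness of $\Lambda\bs T$, an application of Proposition~\ref{claim: C, Lamba double classes finite -> Lambda / T finite} to $H$ acting on its own Bass--Serre tree, and the induction hypothesis (with the Stallings tree as base case) to conclude $[H:\Lambda'']<\infty$. The one step that fails is exactly the one you flagged as the main obstacle: finite generation of $\Lambda''=\Lambda\cap H$, which you must have before you may invoke Proposition~\ref{claim: C, Lamba double classes finite -> Lambda / T finite}. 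The claim that the $r$ double cosets of $\Lambda''\bs H/C$ furnish a homomorphism $\Lambda''\to S_r$ whose kernel lies in a conjugate of $C$ cannot be correct: the kernel of any homomorphism to $S_r$ has finite index in $\Lambda''$, so your claim would force $\Lambda''$ to be virtually abelian, which already fails for $\Lambda=G$ (then $\Lambda''=H\in\mathcal{Q}_n$ and $\Lambda''\bs H/C$ is a single coset). Indeed there is no natural $S_r$-action here, since $\Lambda''$ preserves each of its orbits on $H/C$. More fundamentally, finiteness of $\Lambda''\bs H/C$ alone does not imply that $\Lambda''$ is finitely generated: take $H=\FF_2=\la a,b\ra$, $C=\la a\ra$, and $\Lambda''$ the kernel of the surjection $H\to\Z$ sending $a\mapsto 1$, $b\mapsto 0$; then $\Lambda'' C=H$, so there is exactly one double coset, yet $\Lambda''$ is free of infinite rank. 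This example also shows that Proposition~\ref{claim: C, Lamba double classes finite -> Lambda / T finite} genuinely needs its finite-generation hypothesis (here $\Lambda''\bs H/C$ is finite while $\Lambda''\bs T_H$ is infinite), so the gap cannot be bypassed by simply dropping that hypothesis.

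The paper obtains finite generation of $\Stab_\Lambda(w)=\Lambda''$ from the global structure of the action $\Lambda\acting T$ rather than from the single double coset space: since $\Lambda\bs T$ is finite, $\Lambda$ is the fundamental group of a finite graph of groups whose edge groups are subgroups of the finitely generated virtually abelian edge groups of $G$, hence finitely generated; a finitely generated fundamental group of a finite graph of groups with finitely generated edge groups has finitely generated vertex groups (Cohen, as cited in the paper's proof). Substituting this argument for your $S_r$ paragraph repairs the proof; the remainder of your argument (the injectivity of $\Lambda''\bs H/C\hookrightarrow\Lambda\bs G/C$, the orbit-counting reduction, and the treatment of the base case) is sound.
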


\begin{proof}[Proof of Proposition~\ref{claim: C, Lamba double classes finite -> Lambda / T finite}]
We consider two cases.
\\
\textbf{Case 1. Suppose that $C$ fixes a vertex $v$ of $T$.}
Then, by the double-coset hypothesis, the $G$-orbit of $v$ consists of only
finitely many $\Lambda$-orbits $\Lambda \cdot v_i$. Since the action of $G$ on T is cocompact, every point in $T$ is at a bounded distance from $G\cdot v$, thus $\Lambda\bs T$ has finite diameter.
 The fundamental group of the topological graph underlying  $\Lambda\bs T$ is finitely generated since $\Lambda$ is finitely generated and maps onto it.
 By minimality of $G\acting T$, there are no valence 1 edges in $T$. Moreover, the valence of a vertex $z$ in the tree $T$ is the sum of the indices $[\Stab_\Lambda(z):\Stab_\Lambda(e)]$ over a subset of the edges $e$ incident to $z$ containing a representative from each $\Lambda$-orbit.
 It follows that if $z$ is a vertex with $\Stab_\Lambda(z)=\Stab_\Lambda(e)$ for some incident edge $e$ then the image of $z$ in $\Lambda\bs T$ does not have valence 1.
 By finite generation of $\Lambda$, only finitely many valence 1 vertices of $\Lambda\bs T$ can have a vertex group larger than the group of their incident edge.
It follows that $\Lambda\bs T$ is finite. 

\noindent
\textbf{Case 2. Suppose that $C$ does not fix a vertex. }  
Since $C$ is finitely generated, it admits a hyperbolic element $c\in C$ \cite[Corollary 3, p. 65]{Serre-Trees-1980}. 
Let $A$ be the axis of $c$.
Let $C'\leq C$ be a finite index abelian subgroup. For some integer $j$, we have $c':=c^j\in C'$, and $A$ is also the axis of $c'$.
Let $a\in C$. The image $a\cdot A$ is the axis of the hyperbolic element $a c' a^{-1}$. 
In case $a\in C'$ then $a c' a^{-1}=c'$, so the line $A$ is $C'$-invariant and $A$ is the axis of all hyperbolic elements of $C'$.
If $a\in C$, a certain power $k$ satisfies $(a c a^{-1})^k\in C'$, thus the axis of $a c a^{-1}$ is also $A$.
It follows that $A$ is $C$-invariant and is the minimal subtree of $C$.

Let $e\in E(T)$ be an edge that belongs to $A$. 
Contract to one point each connected component of $T\smallsetminus G\cdot e$ in order to produce the $G$-equivariant factor map $P: T\to S$ to the edge-transitive $G$-tree $S$. 
Up to enlarging the segment $L\coloneqq [f_1,f_2]$ of $T$ so as to start and finish with edges in $G\cdot e$, the image $P(L)=[P(f_1),P(f_2)]$ satisfies $\vert \Stab_G(P(L))\vert <\infty$.
Let $\bar e :=P(e)$. The minimal tree of $C$ in $S$ is the image $\bar A:=P(A)=C\cdot \bar e $ of the axis $A$. 
The two maps $(G\to E(S) \: ; \:  g\mapsto g\cdot \bar e )$ and $(\Lambda\bs G\to E(\Lambda\bs S) \: ; \: \Lambda g\mapsto \Lambda (g\cdot \bar e ))$ are onto and the finiteness of the double cosets $\Lambda\bs G/C$ thus implies the following:

{\em 
 The quotient graph $\Lambda\bs S$ is covered by finitely many images $\Lambda g_i C\cdot \bar e $, i.e. by finitely many images $Q_{\Lambda}(g_i (\bar A ))$ of lines under the quotient map $Q_{\Lambda}:S\to \Lambda\bs S$.
} 

Let $S_\Lambda\subseteq S$ be a minimal $\Lambda$-invariant subtree.
Assume for a contradiction that $ S_\Lambda\not= S$. Let $\varepsilon$ be an edge of $S\smallsetminus S_\Lambda $ and $Z_1$ the half-tree of $S\smallsetminus\{\varepsilon\}$ that does not contain $S_\Lambda$. 
By Lemma~\ref{lem: existence of hyperbolic in Z}, there is $\gamma_2\in G$ such that $\gamma_2([P(f_1),P(f_2)])\subseteq Z_1$ and $\gamma_2(P(f_1))$ separates $\gamma_2(P(f_2))$ from $\varepsilon$.
We show that the map $Q_{\Lambda}$ is boundedly finite-to-one in restriction to the half-tree $Z_2$ of $S\smallsetminus\{\gamma_2(P(f_2))\}$ that does not contain $S_\Lambda$.
Indeed an element $\lambda\in \Lambda$ sending a vertex $u$ of $Z_2$ inside $Z_2$ must fix the projection $v$ of $u$ on $S_\Lambda$ and thus the segment $[v,\gamma_2(P(f_2))]$. But this segment contains the segment $\gamma_2(P(L))$. Thus $\lambda$ belongs to the $\gamma_2$-conjugate $\gamma_2\Stab_G(P(L))\gamma_2^{-1}$ of the point-wise stabiliser of $P(L)$, which is finite. 
The map $Z_2\to Q_{\Lambda}(Z_2)=(\gamma_2\Stab_G(P(L))\gamma_2^{-1}\cap \Lambda)\bs Z_2$ has fibers of uniformly bounded size, thus $Q_{\Lambda}(Z_2)$ has infinitely many ends.
The graph $\Lambda\bs S$ contains $Q_{\Lambda}(Z_2)$, thus it has infinitely many ends, and it cannot be covered by finitely many images of lines, contrarily to the above Claim.
Thus $S_\Lambda=S$.

Since $\Lambda$ is finitely generated, $\Lambda\bs S$ is finite (Remark~\ref{rem: H fg minimal -> T/H compact}). 
It follows that $\Lambda\bs T$ is finite.
Proposition~\ref{claim: C, Lamba double classes finite -> Lambda / T finite} is proved.
\end{proof}

\begin{proof}[Proof of Proposition~\ref{claim: L/T finite -> L/G finite}]
We will argue by induction on $n$.

The initialisation of the induction is that groups in $\mathcal{Q}_0$ satisfy the conclusion of Proposition~\ref{claim: L/T finite -> L/G finite}: 
If $G\in \mathcal{Q}_0$, i.e. its  Stallings tree is an irreducible edge transitive action 
$G\acting T$ with a finite edge stabiliser $K$, then for any $\Lambda\leq G$, the quotient $\Lambda\bs T$ is finite if and only if $\Lambda\bs G/K$ is finite,  if and only if $\Lambda\bs G$ is finite.

Assume the conclusion of Proposition~\ref{claim: L/T finite -> L/G finite} holds for all groups satisfying the assumptions up to some $n-1$.
In particular, this conclusion holds for all groups in $\mathcal{Q}_{n}$. 

Let now $G$ be a group satisfying the assumptions for $n$ with $\Stab_\Lambda(w)\in \mathcal{Q}_{n}$. Let $\Lambda\leq G$ and assume $\Lambda\bs T$ is finite. 
Observe that $\Stab_\Lambda(w)$ is also finitely generated since 
(a) subgroups of finitely generated (virtually) abelian groups are finitely generated and (b) 
 a finitely generated graph of groups (such as $\Lambda$) with finitely generated edge groups must have finitely generated vertex groups by \cite[p.43, Propositions 29 and 35]{Cohen-book-1989}. 
 
Let $e$ be some edge incident at $w$. 
Then $\vert \Stab_\Lambda(w)\bs \Stab_G(w)/ \Stab_G(e)\vert $ is bounded above by the finite number of edges of  $\Lambda \bs T$ that are  incident at $\Lambda\cdot w \in \Lambda\bs T$.
Since $\Stab_G(w)\in \mathcal{Q}_{n}$, since $\Stab_\Lambda(w)$ and $\Stab_G(e)$ are finitely generated and $\Stab_G(e)$ is virtually abelian, 
then applying Proposition~\ref{claim: C, Lamba double classes finite -> Lambda / T finite}
to the action of $\Stab_G(w)$ on its own Bass-Serre tree $T'$ implies that $\Stab_\Lambda(w)\bs T'$ is finite.
Then, by the induction hypothesis (including the initialisation when $n=0$), $\Stab_\Lambda(w)$ has finite index in $\Stab_G(w)$.
And similarly, for each $g\in G$, the subgroup $\Stab_\Lambda(gw) $ has finite index in $\Stab_G(gw) $: Indeed $\Stab_G(gw) \simeq\Stab_G(w)\in \mathcal{Q}_{n}$ and $gw$ could have played the role of $w$.

Consider the action of $\Stab_G(w)$ by right multiplication on $\Lambda\bs G$: the orbits are the double cosets $\Lambda\bs G/\Stab_G(w)$ and hence are finite in number because they index a subset of the vertices of $\Lambda\bs T$ ; moreover, the stabilizer of $\Lambda g$ is $g^{-1}\Lambda g \cap \Stab_G(w)=g^{-1}(\Lambda \cap g \Stab_G(w)g^{-1})g =g^{-1} \Stab_\Lambda(gw) g$, which we have just seen has finite index in $g^{-1}\Stab_G(gw)g=\Stab_G(w)$. Thus $\Lambda\bs G$ is finite. 
The converse is clear: if $\Lambda\bs G$ is finite, the quotient $\Lambda\bs T$ is finite since $G\bs T$ is finite. Proposition~\ref{claim: L/T finite -> L/G finite} is proved.
\end{proof}

\begin{proof}[Proof of Corollary~\ref{cor: groups in CCC, K(G) and HTT}]
An element $G\in \mathcal{Q}$ is equipped with its defining Bass-Serre $G$-tree $T$. %
Observe that the action $G\acting T$ can be assumed to be minimal and irreducible: take the minimal $j$ such that $G\in \mathcal{Q}_j$ and note that
the stabiliser of $w$ is not virtually abelian, thus the stabiliser of any edge incident to $w$ has infinite index in $\Stab_G(w)$.
Irreducibility of the action then follows Proposition~\ref{prop: irreducibility vs indices}.
By Theorem~\ref{th: two edge-stab with finite intersection} $\Sub_{\vert \bullet\bs T\vert \infty}(G)\subseteq \PK(G)\subseteq \Sub_{[\infty]}(G)$.
It is enough to check that 
$\overline{\Sub_{\vert \bullet\bs T\vert \infty}(G)}\supseteq  \Sub_{[\infty]}(G)$.

If $\Lambda\in  \Sub_{[\infty]}(G)$ is finitely generated then $\Lambda \bs T$ is infinite: if $n=0$ this is because the action of $G$ on the edges of its Stallings tree is transitive with finite stabilisers, and otherwise this follows from Proposition~\ref{claim: L/T finite -> L/G finite}. Thus $\Lambda\in \Sub_{\vert \bullet\bs T\vert \infty}(G)$.
If $\Lambda\in  \Sub_{[\infty]}(G)$ is not finitely generated, it is a limit of proper finitely generated subgroups $\Lambda_n \leq \Lambda$, which are thus also of infinite index, so that $\Lambda_n\in \Sub_{\vert \bullet\bs T\vert \infty}(G)$ and $\Lambda\in \overline{\Sub_{\vert \bullet\bs T\vert \infty}(G)}$.
Then Theorem~\ref{th: two edge-stab with finite intersection - top. dyn.} applies concerning the dynamics.
\end{proof}

\section{Hyperbolic groups}

\subsection{Generalities on the space of subgroups of hyperbolic groups}

Recall from  \cite[5.3]{Gromov87} that a subset $Y$ of a geodesic metric space $X$ is \defin{quasiconvex} if its convex hull is contained within a bounded neighbourhood of $Y$. 

Let $G$ be a hyperbolic (aka word-hyperbolic) group. 
A subgroup $H\leq G$ is \defin{quasiconvex} if it is a quasiconvex subset of the Cayley graph $X$ of $G$ associated with a  finite generating set of $G$. Note that this does not depend on the choice of Cayley graph as quasiconvexity in hyperbolic spaces is invariant under quasi-isometry (this is not true for metric spaces in general). In fact $H \leq G$ is quasiconvex if and only if $H$ admits a finite generating set $T$ and the inclusion $H \hookrightarrow X$ is a quasi-isometric embedding with respect to the word metric defined by $T$.

Given a hyperbolic group $G$, let \[\Sub_{\mathrm{QC}[\infty]}(G) \subseteq \Sub_{[\infty]}(G)\]  denote the set of quasiconvex subgroups 
of $G$ with infinite index. Observe that $\Sub_{\mathrm{QC}[\infty]}(G) $ is invariant under the $G$-action by conjugation. If $G$ is non-elementary, we prove that this subset is in fact a $G$-invariant subspace of the perfect kernel of $G$.

\begin{theorem} \label{thm: kernel QC hyp}
Let $G$ be a non-elementary hyperbolic group. Then 
\[ \overline{\Sub_{\mathrm{QC}[\infty]}(G)} \subseteq \PK(G).\]
\end{theorem}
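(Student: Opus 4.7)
\medskip
\noindent\textbf{Proof proposal.}
The perfect kernel $\PK(G)$ is closed and, by maximality, contains every perfect subspace of $\Sub(G)$, i.e.\ every closed subset without isolated points. It thus suffices to show that $\Sub_{\mathrm{QC}[\infty]}(G)$ has no isolated points in its induced topology: then $\overline{\Sub_{\mathrm{QC}[\infty]}(G)}$ is closed without isolated points, hence perfect, and therefore contained in $\PK(G)$.

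Given $H\in\Sub_{\mathrm{QC}[\infty]}(G)$, my plan is to build a non-stationary sequence in $\Sub_{\mathrm{QC}[\infty]}(G)$ converging to $H$. Because $G$ is non-elementary hyperbolic and $H$ is quasiconvex of infinite index, the limit set $\Lambda(H)\subseteq\partial G$ is a proper closed subset; since fixed-point pairs of hyperbolic elements are dense in $\partial G\times\partial G$, I may choose a hyperbolic $g\in G$ whose two endpoints $g^{\pm\infty}$ lie outside $\Lambda(H)$. A Klein / ping-pong argument on $\partial G$, of the Gitik combination-theorem type, then yields that for all sufficiently large $n$ the subgroup $H_n:=\la H, g^n\ra$ splits as the free product $H\ast\la g^n\ra$, is quasiconvex in $G$, and has a proper limit set in $\partial G$; in particular it has infinite index in $G$ and strictly contains $H$, so $H_n\in\Sub_{\mathrm{QC}[\infty]}(G)\smallsetminus\{H\}$.

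It remains to verify that $H_n\to H$ in the Chabauty topology. By the free-product normal form, every element of $H_n\smallsetminus H$ is a nontrivial alternating word involving factors $g^{nk}$ with $k\neq 0$; the quasi-geodesic estimate underlying the combination theorem forces the $G$-word length of any such element to tend to infinity with $n$. Hence for every finite $F\subseteq G$ one has $H_n\cap F=H\cap F$ for large $n$, which is the desired convergence. The main obstacle is the combination step itself, i.e.\ simultaneously extracting quasiconvexity of $H_n$ and the length lower bound on $H_n\smallsetminus H$ from the boundary dynamics of $g$ relative to $\Lambda(H)$; granting those, the perfection of $\overline{\Sub_{\mathrm{QC}[\infty]}(G)}$, and hence the theorem, follow immediately.
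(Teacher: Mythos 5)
Your overall architecture is exactly the paper's: show that $\Sub_{\mathrm{QC}[\infty]}(G)$ has no isolated points by adjoining to $H$ a suitable cyclic-type subgroup ``transverse'' to $\Lambda(H)$, use a combination theorem to keep the result quasiconvex of infinite index, and then use a word-length lower bound on the new elements to get Chabauty convergence $H_n\to H$. The convergence estimate you sketch (reduced words in $H_n\smallsetminus H$ must involve a high power of $g$, hence have large $S$-length by the quasi-isometric embedding of the combined subgroup) is precisely the paper's argument.

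There is, however, a genuine gap at the combination step, and it is exactly where the paper has to work: the claim that $H_n=\la H,g^n\ra$ splits as a \emph{free} product $H\ast\la g^n\ra$ fails whenever $G$ has torsion and $M:=H\cap N\neq\{\id\}$, where $N$ is the finite radical of $G$. Since $N$ is the kernel of the action of $G$ on $\partial G$, it stabilises every limit set, so $N\leq P:=\Stab(\Lambda(\la g\ra))$ for \emph{every} infinite order $g$; no choice of endpoints avoids $M\leq H\cap P$. Moreover a suitable power of $g$ commutes with the finite group $M$, so $M$ would be a non-trivial subgroup of one free factor normalised by a non-trivial element of the other, which is impossible in a non-trivial free product. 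The correct statement, which the paper extracts from Mart\'inez-Pedroza's relative combination theorems (Gitik's versions are explicitly noted to be insufficient here), is an amalgam $\la H,u^n,M\ra=H\ast_M(\la u^n\ra\times M)$ over the finite group $M$. Two further points you elide also require proof: (i) arranging that $P\cap H$ equals $M$ exactly (and not some larger finite subgroup) uses proper discontinuity of $H\acting\partial G\smallsetminus\Lambda(H)$ together with the fact that fixed-point sets in $\partial G$ of elements outside $N$ have empty interior; (ii) the combined subgroup could a priori have \emph{finite} index in $G$, and one must pass to a higher power of $u$ to rule this out. All of these are repaired by the paper's Proposition on combination (its Proposition~\ref{prop: combination result}); in the torsion-free case your sketch is essentially complete, but the theorem as stated covers groups with torsion, where the free-product form of your key step is false.
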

Recall that a non-elementary hyperbolic group $G$ has a finite radical $N \unlhd G$ called the \defin{finite radical} of $G$ and, by Proposition \ref{prop:finite normal subgr clopen part}, the set $\Conj(N)^{G}$ of $G$-conjugacy classes of subgroups of $N$ defines a finite $G$-invariant clopen partition: 
$\Sub(G)=\bigsqcup_{C\in \Conj(N)^G} \{H \leq G \colon H\cap N\in C\}$.
In analogy with Theorem \ref{th: two edge-stab with finite intersection - top. dyn.}, we denote for each $C \in \Conj(N)^G$ the subspace
\begin{equation}\label{eq: def of F-C}
 \mathcal{F}_C := \overline{\Sub_{\mathrm{QC}[\infty]}(G)} \cap \{H \leq G \colon H \cap N \in C \}
\end{equation}
(which is perfect by Theorem~\ref{thm: kernel QC hyp}) so that the clopen partition of $\Sub(G)$ induces a $G$-invariant partition of $\overline{\Sub_{\mathrm{QC}[\infty]}(G)}$:
\[\overline{\Sub_{\mathrm{QC}[\infty]}(G)} = \bigsqcup_{C \in \Conj(N)^G} \mathcal{F}_C\]
whose dynamics satisfy:
\begin{theorem} \label{thm: QC hyp dynamique}
Let $G$ be a non-elementary hyperbolic group and fix $C \in \Conj(N)^G$. Then
\begin{itemize}
\item[(i)] the action of $G$ on $\mathcal{F}_C$ is topologically transitive, and
\item[(ii)] if $|C|=1$ then the action $G \acting \mathcal{F}_C$ is highly topologically transitive. 
\end{itemize}
\noindent In particular, if $N = \{\id\}$ then the action $G\acting \overline{\Sub_{\mathrm{QC}[\infty]}(G)}$ is highly topologically transitive.
\end{theorem}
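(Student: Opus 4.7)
The plan is to mirror the structure of the proof of Theorem~\ref{th: two edge-stab with finite intersection - top. dyn.}, replacing Bass--Serre arguments with a combination theorem for quasiconvex subgroups of a hyperbolic group. Given non-empty open sets $V_1, \dots, V_{2r} \subseteq \mathcal{F}_C$, I would first extract a quasiconvex infinite-index subgroup $\Lambda_i \in V_i$ for each $i$, using the density of $\Sub_{\mathrm{QC}[\infty]}(G)$ in $\mathcal{F}_C$. Since $\mathcal{F}_C$ is perfect by Theorem~\ref{thm: kernel QC hyp}, I may further require $\Lambda_i \not\leq N$ (only finitely many subgroups lie in $N$). Invoking Proposition~\ref{prop:finite normal subgr clopen part}, I may assume that $\Lambda_i \cap N = M$ for some fixed $M \in C$: this is automatic in case (ii) since $|C|=1$ forces $M$ to be normal, and in case (i) (where it suffices to treat $r=1$) it is achieved after replacing $V_2$ by $\gamma_2 V_2 \gamma_2^{-1}$ for a $\gamma_2$ conjugating $\Lambda_2 \cap N$ to $\Lambda_1 \cap N$, which is harmless because topological transitivity is conjugation-invariant.

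Next, I would pick a loxodromic element $g \in G$ whose pair of fixed points on $\partial G$ lies outside the union of the limit sets of $\Lambda_1, \dots, \Lambda_{2r}$; such a $g$ exists because each $\Lambda_i$ is quasiconvex of infinite index and hence has proper limit set, while $G$ is non-elementary. Replacing $g$ by a power, I may further assume $g$ centralises $M$ (as the finite group $N$ has finite automorphism group). The central geometric input is then a Gitik-type combination theorem for quasiconvex subgroups (ping-pong on $\partial G$): as $n \to \infty$ the limit set of $g^n \Lambda_{r+i} g^{-n}$ concentrates near the attracting fixed point of $g$, so for all $n$ large enough and every $i \in \{1,\dots,r\}$, the intersection $\Lambda_i \cap g^n \Lambda_{r+i} g^{-n}$ reduces exactly to $M$, and
\[
\Delta_n^{(i)} \coloneqq \langle \Lambda_i,\, g^n \Lambda_{r+i} g^{-n} \rangle \;\cong\; \Lambda_i *_M g^n \Lambda_{r+i} g^{-n}
\]
is a quasiconvex subgroup of $G$ of infinite index (since $\Lambda_i$ already has infinite index). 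In particular $\Delta_n^{(i)} \cap N = M$ for $n$ large, so $\Delta_n^{(i)} \in \Sub_{\mathrm{QC}[\infty]}(G) \cap \mathcal{F}_C$.

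The final step is convergence in the Chabauty topology. The normal-form argument used in the proof of Lemma~\ref{main dendrological lemma}\eqref{it: II} applies verbatim to amalgamated free products and yields $\Delta_n^{(i)} \to \Lambda_i$; by the symmetric construction (swap the roles of $\Lambda_i$ and $\Lambda_{r+i}$ and replace $g$ by $g^{-1}$), $g^{-n}\Delta_n^{(i)} g^n \to \Lambda_{r+i}$. Choosing $n$ large enough to work simultaneously for all $i \in \{1, \dots, r\}$, we obtain $\Delta_n^{(i)} \in V_i$ and $g^{-n}\Delta_n^{(i)}g^n \in V_{r+i}$, hence $g^{-n} V_i g^n \cap V_{r+i} \neq \emptyset$ for every $i$, which proves (ii); part (i) is the case $r=1$ combined with the conjugation reduction above. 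The main obstacle is the hyperbolic combination theorem producing $\Delta_n^{(i)}$ as an amalgam over exactly $M$ and guaranteeing its quasiconvexity; this requires a careful ping-pong on $\partial G$ that uses quasiconvexity of the $\Lambda_i$ together with the fact that the translates $g^n \Lambda_{r+i} g^{-n}$ ``move to infinity'' along the axis of $g$. Once this geometric input is available, the rest is a direct translation of the tree proof.
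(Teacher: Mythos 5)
Your overall strategy is the paper's: reduce to quasiconvex representatives $\Lambda_i$ with $\Lambda_i\cap N=M$, combine $\Lambda_i$ with a far-away conjugate of $\Lambda_{r+i}$ over $M$ via a Mart\'inez-Pedroza/Gitik combination theorem, and conclude by Chabauty convergence of the amalgams to their two factors. The reduction to a single $M\in C$, the choice of an infinite order $p$ with $P=\Stab(\Lambda(\la p\ra))$ meeting each $\Lambda_j$ exactly in $M$ (Proposition~\ref{prop: combination result}), and the final bookkeeping all match the paper.

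There is, however, a genuine gap at the convergence step. You assert that ``the normal-form argument of Lemma~\ref{main dendrological lemma}\eqref{it: II} applies verbatim'' to give $\Delta^{(i)}_n=\Lambda_i\ast_M g^n\Lambda_{r+i}g^{-n}\to\Lambda_i$. The normal form theorem only identifies which elements of $\Delta^{(i)}_n$ lie outside $\Lambda_i$; for Chabauty convergence you must show that every such element has $S$-length going to infinity with $n$. In the tree setting this metric estimate is free (such an element moves a basepoint across the edge $e_n$, which is at distance $>n$ from $T_\Lambda$), but in the hyperbolic setting it does not follow from the splitting alone: Theorem~\ref{thm: MP2} gives quasiconvexity of $\la \Lambda_i, g^n\Lambda_{r+i}g^{-n}\ra$ for each large $n$ with no uniformity in $n$, so you cannot bound $|\delta|_S$ from below for $\delta\in\Delta^{(i)}_n\smallsetminus\Lambda_i$ without further input. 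The paper's construction is designed precisely to supply this estimate: it first builds one \emph{fixed} quasiconvex subgroup $A^{(i)}=(H_i\ast_M K_i^p)\ast_M\la u,M\ra$ using an auxiliary element $u\in P$ (Proposition~\ref{prop: combination result}(3)) and conjugates by $pu^n$ rather than by $g^n$; all the approximants $H_i\ast_M K_i^{pu^n}$ then live inside $A^{(i)}$, and a single quasi-isometry constant $\lambda$ for $A^{(i)}\hookrightarrow G$ converts the syllable $u^nt'u^{-n}$ forced by the normal form into the lower bound $|a|_S\geq 2n/\lambda-\lambda$. A second, smaller gap: you justify infinite index of $\Delta^{(i)}_n$ ``since $\Lambda_i$ already has infinite index,'' which is not a reason (a subgroup generated by two infinite-index subgroups can have finite index); the paper gives a separate argument here, replacing $p$ by a proper power and using the Stallings tree in the case where the amalgam would otherwise be of finite index.
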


Note that the assumption that $|C| = 1$ is a necessary condition for a topologically $r$-transitive action on $\mathcal{F}_C$ whenever $r \geq 2$ (see Remark \ref{rem: obstruction to r-top transitive}). 

We delay the proofs of Theorems \ref{thm: kernel QC hyp} and \ref{thm: QC hyp dynamique} until the next section. The following is an immediate consequence. 

A hyperbolic group is \defin{locally quasiconvex} if every finitely generated subgroup of $G$ is quasiconvex.

\begin{corollary} \label{cor: locally quasiconvex}
If a group $G$ is non-elementary hyperbolic and locally quasiconvex, then
\[ \PK(G) = \Sub_{[\infty]}(G)=\overline{\Sub_{\mathrm{QC}[\infty]}(G)}.\]
If, in addition, the finite radical of $G$ is trivial then the action of $G$ on $\PK(G)$ is highly topologically transitive.
\end{corollary}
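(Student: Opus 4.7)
The plan is to establish the chain of inclusions
\[
\overline{\Sub_{\mathrm{QC}[\infty]}(G)} \subseteq \PK(G) \subseteq \Sub_{[\infty]}(G) \subseteq \overline{\Sub_{\mathrm{QC}[\infty]}(G)},
\]
after which the statement $\PK(G) = \Sub_{[\infty]}(G) = \overline{\Sub_{\mathrm{QC}[\infty]}(G)}$ is immediate, and the dynamical part follows by applying Theorem~\ref{thm: QC hyp dynamique} with trivial finite radical $N = \{\id\}$ (so that the partition of $\overline{\Sub_{\mathrm{QC}[\infty]}(G)}$ consists of the single piece $\mathcal{F}_C$ for $C = \{\{\id\}\}$, with $|C|=1$).

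The first inclusion is exactly Theorem~\ref{thm: kernel QC hyp}. The second inclusion holds because $G$ is finitely generated (being hyperbolic), so every finite-index subgroup of $G$ is isolated in $\Sub(G)$ and therefore cannot lie in the perfect kernel.

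The only thing that really requires the local quasiconvexity hypothesis is the third inclusion $\Sub_{[\infty]}(G) \subseteq \overline{\Sub_{\mathrm{QC}[\infty]}(G)}$. Let $H\in \Sub_{[\infty]}(G)$. If $H$ is finitely generated, then by local quasiconvexity $H$ is quasiconvex, and since $[G:H]=\infty$ we have $H \in \Sub_{\mathrm{QC}[\infty]}(G)\subseteq \overline{\Sub_{\mathrm{QC}[\infty]}(G)}$. If $H$ is not finitely generated, enumerate a generating set $H=\langle s_1,s_2,\ldots\rangle$ and set $H_n:=\langle s_1,\ldots,s_n\rangle$. Then $H_n$ is finitely generated and converges to $H$ in the Chabauty topology. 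Since $H_n\leq H$ and $[G:H]=\infty$, we have $[G:H_n]=\infty$ as well. By local quasiconvexity each $H_n$ is quasiconvex, hence $H_n \in \Sub_{\mathrm{QC}[\infty]}(G)$, which gives $H \in \overline{\Sub_{\mathrm{QC}[\infty]}(G)}$.

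There is no real obstacle here: once Theorems~\ref{thm: kernel QC hyp} and~\ref{thm: QC hyp dynamique} are in hand, the corollary is a short formal consequence of local quasiconvexity plus the standard fact that any subgroup of a finitely generated group is a Chabauty limit of its finitely generated subgroups. The only point to watch is that the approximants $H_n$ remain of infinite index, which is automatic because $H_n \leq H$ and $H$ already has infinite index.
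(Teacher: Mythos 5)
Your proposal is correct and follows exactly the route the paper intends: the paper states this corollary as an immediate consequence of Theorems~\ref{thm: kernel QC hyp} and~\ref{thm: QC hyp dynamique}, and the content you supply (isolation of finite-index subgroups of a finitely generated group, plus the approximation of an infinite-index subgroup by its finitely generated --- hence quasiconvex, hence infinite-index quasiconvex --- subgroups) is precisely the missing third inclusion. The observation that the approximants stay of infinite index because they sit inside $H$ is the right point to flag, and the dynamical conclusion via the single piece $\mathcal{F}_{\{\{\id\}\}}$ when the finite radical is trivial is exactly as in the paper.
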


\begin{example}
\label{ex: hyp limit gps are loc qc}
Hyperbolic limit groups are locally quasiconvex by \cite[Proposition~4.6]{Dahmani03} and torsion free. Therefore, the above corollary provides another proof that the perfect kernel of a hyperbolic limit group $G\not=\Z$ is the space $\Sub_{[\infty]}(G)$ and that the $G$-action on $\PK(G)$ is highly topologically transitive.
\end{example}

\begin{example}
\label{ex: hyperbolic gps w. a small hierarchy}
If $G$ is non-elementary hyperbolic and admits a small hierarchy then by \cite[Theorem~3.6]{Brigdely-Wise13} it is locally quasiconvex and Corollary \ref{cor: locally quasiconvex} applies.

For instance, if $G$ is not virtually cyclic and splits non-trivially as a graph of groups where each vertex group is a free group, each edge group is cyclic and $G$ does not contain any subgroup which is a Baumslag-Solitar group then $G$ is non-elementary hyperbolic
 by \cite{Bestvina-Feighn92} and locally quasiconvex by the above. 
Compare with the drastically different behaviour when $G$ is a Baumslag-Solitar group \cite{CGLMS-1-arxiv}.
\end{example}

\begin{example}
\label{ex: one-relator group}
Let $S$ be a finite set with $\vert S\vert \geq 2$ and consider the one-relator group $G = \la S \: | \: w^m \ra$ where $w$ is a freely and cyclically reduced word in $S \cup S^{-1}$ and $m \geq |w|_S$. Such one-relator groups with torsion are non-elementary hyperbolic and have no non-trivial normal subgroups \cite[Theorems 3 and  2]{Newman68}, and they are locally quasiconvex by \cite[Theorem~1.2]{Hruska-Wise01} so, by Corollary~\ref{cor: locally quasiconvex}, $\PK(G) = \Sub_{[\infty]}(G)$ and the action $G \acting \PK(G)$ is highly topologically transitive. 
\end{example}

\begin{example}
\label{ex: Coxeter groups}
Let $G$ be the Coxeter group $\la s_1, \dots, s_n | s_i^2, (s_i s_j)^{m_{i,j}} \; \forall \; i < j \ra$ 
where $\infty\geq m_{i,j}> n\geq 3$.
Then $G$ is hyperbolic \cite[Theorem 17.1]{Moussong88} and  locally quasiconvex \cite[Theorem~12.2]{McCammond-Wise08}, so $\PK(G) = \Sub_{[\infty]}(G)$. Moreover, it follows from \cite{Paris-Varghese23} that $G$ has no non-trivial finite normal subgroups so the action $G \acting \PK(G)$ is highly topologically transitive.
\end{example}

\begin{remark}
\label{rem: rel hyp. gps}
It is likely that the results of this section can be extended to include relatively hyperbolic groups with suitable parabolic subgroups, for instance finitely generated abelian groups. See Question~\ref{question: generalisations}.
\end{remark}

\subsection{Proof of Theorems \ref{thm: kernel QC hyp} and \ref{thm: QC hyp dynamique}}
The proofs of both the main results of this section depend on the fact that, in a non-elementary hyperbolic group $G$, 
any infinite index quasiconvex subgroup $H$ is a factor in a splitting  of a quasiconvex subgroup $L$ of $G$ over a finite subgroup (see Proposition~\ref{prop: combination result}). 
Moreover, there is a great deal of freedom in the choice of $L$. In the torsion-free case, this result was first observed by Gromov in \cite[5.3]{Gromov87} and was developed in a detailed and effective manner by Arzhantseva in \cite{Arzhantseva01}. Other combination theorems allowing torsion were proved by Gitik in \cite{Gitik99} and were generalised to relatively quasiconvex subgroups of relatively hyperbolic groups by Martínez-Pedroza in \cite{Martinez-Pedroza09}. In fact, even when restricted to hyperbolic groups, the results of Martínez-Pedroza improve upon Gitik’s results in some respects (see for instance Theorems~\ref{thm: MP1} and \ref{thm: MP2} below). These 
improvements are needed for various aspects of our proofs, and they allow us to obtain Proposition~\ref{prop: combination result} for hyperbolic groups with torsion. 

The Gromov boundary $\partial G$ of a non-elementary hyperbolic group $G$ is a perfect compact metrisable space; 
it is equipped with a $G$-action that extends the left multiplication action of $G$ on itself.
Given a subgroup $H$, let $\Lambda(H) \subseteq \partial G$ denote the limit set of $H$. 
If $H$ is virtually infinite cyclic then $\Lambda(H)$ consists of precisely two points (the limit points of any infinite order element of $H$). Let $g \in G$ be an infinite order element and $P = \Stab(\Lambda(\la g \ra))$. Then $\la g \ra$ is a finite index subgroup of $P$ \cite[Theorem 30] {dlHarpe-Ghys90} which implies that $P$ is quasiconvex \cite[8.1D]{Gromov87}. Moreover if $a\in G\smallsetminus P$ then $P \cap a Pa^{-1} = \Stab(\Lambda(\la g \ra) \cap \Stab(\Lambda( \la ag \ra))$ where $\Lambda(\la g \ra) \neq \Lambda(\la ag \ra)$. It follows that there is a finite index subgroup $Q$ of $P \cap aPa^{-1}$ which fixes three points of $\partial G$; this implies that $Q$ fixes a bounded subset of $G$ and is therefore finite. 
Thus $P$ is \defin{almost malnormal} (i.e. $P \cap P^a$ is finite for all $a \in G \smallsetminus P$;
here and in what follows, $P^a$ denotes the conjugate $aPa^{-1}$).
 By \cite[Theorem~7.11]{Bowditch12} this implies that $G$ is hyperbolic relative to $P$. Since subgroups of conjugates of $P$ are virtually cyclic and therefore quasiconvex it follows from \cite[Theorem 1.1]{Martinez-Pedroza12} that quasiconvexity in $G$ is equivalent to quasiconvexity relative to $P$. In particular, the following theorems are special cases of the results of \cite{Martinez-Pedroza09}.

\begin{theorem}[Martínez-Pedroza, {\cite[Theorem 1.1]{Martinez-Pedroza09}}] \label{thm: MP1}
Let $G$ be a hyperbolic group and fix a finite generating set $S$ for $G$. Let $g \in G$ be an infinite order element and $P = \Stab(\Lambda(\la g \ra))$. Let $H \leq G$ be a quasiconvex subgroup. Then there is a constant $C_1 \geq 0$ depending only on $H$, $P$ and $S$ such that the following holds. If $Q \leq P$ is a subgroup such that 
\begin{enumerate}
\item $H \cap P \leq Q$ and
\item $|k|_S \geq C_1$ for all $k \in Q \smallsetminus H$
\end{enumerate}
then $\la H, Q \ra$ splits as the amalgamated free product $\la H, Q \ra = H \ast_{H \cap Q} Q$ and $\la H, Q \ra$ is quasiconvex in $G$.
\end{theorem}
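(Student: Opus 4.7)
The plan is to realize every element of $\la H, Q \ra$ by a piecewise-geodesic path $\gamma$ in $\mathrm{Cay}(G,S)$, built by concatenating geodesic representatives of the factors of an alternating word $x_1 x_2 \cdots x_n$ (with each $x_i$ lying in $H \smallsetminus (H \cap Q)$ or in $Q \smallsetminus (H \cap Q)$), and to show that $\gamma$ is a uniform global quasi-geodesic as soon as $C_1$ is large enough. First I would fix the ambient constants: the hyperbolicity constant $\delta$ of $\mathrm{Cay}(G,S)$ and the quasiconvexity constants of $H$ and of $P$. Almost malnormality of $P$ (which holds because any non-trivial conjugate intersection $P \cap a P a^{-1}$ fixes three distinct points of $\partial G$, hence is finite) together with the hypothesis $H \cap P \leq Q$ give uniform control on how long an element of $G$ can simultaneously track $H$ and $P$, or two distinct conjugates of $P$.

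The key step is the local-to-global upgrade. By almost malnormality and quasiconvexity, there is a constant $C_0 = C_0(H, P, S)$ bounding the coarse overlap (the ``corner size'') between consecutive $H$- and $Q$-segments of $\gamma$. The hypothesis $|k|_S \geq C_1$ for every $k \in Q \smallsetminus H$ forces each $Q$-segment of $\gamma$ to have length at least $C_1$ in $\mathrm{Cay}(G,S)$; taking $C_1$ significantly larger than $C_0$ (with an explicit buffer depending only on $\delta$) then places $\gamma$ in the scope of the classical local-to-global principle for $\delta$-hyperbolic spaces, which upgrades $\gamma$ to a uniform $(\lambda, c)$-quasi-geodesic from $1$ to the product $g = x_1 \cdots x_n$.

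Two conclusions follow at once. A non-trivial alternating product cannot equal the identity (otherwise $\gamma$ would be a quasi-geodesic loop of positive length), so the natural surjection $H \ast_{H \cap Q} Q \twoheadrightarrow \la H, Q \ra$ is an isomorphism. And every element of $\la H, Q \ra$ lies within uniformly bounded distance of a $\mathrm{Cay}(G,S)$-geodesic from $1$ to itself, by the stability of quasi-geodesics in $\delta$-hyperbolic spaces, so $\la H, Q \ra$ is quasiconvex in $G$. The main obstacle is producing the transition constant $C_0$ in the presence of torsion and of the full virtually cyclic stabiliser $P \supseteq \la g \ra$, rather than $\la g \ra$ alone: the torsion-free case of Gromov and Arzhantseva rests on essentially combinatorial small-cancellation-style estimates, whereas passing to $P$ with torsion requires one to invoke relative hyperbolicity of $G$ with respect to $P$ together with the relative quasiconvexity formalism, and this is the technical core of the argument in \cite{Martinez-Pedroza09}.
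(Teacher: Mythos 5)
You should first be aware that the paper does not prove this statement: it is imported verbatim as Theorem~1.1 of Martínez-Pedroza, and the authors' only work is the paragraph preceding it, where they verify that the hypotheses of the relatively hyperbolic version are met — namely that $P = \Stab(\Lambda(\la g \ra))$ is quasiconvex and almost malnormal, that $G$ is therefore hyperbolic relative to $P$ by Bowditch's criterion, and that quasiconvexity in $G$ coincides with relative quasiconvexity, so the general combination theorem specialises to the stated form. Your proposal instead sketches a direct geometric proof, which is a legitimately different route, but it is not complete as written.

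Two points. First, the local-to-global step as you describe it does not quite go through: the classical local-to-global principle for broken geodesics in a $\delta$-hyperbolic space needs \emph{all} segments to be long relative to the corner overlaps, whereas here only the $Q$-segments are forced to be long (the hypothesis bounds $|k|_S$ from below only for $k \in Q \smallsetminus H$); the $H$-segments, coming from arbitrary elements of $H \smallsetminus (H \cap Q)$, may have length $1$. The actual mechanism in the literature is that two consecutive $Q$-segments lie near distinct cosets $aP$ and $a x_i P$ (distinct because $x_i \in H \smallsetminus Q$ and $H \cap P \leq Q$), and almost malnormality of $P$ — equivalently, bounded coset penetration in the relatively hyperbolic structure — bounds the coarse intersection of these cosets, so the two long $Q$-segments cannot fellow-travel and the path makes definite progress even across a short $H$-segment. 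You gesture at this ("two distinct conjugates of $P$") but do not wire it into the quasi-geodesic argument, and without it the argument fails for short $H$-factors. Second, you explicitly defer the production of the transition constant $C_0$, in the presence of torsion, to the relative quasiconvexity formalism of the cited paper; since that is precisely the content of the theorem being proved, the proposal ultimately reduces to the same citation the paper makes, rather than constituting an independent proof.
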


\begin{theorem}[Martínez-Pedroza, {\cite[Theorem 1.2]{Martinez-Pedroza09}}] \label{thm: MP2}
Let $G$ be a hyperbolic group and fix a finite generating set $S$ for $G$. Let $g \in G$ be an infinite order element and $P = \Stab(\Lambda(\la g \ra))$. Let $H_1, H_2 \leq G$ be quasiconvex subgroups such that $M := H_1 \cap P = H_2 \cap P$. Then there exists a constant $C_2 \geq 0$ depending only on $H_1, H_2, P$ and $S$ such that the following holds. If $p \in P$ is such that 
\begin{enumerate}
\item $pMp^{-1} = M$
\item $|k|_S \geq C_2$ for all $k \in M p M$
\end{enumerate}
then $\la H_1, p H_2 p^{-1} \ra$ splits as the amalgamated free product  $\la H_1, p H_2 p^{-1} \ra= H_1 \ast_M pH_2p^{-1} $ and $\la H_1, p H_2 p^{-1} \ra$ is quasiconvex in $G$.
\end{theorem}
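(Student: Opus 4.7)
Since the statement is quoted verbatim from Martínez-Pedroza, the intended ``proof'' is simply a citation. But here is a sketch of how one would attack it from scratch, following the philosophy of Gitik-style combination theorems upgraded to allow torsion and the stabilizer $P$ of a pair of boundary points.

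The plan is to pass to the relative setting. Since $g$ is an infinite order element of the non-elementary hyperbolic group $G$, the stabilizer $P = \Stab(\Lambda(\langle g\rangle))$ is virtually cyclic and almost malnormal, so by Bowditch's characterization $G$ is hyperbolic relative to $P$. Under this relative structure, quasiconvex subgroups of $G$ (in particular $H_1$ and $H_2$) become \emph{relatively quasiconvex} subgroups in the sense of Osin/Hruska, thanks to \cite{Martinez-Pedroza12}. The upshot is that one has a ``cusped'' space $\widehat{X}$ (the Cayley graph of $G$ with horoballs glued along cosets of $P$) which is Gromov-hyperbolic, and on which $H_1$ and $pH_2p^{-1}$ act cocompactly on quasiconvex subsets $Y_1, Y_2$.

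The next step is a ping-pong argument in $\widehat{X}$. The hypotheses $pMp^{-1}=M$ and ``$|k|_S\geq C_2$ for $k\in MpM$'' are designed so that, modulo the subgroup $M$ (common to both $Y_1$ and $pY_2$), the ``bridge'' of length $|p|$ joining $Y_1$ to $pY_2$ is sufficiently long compared to the hyperbolicity constant of $\widehat{X}$ and the quasiconvexity constants of $Y_1,Y_2$. Concretely, one first shows: for any $h_1\in H_1\smallsetminus M$ and $h_2\in pH_2p^{-1}\smallsetminus pMp^{-1}=pH_2p^{-1}\smallsetminus M$, the concatenation of a geodesic from $1$ to $h_1$ inside $Y_1$ with a geodesic from $h_1$ to $h_1 h_2$ translated from $pY_2$ is a \emph{local quasigeodesic} in $\widehat X$, with local constants controlled by the quasiconvexity data and the local scale controlled by $C_2$. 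By the local-to-global principle for quasigeodesics in hyperbolic spaces, long enough alternating products $a_1 b_1 a_2 b_2\cdots a_k b_k$ with $a_i\in H_1\smallsetminus M$, $b_i\in pH_2p^{-1}\smallsetminus M$ trace out global quasigeodesics, hence are non-trivial. This is exactly the normal form that identifies $\langle H_1, pH_2p^{-1}\rangle$ with the amalgamated product $H_1 *_M pH_2 p^{-1}$.

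Quasiconvexity of $\langle H_1, pH_2p^{-1}\rangle$ in $G$ comes for free from the same local quasigeodesic argument: any geodesic in $G$ between two elements of the subgroup can be compared to a piecewise-geodesic path along translates of $Y_1$ and $pY_2$ by the Bass--Serre tree structure of the splitting, and the local-to-global principle forces the geodesic to stay within a bounded neighbourhood of this piecewise path, which itself lies in a bounded neighbourhood of the subgroup. The \emph{main obstacle} is the bookkeeping needed to make ``sufficiently long'' uniform in $H_1,H_2,P,S$: this is why $C_2$ must be allowed to depend on all four and why the stabilization $pMp^{-1}=M$ is imposed (otherwise ping-pong ``loses'' the edge group after one step). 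The torsion inside $P$ is handled by passing to a finite-index cyclic subgroup of $P$ to run the length estimates, then lifting back, which is the step where one genuinely needs the improvement over Gitik's torsion-free version.
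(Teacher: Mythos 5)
Your proposal is correct and matches the paper's treatment: the paper also handles this statement as a citation of Martínez-Pedroza, justified by exactly the reduction you describe (almost malnormality of $P=\Stab(\Lambda(\langle g\rangle))$, Bowditch's criterion making $G$ hyperbolic relative to $P$, and the equivalence of quasiconvexity with relative quasiconvexity from \cite{Martinez-Pedroza12}). Your additional sketch of the internal ping-pong and local-to-global argument is consistent with the cited source but goes beyond what the paper itself supplies.
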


We are now in a position to prove the combination result we need.
In order to produce the relevant subgroup $P\leq G$, we adapt a standard argument to produce a suitable hyperbolic element of $G$ by taking advantage of the ``holes'' in the boundary of $G$ left by the subgroups $H_i$.

\begin{proposition} \label{prop: combination result}
Let $G$ be a non-elementary hyperbolic group and $H_1, \dots, H_r \leq G$ be quasiconvex subgroups with infinite index. Let $N \unlhd G$ be the finite radical of $G$ and suppose that $M := H_1 \cap N = H_i \cap N$ for each $i$. Then the following hold.

\begin{enumerate}
\item For each $i$, the subset $\Lambda(H_i) \subseteq \partial G$ is closed with empty interior. Thus the complement $\partial G \smallsetminus \cup_{i=1}^r \Lambda(H_i)$ is a dense open set.
\item For any non-empty open set $Y \subseteq \partial G$, there exists an infinite order element $p \in G$ whose limit points are contained in $Y$ and such that, if we denote $P = \Stab(\Lambda(\la p \ra))$, then $P \cap H_i = M$ for each $i$.
\item If $u \in G$ is a sufficiently high power of $p$ then, for each $n \in \mathbb{N}$, 
the subgroup $Q_n := \la u^n, M \ra$ generated by $u^n$ and $M$ splits as the direct product  $Q_n=\la u^n \ra \times M$, and for each $i \in \{1, \dots, r\}$,
the subgroup $\la H_i, Q_n \ra $ splits as the amalgamated free product 
\[\la H_i, Q_n \ra = H_i \ast_M Q_n\]
and is quasiconvex and has infinite index in $G$.
\end{enumerate}
\end{proposition}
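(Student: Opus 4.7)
The plan is to establish parts (1), (2), (3) sequentially, leaning on the dynamics of $G$ on its Gromov boundary together with the combination Theorems~\ref{thm: MP1} and~\ref{thm: MP2} of Martínez-Pedroza.

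For part (1), the closedness of $\Lambda(H_i)$ is standard. To show empty interior I argue by contradiction: if some non-empty open $U \subseteq \partial G$ were contained in $\Lambda(H_i)$, minimality of the $G$-action on $\partial G$ (which holds since $G$ is non-elementary) combined with compactness of $\partial G$ would give finitely many $g_j \in G$ with $\partial G = \bigcup_j g_j \cdot U \subseteq \bigcup_j \Lambda(g_j H_i g_j^{-1})$; then one of the conjugates of $H_i$ would have full limit set and hence, by the classical criterion for quasiconvex subgroups, finite index, contradicting the infinite index of $H_i$. Density of the complement is then immediate as a finite intersection of dense open sets.

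For part (2), I start by picking two distinct points $\xi_\pm \in Y \cap (\partial G \smallsetminus \bigcup_i \Lambda(H_i))$, non-empty by part (1). Using density of fixed-point pairs of infinite-order elements in $\partial G \times \partial G \smallsetminus \Delta$ (a general feature of convergence actions), I select an infinite-order $p \in G$ with fixed points in this set. The inclusion $M \subseteq P \cap H_i$ is immediate: some positive power of $p$ centralises the finite normal subgroup $N$, so $N \subseteq P$ and in particular $M = H_i \cap N \subseteq H_i \cap P$. Conversely, any infinite-order element of $P \cap H_i$ would place $\xi_\pm \in \Lambda(H_i)$, contradicting the choice of $p$, hence $P \cap H_i$ is finite. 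The main obstacle is the reverse inclusion $P \cap H_i \subseteq M$: I would refine the choice of $p$ to a sufficiently generic hyperbolic element with fixed points in the prescribed open set, ensuring that the torsion subgroup of the virtually cyclic group $P$ coincides with $N$, i.e., $P = \la p\ra \cdot N$. This refinement is possible because there are only finitely many conjugacy classes of finite subgroups of $G$, and any additional torsion in $P$ must fix both $\xi_\pm$, a condition that a generic hyperbolic element avoids.

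For part (3), I take $u := p^\ell$ with $\ell$ large enough that $u$ centralises $N$. Then $Q_n$ decomposes as the internal direct product $\la u^n\ra \times M$, using that $u^n$ has infinite order, $\la u\ra \cap M = \{\id\}$, and $u$ commutes with $M$. I then apply Theorem~\ref{thm: MP1} with $H := H_i$, the elementary subgroup $P$ from part (2), and $Q := Q_n$: the hypothesis $H_i \cap P = M \leq Q_n$ holds by part (2), and the length condition on $Q_n \smallsetminus H_i = Q_n \smallsetminus M$ is ensured by taking $\ell$ sufficiently large, since elements of $Q_n \smallsetminus M$ have the form $u^{nj}m$ with $j \neq 0$, of word length at least $|nj|$ times the translation length of $u$ minus $\max_{m \in M}|m|_S$, which exceeds the constant $C_1$ uniformly in $n \geq 1$. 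Theorem~\ref{thm: MP1} then yields the amalgamated splitting $\la H_i, Q_n\ra = H_i *_M Q_n$ together with its quasiconvexity in $G$. Finally, infinite index follows from the classical criterion that a quasiconvex subgroup has full limit set if and only if it has finite index: a ping-pong argument on $\partial G$, with small neighbourhoods of $\xi_\pm$ (disjoint from $\Lambda(H_i)$) serving as attracting/repelling regions for $u^{\pm n}$ against translates of $\Lambda(H_i)$, produces boundary points outside $\Lambda(\la H_i, Q_n\ra)$, confirming that this limit set is a proper subset of $\partial G$.
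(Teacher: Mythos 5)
Your overall architecture (boundary dynamics plus the Mart\'inez-Pedroza combination theorems) matches the paper's, and your part (3) treatment of the splitting and quasiconvexity via Theorem~\ref{thm: MP1} is essentially the paper's argument. But there are genuine gaps. In part (1), the step from $\partial G=\bigcup_j g_j\cdot U\subseteq\bigcup_j\Lambda(g_jH_ig_j^{-1})$ to ``one conjugate has full limit set'' is a non sequitur: a compact space is easily covered by finitely many proper closed subsets, and the most Baire category gives you is that one of these limit sets has non-empty interior --- which is your standing hypothesis, since each of them is a $G$-translate of $\Lambda(H_i)$. The argument is circular. The fix (and the paper's proof) is to find an infinite-order $h\in H_i$ with both fixed points inside $U\subseteq\Lambda(H_i)$ and use its north--south dynamics to push all of $\partial G$ into $\Lambda(H_i)$.

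The more serious gap is in part (2), where the inclusion $P\cap H_i\subseteq M$ is the entire content of the statement and you defer it to an unsubstantiated genericity claim. Two concrete problems: the hyperbolic elements with fixed-point pair in a prescribed open set form only a \emph{countable} family, so Baire genericity in $\partial G\times\partial G$ does not let you select one whose pair avoids the meager set $\bigcup\{\Fix(g)\times\Fix(g): g\notin N \text{ of finite order}\}$; and torsion in $P=\Stab(\Lambda(\la p\ra))$ need not fix both limit points of $p$ (it may swap them), so ``any additional torsion in $P$ must fix both $\xi_\pm$'' is not accurate, and the appeal to finitely many conjugacy classes of finite subgroups is never turned into an argument. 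The paper avoids all of this: it uses proper discontinuity of $H_i\acting\partial G\smallsetminus\Lambda(H_i)$ to isolate a \emph{finite} set $A$ of elements of $\bigl(\bigcup_iH_i\bigr)\smallsetminus M$ that can return a small neighbourhood $U$ of a point of $Y\smallsetminus\bigcup_i\Lambda(H_i)$ to itself, proves that $\Fix(a)$ is closed with empty interior for every $a\notin N$, and then shrinks $U$ finitely many times to an open $V$ with $aV\cap V=\emptyset$ for all $a\in A$; any infinite-order $p$ with limit points in $V$ then satisfies $P\cap H_i=M$ for all $i$ simultaneously. Finally, the infinite-index claim at the end of your part (3) is only a sketch (controlling the limit set of the amalgam by ping-pong is real work); the paper's shortcut is to observe that if $H_i\ast_MQ_{n_0}$ had finite index the splitting would be non-trivial, so $\la H_i,Q_{2n_0}\ra$ has infinite index in it, and one replaces $u$ by a further power.
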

\begin{proof}
Recall that the finite radical $N \unlhd G$ is the kernel of the action of $G$ on $\partial G$ (see e.g. \cite[Lemma 11.130]{Drutu-Kapovich2018}). Let $H \leq G$ be an infinite index quasiconvex subgroup. Then the limit set $\Lambda(H)$ is a (possibly empty, if $H$ is finite) closed proper subspace of $\partial G$. Suppose for a contradiction that there is an open subset $U\subseteq \partial G$ such that $U\subseteq \Lambda (H)$. Then by \cite[8.2G]{Gromov87}, there is an infinite order element $h\in H$ such that $\Lambda (\la h\ra)\subseteq U$. Since $h$ acts on $\partial G$ with north-south dynamics (\cite[8.1G]{Gromov87}), a certain power $h^n$ satisfies $h^n U\cup U=\partial G$. But $h^n U\cup U\subseteq h^n \Lambda(H)\cup \Lambda(H)=\Lambda(H)$.
Thus $\Lambda(H)=\partial G$ which contradicts the fact that $H$ has infinite index in $G$. Thus $\Lambda(H)$ has empty interior. The same holds for a finite union of closed sets with empty interior.
This proves part 1 of the lemma.

To prove part 2, we first show that, 
{\em for any $g \in G \smallsetminus N$, the fixed point set $\Fix(g) \coloneqq \{x \in \partial G : gx = x\}$ is closed with empty interior.} 
Fix an element $g \in G$. First, $\Fix(g)$ is closed.
Suppose that there is a non-empty open set $W \subseteq \Fix(g)$ and let $L \leq G$ 
be the pointwise stabiliser of $W$. Since $\partial G$ is perfect, $W$ contains more than two points so every element of $L$ has finite order. By \cite[Corollaire 36, Chapitre 8]{dlHarpe-Ghys90} this implies that $L$ is finite. Let $h \in G$ be an infinite order element whose limit points $h^+, h^-$ are in $W$ and let $W^+, W^- \subseteq W$ be open neighbourhoods of $h^+, h^-$ respectively such that $W^+ \cap W^- = \emptyset$. Let $L^-$ be the pointwise stabiliser of $W^-$ and note that, as above, $L^-$ is finite. Moreover, for sufficiently large $n \in \mathbb{N}$, $h^n W^- \cup W^+ = \partial G$ and in particular $W^- \subseteq h^n W^-$, so $h^n L^- h^{-n} \leq L^-$. Since $L^-$ is finite, this implies that $h^n L^- h^{-n} = L^-$.
But then $g$ is in the pointwise stabiliser of $h^n W^-$ and of $W^+$ so $g \in N$.

Recall that any subgroup $H \leq G$ acts properly discontinuously on $\partial G \smallsetminus \Lambda(H)$ \cite{Coornaert-1989}. 
Let $Y \subseteq \partial G$ be non-empty and open and let $x_0 \in Y' := Y \cap (\partial G \smallsetminus \cup_{i=1}^r \Lambda(H_i))$. Then for each $i$ there is an open neighbourhood $x_0 \in U_i \subseteq Y'$ such that $|\{h \in H_i : hU_i \cap U_i \neq \emptyset\}| < \infty$.
Let $U \coloneqq \cap_{i=1}^r U_i$ and note that $U$ is non-empty and open. Let $A = \{h \in (\cup_{i=1}^r H_i) \smallsetminus M : hU \cap U \neq \emptyset\}$. Since, for each $h \in A$, the fixed point set $\Fix(h)$ is closed with empty interior, the complement $V := U \smallsetminus (\cup_{h \in A} \Fix(h))$ is non-empty and open. Now enumerate $A = \{a_1, \dots, a_l\}$. If $a_1 V \cap V = \emptyset$, let $V_1 := V$. Otherwise there exists $x_1 \in V$ such that $a_1 x_1 \in V$ and, since $x_1 \notin \Fix(a_1)$, we have $x_1 \neq a_1 x_1$. It follows that there exists an open neighbourhood $V_1 \subseteq V$ of $x_1$ such that $a_1 V_1 \cap V_1 = \emptyset$. Iterate this process to find a sequence of non-empty open sets $V_l \subseteq \dots \subseteq V_1 \subseteq V$ such that for each $1 \leq k \leq l$ and each $1 \leq i \leq k$ we have $a_i V_k \cap V_k = \emptyset$. In particular $a V_l \cap V_l = \emptyset$ for all $a \in A$.

Let $p \in G$ be an infinite order element whose limit points are contained in $V_l \subseteq Y$ and let $P = \Stab(\Lambda(\la p \ra))$. Then $P \cap H_i = M$ for each $i$. This proves part 2 of the lemma.

 Let $u$ be a power of $p$ which commutes with $M$. For each $n \in \mathbb{N}$ let $Q_n := \la u^n, M \ra = \la u^n \ra \times M \leq P$. Then $Q_n \cap H_i = M$ for each $i$ and, since $P$ is quasi-isometrically embedded in $G$, for sufficiently large $n$ and $C_1 \geq 0$ as in Theorem~\ref{thm: MP1}, $|k|_S \geq C_1$ for all $k \in Q_n \smallsetminus M$. Therefore, by Theorem~\ref{thm: MP1}, for sufficiently large $n$, 
\begin{equation} \label{amalgamated free product}
\la H_i, Q_n \ra = H_i \ast_M Q_n \leq G 
\end{equation}
is quasiconvex. 
Moreover, up to replacing $u$ with $u^{n_0}$ for some sufficiently large $n_0$,
each $\la H_i, Q_n \ra$ is an infinite index quasiconvex subgroup of $G$ satisfying (\ref{amalgamated free product}).
Indeed, if $H_i \ast_M Q_{n_0}$ has finite index, then the splitting is non-trivial (if it were trivial, i.e. $H_i=M$, and $G$ would be elementary), thus $\la H_i, Q_{2n_0} \ra$ has infinite index in $\la H_i, Q_{n_0} \ra$ and therefore in $G$. This proves part 3 of the lemma.
\end{proof}

\begin{proof}[Proof of Theorem \ref{thm: kernel QC hyp}]
Let $H \leq G$ be an infinite index quasiconvex subgroup, $N \unlhd G$ be the finite radical of $G$ and $M = H \cap N$. By Proposition~\ref{prop: combination result}, there exists an infinite order element $u \in G$ such that, for each $n \in \mathbb{N}$, $Q_n := \la u^n, M \ra = \la u^n \ra \times M$ and 
\[ H_n := \la H, Q_n \ra = H \ast_M Q_n \in \Sub_{QC[\infty]}(G).\]
Let $T$ be a finite generating set for $H$ and $\lambda \geq 1$ be such that the natural inclusion of the first term of the sequence $H_1 \hookrightarrow G$ is a $\lambda$-quasi-isometric embedding with respect to the generating set $S$ for $G$ and the generating set $T \cup \{u\}$ for $H_1$. For any $n \in \mathbb{N}$, if $o \in H_n \smallsetminus H$ then 
the reduced spelling of $o$ must contain the subword $u^n$, and 
since $o\in H_1\smallsetminus H$ we have
\[ |o|_S \geq \frac{1}{\lambda} |o|_{T \cup \{u\}} - \lambda \geq \frac{n}{\lambda} - \lambda.\]
Hence, given a basic open neighbourhood $\mathcal{V}(\II,\OO)$ of $H$ in $\Sub(G)$, for all sufficiently large $n$, the subgroup $H_n$ contains $H$ but does not contain $o\in \OO$, thus $H_n \in \mathcal{V}(\II, \OO) \cap \Sub_{QC[\infty]}(G)$ so $H$ is not isolated in $\Sub_{\mathrm{QC}[\infty]}(G)$.
\end{proof}

\begin{proof}[Proof of Theorem~\ref{thm: QC hyp dynamique}]
Fix $C \in \Conj(N)^G$. To prove part (i) (i.e. that the action of $G$ on $\mathcal{F}_C$ is topologically transitive) we will show the following. Given two non-empty open subsets $U, V \subseteq \mathcal{F}_C$, there exists $g \in G$ such that $(g \cdot U) \cap V = gUg^{-1} \cap V \neq \emptyset$. As in the proof of Theorem~\ref{th: two edge-stab with finite intersection - top. dyn.}, we can assume, up to replacing $V$ with its image by some element of $G$, that there exist finitely generated subgroups $H \in U$ and $K \in V$ such that $M := H \cap N = K \cap N$. 

To prove part (ii), we need to show that, if $C = \{M\}$ (i.e. $M$ is normal in $G$) then, for all $r$ and for all non-empty open sets $U_1, \dots, U_r, V_1, \dots, V_r \subseteq \mathcal{F}_C$, there exists $g \in G$ such that $gU_i \cap V_i \neq \emptyset$ for each $i$. Since finitely generated subgroups are dense and, for all $H_i \in U_i$ and $K_i \in V_i$, we have $H_i \cap N = K_i \cap N = M$, both part (i) and part (ii) follow from the following statement: 

{\em Fix $C \in \Conj(N)^G$ and $M \in C$ (we don't assume that $|C|=1$). Let $r \geq 1$ and $U_1, \dots, U_r, V_1, \dots V_r \subseteq \mathcal{F}_C$ be open subsets such that, for each $i \in \{1, \dots, r\}$, there exist finitely generated subgroups $H_i \in U_i$ and $K_i \in V_i$ where $H_i \cap N = K_i \cap N = M$. Then there is a common element $g \in G$ such that $gU_i \cap V_i \neq \emptyset$ for all $i$.} 

Let us prove that this statement indeed holds.

\setcounter{claim}{0}
\begin{claim} 
\label{claim: p and K i p splitting}
There exists an infinite order element $p \in G$ which commutes with $M$ and such that, for each $i$, $\la H_i, K_i^p \ra = H_i \ast_M K_i^p \leq G$ is quasiconvex and has infinite index in $G$.
\end{claim}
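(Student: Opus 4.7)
The plan is to combine the combination Proposition~\ref{prop: combination result} with Martínez-Pedroza's amalgamation Theorem~\ref{thm: MP2}, applied to each pair $(H_i, K_i)$ with a single, carefully chosen element $p$. As a preliminary reduction, I will assume each $H_i, K_i$ is itself quasiconvex with infinite index in $G$: indeed $\Sub_{\mathrm{QC}[\infty]}(G) \cap \{H : H \cap N \in C\}$ is dense in $\mathcal{F}_C$ (since the condition on the intersection with $N$ is clopen, and $\mathcal{F}_C = \overline{\Sub_{\mathrm{QC}[\infty]}(G) \cap \{H : H \cap N \in C\}}$), so the open sets $U_i, V_i$ all meet this dense subspace, and by $G$-conjugation within the class $C$ I can normalise the intersections so that $H_i \cap N = K_i \cap N = M$. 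By perfectness of $\mathcal{F}_C$ (Theorem~\ref{thm: kernel QC hyp}) I may further arrange $H_i, K_i \neq M$.

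Next I would apply Proposition~\ref{prop: combination result} to the entire collection $\{H_1, \dots, H_r, K_1, \dots, K_r\}$, obtaining an infinite order element $p_0 \in G$ whose limit-point stabiliser $P := \Stab_G(\Lambda(\langle p_0 \rangle))$ satisfies $P \cap H_i = P \cap K_i = M$ for every $i$. I then set $p := p_0^{m}$ for an exponent $m$ chosen large enough to satisfy three requirements: (a) $p$ normalises $M$, which follows because the $\langle p_0 \rangle$-action by conjugation on the finite set $\Sub(N)$ is eventually periodic; (b) $p$ centralises $M$, which follows because $\mathrm{Aut}(M)$ is finite; and (c) $|x|_S \geq \max_{i} C_2^{(i)}$ for all $x \in MpM$, where $C_2^{(i)}$ is the constant delivered by Theorem~\ref{thm: MP2} for the pair $(H_i, K_i)$. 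Condition (c) can be met since $P$ is quasi-isometrically embedded in $G$, so the word length of $p_0^{m}$ in $G$ grows linearly in $m$.

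With this single $p$ in hand, Theorem~\ref{thm: MP2} applies to each triple $(H_i, K_i, p)$ and gives that
\[
\langle H_i,\, p K_i p^{-1} \rangle \;=\; H_i \ast_M p K_i p^{-1}
\]
is a quasiconvex subgroup of $G$, with non-trivial amalgam structure since $H_i, K_i \neq M$. The main obstacle is to secure the infinite index condition \emph{simultaneously} for all $i$. This will be handled by mimicking the last step in the proof of Proposition~\ref{prop: combination result}: if some $\langle H_i, p K_i p^{-1} \rangle$ happens to have finite index in $G$, then the non-triviality of the splitting forces a sufficiently higher power $p^{m'}$ of $p_0$ to produce the strictly smaller quasiconvex amalgam $\langle H_i, p^{m'} K_i p^{-m'} \rangle$ of infinite index in $G$, while the hypotheses of Theorem~\ref{thm: MP2} continue to hold uniformly in $i$ for all large $m'$. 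Iterating this reduction through the finitely many indices $i$ yields a single power of $p_0$ that meets every condition of the claim.
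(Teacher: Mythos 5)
Your setup — reducing to quasiconvex $H_i,K_i$ with $H_i\cap N=K_i\cap N=M$ and $H_i,K_i\neq M$, producing a single $p$ via Proposition~\ref{prop: combination result}(2), passing to a power that centralises $M$ and satisfies the length hypothesis of Theorem~\ref{thm: MP2}, and invoking Theorem~\ref{thm: MP2} for each $i$ — is sound and matches the paper up to the point of quasiconvexity of the amalgams. The gap is in the infinite-index step. The last step of Proposition~\ref{prop: combination result}(3) works because $Q_{2n_0}\leq Q_{n_0}$, so $\langle H_i,Q_{2n_0}\rangle= H_i\ast_M Q_{2n_0}$ is a \emph{subgroup} of $\langle H_i,Q_{n_0}\rangle$, and one can argue it has infinite index there. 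For conjugates this nesting fails: $K_i^{p^{m'}}$ is not contained in $K_i^{p}$, so $\langle H_i, K_i^{p^{m'}}\rangle$ is not a ``strictly smaller'' subgroup of $\langle H_i, K_i^{p}\rangle$ in any sense, and nothing in your argument rules out that $\langle H_i, K_i^{p^{m'}}\rangle$ again has finite index. (Finite index is a real possibility for a badly placed $p$: in $\FF_2=\langle a\rangle\ast\langle b\rangle$ with $H=\langle a\rangle$, $K=\langle b\rangle$ and $p=ab$ one gets $\langle H,K^p\rangle=\FF_2$.)

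The paper avoids this by observing that finite index of $H_i\ast_M K_i^{p}$ forces $G$ to split over the finite group $M$, hence to have infinitely many ends — and then by choosing $p$ differently in that case: it takes $p$ hyperbolic on the Stallings tree with $\mathrm{Axis}(p)$ inside a half-tree $Z_e$ separated (by an edge $e$ with $\Stab_{H_i}(e)=\Stab_{K_i}(e)=M$, via Claim~\ref{Claim: half-trees} and Lemma~\ref{lem: existence of hyperbolic in Z}) from all the minimal subtrees of the $H_i,K_i$. With that positioning, for $n\geq 2$ one exhibits a half-tree of $T$ missed by every minimal $\bigl(H_i\ast_M K_i^{p^{n}}\bigr)$-invariant subtree, so the quotient graph is infinite and cocompactness of $G\acting T$ gives infinite index; the identity $P\cap H_i=P\cap K_i=M$ is then read off from the tree rather than from Proposition~\ref{prop: combination result}. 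Your uniformly boundary-chosen $p$ carries no such control over its axis in the Stallings tree, so neither your iteration nor the paper's geometric argument applies to it. To repair the proof you should either split into the two cases as the paper does, or note that in the one-ended case finite index is outright impossible (it would force infinitely many ends) and, in the infinitely-ended case, constrain the open set $Y$ of Proposition~\ref{prop: combination result}(2) so that the limit set of $p$ lies in the part of $\partial G$ corresponding to the half-tree $Z_e$, recovering the needed position of $\mathrm{Axis}(p)$.
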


\begin{proof}[Proof of Claim~\ref{claim: p and K i p splitting}]
\renewcommand{\qedsymbol}{$\diamond$}

If $G$ has infinitely many ends, then it admits a Stallings tree, a minimal irreducible $G$-tree $T$ with finite edge stabilisers. Note that the kernel $N$ of the action on $T$ is the finite radical of $G$. The infinite index subgroups $H_i,K_i$ have proper minimal subtrees $T_{H_i},T_{K_i}$ in $T$; let $Z_e$ be the half-tree separated from the union $\cup_i T_{H_i}\cup \cup_iT_{K_i}$ of these minimal subtrees by some edge $e$ such that $\Stab_{H_i}(e) = \Stab_{K_i}(e) = M$ for each $i$ (see Claim~\ref{Claim: half-trees} in the proof of Theorem~\ref{th: two edge-stab with finite intersection - top. dyn.}
Take a vertex $x$ of $T$. The map $G\ni g\mapsto g\cdot x$ extends to a $G$-equivariant map $\Psi:G\cup \partial G\to T\cup \partial T$.
By Lemma~\ref{lem: existence of hyperbolic in Z} there is a hyperbolic (with respect to the action on $T$) element $p \in G$ whose axis $\mathrm{Axis}(p)$ is contained in $Z_e$. 
Since it is fixed by $\la p \ra$, the image $\Psi(\Lambda(\la p \ra)$ is the set of limit points of $\mathrm{Axis}(p)$.
Thus, the stabiliser $P:= \Stab_G(\Lambda(\la p \ra))$ stabilises globally $\mathrm{Axis}(p)$.
It follows that any $g \in P \cap H_i$ (respectively of $P \cap K_i$), for any $i$, must stabilise $\mathrm{Axis}(p)$ and $T_{H_i}$ (respectively $T_{K_i}$),
therefore $g \in \Stab_{H_i}(e) = \Stab_{K_i}(e) = M$. 

Otherwise, if $G$ has only one end, let $Y$ any open subset of $\partial G$.
By Proposition~\ref{prop: combination result} there exists an infinite order element $p \in G$ whose limit points are contained in $Y$ such that, if $P \coloneqq \Stab(\Lambda(\la p \ra))$, then $P \cap H_i = P \cap K_i = M$ for each $i$. 

In either case, replace $p$ with a proper power so that it commutes with $M$.

Now let $C_2 \geq 0$ be as in Theorem~\ref{thm: MP2}. Since $\la p \ra$ is quasi-isometrically embedded in $G$ and $M$ is finite, we can replace $p$ with a proper power so that $|p m| \geq |p| - |m| \geq C_2$ for all $p m \in p M = MpM$. Thus by Theorem \ref{thm: MP2}, for each $i$, the subgroup generated by $H_i$ and $K_i^{p}$ splits as
\[ \la H_i, K_i^{p} \ra = H_i \ast_M K_i^{p},\]
and is quasiconvex in $G$.

Moreover, up to taking a proper power of $p$, we can assume that for each $i$,  $H_i \ast_M K_i^{p}$ has infinite index in $G$. 
 Indeed, suppose that $H_i \ast_M K_i^p$ has finite index in $G$ for some $i$. Then the splitting is non-trivial (if $M$ is equal to one of the factors then the other factor has finite index in $G$) so $G$ has at least two, thus infinitely many, ends. In this case, $p$ acts hyperbolically on the Stallings tree with axis $\mathrm{Axis}(p)$ contained in $Z_e$. Let $n \geq 2$ and let $\pi(e)$ be the projection of $e$ on $\mathrm{Axis}(p)$. Then any edge $e'\not\subset \mathrm{Axis}(p)$ but adjacent to $\mathrm{Axis}(p)$ at a vertex strictly between $\pi(e)$ and $p^n\cdot \pi(e)$
defines a half-tree $Z'$ (the one that does not contain $e$) which lies outside any minimal invariant $H_i \ast_M K_i^{p^n}$-subtree of $T$. Thus the quotient $H_i \ast_M K_i^{p^n} \bs T$ is infinite. Since the action of $G$ on $T$ is minimal and $G$ is finitely generated, the quotient $G \bs T$ is compact (see Remark \ref{rem: H fg minimal -> T/H compact}) 
so this implies that $H_i \ast_M K_i^{p^n}$ has infinite index in $G$.  
\end{proof}

By Proposition~\ref{prop: combination result}-(3), there exists an infinite order element $u \in G$ which commutes with $M$ and such that, for all $i$, \begin{align*} 
&A^{(i)} := \la H_i, K_i^p, u \ra = (H_i \ast_M K_i^p) \ast_M \la u, M \ra \leq G \quad \text{and} \\
&B^{(i)} := (A^{(i)})^{p^{-1}} = \la H_i^{p^{-1}}, K_i, u^{p^{-1}} \ra = (H_i^{p^{-1}} \ast_M K_i) \ast_M \la u^{p^{-1}}, M \ra \leq G
\end{align*}
are quasiconvex and have infinite index in $G$. For ease of notation, we now fix $i \in \{1, \dots, r\}$ and denote $A = A^{(i)}$ and $B = B^{(i)}$.

\begin{claim} \label{claim: An Bn} 
For all large enough $n$, we have $A_n := H_i \ast_M (K_i)^{p u^{n}} \in U_i$ while $B_n = (A_n)^{ u^{-n} p^{-1}} = (H_i)^{u^{-n} p^{-1}} \ast_M K_i \in V_i$.
\end{claim}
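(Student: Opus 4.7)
The plan is to prove the two convergences $A_n \to H_i$ and $B_n \to K_i$ in the Chabauty topology of $\Sub(G)$; this gives $A_n \in U_i$ and $B_n \in V_i$ for all large $n$. The key tools are the amalgamated product structures of $A$ and $B$, together with the normal form theorem for amalgamated products.

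Since $u$ was chosen as a power of $p$, the elements $u$ and $p$ commute, so $(K_i)^{pu^n} = u^n(pK_ip^{-1})u^{-n} = (K_i^p)^{u^n}$, and hence $A_n = \la H_i, (K_i^p)^{u^n}\ra$ is a subgroup of $A$. Viewing $A$ via the splitting $A = L *_M U$ with $L = H_i *_M K_i^p$ and $U = \la u, M\ra$, and using that $M$ is normal in $G$ (since $M \leq N$), I would first check that $A_n$ itself splits as the amalgamated free product $A_n = H_i *_M (K_i^p)^{u^n}$. Indeed, any reduced word $x_1 y_1 x_2 y_2 \cdots$ in this abstract product (with $x_j \in H_i$, $y_j \in (K_i^p)^{u^n}$, inner syllables outside $M$) expands via $y_j = u^n y_j' u^{-n}$ into a word in $A$ whose syllables alternate between $L$ and $U$: each $u^{\pm n}$ lies outside the finite group $M$, and each $y_j'$ lies outside $M$ because $y_j \notin M$ and $M$ is normal. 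By Britton's normal form theorem this expanded word is non-trivial in $A$, and in fact yields the reduced $A$-normal form of the element.

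The convergence $A_n \to H_i$ follows from this normal form description. Take any $o \in G \smallsetminus H_i$. If $o \notin A$, trivially $o \notin A_n$ for any $n$. If $o \in A$, then its reduced $A$-normal form is a fixed finite list of syllables using only finitely many specific cosets $M u^{k_j} \subseteq U$ (with $k_j \in \mathbb{Z}$ depending only on $o$). If $o$ were in $A_n$ then, since $o \notin H_i$, its $A_n$-reduced form would contain at least one $(K_i^p)^{u^n}$-syllable, and the expansion would produce $U$-syllables in the coset $M u^n$. Because the cosets $M u^n$ are distinct for distinct $n$ (as $u$ has infinite order and $M$ is finite), uniqueness of the reduced $A$-form forces $n$ to equal one of the finitely many $k_j$. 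Hence $o \in A_n$ for only finitely many $n$. Thus for every basic clopen neighborhood $\mathcal{V}(\II,\OO)$ of $H_i$ we have $A_n \in \mathcal{V}(\II,\OO)$ eventually, so $A_n \in U_i$ for large $n$.

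The convergence $B_n \to K_i$ follows by the symmetric argument applied to $B = A^{p^{-1}}$: since $u$ commutes with $p$ we have $u^{p^{-1}} = u \in B$, and $B$ inherits the splitting $B = (H_i^{p^{-1}} *_M K_i) *_M \la u, M\ra$, inside which $B_n = (H_i^{p^{-1}})^{u^{-n}} *_M K_i$ plays the role of $A_n$ and converges to $K_i$ by the same reasoning. The main delicate point throughout is the careful verification of the ``inner syllable outside $M$'' condition when passing from the $A_n$- (or $B_n$-) normal form to the $A$- (or $B$-) normal form, including potential boundary absorption of $M$-elements; normality of $M$ in $G$ and its finiteness make this bookkeeping routine and do not affect the main observation that $u^{\pm n}$ must appear as a $U$-syllable.
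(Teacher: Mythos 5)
There is a genuine gap in your proposal. The sets $U_i$ and $V_i$ are open subsets of $\mathcal{F}_C = \overline{\Sub_{\mathrm{QC}[\infty]}(G)} \cap \{H \leq G : H\cap N \in C\}$, which is a closed but in general not open subspace of $\Sub(G)$. Establishing the Chabauty convergences $A_n \to H_i$ and $B_n \to K_i$ in $\Sub(G)$ therefore only yields that $A_n$ eventually lies in any prescribed basic neighbourhood $\mathcal{V}(\II,\OO)$ of $H_i$; to conclude $A_n \in U_i$ you must additionally prove that $A_n \in \mathcal{F}_C$. The condition $A_n \cap N \in C$ is clopen and does follow from your convergence, but membership in $\overline{\Sub_{\mathrm{QC}[\infty]}(G)}$ does not: a sequence converging to a point of a closed set need not eventually enter that set. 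The paper closes exactly this step by observing that the inclusion $A_n \hookrightarrow A$ is a quasi-isometric embedding, so that $A_n$ (and likewise $B_n$) is itself a quasiconvex subgroup of infinite index in $G$, using that $A$ is quasiconvex of infinite index by Proposition~\ref{prop: combination result}. Your purely algebraic normal-form argument supplies no quasiconvexity information, so this verification cannot be omitted.

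Apart from this, your route to the convergence itself is correct but genuinely different from the paper's. You argue combinatorially: expanding an $A_n$-reduced word into an $(L *_M U)$-reduced word whose $U$-syllables lie in the cosets $Mu^{\pm n}$, and invoking the invariance of the syllable double-coset sequence to conclude that each fixed $o \notin H_i$ belongs to $A_n$ for only finitely many $n$. The paper instead argues metrically: it fixes $\lambda$ so that $A \hookrightarrow G$ is a $\lambda$-quasi-isometric embedding for the generating set $S_A = T \cup pT'p^{-1} \cup \{u\}$ and deduces the quantitative bound $|a|_S \geq \frac{2n}{\lambda} - \lambda$ for every $a \in A_n \smallsetminus H_i$, which simultaneously gives the convergence \emph{and} (via the same quasi-isometric embedding) the quasiconvexity of $A_n$ needed above. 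Your approach is a clean qualitative substitute for the convergence estimate, but the metric viewpoint is what delivers the missing $\mathcal{F}_C$-membership essentially for free, which is why the paper adopts it.
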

\begin{proof}[Proof of Claim~\ref{claim: An Bn}]
\renewcommand{\qedsymbol}{$\diamond$}
First observe, for each $n$, that $A_n \leq A$, $B_n \leq B$ and the inclusions $A_n \hookrightarrow A$ and $B_n \hookrightarrow B$ are quasi-isometric embeddings so
 $A_n, B_n$ are quasiconvex in $G$ and have infinite index. Moreover $A_n \cap N = B_n \cap N = M$ so $A_n, B_n \in \mathcal{F}_C$. Now let $\mathcal{V}(\II,\OO) \subseteq \Sub(G)$ be a basic neighbourhood of $H_i$ such that $\mathcal{V}(\II,\OO) \cap \mathcal{F}_C \subseteq U_i$. Let $T, T'$ be finite generating sets for $H_i, K_i$ respectively and let $\lambda \geq 1$ be such that $A \hookrightarrow G$ is a $\lambda$-quasi-isometry with respect to the word metric on $G$ defined by $S$ and the word metric on $A$ defined by $S_A := T \cup p T' p^{-1} \cup \{u\}$. Then for all $a \in A_n \smallsetminus H_i$, the reduced spelling of $a$ must contain as a subword $p u^n t' u^{-n} p^{-1}$ for some $t' \in T'$. Thus 
\[ |a|_S \geq \frac{1}{\lambda} |a|_{S_A} - \lambda \geq \frac{2n}{\lambda} - \lambda.\]
Therefore if $n$ is sufficiently large then $|a|_S > |o|_S$ for all $a \in A_n \smallsetminus H_i$ and $o \in \OO$. Since $\II \subseteq H_i \subseteq A_n$, this implies that $A_n \in \mathcal{V}(\II, \OO) \cap \mathcal{F}_C \subseteq U_i$. By a symmetric argument, $(u^{-n} p^{-1})\cdot A_n = B_n \in V_i$ for sufficiently large $n$.
\end{proof}

Hence, by applying Claim~\ref{claim: An Bn}  to each $i \in \{1, \dots, r\}$, we find $N \in \mathbb{N}$ such that, for each $i$, the intersection $(u^{-N}p^{-1} \cdot U_i) \cap V_i$ is non-empty. 
\end{proof}

\subsection{Hyperbolic 3-manifold groups}
\label{Sect: Hyperb. 3-mfld}

In the case of hyperbolic 3-manifold groups the results of the previous section, together with celebrated results about 3-manifolds, allow us to describe the perfect kernel completely.

The following dichotomy, which is a consequence of Canary's Covering Theorem \cite{Canary96} and the Tameness Theorem \cite{Agol04,Calegari-Gabai06} (see \cite[Theorem 5.2]{AFW15} for a precise statement) allows us to understand the status of non-quasiconvex subgroups in the space of subgroups of a hyperbolic 3-manifold groups. If $G = \pi_1(M)$, where $M$ is a hyperbolic 3-manifold, and $H \leq G$ is finitely generated then one (and only one) of the following holds:
\begin{enumerate}
\item[(a)] $H$ is quasiconvex or
\item[(b)] $H$ is a \defin{virtual fiber}, i.e. there exists a finite index subgroup $G' \leq G$ such that  $H'\coloneqq G'\cap H$ is a hyperbolic surface group and $G'$ fits in a short exact sequence
\[ \begin{tikzcd}
1 \arrow[r] &H' \arrow[r] &G' \arrow{r} &\mathbb{Z} \arrow[r] &1.
\end{tikzcd} \] 
\end{enumerate}

\begin{theorem} \label{theorem: kernel hyp 3-manifolds}
Let $M$ be a closed hyperbolic 3-manifold and $G = \pi_1(M)$. Then 
\[ \PK(G) = \overline{\Sub_{\mathrm{QC}[\infty]}(G)}. \]
More precisely, the perfect kernel of $G$ is the set of infinite index subgroups $H \leq G$ such that either $H$ is quasiconvex or $H$ is not finitely generated.
\\
Moreover, the action of $G$ on $\PK(G)$ is highly topologically transitive and the Cantor-Bendixson rank of $G$ is either 2 or 3.
\end{theorem}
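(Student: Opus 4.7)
The plan is to pin down $\PK(G)$ explicitly, then deduce the dynamical statement and compute $\rkCB(G)$. Throughout, $G=\pi_1(M)$ is finitely generated (as $M$ is compact), torsion-free and non-elementary hyperbolic, so its finite radical is trivial. The main external inputs are (i) the dichotomy from the Tameness Theorem plus Canary's Covering Theorem---every finitely generated infinite-index $H\leq G$ is either quasiconvex (QC) or a virtual fiber---and (ii) Agol's virtual fibering theorem, which furnishes a finite index $G'\leq G$ sitting in a short exact sequence $1\to F\to G'\to \Z\to 1$ with $F$ a hyperbolic surface group.

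My first step is to identify $\PK(G)=\overline{\Sub_{\mathrm{QC}[\infty]}(G)}$. The inclusion $\overline{\Sub_{\mathrm{QC}[\infty]}(G)}\subseteq \PK(G)$ is Theorem~\ref{thm: kernel QC hyp}, and since $G$ is finitely generated, $\PK(G)\subseteq \Sub_{[\infty]}(G)$. For a finitely generated $H\in\Sub_{[\infty]}(G)$, the dichotomy splits in two: if $H$ is QC then $H\in\Sub_{\mathrm{QC}[\infty]}(G)$ trivially; if $H$ is a virtual fiber then $H\cap G'$ is a finitely generated surface subgroup of $G'$ (after possibly enlarging the cover), and Corollary~\ref{cor: virtual fibre} with $Q=\Z$ (whose $\Sub$ is countable) puts $H\notin\PK(G)$.

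The delicate step is the non-finitely-generated case: I have to show that every non-f.g.\ $H\in\Sub_{[\infty]}(G)$ belongs to $\overline{\Sub_{\mathrm{QC}[\infty]}(G)}$. Fixing a countable generating set $h_1,h_2,\ldots$ of $H$, set $L_n\coloneqq\la h_1,\ldots,h_n\ra\leq H$; each $L_n$ is finitely generated of infinite index and $L_n\to H$ in $\Sub(G)$ by construction. If infinitely many $L_n$ are QC we are done, so the main obstacle is the situation where all large $L_n$ are virtual fibers with $L_n\leq L_{n+1}$ strict. In that case, after choosing a common finite-index $G'$, both $L_n\cap G'$ and $L_{n+1}\cap G'$ are finitely generated surface subgroups of $G'$, and since any finitely generated infinite-index subgroup of a surface group is free, the inclusion $L_n\cap G'\leq L_{n+1}\cap G'$ is forced to have finite index. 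Iterating, the $L_n$ are pairwise commensurable, so they all lie in the common commensurator $C\coloneqq\mathrm{Comm}_G(L_1)$. For a virtual fiber of a closed hyperbolic $3$-manifold group, Canary's Covering Theorem identifies $C$ with the subgroup preserving the ambient virtual fibration structure: $C$ is finitely generated and $C/L_1$ is virtually cyclic. Hence $H\leq C$ and $H/L_1$ is a finitely generated subgroup of a virtually cyclic group, forcing $H$ itself to be finitely generated---a contradiction. So this subcase does not occur, completing the identification of $\PK(G)$.

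For the remaining statements: torsion-freeness of $G$ gives trivial finite radical, so the $N=\{\id\}$ clause of Theorem~\ref{thm: QC hyp dynamique} yields high topological transitivity on $\PK(G)=\overline{\Sub_{\mathrm{QC}[\infty]}(G)}$. For the rank, finite-index subgroups are isolated in $\Sub(G)$ (so $\rkCB(G)\geq 1$), while every virtual fiber $H$ is the limit of the finite-index subgroups $\pi^{-1}(n\Z)\leq G'$ as $n\to\infty$; hence virtual fibers lie in $\Sub(G)'\setminus\PK(G)$, giving $\rkCB(G)\geq 2$. For the upper bound, the homeomorphism $\Sub_{F\leq}(G')\cong \Sub(\Z)$ from Proposition~\ref{prop: f.g. normal subgr and Sub(Q)} combined with $\rkCB(\Z)=2$ (Example~\ref{example rkCBe H in Z^d}) gives $\rkCBe(H\cap G';G')\leq 2$ for a virtual fiber, and the countable-fiber property of $P:\Sub(G)\to\Sub(G')$ from Proposition~\ref{prop:finite index subgr} then bounds $\rkCBe(H;G)$ by at most one more (the ``$+1$'' coming from the finite twist contributed by elements of $G\setminus G'$ that normalize $H\cap G'$), hence $\rkCBe(H;G)\leq 3$ and $\rkCB(G)\leq 3$.
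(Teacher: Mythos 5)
Your identification of $\PK(G)$ and the deduction of high topological transitivity are essentially sound and follow the same skeleton as the paper's proof (the quasiconvex/virtual-fiber dichotomy, Corollary~\ref{cor: virtual fibre} to exclude virtual fibers, and exhibiting every non-finitely generated infinite index subgroup as a limit of infinite index quasiconvex subgroups). Your treatment of the non-finitely generated case is, however, more roundabout than it needs to be: the detour through pairwise commensurability of the $L_n$ and the commensurator $C=\mathrm{Comm}_G(L_1)$ (which in fact equals all of $G$, since $L_1$ is commensurable with a normal subgroup of a finite index subgroup) ultimately rests on the single fact that any intermediate subgroup $L\leq H\leq G$ above a virtual fiber $L$ is finitely generated. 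That is exactly Proposition~\ref{prop: G>K>H QI w. Z -> finite index} applied to the tower $L\leq H\leq G$, and one virtual-fiber $L_n$ suffices; the commensurability chain and the appeal to Canary's theorem for the commensurator are superfluous, and the phrase ``$H/L_1$ is a finitely generated subgroup of a virtually cyclic group'' does not literally parse since $L_1$ is not normal in $H$. This is the route the paper takes.

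The genuine gap is in your upper bound $\rkCBe(H;G)\leq 3$ for virtual fibers. Proposition~\ref{prop:finite index subgr} gives countability of the fibers of $P:\Sub(G)\to\Sub(G')$, and together with $\rkCBe(H\cap G';G')\leq\rkCB(\Z)=2$ this shows that $H$ has a countable clopen neighbourhood, hence $H\notin\PK(G)$ --- but a countable compact space can have arbitrarily large countable Cantor-Bendixson rank, so countability of a neighbourhood yields no ordinal bound at all. The asserted inequality $\rkCBe(H;G)\leq\rkCBe(H\cap G';G')+1$ is nowhere proved (neither in your proposal nor in the paper) and is not obvious: the fiber $P^{-1}(H\cap G')$ consists of unions of up to $[G:G']$ cosets of $H\cap G'$ and can itself carry several layers of accumulation, so a priori the correction term could depend on $[G:G']$ rather than being $1$. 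The paper closes this step by a different and substantive argument: a virtual fiber has Schreier graph quasi-isometric to $\Z$ or $\N$, and Theorem~\ref{th: H co-QI Z then CB-e-rk(H) <4} (proved earlier via the geometry of intermediate covers) gives $\rkCBe(H;G)\leq 3$ directly. Without that theorem, or an actual proof of your ``$+1$'' lemma, the bound $\rkCB(G)\leq 3$ is not established. (A smaller, fixable point on the lower bound: you show that $H\cap G'=\ker\pi$ is a non-trivial limit of finite index subgroups, but you need this for $H$ itself; the paper obtains it from separability of $H$, which is a finite union of cosets of the separable subgroup $H'$.)
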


\begin{proof}
By Theorem \ref{thm: kernel QC hyp} and since $G$ is finitely generated $\overline{\Sub_{\mathrm{QC}[\infty]}(G)} \subseteq \PK(G) \subseteq \Sub_{[\infty]}(G)$. Let us show that the first inclusion is in fact an equality.

If $H\leq G$ is a virtual fiber subgroup then, by Corollary \ref{cor: virtual fibre}, $H \notin \PK(G)$. 

It remains to show that every non-finitely generated subgroup of $G$ is the limit of a sequence of quasiconvex subgroups with infinite index.

It follows from  Proposition \ref{prop: G>K>H QI w. Z -> finite index} that a non-finitely generated  subgroup does not contain any virtual fiber subgroup.
Indeed, for any tower of subgroups $K_1 \leq K_2 \leq G$ where $K_1$ is a virtual fiber subgroup, either $[K_2: K_1] < \infty$ or $[G: K_2] < \infty$ and in either case $K_2$ is finitely generated.
Thus if $H = \la s_n : n \in \mathbb{N} \ra \leq G$ is not finitely generated, it is the limit of the non-stationary sequence $(\la s_1, \dots, s_n \ra)_n$ of infinite index subgroups and each $\la s_1, \dots, s_n \ra$ is quasiconvex.

To see that $\rkCB(G) = 2$ or 3, first note that the Cantor-Bendixson rank of $G$ is the supremum of the Cantor-Bendixson erasing ranks of its finite index and virtual fiber subgroups. But virtual fiber subgroups are separable. Indeed if $G' \leq G$, $H' \leq H$ are finite index subgroups such that $G' / H' = \mathbb{Z}$ then $H' = \cap_{n \in \N} \pi^{-1}(n \Z)$ where $\pi: G' \rightarrow \Z$ is the quotient map. For each $n \in \N$, $\pi^{-1}(n \Z)$ has finite index in $G'$ and therefore in $G$, so $H'$ is closed with respect to the profinite topology on $G$. Since $H$ is the union of finitely many cosets of $H'$, it follows that $H$ is also closed. Thus $\rkCBe(H;G) \geq 2$. Since by \cite[Theorem 9.2]{Agol13} every closed hyperbolic 3-manifold group contains a virtual fiber subgroup, this implies that $\rkCB(G)$ is in fact the supremum of the Cantor-Bendixson erasing ranks of the virtual fiber subgroups. For any such subgroup $H \leq G$, any Schreier graph of $H$ is quasi-isometric to $\Z$ or $\N$ (see \cite[Corollary~3.3]{Azuelos})
so it follows from Theorem \ref{th: H co-QI Z then CB-e-rk(H) <4} that $\rkCBe(H;G) \leq 3$. 
\end{proof}

Note that there exist hyperbolic 3-manifold groups with Cantor-Bendixson rank 3. Indeed, it was pointed out to us by Ian Biringer that there are hyperbolic 3-manifold groups which algebraically fiber over the infinite dihedral group $D_\infty$ and the kernel of this fibering has Cantor-Bendixson erasing rank $3$. Recall that a group $G$ \defin{algebraically fibers} over $Q$ if it fits into a short exact sequence $1 \to H \to G \to Q \to 1$ where $H$ is finitely generated. He moreover convinced us (personal communication) that if a virtual fiber subgroup is not virtually of this form then its Cantor-Bendixson erasing rank is $2$.

It follows that the Cantor-Bendixson rank of a hyperbolic 3-manifold group $G$ is 3 if and only if $G$ virtually fibers over $D_\infty$, otherwise it is 2. This leads us to the following question.

\begin{question} 
Does there exist a closed hyperbolic 3-manifold $M$ such that no finite index subgroup of $\pi_1(M)$ algebraically fibers  over $D_\infty$?
Equivalently, does there exist a hyperbolic 3-manifold group $G$ with $\rkCB(G) = 2$?
\end{question}

\section{Some facts and more questions} \label{Section: Facts and questions}

Let us consider the supremum of the Cantor-Bendixson ranks among various classes of  groups
\begin{align*}
\alpha_{\text{fp}}&\coloneqq \sup\{\rkCB(G)\mid G \text{ finitely presented}\}\\
\alpha_{\text{fg}}&\coloneqq \sup\{\rkCB(G)\mid G \text{ finitely generated}\}\\
\alpha_{\text{c}}&\coloneqq \sup\{\rkCB(G)\mid G \text{ countable}\}
\end{align*}

\begin{question} Clearly $\alpha_{\text{fp}}\leq \alpha_{\text{fg}}\leq \alpha_{\text{c}}$.
\begin{enumerate}
\item Which inequalities are strict?
\item What are the values of $\alpha_{\text{fp}}, \alpha_{\text{fg}}$ and $\alpha_\text{c}$?
\item What are the possible values of Cantor-Bendixson ranks for the various classes of groups?
\end{enumerate}
\end{question}

Recall the \defin{Rips construction} and the \defin{Haglund-Wise construction}:\\
Given any finitely presented group $Q$ there exists a specific group $G$ and a finitely generated normal subgroup $N$ with a short exact sequence 
 \[
 \begin{tikzcd}
 1 \arrow[r] & N \arrow[r] & G \arrow[r, "\phi"] & Q \arrow[r] & 1,
 \end{tikzcd}
 \]
 where  the group $G$ can be chosen to be
 \\
--  
a hyperbolic group \cite{Rips-1982-small-canc-}, or 
\\-- 
a finitely presented subgroup of $\mathrm{SL}(n,\Z)$ for some $n$ \cite[Th. 1.6]{Haglund-Wise-2008-SCC}.
 \medskip
 
It follows that:
\begin{proposition}
\begin{align*}\alpha_{\text{\rm fp}}&= \sup\{\rkCB(G)\mid G \text{\rm\  hyperbolic}\}.\\
\alpha_{\text{\rm fp}}&= \sup\{\rkCB(G)\mid G \text{\rm\   finitely presented linear}\}.
\end{align*}
\end{proposition}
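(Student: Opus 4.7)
The plan is to reduce both equalities to a single observation: the Rips and Haglund-Wise constructions, combined with Proposition~\ref{prop: f.g. normal subgr and Sub(Q)}, transfer Cantor-Bendixson rank from arbitrary finitely presented groups into hyperbolic (respectively finitely presented linear) groups. Since hyperbolic groups and finitely presented linear groups are themselves finitely presented, the reverse inequalities $\sup\{\rkCB(G):G \text{ hyperbolic}\} \leq \alpha_{\text{fp}}$ and $\sup\{\rkCB(G):G \text{ f.p. linear}\}\leq \alpha_{\text{fp}}$ are automatic, so it suffices to show that for every finitely presented group $Q$ there is a hyperbolic (respectively f.p. linear) group $G$ with $\rkCB(G)\geq \rkCB(Q)$.

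Given a finitely presented $Q$, I would apply Rips' construction to produce a hyperbolic group $G$ together with a short exact sequence $1\to N\to G\to Q\to 1$ with $N$ finitely generated. Since $N$ is finitely generated, part~(2) of Proposition~\ref{prop: f.g. normal subgr and Sub(Q)} tells us that $\Sub_{N\leq}(G)$ is a clopen subset of $\Sub(G)$ and that the quotient map $q:\Sub_{N\leq}(G)\to \Sub(Q)$ is a $G$-equivariant homeomorphism preserving the Cantor-Bendixson erasing rank, i.e.\ $\rkCBe(H;G)=\rkCBe(H/N;Q)$ for all $H\in\Sub_{N\leq}(G)$. The key consequence is that every subgroup $L\leq Q$ lifts via $q^{-1}$ to a subgroup of $G$ realising the same CB-erasing rank.

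Next I would deduce the desired inequality on CB-ranks. If $\Sub(Q)$ is perfect, $\rkCB(Q)=0$ and there is nothing to prove. Otherwise $\rkCB(Q)=\sup\{\rkCBe(L;Q):L\in \Sub(Q)\smallsetminus \PK(Q)\}$, and for each such $L$ the lift $q^{-1}(L)\in \Sub_{N\leq}(G)\subseteq \Sub(G)$ satisfies $\rkCBe(q^{-1}(L);G)=\rkCBe(L;Q)$, whence
\[
\rkCB(G)\ \geq\ \sup_{L\in \Sub(Q)\smallsetminus \PK(Q)}\rkCBe(q^{-1}(L);G)\ =\ \rkCB(Q).
\]
Taking the supremum over all finitely presented $Q$ yields $\alpha_{\text{fp}}\leq \sup\{\rkCB(G):G \text{ hyperbolic}\}$, giving the first equality. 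The second equality follows by the identical argument, replacing Rips' construction by the Haglund-Wise construction, which embeds the resulting $G$ as a finitely presented subgroup of some $\mathrm{SL}(n,\Z)$.

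There is essentially no obstacle here: the only ingredients are the two black-box constructions (both already cited in the excerpt) and the rank-preservation statement of Proposition~\ref{prop: f.g. normal subgr and Sub(Q)}. The only minor care needed is to remember that $\rkCBe$ only takes successor-ordinal values, so when $\rkCB(Q)$ is a limit ordinal we must pass via the supremum of erasing ranks rather than exhibiting a single subgroup that realises $\rkCB(Q)$.
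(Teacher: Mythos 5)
Your proposal is correct and follows essentially the same route as the paper: the paper likewise derives both equalities from the Rips and Haglund--Wise short exact sequences combined with the rank-preservation statement \eqref{eq: equality of rk CB X} of Proposition~\ref{prop: f.g. normal subgr and Sub(Q)}, which yields $\rkCB(G)\geq \rkCB(Q)$, the reverse inequalities being automatic since hyperbolic and finitely presented linear groups are finitely presented. Your added care about limit ordinals (passing through the supremum of erasing ranks rather than a single realising subgroup) is a correct and worthwhile detail that the paper leaves implicit.
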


Recall from Proposition~\ref{prop: f.g. normal subgr and Sub(Q)} that given an exact sequence $1\rightarrow N\rightarrow G\overset{\phi}{\rightarrow} Q\rightarrow 1$ where $N$ is finitely generated, then $\Sub_{N\leq}(G)$ is a $G$-invariant clopen subset of $\Sub(G)$ and $\phi$ induces a homeomorphism $\Sub_{N\leq}(G)\simeq \Sub(Q)$ that is $G$-equivariant for the $G$-actions, where $G$ acts on $\Sub(Q)$ via $\phi$, so that $\PK(G)\cap \Sub_{N\leq}(G) \simeq \PK(Q)$ and 
$\rk(G)\geq \rk(Q)$ (Proposition~\ref{prop: f.g. normal subgr and Sub(Q)} \eqref {eq: equality of rk CB X}).

If follows from the Rips and Haglund-Wise constructions that  for any finitely presented $Q$ {\em we can embed the action $Q\acting \Sub_{[\infty]}(Q)$, regardless of its complexity, inside the space of non quasiconvex subgroups (see Corollary~\ref{cor: K(Q) non QC}) of some hyperbolic group, or alternatively inside the space of subgroups of a finitely presented linear group}.

\begin{question}These observations give rise to the questions:
\begin{enumerate}
\item How ``pathological'' can the action $H\acting \Sub(H)$ be for a finitely presented group $H$?
\item
Which bounds can be set for the Cantor-Bendixson ranks of finitely presented groups?

\end{enumerate}
\end{question}
Observe that the lamplighter groups, the Grigorchuk group and the Gupta-Sidki $3$-group mentioned in the introduction are not finitely presented. The first examples of a finitely presented group with infinite Cantor-Bendixson rank (namely the Baumslag group or $\FF_2\times \FF_2$) have been announced by Bontemps, Frisch and the second author \cite{BFG23}. This implies in particular the existence of hyperbolic groups with infinite Cantor-Bendixson rank.

\begin{corollary}\label{cor: K(Q) non QC}
If $G$ is produced using the Rips construction above with a non-hyperbolic $Q$, then 
\[\Sub_{N\leq}(G)\cap \overline{\Sub_{\mathrm{QC}[\infty]}(G)}=\emptyset .\]
Thus, if $\PK(Q)\not=\emptyset$, the reverse inclusion 
$ \overline{\Sub_{\mathrm{QC}[\infty]}(G)} \supseteq \PK(G)$ of Theorem \ref{thm: kernel QC hyp}  does not hold since
\[\PK(Q)\simeq \PK(G)\cap \Sub_{\leq N}(G)\subseteq \PK(G)\smallsetminus \overline{\Sub_{\mathrm{QC}[\infty]}(G)}.\]
\end{corollary}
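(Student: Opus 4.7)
The idea is to show that no subgroup $H \geq N$ can arise as a limit of infinite index quasiconvex subgroups, by combining the openness of $\Sub_{N\leq}(G)$ with the fact that normal infinite subgroups of non-elementary hyperbolic groups have full limit set in $\partial G$. Since $N$ is finitely generated, Proposition~\ref{prop: f.g. normal subgr and Sub(Q)}(2) tells us that $\Sub_{N\leq}(G)$ is a clopen neighbourhood of $N$ in $\Sub(G)$. Hence, if some $H \in \Sub_{N\leq}(G)$ were also in $\overline{\Sub_{\mathrm{QC}[\infty]}(G)}$, there would be a sequence $(K_n)_n$ in $\Sub_{\mathrm{QC}[\infty]}(G)$ converging to $H$, and for $n$ large enough we would have $K_n \in \Sub_{N\leq}(G)$, i.e.\ $N \leq K_n$. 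Therefore it suffices to prove that no infinite index quasiconvex subgroup of $G$ contains $N$.

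First I would note that $Q$, being non-hyperbolic, is in particular infinite (finite groups are hyperbolic), and since $N$ is non-trivial and infinite in the Rips construction (it even contains a non-abelian free subgroup), both $N$ and $G$ are infinite. Moreover $G$ cannot be elementary hyperbolic: an elementary hyperbolic group is virtually cyclic, so any quotient is virtually cyclic, hence hyperbolic, contradicting non-hyperbolicity of $Q$. Thus $G$ is non-elementary hyperbolic. Now consider the limit set $\Lambda(N) \subseteq \partial G$. It is closed and non-empty (since $N$ is infinite), and it is $G$-invariant because $N$ is normal: for every $g \in G$, $g \Lambda(N) = \Lambda(gNg^{-1}) = \Lambda(N)$. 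Since the action of a non-elementary hyperbolic group on its Gromov boundary is minimal, we conclude $\Lambda(N) = \partial G$.

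Suppose now that $K \leq G$ is quasiconvex and contains $N$. Then $\Lambda(K) \supseteq \Lambda(N) = \partial G$, so $\Lambda(K) = \partial G$. It is a standard fact about hyperbolic groups that a quasiconvex subgroup whose limit set equals $\partial G$ has finite index: indeed, quasiconvexity of $K$ means $K$ lies within bounded Hausdorff distance of the convex hull of $\Lambda(K) = \partial G$, which is coarsely all of $G$, so $K$ is cobounded and hence of finite index. This contradicts $K \in \Sub_{\mathrm{QC}[\infty]}(G)$, proving the first claim $\Sub_{N\leq}(G) \cap \overline{\Sub_{\mathrm{QC}[\infty]}(G)} = \emptyset$.

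For the last assertion, Proposition~\ref{prop: f.g. normal subgr and Sub(Q)}(2) gives a $G$-equivariant homeomorphism $q_\restriction : \PK(G) \cap \Sub_{N\leq}(G) \to \PK(Q)$. If $\PK(Q) \neq \emptyset$, this intersection is non-empty, and by the first part it is disjoint from $\overline{\Sub_{\mathrm{QC}[\infty]}(G)}$, whence $\PK(Q) \simeq \PK(G) \cap \Sub_{N \leq}(G) \subseteq \PK(G) \smallsetminus \overline{\Sub_{\mathrm{QC}[\infty]}(G)}$, so the inclusion in Theorem~\ref{thm: kernel QC hyp} is strict. The only real obstacle is the boundary-dynamics lemma (minimality of $G \acting \partial G$ together with the quasiconvex-implies-finite-index principle when $\Lambda(K) = \partial G$); everything else is bookkeeping with the clopen set $\Sub_{N\leq}(G)$.
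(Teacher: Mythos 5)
Your proof is correct, and it differs from the paper's in one substantive way: where you prove the key fact from scratch, the paper simply quotes the theorem of Mihalik and Towle \cite{Mihalik-Towle1994} that an infinite index quasiconvex subgroup of a hyperbolic group contains no infinite normal subgroup of the ambient group, and notes that $N$ is infinite precisely because $Q$ is not hyperbolic (a finite $N$ would force $Q=G/N$ to be hyperbolic). Your boundary-dynamics argument --- $\Lambda(N)$ is a non-empty closed $G$-invariant subset of $\partial G$, hence all of $\partial G$ by minimality, so any quasiconvex $K\supseteq N$ has full limit set and therefore finite index --- is essentially a self-contained proof of the special case of Mihalik--Towle that is needed here, which makes your write-up longer but independent of the citation. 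The surrounding bookkeeping (clopenness of $\Sub_{N\leq}(G)$ from Proposition~\ref{prop: f.g. normal subgr and Sub(Q)}, passage to the limit of a sequence in $\Sub_{\mathrm{QC}[\infty]}(G)$, and the identification $\PK(Q)\simeq \PK(G)\cap\Sub_{N\leq}(G)$) matches the paper's intent exactly. One small stylistic remark: you justify that $N$ is infinite by appealing to structural properties of the Rips construction (``it even contains a non-abelian free subgroup''), whereas the cleaner and fully general deduction --- the one the paper uses --- is that $N$ finite would make $Q$ hyperbolic, contradicting the hypothesis; since you already invoke non-hyperbolicity of $Q$ to rule out $G$ being elementary, you might as well use it here too.
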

The corollary follows from the following result of Mihalik and Towle \cite[{Theorem~1}]{Mihalik-Towle1994}:
\\
{\em If a subgroup of a hyperbolic group $G$ has infinite index and is quasiconvex, then it does not contain any infinite normal subgroup of $G$}.
The fact that $Q$ is not hyperbolic ensures that $N$ is infinite.

\bigskip

The Theorems~\ref{th: two edge-stab with finite intersection}, \ref{th: two edge-stab with finite intersection - top. dyn.}, \ref{thm: kernel QC hyp}, \ref{thm: QC hyp dynamique}
all use the property that the subgroups we consider fix a proper quasiconvex subset of an ambient hyperbolic space. 
This leads to the following question, which we have not pursued in depth:
\begin{question} \label{question: generalisations}
Is there a common generalisation of our results in the context of group actions on hyperbolic spaces with suitable constraints?
\end{question}

Concerning the above Question~\ref{question: generalisations} and Remark~\ref{rem: rel hyp. gps}, 
notice that results in this direction have been obtained since a first version of this paper was circulated, see \cite{HMO}.

\section{An application: amenable faithful transitive actions}

The class $\mathcal{A}$ is the class of countably infinite groups that admit a faithful, transitive and amenable action.

Recall from \cite{Glasner-Monod-2007} that if a triple of groups $K_1\leq K_2\leq K_3$ satisfies that $K_i$ is co-amenable in $K_{i+1}$, then $K_1$ is co-amenable in $K_3$
and if $K_1$ belongs to $\mathcal{A}$, then $K_3$ belongs to $\mathcal{A}$. 
In particular, if $H\in \mathcal{A}$ and $G\geq H$ is a finite index supergroup, then $G\in \mathcal{A}$. Moreover, if $H_1,H_2 \in \mathcal{A}$ then $H_1 \times H_2 \in \mathcal{A}$.

Moon observed that Baumslag-Solitar groups belong $\mathcal{A}$ since they admit a co-amenable free subgroup (or they are amenable, depending on the parameters).

\begin{proposition}
\label{prop: conditions for being in A}
Let $G$ be a countable group and $\Sub_{[\infty]}(G)$ its space of infinite index subgroups.
Assume that there is
\begin{enumerate}[(a)]

\item 
\label{it: faithful}
 a subgroup $H_1\in \Sub_{[\infty]}(G)$ such that $G\acting G/H_1$ is faithful;

\item \label{it: co-amenability condition for A}
a co-amenable subgroup $H_2\in \Sub_{[\infty]}(G)$ (or more generally a family $(H_2^{i})_{i\in \N}$ of subgroups $H_2^{i}\in \Sub_{[\infty]}(G)$ such that the action $G\acting G/H_2^1\sqcup G/H_2^2\sqcup \cdots \sqcup G/H_2^{i}\sqcup\cdots$ is amenable);
\item 
\label{it: cond on H0}
a subgroup $H_0\in \Sub_{[\infty]}(G)$ whose orbit under the $G$-action by conjugation on $\Sub_{[\infty]}(G)$ accumulates on both $H_1$ and $H_2$  (or more generally on $H_1$ and all the $H_2^{i}$'s).

\end{enumerate}
Then the action $G\acting G/H_0$ is faithful, transitive and amenable. In particular,
$G\in \mathcal A$.

\smallskip
\noindent
Moreover, if $\RRR\subseteq \Sub(G)$ is a $G$-invariant closed subspace containing 
both $H_1$ and $H_2$ (resp. $H_1$ and all the $H_2^{i}$) together with a dense orbit, then the set of $H_0$ satisfying \eqref{it: cond on H0} is a dense G$_\delta$ subset of $\RRR$.
\end{proposition}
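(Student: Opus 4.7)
The plan is to verify the three conclusions (transitivity, faithfulness, amenability) about $G\acting G/H_0$, and then the density assertion. Transitivity of $G\acting G/H_0$ is automatic.

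For faithfulness, I identify the kernel of $G\acting G/H_0$ with the normal core $K:=\bigcap_{g\in G}gH_0g^{-1}$. Hypothesis~\eqref{it: cond on H0} provides a sequence $(g_n)$ with $g_nH_0g_n^{-1}\to H_1$ in the Chabauty topology. Since $K$ is normal, $K\leq g_nH_0g_n^{-1}$ for every $n$; and a fixed subgroup contained in each term of a Chabauty-convergent sequence is automatically contained in the limit, giving $K\leq H_1$. Being normal, $K$ then lies in the core of $H_1$, which is trivial by~\eqref{it: faithful}.

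The substantive step is amenability; this is the main obstacle. The key observation is that $G\acting G/H_0$ and $G\acting G/(gH_0g^{-1})$ are $G$-isomorphic for every $g\in G$ (via $xH_0\mapsto xg^{-1}\cdot gH_0g^{-1}$), so it suffices to prove amenability of $G\acting G/L_n$ for some large $n$, where $L_n:=g_nH_0g_n^{-1}\to H_2$ from~\eqref{it: cond on H0}. The plan is a direct transfer of Reiter's condition. Given $F\Subset G$ and $\epsilon>0$, co-amenability of $H_2$ provides a finitely supported probability measure $\mu=\sum_{i=1}^N c_i\delta_{k_iH_2}$ on $G/H_2$ with $\|g\mu-\mu\|_1<\epsilon$ for all $g\in F$. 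Choose the finite ``witness'' $F'':=\{k_j^{-1}gk_i:g\in F\cup\{\id\},\,1\leq i,j\leq N\}$. The Chabauty convergence $L_n\to H_2$ yields $F''\cap L_n=F''\cap H_2$ for $n$ large, so the coset identifications among $\{k_iL_n\}_i$ and the $F$-action upon them mirror exactly those for $\{k_iH_2\}_i$; hence $\mu_n:=\sum c_i\delta_{k_iL_n}$ is a probability measure on $G/L_n$ with $\|g\mu_n-\mu_n\|_1=\|g\mu-\mu\|_1<\epsilon$, verifying Reiter's condition for $G/L_n$. This combinatorial bookkeeping across the Chabauty limit is precisely the delicate point. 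For the generalized family version, I apply the same idea to a Reiter measure on $\bigsqcup_iG/H_2^i$: being finitely supported it involves only finitely many $i$'s, and using conjugates $g_n^{(i)}H_0(g_n^{(i)})^{-1}\to H_2^i$ it transfers to an approximately invariant measure on a finite disjoint union of copies of $G/H_0$; amenability of such a finite disjoint union is equivalent to that of $G/H_0$.

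For the ``moreover'' assertion, the condition ``$H_j\in\overline{G\cdot H_0}$'' is G$_\delta$ in $H_0\in R$: fixing a countable neighborhood basis $(V_k)$ of $H_j$, it rewrites as $H_0\in\bigcap_k W_{V_k}$ with $W_{V_k}:=\bigcup_{g\in G}\{H_0\in R:gH_0g^{-1}\in V_k\}$ open by continuity of the conjugation action. Intersecting over $H_1$ and the countably many $H_2^i$'s preserves the G$_\delta$ property. Density follows from the standard Polish-dynamics fact that, once a compact metric $G$-space $R$ has a dense orbit, the set of points with dense orbit is itself a dense G$_\delta$; every such $H_0$ satisfies $\overline{G\cdot H_0}=R\supseteq\{H_1\}\cup\{H_2^i:i\in\N\}$, hence also~\eqref{it: cond on H0}.
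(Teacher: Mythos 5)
Your proof is correct and follows essentially the same strategy as the paper's: faithfulness by approximating $H_1$ with conjugates of $H_0$, amenability by encoding a finite approximate-invariance witness for $G/H_2$ (resp.\ $\bigsqcup_i G/H_2^i$) as a clopen condition and transferring it to conjugates of $H_0$, and the dense $G_\delta$ assertion via the standard fact about points with dense orbit. Your variations are cosmetic but clean: you phrase faithfulness via the normal core rather than element by element, and you use Reiter measures where the paper uses F{\o}lner sets; both are equivalent and your bookkeeping with the finite set $F''$ is sound. The only omission is in the ``moreover'' part: condition \eqref{it: cond on H0} also requires $H_0\in\Sub_{[\infty]}(G)$, which you do not verify for the points with dense orbit; it holds because a Chabauty limit of conjugates of a subgroup of finite index $k$ again has index at most $k$ (pigeonhole on cosets), so any $H_0$ whose orbit closure contains the infinite-index subgroup $H_1$ must itself have infinite index --- the paper disposes of this in one sentence, and you should add it.
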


\begin{proof}[Proof of Proposition~\ref{prop: conditions for being in A}]
It is enough to show that $G\acting G/H_0$ is faithful and amenable.
Let $\gamma\in G\smallsetminus \{\id\}$. By faithfulness of $G\acting G/H_1$, there is $g_1\in G$ such that $g_1^{-1}\gamma g_1 \not\in H_1 $.
Let $\RRR$ be the closure of the orbit of $H_0$ and let $V$ be the open neighbourhood of $H_1$ in $\RRR$ consisting of the subgroups that do not contain $g_1^{-1}\gamma g_1$.
By density, there is $g\in G$ such that $g H_0 g^{-1}\in V$, i.e. $\gamma \not\in g_1 g H_0(g_1 g)^{-1}$.
 Thus $G\acting G/H_0$ is faithful.

Consider a F{\o}lner sequence $(A_i)_i$ with $A_i\subseteq G/H_2^{i}$ (or $A_i\subseteq G/H_2$).
More precisely, one can (up to re-indexing) write $G$ as an increasing union of finite subsets $G=\ubigcup_i S_i$ such that for each $i$ and $\gamma\in S_i$ we have $\vert \gamma  A_i\triangle A_i\vert\leq \frac{1}{i} \vert A_i \vert$ with $A_i=\{g_j^{i} H_2^{i}\colon j=1, 2, \cdots , \vert A_i\vert\}$.
Let $\vert A_i\vert = n_i$ and $J_i=\{1, 2, \cdots , n_i\}$.
The intersection $\gamma A_i\cap A_i$ is encoded by the equations $(g_k^{i})^{-1} \gamma g_j^{i}\in H_2^{i}$ (for $j,k\in J_i$ and $\gamma\in S_i$) while $\vert A_i\vert $ is encoded by $(g_k^{i})^{-1} g_j^{i}\not\in H_2^{i}$ (for $j,k\in J_i$, $j\not=k$).
These conditions define an open neighbourhood $V_i$ of $H_2^{i}$ in $\RRR$ and by density of the orbit of $H_0$, there is $k_i\in G$ such that $k_i H_0 k_i^{-1}\in V_i$, i.e.
$(g_k^{i})^{-1} \gamma g_j^{i}\in H_2^{i}$ $\implies$ $(g_k^{i})^{-1} \gamma g_j^{i}\in k_i H_0 k_i^{-1}$ and 
$(g_k^{i})^{-1} g_j^{i}\not\in  k_i H_0 k_i^{-1}$.
It follows that the $B_i\colon=\{g_j^{i}k_i H_0\colon j\in J_i \}$ satisfy  $\vert \gamma  B_i\triangle B_i\vert\leq \frac{1}{i} \vert B_i \vert$.
Thus the sequence $(B_i)_i$ is a F{\o}lner sequence for the action $G\acting G/H_0$, which is therefore amenable.

 Concerning the ``moreover" part, since $\RRR$ admits a countable basis $(U_\ell)_{\ell\in \N}$ of open sets, the set $\cap_\ell G\cdot U_\ell$ of points with dense orbits in $\RRR$ is a G$_\delta$. Since their orbit accumulates to some infinite index subgroup (namely $H_1$), they have infinite index themselves.
\end{proof}

\begin{corollary} \label{cor: graphs of free groups in A} 
Any finite graph of free groups with cyclic edges with at least one non-cyclic vertex group is in $\mathcal{A}$.
\end{corollary}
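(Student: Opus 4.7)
The plan is to apply Proposition~\ref{prop: conditions for being in A} in order to produce a subgroup $H_0\in \PK(G)$ whose conjugation orbit accumulates simultaneously on a faithful point and on a co-amenable point. I first verify that $G$ belongs to the class $\mathcal{Q}'$ via Example~\ref{ex: class Q}: the vertex groups are finitely generated free, the edge groups are cyclic (hence finitely generated virtually abelian), the non-cyclic vertex group $G_{v_0}$ has infinitely many ends and thus lies in $\mathcal{Q}_0$, and the Bass-Serre tree $T$ carries two edges with trivial stabiliser intersection. The last point uses that maximal cyclic subgroups of a free group are malnormal: for an edge $f$ with stabiliser $\langle c\rangle \leq G_{v_0}$ and any $g\in G_{v_0}$ outside the maximal cyclic subgroup containing $c$, one has $\Stab(f)\cap \Stab(g\cdot f)=\langle c\rangle \cap g\langle c\rangle g^{-1}=\{\id\}$. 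Since $G$ is torsion-free (as an iterated amalgamation of free groups over torsion-free cyclic subgroups), its finite radical is trivial, so $G\in \mathcal{Q}'$. Corollary~\ref{cor: groups in CCC, K(G) and HTT} then gives $\PK(G)=\Sub_{[\infty]}(G)$ and the fact that $G\acting \PK(G)$ is highly topologically transitive, in particular with a dense orbit.

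I then exhibit the two subgroups called for by Proposition~\ref{prop: conditions for being in A}. Since $G$ is infinite, $H_1:=\{\id\}$ lies in $\Sub_{[\infty]}(G)=\PK(G)$ and the regular action $G\acting G$ is faithful. For a co-amenable $H_2\in \PK(G)$, I claim that $G$ has infinite abelianisation and take $H_2$ to be the kernel of any surjection $G\twoheadrightarrow \Z$. To see the claim, I present $G$ as an iterated sequence of amalgamated free products and HNN extensions along the edges of the graph of groups, starting from $G_{v_0}$. At each step, the cyclic edge group contributes at most one relation in the abelianisation, while the newly added free vertex group (or HNN stable letter) contributes at least as many torsion-free generators as relations imposed; hence the torsion-free rank $\dim_\Q(G^{ab}\otimes \Q)$ is non-decreasing. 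Since $\dim_\Q(G_{v_0}^{ab}\otimes \Q)=r_0\geq 2$, the final torsion-free rank satisfies $\dim_\Q(G^{ab}\otimes \Q)\geq 2$, whence $G$ surjects onto $\Z$. The kernel $H_2$ of such a surjection is an infinite-index normal subgroup of $G$ with amenable quotient, hence co-amenable, and lies in $\PK(G)$.

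The corollary now follows from the ``Moreover'' part of Proposition~\ref{prop: conditions for being in A}: the subspace $R:=\PK(G)$ is closed, $G$-invariant, contains both $H_1$ and $H_2$, and admits a dense orbit. Hence the set of $H_0\in R$ whose $G$-orbit accumulates on both $H_1$ and $H_2$ is a dense $G_\delta$ subset of $R$, and for any such $H_0$ the action $G\acting G/H_0$ is faithful, transitive, and amenable. Thus $G\in \mathcal{A}$. The main obstacle is the torsion-free rank monotonicity at each amalgamation/HNN step; although elementary, it requires careful case analysis depending on whether the vertex and edge groups involved are trivial or infinite cyclic and whether the resulting cyclic relation contributes a non-trivial constraint in the abelianisation.
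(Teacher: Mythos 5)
Your proof is correct, and its overall skeleton (establish $G\in\mathcal{Q}'$, invoke Corollary~\ref{cor: groups in CCC, K(G) and HTT} for topological transitivity on $\Sub_{[\infty]}(G)$, then feed $H_1=\{\id\}$ and a fibering kernel $H_2$ into Proposition~\ref{prop: conditions for being in A}) matches the paper's. Where you genuinely diverge is in producing the co-amenable subgroup $H_2$. The paper runs a trichotomy on the shape of the underlying graph: if it is not a tree, $G$ surjects onto $\pi_1$ of the graph; if it is a tree with a non-cyclic leaf, $G$ surjects onto $G_v^{ab}$ modulo the incident edge group; and in the remaining case (all leaves cyclic) it only obtains a surjection onto a free product of two cyclic groups, hence merely a \emph{virtual} surjection onto $\Z$, and must then invoke transitivity of co-amenability through a finite-index subgroup. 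You instead prove directly that $\dim_\Q(G^{ab}\otimes\Q)\geq 2$ by tracking the torsion-free rank of the abelianisation along an iterated amalgamation/HNN decomposition rooted at the non-cyclic vertex: each infinite cyclic edge group imposes one linear relation but forces the newly attached vertex group (or stable letter) to contribute at least one new free generator, so the rank never drops below $r_0\geq 2$. This is a clean, uniform argument that yields an honest surjection $G\twoheadrightarrow\Z$ in all cases, slightly more than the paper proves, at the cost of the case-checking you flag (which does go through: when the edge group is infinite cyclic the new vertex group is non-trivial free, and the single relation kills at most one dimension whether or not the edge generator dies in the new factor's abelianisation). A second, minor difference is that you verify $G\in\mathcal{Q}'$ from the definition (malnormality of maximal cyclic subgroups of free groups giving the two edges with trivially intersecting stabilisers, torsion-freeness giving trivial finite radical), whereas the paper simply cites Example~\ref{ex: class Q}; your only unaddressed degenerate case is a one-vertex, edgeless graph, where $G$ is free non-abelian and everything is immediate.
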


This generalises a result of Moon \cite{Moon-2011-free-prod-A}.

\begin{proof}
We will apply Proposition~\ref{prop: conditions for being in A}.
By Example~\ref{ex: class Q}, $G$ belongs to $\mathcal Q'$.
Thus, $G\acting \Sub_{[\infty]}(G)=:\RRR\ni \{\id\}$ is topologically transitive.
It remains to find a co-amenable infinite index subgroup  $H_2\in \RRR$.

If the graph is not a tree, then $G$ surjects onto $\Z$.
Take $H_2$ to be the kernel of this morphism.
Otherwise, if one of the leaves $v$ of the finite graph has a non-cyclic stabiliser $G_v$, then $G$ surjects onto the abelianisation of $G_v$ modulo the group $G_e$ of the incident edge $e$. 
Eventually, if $v_1,v_2$ are two leaves of the finite graph, with cyclic stabilisers $G_{v_1}, G_{v_2}$ (and the stabilisers $G_{e_1}\lneqq G_{v_1}$ and $G_{e_2}\lneqq G_{v_2}$ of the incident edges $e_1, e_2$ are proper subgroups). Then $G$ maps onto the non-trivial free product $G_{v_1}/G_{e_1}*G_{v_2}/G_{e_2}$, that itself maps virtually onto $\Z$.
Thus $G$ virtually maps onto $\Z$.
The kernel of this virtual morphism is co-amenable and has infinite index.
\end{proof}

\begin{corollary}
 \label{cor: G acts on a tree, edges w. triv. intersect of stab. in A}
Let $G$ be a countable group which admits a minimal and irreducible action $G\acting T$ on a tree. Suppose that there are two edges $f_1,f_2$ of $T$ with $\Stab_G(f_1)\cap \Stab_G(f_2)=N_0$ (the kernel of the $G$-action on $T$), and assume moreover that $G$ admits a co-amenable subgroup $G_0$ (possibly a finite index subgroup) such that $G_0\cap N_0=\{\id\}$ and such that the graph $G_0\bs T$ is not a finite tree.
Then $G \in \mathcal{A}$.
\end{corollary}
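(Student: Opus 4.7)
The plan is to invoke Proposition~\ref{prop: conditions for being in A} with $H_1 := \{\id\}$ and an $H_2$ built from $G_0$, working inside the perfect $G$-invariant set $\mathcal{F}_{\{\{\id\}\}}^T$ of Equation~\eqref{eq: def F-C}. First, the irreducibility of $G \acting T$ forces $G$ to contain a non-abelian free subgroup via \cite[Theorem~2.7]{Culler-Morgan-87}, so $G$ is infinite, $\{\id\}$ has infinite index, and the left regular action $G\acting G$ is faithful. Moreover $\{\id\}\cap N_0=\{\id\}$ and $\{\id\}\bs T=T$ is infinite (any hyperbolic element yields a bi-infinite axis), so $\{\id\} \in \mathcal{F}_{\{\{\id\}\}}^T$.

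Next I would construct $H_2$ by case analysis on the structure of $G_0\bs T$. If $G_0$ has infinite index and $G_0\bs T$ is infinite, simply take $H_2:=G_0$; then $H_2$ is co-amenable by hypothesis, $H_2\cap N_0=\{\id\}$, and $H_2\in\Sub_{\vert\bullet\bs T\vert\infty}(G)\subseteq \mathcal{F}_{\{\{\id\}\}}^T$. Otherwise $G_0\bs T$ contains a cycle (this covers the remaining possibilities under the hypothesis that $G_0\bs T$ is not a finite tree, in particular the whole finite-index case when $G$ is finitely generated): Bass--Serre theory applied to this splitting yields a surjection $\pi:G_0\twoheadrightarrow \Z$ along the fundamental cycle, and I set $H_2:=\ker\pi$. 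Then $H_2$ has infinite index in $G_0$ (hence in $G$), is co-amenable in $G_0$ (quotient $\Z$) and therefore in $G$, satisfies $H_2\cap N_0\leq G_0\cap N_0=\{\id\}$, and the cycle in $G_0\bs T$ unwraps to a bi-infinite line in $H_2\bs T$, forcing $H_2\bs T$ to be infinite; hence $H_2\in \mathcal{F}_{\{\{\id\}\}}^T$.

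Now $N_0$ is finite, so the conjugacy class $\{\{\id\}\}\in \Conj(N_0)^G$ is a singleton and Theorem~\ref{th: two edge-stab with finite intersection - top. dyn.}(ii) applies: the action $G\acting \mathcal{F}_{\{\{\id\}\}}^T$ is highly topologically transitive, and in particular topologically transitive. The ``moreover'' clause of Proposition~\ref{prop: conditions for being in A} with $\RRR=\mathcal{F}_{\{\{\id\}\}}^T$ then furnishes a dense $G_\delta$ set of $H_0\in \mathcal{F}_{\{\{\id\}\}}^T$ whose conjugation orbits accumulate on both $H_1$ and $H_2$; any such $H_0$ fulfils all three conditions of that proposition, so $G\acting G/H_0$ is faithful, transitive and amenable, yielding $G\in\mathcal{A}$.

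The main delicate point is the residual sub-case where $G_0$ is finite-index yet $G_0\bs T$ is an infinite graph with no cycle (which, by Remark~\ref{rem: H fg minimal -> T/H compact}, forces $G$ not to be finitely generated): neither of the two templates above applies verbatim, and one has to produce $H_2$ by approximating $G_0$ from below by finitely generated subgroups whose $\bs T$-quotients remain infinite and then feeding the resulting subgroup into the cycle/infinite-quotient argument. Apart from this bookkeeping, the proof is a clean assembly of the topological transitivity Theorem~\ref{th: two edge-stab with finite intersection - top. dyn.} and the general criterion Proposition~\ref{prop: conditions for being in A}.
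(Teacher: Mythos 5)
Your argument follows the paper's route exactly: take $H_1=\{\id\}$, work in $\RRR=\mathcal{F}^{T}_{\{\{\id\}\}}$, get a dense orbit from Theorem~\ref{th: two edge-stab with finite intersection - top. dyn.}(ii), and feed everything into Proposition~\ref{prop: conditions for being in A}, with $H_2$ produced either as $G_0$ itself or as the kernel of the surjection $G_0\twoheadrightarrow\Z$ coming from a cycle of $G_0\bs T$ (whose kernel has infinite quotient because the cycle unwraps to a line). The only divergence is your case split for $H_2$, and that is where your write-up has a genuine loose end.

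Because you insist on the letter of condition~\eqref{it: co-amenability condition for A} ($H_2\in\Sub_{[\infty]}(G)$), you only allow $H_2=G_0$ when $G_0$ has infinite index, and you are left with the sub-case ``$G_0$ of finite index, $G_0\bs T$ an infinite tree''. Your proposed repair there --- approximating $G_0$ from below by finitely generated subgroups with infinite quotient --- does not work: what $H_2$ must supply is co-amenability, and that is not inherited by such subgroups (a finitely generated subgroup of a non-finitely-generated group is in general nowhere near co-amenable). The paper instead sets $H_2:=G_0$ whenever $G_0\bs T$ is infinite, regardless of index. This is legitimate because the infinite-index requirement on $H_2$ is never used in the proof of Proposition~\ref{prop: conditions for being in A}: the F\o lner argument only needs $H_2$ to be co-amenable and to lie in $\RRR$ in the closure of the orbit of $H_0$, while the infinite index of $H_0$ itself is already forced by the accumulation of its orbit on $H_1=\{\id\}$ (having index at most $k$ is a closed condition, so a limit of conjugates of a finite-index subgroup cannot be of infinite index). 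So the correct fix is to drop the infinite-index restriction on $G_0$ in your first case (or to note that Proposition~\ref{prop: conditions for being in A} holds verbatim with $H_2$ of finite index), not to replace $G_0$ by smaller subgroups.
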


\begin{proof}[Proof of Corollary~\ref{cor: G acts on a tree, edges w. triv. intersect of stab. in A}]
If $G_0\bs T$ is not a tree, $G_0$ admits a homomorphism onto $\Z$ whose kernel $N$ gives rise to a infinite quotient $N\bs T$: indeed, pick a simple closed loop $c$ in the graph  $G_0\bs T$, then  the fundamental group $G_0$ of the graph of groups associated with $G_0\acting T$ maps onto $\pi_1(c)\simeq \Z$ with kernel $N\leq G_0$. The graph of groups associated with $N\acting T$ contains an embedded copy of the universal cover of the loop $c$. 
It follows that $H_2\coloneqq N$ is co-amenable in $G_0$ and thus in $G$ and that $H_2\in \overline{\Sub_{\vert \bullet\bs T\vert \infty}(G)}$ and $H_2\cap N_0=\{\id\}$ (i.e. $H_2\in \mathcal{F}_{\{\{\id\}\}}^{T}$ in the terminology of  Equation~\ref{eq: def F-C} and Theorem~\ref{th: two edge-stab with finite intersection - top. dyn.}). In case $G_0\bs T$ is infinite, then simply set $H_2\coloneqq G_0$.
By Theorem~\ref{th: two edge-stab with finite intersection - top. dyn.}, $\RRR \coloneqq \mathcal{F}_{\{\{\id\}\}}^{T}$ 
and  $H_2$ satisfy the assumptions of Proposition~\ref{prop: conditions for being in A}.
\end{proof}

\begin{corollary}
\label{cor: non-tree of groups w. a w.w.malnormal incident edge group}
Let $G$ be a group which decomposes non-trivially as the fundamental group of a graph of groups where the graph is not a finite tree. Assume that, for some edge $e$ and incident vertex $v$, the associated edge group $G_e$ and vertex group $G_v$ satisfy: $\exists g\in G_v$ such that $g G_e g^{-1}\cap G_e=\{\id\}$. Then 
$G\in \mathcal{A}$.
\end{corollary}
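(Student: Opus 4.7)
The plan is to invoke Corollary~\ref{cor: G acts on a tree, edges w. triv. intersect of stab. in A} applied to the Bass-Serre tree $T$ of the given graph-of-groups decomposition (after passing to a minimal $G$-invariant subtree if necessary, so that the action $G\acting T$ is minimal). First, the two edges $e$ and $g\cdot e$ of $T$ are both incident at $v$ (since $g\in G_v$ fixes $v$) and satisfy
\[ \Stab_G(e)\cap\Stab_G(g\cdot e)\;=\;G_e\cap gG_eg^{-1}\;=\;\{\id\} \]
by hypothesis. Since the kernel $N_0$ of the action is contained in every edge stabiliser (indeed in every intersection thereof), it follows that $N_0=\{\id\}$.

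Next, irreducibility of $G\acting T$: because the underlying graph $G\bs T$ is not a finite tree, it either contains an embedded cycle---whose lift to $T$ produces the axis of a hyperbolic element obtained by going around the cycle---or it is infinite. Combined with the branching at $v$ provided by the two distinct edges $e, g\cdot e$ (distinct because $g\notin G_e$, lest the intersection $G_e\cap gG_eg^{-1}$ contain $G_e$), this excludes $T$ being a point or a line and prevents $G$ from fixing an end, so by Proposition~\ref{prop: irreducibility vs indices} the minimal invariant subtree is irreducible.

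Finally, take $G_0\coloneqq G$ itself as the required co-amenable subgroup: it is trivially co-amenable in itself, $G_0\cap N_0=\{\id\}$, and $G_0\bs T=G\bs T$ is not a finite tree by assumption. All hypotheses of Corollary~\ref{cor: G acts on a tree, edges w. triv. intersect of stab. in A} being verified, the conclusion $G\in\mathcal{A}$ follows. The main obstacle in writing this out carefully is establishing irreducibility in the infinite-tree sub-case of $G\bs T$, where the cycle-based production of a hyperbolic element is unavailable; one must then use the trivial-intersection condition more delicately to exhibit transverse hyperbolic elements, in the spirit of Lemma~\ref{lem: existence of hyperbolic in Z}. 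A slightly more direct alternative, bypassing Corollary~\ref{cor: G acts on a tree, edges w. triv. intersect of stab. in A} entirely, is to apply Proposition~\ref{prop: conditions for being in A} with $H_1=\{\id\}$ (faithfulness is automatic), $H_2$ the kernel of a surjection $G\twoheadrightarrow\Z$ obtained from a non-tree edge (or a suitable replacement in the infinite-tree case), and $H_0$ a subgroup with dense orbit in $\overline{\Sub_{\vert\bullet\bs T\vert\infty}(G)}$ furnished by Theorem~\ref{th: two edge-stab with finite intersection - top. dyn.}.
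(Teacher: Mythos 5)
Your proof takes exactly the same route as the paper's: observe that the edges $e$ and $g\cdot e$ of the Bass--Serre tree have trivially intersecting stabilisers, deduce that the kernel $N_0$ of the action is trivial, and apply Corollary~\ref{cor: G acts on a tree, edges w. triv. intersect of stab. in A} with $G_0=G$. The paper's own proof is in fact terser than yours — it does not discuss minimality or irreducibility at all — so the care you take over irreducibility (which you rightly flag as the one delicate point) goes beyond what the authors wrote.
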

\begin{proof}
The condition $g G_e g^{-1}\cap G_e=\{\id\}$ ensures that in the associated Bass-Serre tree $T$, the edge $f_1$ stabilised by $G_e\leq G$ and its image by $g\in G_v$ have stabilisers that intersect trivially. Thus the kernel $N_0$ of the action is trivial. The corollary follows by applying Corollary~\ref{cor: G acts on a tree, edges w. triv. intersect of stab. in A} with $G_0 \coloneqq G$.
\end{proof}

\bigskip

Our final application is to show that the class of (virtually) \defin{compact special groups} introduced by Haglund and Wise in \cite{Haglund-Wise-2008-SCC} is contained in $\mathcal{A}$. These are groups $G$ which (virtually) act co-specially, freely and cocompactly on a CAT(0) cube complex $X$: a cell complex constructed by attaching $n$-cubes $[0,1]^n$ isometrically along faces, which is CAT(0) when equipped with the piecewise $\ell^2$-metric. 
The action $G \acting X$ is \defin{co-special} if the quotient $G\bs X$ is special in the sense of  \cite[Chapter 6, p. 167]{Wise2021}.

We recall some key definitions and properties related to CAT(0) cube complexes and co-special actions.

Let $X$ be a CAT(0) cube complex. The \defin{dimension} of $X$ is the supremum of the set of $n \in \mathbb{N}$ such that $X$ contains an $n$-cube. We say that $X$ is \defin{reducible} if it splits non-trivially as a product $X = X_1 \times X_2$ and $X$ is \defin{irreducible} otherwise. 

A \defin{hyperplane} $h \subseteq X$ is a non-empty connected subspace of $X$ whose intersection with each $n$-cube $\sigma$ of $X$ is either empty or a midcube of $\sigma$ (the space obtained from $[0,1]^n$ by restricting one coordinate to $1/2$). 
It was proved in \cite{Sageev1995} that each hyperplane of $X$ separates $X$ into two connected components, called \defin{halfspaces}; we denote these $h^+$ and $h^-$.
 Given two non-empty subspaces $U,V \subseteq X$, we say that $h$ \defin{separates} $U$ from $V$ if $U$ and $V$ lie in distinct connected components of $X \setminus h$. Given two hyperplanes $h,k \subseteq X$, we say that $h$ and $k$ are \defin{strongly separated} if no hyperplane in $X$ intersects both $h$ and $k$.

  Let $d$ denote the piecewise $\ell^1$-metric on $X$. 
 The distance between any two vertices of $X$ is precisely the number of hyperplanes separating them; this coincides with the restriction to the 0-skeleton of the path metric on the 1-skeleton of $X$. There is a well-defined \defin{gate map} $\mathfrak{g}_h: X \rightarrow h$ which maps each vertex of $X$ to its (unique) closest point projection in $h$ (see \cite[Section~2.1]{BHS2017}). This map has the property that {\em if $x,y$ are vertices of $X$ and $k$ is a hyperplane then $k$ separates $\mathfrak{g}_h(x)$ and $\mathfrak{g}_h(y)$ if and only if $k$ intersects $h$ and separates $x$ from $y$}. In particular, if $h$ and $k$ are strongly separated hyperplanes then $\mathfrak{g}_h(k)$ is a single point.

We consider actions by cellular automorphisms, these are isometries with respect to both $d$ and the CAT(0) metric (i.e. the piecewise $\ell^2$-metric). The group of all cellular automorphisms of $X$ is denoted by $\Aut(X)$. An action $G \acting X$ on a CAT(0) cube complex is said to be \defin{proper} if the stabiliser of each $n$-cube is finite. It is \defin{essential} if for any orbit $G \cdot v$ and for each halfspace $h^+$ bounded by a hyperplane $h$, there are points in $(G \cdot v) \cap h^+$ arbitrarily far from $h$. 

We recall the Double Skewering Lemma of \cite{Caprace-Sageev2011}:
{\em Suppose that $G$ is a group acting essentially on a finite dimensional CAT(0) cube complex without a fixed point at infinity. Then for any halfspaces $h^+ \subseteq k^+$, there is an element $\gamma \in G$ such that $\gamma k^+ \subsetneq h^+ \subseteq k^+$.}

Suppose that $h \subseteq X$ is a hyperplane such that the set of hyperplanes $G \cdot h$ is pairwise disjoint (i.e. for all $g \in G$ either $gh=h$ or $gh \cap h = \emptyset$) and such that $G$ does not invert $h$ (i.e. if $g \in \Stab_G(h)$ then $gh^+ = h^+$). Let $N(h)$ be the open carrier of $h$; that is, the union of all open cubes which intersect $h$.
Let $T_h$ be the graph whose vertex set is the set of connected components of $X \smallsetminus (G \cdot N(h))$ and where $v, w \in V(T)$ are connected by an edge if they are both bounded by a common hyperplane carrier in $G \cdot N(h)$. This defines a continuous surjection $\pi_h: X \rightarrow T_h$ which maps each connected component of $X \smallsetminus (G \cdot N(h))$ to a vertex and each open hyperplane carrier $N(k)$ in $G \cdot N(h)$ to an open edge whose midpoint is $\pi_h(k)$. Note that edges of $T_h$ separate the space into two connected components so $T_h$ is a tree. We will refer to $T_h$ as the \defin{dual tree} of $h$. The action $G \acting (G \cdot N(h))$ induces an action $G \acting T_h$ via $\pi_h$. Since $G$ does not invert hyperplanes in the orbit of $h$, the action of $G$ on $T_H$ is without edge inversions.

Recall that if $G \acting X$ is co-special then $G$ acts with pairwise disjoint hyperplane orbits and no hyperplane inversions. By the above, this implies that $G$ acts on the dual tree $T_h$ for any hyperplane $h$ of $X$. 

\begin{lemma} \label{lem: action on a tree}
Let $G$ be a countable group acting on an irreducible CAT(0) cube complex $X$, where $X$ is not quasi-isometric to $\mathbb{R}$ and the automorphism group $\Aut(X)$ acts cocompactly. Suppose that the action is proper, essential and does not fix a point in the visual boundary $\partial_\infty X$. Suppose moreover that there is a hyperplane $h \subseteq X$ such that the set of hyperplanes $G \cdot h$ is pairwise disjoint and such that $G$ does not invert $h$. Let $T_h$ be the corresponding dual tree.
Under these conditions, the action $G \acting T_h$ is minimal and irreducible and there are edges $f_1, f_2 \in E(T)$ such that $\Stab_G(f_1) \cap \Stab_G(f_2)$ is finite.
\end{lemma}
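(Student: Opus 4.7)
The plan is to reduce all three conclusions to the existence of a pair of \emph{strongly separated} hyperplanes inside the single $G$-orbit $G \cdot h$. Granted such a pair $h_1, h_2$, with dual edges $f_1, f_2 \in E(T_h)$, the finiteness of $\Stab_G(f_1) \cap \Stab_G(f_2)$ will follow quickly: any $\gamma$ in that intersection preserves $h_1$ and $h_2$ as hyperplanes of $X$, so by equivariance of the gate map recalled in the excerpt, $\gamma \mathfrak{g}_{h_1}(h_2) = \mathfrak{g}_{h_1}(\gamma h_2) = \mathfrak{g}_{h_1}(h_2)$. Strong separation forces $\mathfrak{g}_{h_1}(h_2)$ to be a single vertex $v \in h_1$, so $\gamma$ fixes $v$; properness of $G \acting X$ then bounds $\Stab_G(v)$, and hence $\Stab_G(f_1) \cap \Stab_G(f_2) \leq \Stab_G(v)$, to be finite.

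The strongly separated pair itself will be produced by combining the Caprace--Sageev theory with the Double Skewering Lemma cited just above the lemma. The standing hypotheses on $X$ (irreducibility, $\Aut(X)$ cocompact, and the $G$-hypotheses transferring via essentiality and cocompactness to give an $\Aut(X)$-action with no fixed point at infinity) furnish a pair of strongly separated hyperplanes $k, k' \subseteq X$ with, say, $k^+ \subsetneq (k')^+$. Double skewering applied to the essential, fixed-point-free action $G \acting X$ then yields $g_1 \in G$ with $g_1 h^+ \subsetneq k^+$ and $g_2 \in G$ with $g_2 h^- \subsetneq X \smallsetminus (k')^+$. Any hyperplane crossing both $g_1 h$ and $g_2 h$ would have to cross both $k$ and $k'$, which is impossible, so $h_1 \coloneqq g_1 h$ and $h_2 \coloneqq g_2 h$ form the desired strongly separated pair in $G \cdot h$.

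Minimality of $G \acting T_h$ will follow directly from essentiality of $G \acting X$: for any non-empty $G$-invariant subtree $T_0 \subseteq T_h$ containing a vertex $w$ and any edge $f$ of $T_h$ dual to a hyperplane $k \in G \cdot h$, essentiality places $G$-translates of the component of $X \smallsetminus (G \cdot N(h))$ represented by $w$ arbitrarily deep in both halfspaces of $k$, so $G \cdot w$ meets both sides of $f$ in $T_h$; the convex hull of $G \cdot w$ therefore contains $f$, and since $f$ is arbitrary, $T_0 = T_h$. For irreducibility, the $G$-action on $T_h$ is transitive on edges (each edge corresponds to a hyperplane in $G \cdot h$), so Proposition~\ref{prop: irreducibility vs indices} reduces the problem to a statement about indices of edge-vs-endpoint stabilisers; iterating the strongly-separated construction with varying choices of halfspaces produces several hyperbolic elements of $G$ on $T_h$ with pairwise transverse axes, and this rules out the forbidden index configurations (as well as global fixed points, fixed ends and invariant lines). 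The main delicate point will be the second paragraph: promoting an abstract strongly separated pair in $X$ to one that actually lies in the specific orbit $G \cdot h$, which is where we must lean on essentiality of $G \acting X$ rather than merely of $\Aut(X) \acting X$, since $G$ itself need not act cocompactly.
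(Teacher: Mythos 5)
Your first two steps coincide with the paper's: both produce a strongly separated pair $h_1,h_2$ inside the orbit $G\cdot h$ by first invoking \cite[Theorem~5.1]{Caprace-Sageev2011} for an abstract strongly separated pair $k,k'$ and then double-skewering $h$ to either side of it (the paper is slightly more careful at this point, since the Double Skewering Lemma takes \emph{nested} halfspaces as input: one must first record that $h$, being unable to cross both $k$ and $k'$, has a halfspace nested in one of theirs, up to relabelling), and both deduce finiteness of $\Stab_G(f_1)\cap\Stab_G(f_2)$ from the gate map collapsing $h_2$ to a single point of $h_1$ together with properness. Your minimality argument via essentiality is a harmless variant of the paper's, which simply notes that the action is edge-transitive and, by Double Skewering, has no global fixed point.

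The gap is in irreducibility. The substantive content there is showing that $T_h$ is \emph{not a line}, and your proposed route --- iterating the strongly separated construction with varying choices of halfspaces to obtain several hyperbolic elements of $G$ on $T_h$ with pairwise transverse axes --- is circular: every hyperbolic element manufactured by skewering a nested chain of translates of $h$ has its axis running through the corresponding edges, and if $T_h$ were a line all of these axes would coincide with $T_h$; nothing in the double-skewering mechanism alone forces two of them to be transverse. What is needed is a configuration of three pairwise disjoint, pairwise non-separating translates of $h$ (a \emph{facing triple}), which forces a branch vertex in $T_h$. The paper obtains a facing triple of hyperplanes from \cite[Theorem~7.2]{Caprace-Sageev2011}, whose hypothesis (no $\Aut(X)$-invariant Euclidean flat) is supplied by \cite[Lemma~7.1]{Caprace-Sageev2011} using precisely the assumptions your sketch never exploits: $X$ irreducible, not a quasi-line, and $\Aut(X)$ acting cocompactly; it then skewers translates of $h_i$ into the three halfspaces to land the facing triple inside $G\cdot h$. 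Once $T_h$ is known not to be a line, minimality rules out an invariant line and the Flipping Lemma of \cite{Caprace-Sageev2011} rules out a fixed end; your appeal to Proposition~\ref{prop: irreducibility vs indices} is then unnecessary, and in any case would require the same missing branching input to verify the index conditions.
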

\begin{proof}
    Since the action of $\Aut(X)$ is cocompact, $X$ is finite dimensional.

    We first show that there is a pair of strongly separated hyperplanes in the orbit $G \cdot h$. Since $X$ is irreducible and finite dimensional and since $G$ acts essentially without a fixed point at infinity, there is a pair of strongly separated hyperplanes $k_1, k_2 \subseteq X$ by \cite[Theorem~5.1]{Caprace-Sageev2011}. 
    Since $h$ does not intersect both $k_1$ and $k_2$, then up to relabelling, we can  assume that $h^- \subseteq k_2^-$ and that $k_2^+ \subseteq k_1^+$ where $k_i^+, k_i^-$ are the halfspaces of $k_i$.
    By the Double Skewering Lemma, there exists $\gamma, \psi \in G$, such that $\gamma k_2^- \subsetneq k_1^-$ and $\psi h^+ \subsetneq k_2^+$. Thus $\gamma h \subseteq  k_1^-$ and $\psi h \subseteq k_2^+$. If a hyperplane intersects both $\gamma h$ and $\psi h$ then it must intersect $k_1$ and $k_2$ so no such hyperplane exists. Therefore $h_1 \coloneqq \gamma h$ and $h_2 \coloneqq \psi h$ are strongly separated.

    Recall that, under the gate map, the image $\mathfrak{g}_{h_1}(h_2)$ is a single point, say $x_1$. If $g \in \Stab_G(h_1) \cap \Stab_G(h_2)$ then $d(g x_1, h_2) = d(x_1, h_2)$ and, since $g x_1 \in h_1$ and $x_1$ is the unique point in $\mathfrak{g}_{h_1}(h_2)$, we have $g x_1 = x_1$. Since the action of $G$ on $X$ is proper, this implies that $\Stab_G(h_1) \cap \Stab_G(h_2)$ is finite. Thus if $f_i$ is the edge of $T_h$ with midpoint $\pi_h(h_i)$ for $i = 1,2$ then $\Stab_G(f_1) \cap \Stab_G(f_2)$ is finite, as required.

    Let us now show that the action $G \acting T_h$ is minimal and irreducible. Firstly, by the Double Skewering Lemma, $T_h$ is unbounded and the $G$-action has no global fixed point. Since the action has a single orbit of edges, it is minimal. 
    
    Since $X$ is irreducible and not a quasi-line and since the action of $\Aut(X)$ is cocompact, \cite[Lemma~7.1]{Caprace-Sageev2011} implies that $\Aut(X)$ does not stabilise any Euclidean flat $\mathbb{R}^n$ in $X$. By \cite[Theorem~7.2]{Caprace-Sageev2011}, this implies that there is a \defin{facing triple} of hyperplanes in $X$: i.e. a pairwise disjoint triple of hyperplanes $k_0, k_1, k_2$ such that no $k_j$ separates the remaining two hyperplanes. Suppose that for each $j$, $k_j^+$ is the halfspace containing the remaining $k_{\ell}$'s. Since $h_1, h_2$ are strongly separated, there is $i \in \{1,2\}$ and $j \in \mathbb{Z} / 3\mathbb{Z}$ such that $h_i \cap k_j = \emptyset$. By the Double Skewering Lemma, there exists $g \in G$ such that $g h_i \subseteq k_j^-$ (set $g = \id$ if this was already the case). Then, using the Double Skewering Lemma again, there are $g_{j+1}, g_{j+2} \in G$ such that $g_{j+1} g h_i \subseteq k_{j+1}^-$ and  $g_{j+2} g h_i \subseteq k_{j+2}^-$. The set $\{gh_i, g_{j+1}gh_i, g_{j+2}gh_i\}$ is a facing triple in the $G$-orbit of $h$. Thus there is a facing triple of hyperplanes (in this case mid-points of edges) in $T_h$, so $T_h$ cannot be a line. By minimality of the action, $G$ cannot fix a line in $T_h$.

    It remains to show that $G$ does not fix an end. Let $(e_n)_{n \in \mathbb{N}}$ be a geodesic ray such that $e_1$ is the edge whose midpoint is $\pi_h(h)$. There is a halfspace of $h$ which contains $\pi_h^{-1}(e_n)$ for all $n \geq 2$, suppose without loss of generality that this halfspace is $h^+$. By the Flipping Lemma of \cite{Caprace-Sageev2011} there is an element $\gamma \in G$ such that $\gamma h^+ \subsetneq h^-$. Thus the paths $(e_n)_{n \geq 2}$ and $(\gamma e_n)_{n \geq 2}$ are contained in distinct components of $T_h \setminus \{e_1\}$ and therefore represent distinct ends of $T_h$.
\end{proof}

\begin{theorem} \label{thm: virtually compact special in A}
    If $G$ is an infinite virtually compact special group then $G \in \mathcal{A}$.
\end{theorem}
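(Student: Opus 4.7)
The plan is first to reduce to the case where $G$ itself is compact special: since $\mathcal{A}$ is closed under finite index supergroups, I may replace $G$ by such a finite index subgroup, so that $G$ acts freely, cocompactly and co-specially on some CAT(0) cube complex $X$. If $G$ is amenable, then $G \in \mathcal{A}$ immediately since every countably infinite amenable group lies in $\mathcal{A}$. Otherwise, I would decompose $X$ into its irreducible factors $X = X_1 \times \cdots \times X_k$ and invoke the standard product splitting for proper cocompact actions on products of CAT(0) cube complexes to pass to a further finite index subgroup that splits as a direct product acting on the individual factors. Using the closure of $\mathcal{A}$ under direct products and finite index supergroups, together with the amenable base case and the treatment of the one-dimensional case (where $X_i$ is a tree or a line, so the corresponding factor of $G$ is virtually non-abelian free, handled by Corollary~\ref{cor:infinitely many ends}, or virtually $\mathbb{Z}$), it suffices to prove $G \in \mathcal{A}$ under the additional assumption that $X$ is irreducible and not a quasi-line. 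After passing to the essential core, I may further assume the action is essential; non-amenability and the cocompactness of $\Aut(X)$ then ensure that $G$ has no fixed point in $\partial_\infty X$.

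Next, I will exploit the fact that $G$ is virtually RFRS (by Haglund--Wise and Agol). Any non-trivial RFRS group has a finite index subgroup with positive first Betti number, since the first non-trivial torsion-free abelian quotient in the defining chain provides a surjection onto $\mathbb{Z}$. I will pass to such a finite index subgroup $K \leq G$, which is still compact special, acting freely, cocompactly and co-specially on $X$. In a compact special cube complex, the hyperplane dual classes $[\chi_H] \in H^1(K \bs X; \mathbb{Z})$ span the first integral cohomology (a standard fact coming from the Salvetti RAAG embedding), and a 2-sided hyperplane $H$ is separating precisely when $[\chi_H] = 0$. Since $b_1(K) > 0$, some hyperplane $H$ of $K \bs X$ is therefore non-separating; I will lift such an $H$ to a hyperplane $h \subseteq X$.

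Finally, I will apply Lemma~\ref{lem: action on a tree} to $K \acting X$ with this hyperplane $h$: the resulting action $K \acting T_h^K$ on the dual tree is minimal and irreducible, and admits two edges $f_1, f_2$ with $|\Stab_K(f_1) \cap \Stab_K(f_2)| < \infty$. Because $K$ acts freely on the complete CAT(0) space $X$, the Bruhat--Tits fixed point theorem forbids non-trivial finite subgroups of $K$, so this intersection and therefore the kernel $N_0$ of $K \acting T_h^K$ are trivial. Moreover $K \bs T_h^K$ is exactly the dual graph of $H$ in $K \bs X$ -- a single vertex with a loop, since $H$ is non-separating -- and hence is not a finite tree. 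Taking $K$ itself as the co-amenable subgroup required by Corollary~\ref{cor: G acts on a tree, edges w. triv. intersect of stab. in A} will then yield $K \in \mathcal{A}$, and hence $G \in \mathcal{A}$. The hard part will be the product-decomposition step in the first paragraph: the tree machinery only kicks in once $X$ has been reduced to an irreducible non-quasi-line factor, which requires care with the virtual splitting of cocompact actions on reducible CAT(0) cube complexes and the inductive reduction to lower-dimensional (or tree/line) factors.
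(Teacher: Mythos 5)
Your plan reproduces the architecture of the paper's proof: reduce to the compact special case using the closure of $\mathcal{A}$ under direct products and finite index supergroups, pass to the essential core, split reducible complexes virtually into irreducible factors (the paper does this with \cite[Lemma~3.8]{Zalloum2023} and an induction on dimension, exactly the care you flag as ``the hard part''), dispose of the quasi-line case, and then feed a dual tree whose quotient is a loop into Lemma~\ref{lem: action on a tree} and Corollary~\ref{cor: G acts on a tree, edges w. triv. intersect of stab. in A}. The single point of genuine divergence is how you obtain, after passing to a finite index subgroup, a hyperplane whose image in the quotient does not separate: the paper simply quotes \cite[Lemma~14.6]{Wise2021}, which produces a finite cover of a compact special cube complex in which a \emph{given} hyperplane is non-separating, with no Betti number or RFRS input.

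Your substitute for that step has a genuine gap. The claim that the hyperplane dual classes $[\chi_H]$ span $H^1(K\bs X;\mathbb{Z})$ is false for compact special cube complexes: the two-square torus $\mathbb{R}^2/\Lambda$ with $\Lambda=\langle(1,1),(2,0)\rangle$ and the unit square tiling is special (it covers the Salvetti complex of $\mathbb{Z}^2$) and has exactly two hyperplanes, whose dual classes are the restrictions $x|_\Lambda$ and $y|_\Lambda$ of the coordinate functionals; in the basis of $\mathrm{Hom}(\Lambda,\mathbb{Z})$ dual to $\{(1,1),(1,-1)\}$ these are $(1,1)$ and $(1,-1)$, which generate only an index two subgroup of $H^1$. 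The ``Salvetti embedding'' does not rescue the claim, because the induced map $H^1(A_\Gamma;\mathbb{Z})\to H^1(\pi_1(Y);\mathbb{Z})$ is restriction of homomorphisms to a subgroup, which need not be surjective --- the same example, with $A_\Gamma=\mathbb{Z}^2$ and the subgroup $\Lambda$, already shows this. For your application rational spanning would suffice (you only need one non-zero dual class once $b_1(K)>0$), but that weaker statement is not the standard fact you cite and you give no argument for it; note that for merely non-positively curved square complexes it can genuinely fail (square-tiled surfaces with one cylinder in each direction have two hyperplanes but $b_1=2g$), so any proof would have to exploit specialness in an essential way. The clean repair is to drop the RFRS/Betti-number detour and invoke \cite[Lemma~14.6]{Wise2021} as the paper does; the remainder of your plan --- torsion-freeness forcing the kernel and the finite edge-stabiliser intersection to be trivial, the loop quotient of the dual tree, and the application of Corollary~\ref{cor: G acts on a tree, edges w. triv. intersect of stab. in A} with $G_0=K$ --- then goes through exactly as in the paper.
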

Examples of virtually compact special groups include 
right-angled Artin groups (essentially by definition); 
right-angled and hyperbolic Coxeter groups \cite{Niblo-Reeves-2003,Haglund-Wise-2010}; limit groups \cite[Theorem~18.6]{Wise2021}; one-relator groups with torsion \cite[Corollary~19.2]{Wise2021}.
Many groups are known to act properly and
cocompactly on CAT(0) cube complexes.
By \cite{Agol13}, those that are moreover hyperbolic are then virtually compact special. This includes 
C'(1/6) small cancellation groups \cite{Wise-2004};  random groups at density $d<1/6$ \cite{Ollivier-Wise-2011}, hyperbolic-by-cyclic groups which are themselves hyperbolic \cite{DMM-2023}.

\begin{proof}
    Recall that class $\mathcal{A}$ contains every infinite amenable group and is closed under taking direct products. Moreover if $G$ is virtually in $\mathcal{A}$ then $G$ itself is in $\mathcal{A}$.
    Thus we can assume that $G$ is compact special; in particular, $G$ is torsion-free. Let $X$ be a CAT(0) cube complex on which $G$ acts freely, cocompactly and co-specially.
    By \cite[Proposition~3.5]{Caprace-Sageev2011}, there is a convex subcomplex of $X$,
    called the \defin{essential core}, on which $G$ acts essentially. The action of $G$ on $Y$ is moreover free and cocompact and co-special.
    Thus by replacing $X$ with its essential core and $G$ with a finite index subgroup, we can assume that the action $G \acting X$ is free, cocompact, co-special and essential.
    
    If $X$ splits non-trivially as a product $X = X_1 \times X_2$ then by \cite[Lemma~3.8]{Zalloum2023} there is a finite index subgroup $G' \leq G$ which splits as a direct product $G' = G_1 \times G_2$, where $G_1 = \Stab_{G'}(X_1 \times \{x_2\})$ and $G_2 = \Stab_{G'}(\{x_1\} \times X_2)$ for any $x_i \in X_i$.
    The actions $G_i \acting X_i$ are free, cocompact and co-special. Moreover, for each $i$, the dimension of $X_i$ is strictly inferior to that of $X$. 
    
    By induction on the dimension, if we iterate the process of passing to the essential core, then applying \cite[Lemma~3.8]{Zalloum2023} if the subspace is reducible, we arrive at a set of subgroups $H_1, \dots, H_k$ of $G$ such that:
    \\
    -- $G$ can be constructed from $H_1, \dots, H_k$ by taking direct products and finite index supergroups and 
    \\
    -- each $H_i$ acts on an irreducible CAT(0) cube complex $X_i$ such that the action is free, cocompact, co-special and essential.
    \\
    We can therefore assume that $X$ is irreducible and $G \acting X$ has the above properties.
    
    Let $\rho:X \rightarrow X \bs G$ be the quotient map. Fix a hyperplane $h \subseteq X$ and let $T_h$ be the corresponding dual tree. By \cite[Lemma~14.6]{Wise2021} we can replace $G$ with a finite index subgroup so that $\rho(h)$ does not separate $G \bs X$. This implies that $G \bs T_h$ is a loop.
    
    By \cite[Corollary~4.9]{Caprace-Sageev2011}, 
    and since $X$ is irreducible, either $X$ is quasi-isometric to a line
    or there is no fixed point at infinity. In the former case, $G$ is virtually cyclic (in fact, cyclic since it is torsion free) so $G \in \mathcal{A}$ since it is amenable.
    On the other hand, if $G$ does not fix a point in $\partial_\infty X$ then it follows from Lemma~\ref{lem: action on a tree} that the action of $G$ on $T_h$ is minimal and irreducible, and that there are edges $f_1, f_2 \in E(T_h)$ such that $\Stab_G(f_1) \cap \Stab_G(f_2)$ is finite. Since $G$ is torsion-tree this intersection is in fact trivial. Thus $G \in \mathcal{A}$ by Corollary~\ref{cor: G acts on a tree, edges w. triv. intersect of stab. in A}.
\end{proof}

\appendix
\section{Appendix: A list of examples}

A list of examples of finitely generated groups for which $\PK(G)=\Sub_{[\infty]}(G)$, thus $\rkCB(G)=1$, and for which $G\acting \PK(G)$ is highly topologically transitive.

\begin{enumerate}
\item Non-abelian limit groups  (Corollary~\ref{cor: groups in CCC, K(G) and HTT} and Example~\ref{ex: class Q}).

\item 
Non-elementary free products $A*B$ and groups with infinitely many ends and no non-trivial finite normal subgroup (Corollary~\ref{cor:infinitely many ends}). As for $\mathrm{SL}(2,\Z)$, see Example~\ref{ex: SL(2,Z)}.

\item Graphs of free groups with cyclic edges such that at least one vertex group is non-cyclic (Corollary~\ref{cor: groups in CCC, K(G) and HTT} and Example~\ref{ex: class Q}).

\item 
Groups in the class $\mathcal{Q}'$ (Corollary~\ref{cor: groups in CCC, K(G) and HTT}).

\item
Non-elementary  and locally quasiconvex hyperbolic groups with trivial finite normal subgroup (Corollary~\ref{cor: locally quasiconvex}).
For instance, non-elementary hyperbolic groups with a small hierarchy and trivial finite normal subgroup (Example~\ref{ex: hyperbolic gps w. a small hierarchy}).

\item 
One-relator group $G = \la S \: | \: w^m \ra$ where $w$ is a cyclically reduced word in $S \cup S^{-1}$ and $m \geq |w|_S$ (Example~\ref{ex: one-relator group}).

\item 
 Coxeter groups $G=\la s_1, \dots, s_n | s_i^2, (s_i s_j)^{m_{i,j}} \; \forall \; i < j \ra$ 
where $\infty\geq m_{i,j}> n\geq 3$ (Example~\ref{ex: Coxeter groups}).

\end{enumerate}

\bibliographystyle{plain}

  \bigskip
{\footnotesize
  \noindent
  {P.A., \textsc{University of Bristol, School of Mathematics, Bristol, UK}}\par\nopagebreak \texttt{penelope.azuelos@bristol.ac.uk}
  
\medskip 

  \noindent
{D.G., \textsc{CNRS, ENS-Lyon, 
Unité de Mathématiques Pures et Appliquées,  69007 Lyon, France}}
\par\nopagebreak \texttt{damien.gaboriau@ens-lyon.fr}
}

\end{document}